\theoremstyle{thmstyleone}
\newtheorem{theorem}{Theorem}
\newtheorem{proposition}[theorem]{Proposition}
\newtheorem{corollary}[theorem]{Corollary}
\newtheorem{lemma}[theorem]{Lemma}
\theoremstyle{thmstyletwo}
\newtheorem{remark}{Remark}
\theoremstyle{thmstylethree}
\newtheorem{definition}{Definition}
\newtheorem{assump}{Assumption}
\newtheorem{hypothesis}{Hypothesis}
\newcommand{\norm}[1]{\lVert#1 \rVert}  
\newcommand{\normb}[1]{\lVert#1 \rVert}
\def\diff{\mathrm{d}}
\def\div{\text{div}}
\def\curl{\text{curl}}
\def\tr{\text{tr}}
\def\exp{\text{exp}}
\def\sgn{\text{sgn}}
\def\llangle{\langle\langle}
\def\rrangle{\rangle\rangle}
\def\D{\mathcal D}
\def\R{\mathbb R}
\def\E{\mathbb E}
\def\X{\mathfrak X}
\begin{document}

\title[Transport noise regularises geometric transport equations]{Transport noise restores uniqueness and prevents blow-up in geometric transport equations}

\author*[1,2]{\fnm{Aythami} \sur{Bethencourt de Le\'{o}n}}\email{ab1113@ic.ac.uk}

\author[1,3]{\fnm{So} \sur{Takao}}

\affil[1]{\orgdiv{Department of Mathematics}, \orgname{Imperial College London}, \orgaddress{\city{London}, \postcode{SW7 2AZ}, \country{UK}}}

\affil[2]{\orgdiv{Department of Mathematics, Statistics and Operations Research}, \orgname{University of La Laguna}, \orgaddress{\city{San Cristóbal de La Laguna}, \postcode{38206}, \country{Spain}}}

\affil[3]{\orgdiv{UCL Centre for Artificial Intelligence}, \orgname{University College London}, \orgaddress{\city{London}, \postcode{WC1V 6BH}, \country{UK}}}

\abstract{In this work, we demonstrate well-posedness and regularisation by noise results for a class of geometric transport equations that contains, among others, the linear transport and continuity equations. This class is known as linear advection of $k$-forms. In particular, we prove global existence and uniqueness of $L^p$-solutions to the stochastic equation, driven by a spatially $\alpha$-H\"older drift $b$, uniformly bounded in time, with an integrability condition on the distributional derivative of $b$, and sufficiently regular diffusion vector fields. Furthermore, we prove that all our solutions are continuous if the initial datum is continuous. Finally, we show that our class of equations without noise admits infinitely many $L^p$-solutions and is hence ill-posed. Moreover, the deterministic solutions can be discontinuous in both time and space independently of the regularity of the initial datum. We also demonstrate that for certain initial data of class $C^\infty_{0},$ the deterministic $L^p$-solutions blow up instantaneously in the space $L^{\infty}_{loc}$. In order to establish our results, we employ characteristics-based techniques that exploit the geometric structure of our equations.}

\keywords{well-posedness by noise, transport noise, stochastic Lie transport equations, blow-up prevention, stochastic flows, geometric mechanics.}

\pacs[MSC Classification]{60H50, 60H15, 35R25, 53Z05.}
\maketitle
\pagestyle{plain}
\tableofcontents
\newpage
\pagestyle{headings}
\section{Introduction}
The aim of this work is to demonstrate that noise has a regularising effect for the geometric class of SPDEs
\begin{align}
\begin{split}
    & \diff \mathbf{K}(t,x) + \mathcal L_{b(t,x)} \mathbf{K}(t,x) \, \diff t + \sum_{i=1}^N \mathcal L_{\xi_i(t,x)} \mathbf{K}(t,x) \circ \diff W_t^i = 0,  \\
    & \mathbf{K}(0,x) = \mathbf{K}_0(x), \quad t \in [0,T], \quad x \in \mathbb R^n, \label{eq0}
\end{split}
\end{align}
where 
\begin{itemize}
    \item $\mathbf{K}(t,x)$ is a stochastic process taking values in the space of differential $k$-forms on $\mathbb R^n,$ for any $k \in \{0,1,\dots,n\}$. 
    \item $b(t,x)$ and $\xi_i(t,x),$ $i=1,\ldots,N,$ are deterministic vector fields on $\mathbb{R}^n$ satisfying some regularity conditions. % satisfying Hypotheses \ref{hypoflow} $\&$ \ref{hypoexistence} listed further below.
    \item $\mathcal L_{v}$ denotes the Lie derivative \footnote{The Lie derivative generalises the notion of directional derivative to tensor fields. More concretely, $\mathcal L_{v}T$ describes the infinitesimal rate of change of a tensor field $T$ under the flow of $v$ (see Appendix \ref{tensor} for supplementary material on tensor algebra and calculus).} with respect to a vector field $v$. 
    \item $\{W_t^i\}_{i = 1, \ldots, N}$ is a family of independent Brownian motions and the notation $\circ$ indicates stochastic integration in the Stratonovich sense, which in this introduction is understood formally.
    \item $\mathbf{K}_0(x)$ is a deterministic differential $k$-form serving as initial condition.
\end{itemize}
Without the noise, i.e. $\xi_i = 0,$ $i=1,\ldots,N,$ equation \eqref{eq0} is known as the (Lie) transport equation of $k$-forms. Since the Lie derivative $\mathcal L_{b} \mathbf{K}$ can be understood as the infinitesimal rate of change of $\mathbf{K}$ when transported along the flow of the vector field $b$, the PDE $\partial_t \mathbf{K} + \mathcal{L}_b \mathbf{K} = 0,$ $\mathbf{K}(0,\cdot)=\mathbf{K}_0$ describes the advection of the $k$-form $\mathbf{K}_0$ along the flow of $b$ (see Section 6 in \cite{holm1998euler}).
The multiplicative noise term added in \eqref{eq0} is known as ``Lie transport noise" or ``transport noise" (see \cite{crisan2017solution,DieAytJNLS}), which simply perturbs the transport velocity in the deterministic equation by noise. Indeed, equation \eqref{eq0} can formally be expressed as $\partial_t \mathbf{K} + \mathcal L_{\tilde{b}} \mathbf{K} = 0$, where $\tilde{b}(t,x) := b(t,x) + \sum_{i=1}^N \xi_i(t,x) \circ \dot{W}_t^i$.

We prove that under some regularity conditions on the vector fields $b$ and $\xi_i$, $i=1,\ldots,N,$ which we state in more detail later, equation \eqref{eq0} is globally well-posed in $L^p$. Furthermore, we demonstrate that the noise term in \eqref{eq0} has the effect of {\em restoring uniqueness of solutions and preventing the development of instantaneous singularities} in the same equation without the noise. Results of this type are normally referred to as ``well-posedness/regularisation by noise" in the literature, and have attracted significant attention in the last decade. This is due to their relevance in fluid dynamics (indeed, the equations studied are often of fluid dynamics origin) and the prospect that some of the research could help tackle the famous 3D Navier-Stokes existence and smoothness problem. In this work, we establish novel well-posedness and regularisation results, generalising existing ones in the literature and unifying them under a single framework.

\subsection{Background and motivations} Two main motivations led us to consider equation \eqref{eq0}. 
Our first source of inspiration stems from the groundbreaking results in \cite{flandoli2010well}, where the authors prove that noise restores uniqueness of solutions of the stochastic linear transport equation, corresponding to the case $k=0,$ $\xi_i:=e_i,$ $i=1,\ldots,n$ in our equation \eqref{eq0}. The latter work is fundamental as it provides the first example of a PDE that becomes well-posed under the presence of noise.

Our second source of inspiration comes from \cite{holm2015variational}, where the author introduces formal variational principles that yield stochastic fluid equations with Stratonovich Lie transport noise. The resulting equations comprise a coupling between a nonlinear momentum equation and a linear equation of the form \eqref{eq0}, the latter being a natural geometric generalisation of the linear transport equation considered in \cite{flandoli2010well}. It was therefore natural to ask whether the well-posedness by noise results in \cite{flandoli2010well} could be generalised to the stochastic transport equation \eqref{eq0}. 

The well-posedness and regularisation properties that we show for \eqref{eq0} can be motivated by the observation that by (formally) rewriting \eqref{eq0} in It\^o  form and (formally) computing the expectation, we obtain
\begin{align} \label{expectationreg}
\partial_t \mathbb{E}[\mathbf{K}(t,x)] + \mathcal L_b \mathbb{E}[\mathbf{K}(t,x)] = \frac{1}{2} \sum_{i=1}^N \mathcal L^2_{\xi_i} \mathbb{E}[\mathbf{K}(t,x)],
\end{align}
where $\sum_{i=1}^N \mathcal{L}_{\xi_i}^2$ is a dissipative second order operator if some parabolicity conditions are met. We note that if $\xi_i := e_i,$ $i=1,\ldots,n,$ then $\sum_{i=1}^n \mathcal{L}_{\xi_i}^2$ is exactly the Laplacian operator $\Delta.$
Although \eqref{expectationreg} suggests that transport noise could regularise solutions in \eqref{eq0}, at least at the level of expectation, it is not clear whether this regularity would be transferred to pathwise solutions. However, we will prove that this is indeed the case.

\subsection{Physical interpretation} From a physical viewpoint, a motivation for considering equation \eqref{eq0} resides in its importance in stochastic fluid modelling, where
it represents the advection of a locally conserved quantity $\mathbf{K}$ (mathematically modelled as a differential $k$-form) along stochastic paths traversed by fluid particles. Indeed, as demonstrated in \cite{takao2019}, equation \eqref{eq0} is equivalent to the stochastic local conservation law
\begin{align} \label{conservation-law}
    \int_{\Omega_0} \mathbf{K}(0,\cdot) = \int_{\Omega_t} \mathbf{K}(t,\cdot), \quad t \in [0,T],
\end{align}
where $\Omega_0$ is any $k$-dimensional compact, orientable submanifold of $\mathbb R^n,$ and $\Omega_t := \phi_t(\Omega_0)$ is the image of $\Omega_0$ under a sufficiently smooth stochastic flow map $\phi_t$, represented by the equation
\begin{align} \label{flow-map-stratonovich}
    \phi_{s,t}(x) &= x + \int^t_s b(r,\phi_{s,r}(x)) \, \diff r + \sum_{i=1}^N \int^t_s \xi_i(r,\phi_{s,r}(x)) \circ \diff W_r^i,
\end{align}
for $0\leq s \leq t \leq T$ and $ x \in \mathbb{R}^n$, where we employed the notation $\phi_t := \phi_{0,t}$ (i.e. when the initial time is set to zero). Under the modelling philosophy of \cite{holm2015variational}, $\phi_t(x)$ represents the stochastically perturbed Lagrangian trajectory of a fluid particle located initially at the point $x \in \R^n$.

We illustrate the equivalence between \eqref{eq0} and \eqref{conservation-law} with some examples. If $\mathbf{K}(t,x) := \rho(t,x) \diff^n x$ is the mass density of the fluid, where $\diff^n x$ is the standard volume element in $\R^n$ (which is a volume form, i.e. $k=n$), conservation of mass \eqref{conservation-law} implies that $\rho$ satisfies the stochastic continuity equation with transport noise
\begin{align} \label{stoch-continuity}
    \diff \rho(t,x) + \div{(b(t,x)\rho(t,x))} \, \diff t + \sum_{i=1}^N \div{(\xi_i(t,x)\rho(t,x))} \circ \diff W_t^i = 0.
\end{align}
Note that the above equation is exactly \eqref{eq0}, since the Lie derivative of volume forms is given explicitly by $\mathcal L_{v} (\rho(t,x) \diff^n x) = \div{(\rho v)} \diff^n x$. We may also consider the case when $\mathbf{K}(t,x):=u(t,x)$ is a scalar, or a tuple of scalars $u = (u_1, \ldots, u_n)$ (this corresponds to $k=0$), which models physical quantities that make natural sense pointwise, such as the temperature or entropy per unit mass. In this case, the zero-dimensional manifold reduces to a single point $\Omega_0 = \{x\}$ and the conservation law \eqref{conservation-law} applied to each component of $u$ becomes pointwise evaluation $u_i(0, x) = u_i(t, \phi_t(x)),$ $i=1,\ldots,n$. This implies that $u$ satisfies the linear transport equation with transport noise
\begin{align} \label{lin-transport}
    \diff u(t,x) + b(t,x) \cdot Du(t,x) \, \diff t + \sum_{i=1}^N \xi_i(t,x) \cdot Du(t,x) \circ \diff W_t^i = 0,
\end{align}
which coincides with \eqref{eq0} since the Lie derivative of scalar fields is given by $\mathcal L_{v} u = v \cdot D u$. 

Yet another example arises in 3D ideal magnetohydrodynamics, where the advection of a magnetic field, interpreted geometrically as a two-form $B \cdot \diff S$ (i.e. $k=2,$ $n=3$) with $\div{(B)}=0$ (here, $\diff S$ represents the surface element), again obeys the conservation law \eqref{conservation-law} (conservation of magnetic flux) and therefore satisfies equation \eqref{eq0}. In this case, the Lie derivative reads 
\begin{align*}
\mathcal L_v (B \cdot \diff S) = (-\text{curl}(v \times B) + v \div{(B)}) \cdot \diff S = -\text{curl}(v \times B) \cdot \diff S,
\end{align*}
and \eqref{eq0} yields the stochastic vector advection equation with transport noise, studied for instance in \cite{flandoli2014noise}. We refer the readers to \cite{holm1998euler} for more instances of deterministic fluid models with advected quantities, and to \cite{holm2015variational} for their stochastic counterparts.

\subsection{Overview of related results}
The example SPDEs that we listed above have been the subject of several articles on the topic of well-posedness/regularisation by noise.
In the aforementioned work \cite{flandoli2010well}, the authors proved well-posedness by noise for the stochastic linear transport equation \eqref{lin-transport} with $\xi_i:=e_i,$ for $i=1,\ldots,n$, providing the first example of a PDE that becomes well-posed under the presence of noise. In particular, they show using a characteristics-based argument that \eqref{lin-transport} (we have retained the same notation the authors employed) admits a unique global $L^\infty$-solution, strong in the probabilistic sense, under the assumptions that the drift vector field $b$ is bounded, globally $\alpha$-H\"older continuous (uniformly in time, for some $\alpha \in (0,1)$), and $\div{(b)} \in L^q([0,T] \times \mathbb R^n),$ $q>2$, in the sense of distributions. Moreover, there exist {\em classical} examples of vector fields $b$ in the above conditions such that the deterministic transport equation is ill-posed.

In subsequent years following the publication of \cite{flandoli2010well}, effort has been made to weaken the conditions on the drift $b$ in the stochastic linear transport equation \eqref{lin-transport} such as in \cite{mohammed2015sobolev}, where only boundedness and measurability are assumed to construct solutions of Sobolev regularity. Standard PDE arguments (as opposed to characteristics-based ones) to prove the well-posedness by noise have also been developed in \cite{attanasio2011renormalized}, where the concept of renormalised solutions is exploited. Furthermore, in \cite{Flandoli2013}, the transport noise has been shown to prevent the formation of singularities for a Sobolev-type class of initial conditions and drift $b$ satisfying the so-called Ladyzhenskaya-Prodi-Serrin (LPS) condition.

Well-posedness by noise for the stochastic continuity equation \eqref{stoch-continuity} has also been shown in several works in the literature. For instance, \cite{Beck2018} applies novel duality techniques to establish regularity and uniqueness results for the transport and continuity equations when $b$ satisfies the LPS condition, sometimes considering integrability conditions on $\div{(b)}$.  
In addition, \cite{olivera2017regularization} employs an argument based on characteristics to show well-posedness in the case when $b$ is random and meets certain integrability conditions. The work \cite{gess2019stochastic} furthermore shows well-posedness of {\em kinetic solutions} of the continuity equation with a general nonlinear noise. 

We stress that {\em all} the above articles except \cite{gess2019stochastic} only treat the case $\xi_i := e_i,$ $i=1,\ldots,n$, which substantially simplifies the equations. One of the goals of this article is to remove such a strong restriction on the diffusion vector fields when establishing well-posedness and regularisation by noise results.

Interestingly, the case corresponding to $k=2$, $n=3$ in our general equation \eqref{eq0}, known as the vector advection equation in 3D ideal magnetohydrodynamics, has also been considered in the literature, but only in the case $\div(b) = 0,$ $\xi_i := e_i,$ $i=1,\ldots,n$. In particular, in \cite{flandoli2014noise}, a mechanism for preventing instantaneous singularities of the spatial supremum norm is demonstrated for a H\"older type divergence-free drift and in \cite{flandoli2018well}, well-posedness by noise is shown by assuming some integrability conditions on a divergence-free drift vector field. We observe that, in particular, the results we establish in the present paper cover well-posedness and prevention of singularities in the above case for a drift with {\em nonzero divergence}.

The ultimate goal of research in well-posedness/regularisation by noise is to prove a meaningful result for nonlinear fluid PDEs such as the 2D Boussinesq equation, 3D Euler, or 3D Navier Stokes, for which the existence of global smooth solutions in the deterministic case remains an intriguing open problem. Local well-posedness of the 2D Boussinesq and 3D Euler equations with transport noise has already been established in \cite{DieAytJNLS} and \cite{crisan2017solution}, respectively. Moreover, \cite{diegochino} generalises the latter well-posedness results by developing an abstract local theory in Hilbert spaces that includes fluid PDEs with transport noise. However, the results of these works do not shed any light on whether transport noise has a regularising effect on the deterministic solutions. An example of when noise of transport-type does indeed have a regularising effect is the class of stochastic Lagrangian-averaged equations introduced in \cite{drivas2020lagrangian}, where in the case of the 2D Boussinesq equation, global well-posedness can be demonstrated by exploiting the regularity of solutions to the parabolic system \eqref{expectationreg} (see \cite{modelling}). Surprisingly, partial results of well-posedness by noise in the case of the 3D Navier-Stokes equation have also recently been established in \cite{fl2019scaling,fl2019high}, where the added noise bears some similarity with the transport noise we consider here. While these works show promise that transport noise could help regularise solutions of deterministic nonlinear equations, it is also well-established that this is {\em not always} the case. For instance, in \cite{alonso2018burgers}, it is shown that transport noise is not sufficient to prevent formation of shocks in the inviscid Burgers' equation.

\subsection{Contributions} 
In this article, we extend the well-posedness/regularisation by noise results of \cite{flandoli2010well, Flandoli2013, flandoli2014noise, olivera2017regularization,neves2015wellposedness,flandoli2018well} and several others to the general class of equations \eqref{eq0}. In particular, we:
\begin{enumerate}
    \item Prove global existence of $L^p$-solutions, for any $p \geq 2,$ strong in the probabilistic sense, with drift and diffusion vector fields satisfying Hypotheses \ref{hypoflow} \& \ref{hypoexistence} in Section \ref{sec:main results}, and uniqueness when Hypothesis \ref{hypoexistence} is substituted by Hypothesis \ref{stronguniqueness}. We also argue that Hypothesis \ref{stronguniqueness} is sharp (in the sense that it is crucial to give a meaning to equation \eqref{eq0}) whenever $k \notin \{0,n\}$, and show that it can be substantially weakened in the case $k \in \{0,n\}$, recovering in particular the result in \cite{flandoli2010well}. Moreover, we prove that the unique weak solution of \eqref{eq0} is continuous both in time and space whenever the initial datum is taken to be continuous. 
    
    \item Demonstrate {\em two ill-posedness results of different nature} for equation \eqref{eq0} without stochastic noise (i.e. $\xi_i:=0,$ $i=1,\ldots,N$). More concretely, given $n\in \mathbb{N},$ $k \in \{0,1,\ldots,n\},$ $p \geq 2,$ and $q \in (1,2]$ defined by the relation $1/q+1/p=1,$ we show the following:
    \begin{enumerate}[(i)]
    \item For all $\alpha \in (0,1)$ if $n \geq 2$ and $\alpha \in ((q-1)/q,1)$ if $n=1,$ there exists a drift vector field $b_\alpha$ satisfying Hypotheses \ref{hypoflow}, \ref{hypoexistence}, \ref{stronguniqueness} for which we construct an infinite family of $L^p$-solutions of equation \eqref{eq0} when $kp <n$ (i.e., we establish nonuniqueness of solutions). Moreover, these solutions fail to be continuous in both time and space for initial datum of arbitrary regularity.
    \item Whenever $k\notin \{0,n\}$ and $\alpha \in ((p-1)/p,1)$, there exists a different vector field $b_{\alpha}'$ satisfying the above hypotheses such that for certain smooth compactly supported initial data, nontrivial $L^p$-solutions exit the space $L^\infty_{loc}$ at {\em any} positive time (i.e., {\em instantaneous supremum norm inflation} occurs).
    \end{enumerate}
\end{enumerate}
Thus, under fairly general conditions, the Lie transport noise term in \eqref{eq0} has the effect of (i) restoring the uniqueness of $L^p$-solutions and (ii) preventing instantaneous blow-up of the spatial supremum norm of the solutions. Since we apply characteristics-based techniques that exploit the geometric structure of equation \eqref{eq0}, along the way we also prove some regularity properties of the flow \eqref{flow-map-stratonovich} and an explicit representation for our $L^p$-solutions that may be of independent interest. To establish our results, we also prove an extension to $k$-form-valued stochastic processes of the It\^o-Wentzell formula for the evolution of a time-dependent stochastic field along a semimartingale, which is equally of independent interest and has natural applications in fluid dynamics.

\section{Preliminaries} \label{preliminaries}
We provide some important definitions and results that will be needed for our analysis. In particular, in Subsection \ref{holder}, we introduce bounded and uniform H\"older spaces of Euclidean functions defined on $[0,T] \times \mathbb{R}^n$. In Subsection \ref{notation}, we discuss general notational aspects, in particular some geometric ones that will be employed in Subsection \ref{stochasticresults}. In Subsection \ref{stochasticresults}, we state some important definitions and results from the theory of stochastic processes. We also introduce weak concepts of the Lie derivative that will be used in the paper. 

Throughout this paper, all the spaces of functions depending on time and space are understood in the Lebesgue sense (i.e. as spaces of equivalence classes, where two functions are identified if they coincide for almost every (``a.e.") $(t,x)$). We stress this only throughout this introduction. 

\subsection{Euclidean H\"older spaces} \label{holder}
Let $\alpha \in (0,1).$ We say that a bounded Borel-measurable function $f:\mathbb{R}^n \rightarrow \mathbb{R}$ is (globally) $\alpha$-H\"older continuous if
\begin{align*}
[f]_{\alpha}:=\sup_{x\neq y\in {\mathbb{R}}
^{n}}\frac{\lvert f(x)-f(y)\rvert}{\lvert x-y\rvert^{\alpha }}<\infty.
\end{align*}
We denote the space of $\alpha$-H\"older continuous functions by $C^\alpha(\mathbb{R}^n).$
In the limiting case $\alpha = 1$, we say that $f:\mathbb{R}^n \rightarrow \mathbb{R}$ is (globally) Lipschitz continuous if the Lipschitz seminorm 
\begin{align*}
    [f]_{\text{Lip}} := \sup_{x\neq y\in {\mathbb{R}}
^{n}}\frac{\lvert f(x)-f(y)\rvert}{\lvert x-y\rvert} < \infty.
\end{align*}
We denote the space of bounded $\alpha$-H\"older continuous functions by $C_b^\alpha(\mathbb{R}^n),$ which is a Banach space endowed with the norm
\begin{align*}
    \norm{f}_{C_b^\alpha} := \norm{f}_{L^\infty} + [f]_{\alpha}.
\end{align*}
This definition can be extended straightforwardly to vector-valued functions, in which case we employ the notation $C_b^\alpha(\mathbb{R}^n,\mathbb{R}^m).$ For $k\in \mathbb{N},$ we define $C^{k+\alpha}(\mathbb{R}^n)$ to be the space of $C^k$-functions $f:\mathbb{R}^n \rightarrow \mathbb{R}$, such that $D^l f$ for all $l$ with $\lvert l \rvert=k$ are $\alpha$-H\"older continuous.
Similarly, we define $C^{k+\alpha}_b(\mathbb{R}^n)$ to be the Banach space of functions $f:\mathbb{R}^n \rightarrow \mathbb{R}$ having bounded continuous derivatives of order $i=0,\ldots,k,$ and $k$-th derivatives of class $C_b^\alpha(\mathbb{R}
^n).$ The corresponding norm is defined by
\begin{align*}
\norm{f}_{C_{b}^{k+\alpha}}:=\sum_{i=0}^{k}\norm{D^{i}f}_{L^\infty}+[D^{k}f]_{\alpha}.
\end{align*}
Again, this definition can easily be extended to vector-valued functions. Let $T>0$ and $\alpha \in (0,1)$. We define the space of bounded $\alpha$-H\"older continuous functions uniformly in time $L^{\infty
}\left([0,T];C_{b}^{\alpha }({\mathbb{R}}^{n})\right) $ to be the set of
bounded Borel-measurable functions $f:[0,T]\times {\mathbb{R}}^{n}\rightarrow {\mathbb{R%
}}$ with
\begin{align*}
[f]_{\alpha ,T}:=\sup_{t\in  [0,T]}\sup_{x\neq y\in {\mathbb{R}}
^{n}}\frac{\lvert f(t,x)-f(t,y) \rvert}{\lvert x-y \rvert^{\alpha }}<\infty.
\end{align*}
This is a Banach space with respect to the norm $\norm{f}_{C_b^\alpha,T}:=\norm{f}_{L^\infty_{t,x}}+[f]_{\alpha ,T}.$ Naturally, we denote by $L^{\infty }\left( [0,T];C_{b}^{\alpha }({\mathbb{R}}^{n},{\mathbb{R}}%
^{m})\right) $ the space of vector fields $f:[0,T]\times {\mathbb{R}}%
^{n}\rightarrow {\mathbb{R}}^{m}$ having components of class $L^{\infty
}\left( [0,T];C_{b}^{\alpha }({\mathbb{R}}^{n})\right) $.

Moreover, for $k \in \mathbb{N}\cup \{0\},$ we say that $f\in L^{\infty } ( [0,T];C_{b}^{k+\alpha }({%
\mathbb{R}}^{n})) $ if all spatial derivatives of order $i$ are in $L^{\infty }\left([0,T];C_{b}^{\alpha }({\mathbb{R}%
}^{n})\right)$, $i=0,1,\dots ,k$. The corresponding norm is defined as
\begin{align*}
\norm{f}_{C_{b}^{k+\alpha},T}:=\sum_{i=0}^{k}\norm{D^{i}f}_{L_{t,x}^\infty}+[D^{k}f]_{\alpha ,T}.
\end{align*}
This definition can be extended straightforwardly to functions of multiple components, in which case we employ the notation $L^{\infty} ([0,T];C_{b}^{k+\alpha}({\mathbb{R}}^{n}, \mathbb{R}^m)).$ Finally, we denote by $C_b([0,T];C_b^{k+\alpha}(\mathbb{R}^n,\mathbb{R}^n))$ the functions of class $C([0,T]\times \mathbb{R}^n;\mathbb{R}^n) \cap L^\infty([0,T];C^{k+\alpha}_{b}(\mathbb{R}^n,\mathbb{R}^n))$.

All the spaces of functions introduced above can be defined similarly when $T=\infty,$ in which case we substitute $[0,T]$ by $[0,\infty)$ in the notations. 

\subsection{Notations} \label{notation}
\subsubsection{Differential geometry notations}

Since we will be dealing with SPDEs with solutions in the space of tensor-valued stochastic processes, it is important that we clarify some of the notations we will use throughout the paper. For completeness, we have also included supplementary materials in Appendices \ref{tensor} and \ref{back-banach}, addressed towards readers unfamiliar with tensor algebra and calculus, where all the geometric objects introduced below are defined. We stress that the geometric function spaces below are understood as spaces of equivalence classes, where two tensor fields are identified if they coincide for a.e. $x \in \R^n.$

We shall denote by $\mathcal{T}^{(r,s)}\R^n$ the $(r,s)$-tensor bundle over $\R^n$, and in particular, we denote by $\mathcal{T}\R^n$ and $\mathcal{T}^*\R^n$ the tangent and cotangent bundles of $\R^n$, respectively (note that these correspond to the cases $(r,s) = (1,0)$ and $(r,s) = (0,1),$ respectively).
In this article, we will mostly work with $k$-forms, which are the alternating tensors of type $(r,s)=(0,k).$ We employ the notations $\bigwedge^k\mathcal{T}\mathbb{R}^n, \bigwedge^k\mathcal{T}^*\mathbb{R}^n$ for the $k$-th exterior power of the tangent and cotangent bundle, respectively. $k$-forms on $\mathbb{R}^n$ are defined as sections of $\bigwedge^k\mathcal{T}^*\mathbb{R}^n$ (i.e. maps $\mathbf{K} : \mathbb{R}^n \rightarrow \bigwedge^k\mathcal{T}^*\mathbb{R}^n$ that satisfy the section property $\pi \circ \mathbf{K} = \text{id} : \mathbb{R}^n \rightarrow \mathbb{R}^n$, where $\pi : \bigwedge^k\mathcal{T}^*\mathbb{R}^n \rightarrow \mathbb{R}^n$ is the canonical projection). 

We denote the space of smooth sections of the tensor bundle $\mathcal{T}^{(r,s)}\R^n$ by $C^\infty(\R^n, \mathcal{T}^{(r,s)}\R^n),$ and $C_0^\infty(\R^n, \mathcal{T}^{(r,s)}\R^n)$ if they are assumed to have compact support. However, in the special cases of the tangent and cotangent bundles, we typically follow the notations conventionally employed in differential geometry, that is, we denote the space of smooth sections of $\mathcal{T}\R^n$ by $\mathfrak{X}(\R^n),$ and the space of smooth sections of $\mathcal{T}^*\R^n$ by $\Omega^1(\R^n)$. We also denote the space of smooth $k$-vector fields (that is, the space of smooth sections of $\bigwedge^k \mathcal{T}\R^n$) by $\mathfrak{X}^k(\R^n),$ and the space of smooth $k$-forms by $\Omega^k(\R^n).$ Moreover, for each $x \in \R^n$, we denote using angle brackets $\langle \cdot, \cdot \rangle : \mathcal{T}^{(r,s)}_x\R^n \times \mathcal{T}^{(s,r)}_x\R^n \rightarrow \R$ the natural pairing between $(r,s)$ and $(s,r)$-tensors, and by round brackets $( \cdot, \cdot ) : \mathcal{T}^{(r,s)}_x\R^n \times \mathcal{T}^{(r,s)}_x\R^n \rightarrow \R$ the canonical bundle metric on $\mathcal{T}^{(r,s)}\R^n$ induced by the Euclidean metric on $\R^n$ (see Appendix \ref{tensor} for the definition of the natural pairing and bundle metric). 

In general, we adopt the notation $\mathfrak{B}(\R^n,\mathcal{T}^{(r,s)}\R^n)$ to denote Banach sections of tensor bundles \footnote{This notation shall not to be confused with the Borel sigma-algebra associated with a metric space $E$, which we will denote by $\mathcal{B}(E)$.}, where $\mathfrak{B}$ is to be replaced with any Banach function space, for example $\mathfrak B := L^p, C^{m}_b, W^{m,p}$ (see Appendix \ref{back-banach} for precise definitions). For convenience, we also include the exception $\mathfrak B := C^m,$ which is a Fréchet space. In particular, for $m \in \mathbb{N} \cup \{0\}$ and $p \in [1,\infty],$ we denote the Sobolev norm (induced by the canonical bundle metric) of a tensor $\mathbf{K} \in W^{m,p}(\R^n,\mathcal{T}^{(r,s)}\mathbb{R}^n)$ by $\norm{\mathbf{K}}_{W^{m,p}(\R^n,\mathbb{R}^n)}$ or $\norm{\mathbf{K}}_{W^{m,p}}$ for simplicity. For example, the space of Sobolev $k$-forms will be denoted $W^{m,p}(\R^n,\bigwedge^k\mathcal{T}^*\mathbb{R}^n)$ and its corresponding Sobolev norm denoted as above.
The case $\mathfrak B := L^2$ gives a Hilbert space $L^2(\R^n,\mathcal{T}^{(r,s)}\mathbb{R}^n)$, where we denote its associated inner product (induced by the canonical bundle metric) employing double angle brackets $\llangle \mathbf{K}_1, \mathbf{K}_2\rrangle_{L^2},$ for $\mathbf{K}_1, \mathbf{K}_2 \in L^2(\R^n,\mathcal{T}^{(r,s)}\R^n)$.  

For $x \in \mathbb{R}^n$ and $R>0,$ we denote by $B(x,R)$ the open ball with center $x$ and radius $R.$ For a tensor $\mathbf{K} \in W^{m,p}_{loc}(\R^n,\mathcal{T}^{(r,s)} \mathbb{R}^n),$ we denote its norm on $B(x,R)$ by $\norm{\mathbf{K}}_{W^{m,p}_{x,R}}$, or $\norm{\mathbf{K}}_{W^{m,p}_R}$ if $x=0.$ We also denote the space of distribution-valued tensor fields by $\D'(\R^n,\mathcal{T}^{(r,s)} \mathbb R^n)$, which is equipped with the weak-$*$ topology. Given a mollifier $\rho_\epsilon$ and $\mathbf{K} \in \mathcal D'(\R^n,\mathcal{T}^{(r,s)}\mathbb{R}^n),$ we denote its mollification by $\mathbf{K}^\epsilon.$ Finally, we denote the pull-back and push-forward of a tensor field $\mathbf{K}$ by a flow of diffeomorphisms $\varphi_{s,t}$ by $\varphi_{s,t}^*\mathbf{K}$ and $(\varphi_{s,t})_* \mathbf{K},$ respectively, and its Lie derivative along a vector field $b$ by $\mathcal{L}_b \mathbf{K}$. The divergence of a vector field $b$ with respect to the Euclidean volume form is denoted by $\div{(b)}$ and its curl (when $n=3$) by $\curl{(b)}.$

Since we are primarily interested in tensor fields that are random and time-dependent, $\mathbf{K}$ may further be parameterised by time $t \in [0,T]$ and a probability element $\omega \in \Omega$. Therefore, we also consider mixed Banach spaces of $k$-forms. For example, given Banach function spaces $\mathfrak{B}_1, \mathfrak{B}_2, \mathfrak{B}_3$, we say that a tensor field $\mathbf{K}$ belongs to the mixed Banach space $\mathfrak{B}_1\left(\Omega ; \mathfrak{B}_2\left([0, T]; \mathfrak{B}_3(\R^n,\mathcal{T}^{(r,s)}\R^n)\right)\right)$ if 
\begin{align*}
\|\mathbf{K}\|_{\mathfrak{B}_{1,\omega}, \mathfrak{B}_{2,t}, \mathfrak{B}_{3,x}} := \left\|\left\|\|\mathbf{K}\|_{\mathfrak{B}_{3,x}}\right\|_{\mathfrak{B}_{2,t}}\right\|_{\mathfrak{B}_{1,\omega}} < \infty,
\end{align*}
where $\|\cdot\|_{\mathfrak{B}_{i, \bullet}}$ denotes the norm in the Banach space $\mathfrak{B}_i$ and the subscripts $\omega, t, x$ indicate whether this norm is computed in the probability space, time, or physical space, respectively. In the special case $\mathfrak{B}_1 = \mathfrak{B}_2 = \mathfrak{B}_3 = \mathfrak{B}$, we adopt the notation $\mathfrak{B}(\Omega \times [0,T] \times \R^n ; \mathcal{T}^{(r,s)}\R^n)$ for the corresponding mixed Banach space, whose norm we denote simply by $\|\cdot\|_{{\mathfrak{B}}_{\omega, t, x}}$. For example, when $\mathfrak{B} := L^p$, we obtain $L^p(\Omega \times [0, T] \times \R^n; \mathcal{T}^{(r,s)}\R^n)$, whose associated norm is $\|\mathbf{K}\|_{L^p_{\omega,t,x}} = \E\left[\int^T_0 \int_{\R^n} \lvert \mathbf{K}\rvert^p \, \diff^n x \, \diff t\right]^{1/p}$. Moreover, if the latter integral is only defined on $B(0, R) \subset \R^n$ in the physical space, we adopt the notation $\|\cdot\|_{L^p_{\omega,t,R}}$. In the special case $p=2$, we also have the corresponding inner product $\llangle\cdot, \cdot\rrangle_{L^2_{\omega,t,x}}$ in the full space and $\llangle\cdot, \cdot\rrangle_{L^2_{\omega,t,R}}$ its restriction to $B(0,R)$. 

\subsubsection{Miscellaneous notations}
Below, we introduce some of the other notations we employ throughout the paper shall any confusion arises.

\begin{itemize}
    \item We understand $\mathbb{N}$ to be the natural numbers excluding zero.
    \item We denote by $e_i,$ $i=1,\ldots,n$ the canonical basis vectors in $\mathbb{R}^n,$ that is, $e_1 = (1, 0, 0, \ldots, 0)$,  $e_2 = (0, 1, 0, \ldots, 0)$, etc.
    \item  $\text{id}_X$ represents the identity function on a set $X.$ 
    \item For $x \in \mathbb{R}^n$ and $R>0,$ we denote by $\overline{B}(x,R)$ the closure of the open ball $B(x,R)$, and by $S(x,R)$ its boundary.
    \item In general, we will employ the notation $\overline{A}$ for the closure of a set $A$ with respect to a topology (which will be clear from the context) and $A^c$ for its complement.
    \item For a matrix $P \in \mathbb{R}^{n \times n}$ or a point $x \in \mathbb{R}^n,$ we denote by $\lvert P\rvert$ (respectively $\lvert x\rvert$) the $l_2$-norm of $P$ (respectively $x$). In the matrix case, this corresponds to the Frobenius norm and in the vector case, the Euclidean norm. We also denote by $\tr{(P)}$ the trace of a matrix $P$ and $\text{Adj}{(P)}$ its adjugate matrix.
    \item Given a measurable set $A \subset \R^n,$ we denote the characteristic function of $A$ by $\mathcal{X}_A,$ and we assume by convention $\mathcal{X}_{\emptyset} := 0$ (understood as a function), where $\emptyset$ is the empty set.
    \item Given two functions  $F_1$ and $F_2$ taking values in the real line, we write $ F_1 \lesssim F_2 $ if there exists a positive constant $C$ such that $F_1 \leq C F_2.$ Such constant may differ from line to line. In this article, we make it clear which variables the constant in $\lesssim$ depends on if it is not evident from the context.
    \item Let $\rho: \mathbb{R}^n \rightarrow \mathbb{R}$ be a $C^\infty$-function such that $0 \leq \rho(x) \leq 1,$ $\rho(x) = \rho(-x),$ $x \in \mathbb{R}^n,$ $\int_{\mathbb R^n} \rho(x) \, \diff^n x = 1,$ such that it is supported on $B(0,1)$ and $\rho(x)=1,$ $x \in B(0,1/2).$ For any $\epsilon >0,$ we define $\rho^\epsilon$ by $\rho^\epsilon (x) := \epsilon^{-n} \rho(x/\epsilon).$ For a function $f \in L^1_{loc}(\R^n,\R^m),$ its mollification $\rho^\epsilon * f$ will be denoted in short by $f^\epsilon.$
    \item We denote by $D$ the gradient of a real-valued function or, in general, the Fréchet derivative of a vector-valued function, and by $D^m$ its $m$-th Fréchet derivative, where $m \in \mathbb{N}$. These may be understood in the distributional sense, which will be clear from the context. Throughout this article, whenever we apply $D^m$ to a function (or distribution) depending on both time and space, it will normally be understood as derivatives in the spatial component and will otherwise be clarified.
    \item We denote $(r,s)$-tensors in {\textbf{bold font}}, with the exception of the cases $(r,s)=(1,0)$ and $(r,s)=(0,1)$, corresponding to vectors and covectors, respectively. This is to avoid saturating the paper with bold font symbols. Tensor components are denoted by $T^{i_1,\ldots,i_r}_{j_1,\ldots,j_s}$ as discussed in Appendix \ref{tensor}. In particular, for differential $k$-forms we use the notation $K_{j_1,\ldots,j_k}$.
\end{itemize}

\subsection{Results from stochastic analysis} \label{stochasticresults}

Finally, we introduce some key results from stochastic analysis that our proofs depend on. Throughout the paper, we assume that we are working with a fixed filtered probability space $(\Omega, (\mathcal{F}_t)_{t \in [0,T]}, \mathcal{F},\mathbb{P})$ satisfying the usual conditions, together with an $N$-dimensional Brownian motion $W_t = (W_t^1,\ldots,W_t^N)$ adapted to $(\mathcal{F}_t)_t.$ We denote by $(\mathcal{F}_{s,t})_{s,t}$ the completion of the sigma-algebra generated by $W_r-W_u,$ $s \leq u \leq r \leq t,$ for $0 \leq s < t \leq T$. We might consider the case $T=\infty.$ In this section, we also consider a different Brownian motion $B_t = (B_t^1,\ldots,B_t^N)$ adapted to $(\mathcal{F}_t)_t$.

\subsubsection{Stochastic flow of diffeomorphisms} Let $b$ and $\xi_i,$ $i=1,\ldots,N$ be vector fields on $\mathbb{R}^n$ that satisfy Hypothesis \ref{hypoflow} \ref{bcondition} below (see all our assumptions in Section \ref{sec:main results}). We are interested in working with solutions $\phi_{s,t}$ of equation \eqref{flow-map-stratonovich}, which in It\^o form reads
\begin{align} \label{flownotation}
    &\phi_{s,t}(x) = x + \int_s^t \left(b + \frac{1}{2} \sum_{i=1}^N \xi_i \cdot D \xi_i \right)(r,\phi_{s,r}(x)) \, \diff r  + \sum_{i=1}^N \int_s^t \xi_i(r,\phi_{s,r}(x)) \, \diff W_r^i, 
\end{align} 
for a.e. $0 \leq s \leq t \leq T,$ $x \in \mathbb{R}^n$. The case $s=0$ will be of special interest to us, in which case \eqref{flownotation} becomes 
\begin{align} \label{flowmap} 
    \phi_t(x) = x + \int_0^t \left(b + \frac{1}{2} \sum_{i=1}^N \xi_i \cdot D \xi_i \right)(s,\phi_s(x)) \, \diff s + \sum_{i=1}^N \int_0^t \xi_i(s,\phi_s(x)) \, \diff W_s^i,
\end{align} 
for a.e. $0\leq t \leq T,$ $x \in \mathbb{R}^n$, where we denoted $\phi_t := \phi_{0,t}$. Equation \eqref{flownotation} turns out to define a ``stochastic flow of diffeomorphisms" under certain regularity assumptions on the coefficients \cite{kunita1997stochastic}. To explain precisely what we mean by this, we introduce the following definition found in \cite{flandoli2010well}, only slightly generalised to the class $C^m$ (see also \cite{itowentzell1,kunita1997stochastic}). 

\begin{definition}[Stochastic flow of $C^m$-diffeomorphisms] \label{stochasticflow}

Let $m \in \mathbb{N}$ (i.e. at least $m\geq 1).$ An $\mathcal{F}_{s,t}$-measurable random field $\phi_{s,t}: \Omega \times \mathbb{R}^n \rightarrow \mathbb{R}^n,$ $0\leq s \leq t \leq T$ solving equation \eqref{flownotation} is called a stochastic flow of $C^m$-diffeomorphisms if $\mathbb{P}$-a.s. (it can be modified in a set of measure zero in $[0,T] \times \R^n$ such that) the following properties are satisfied:
\begin{itemize}
    \item $\phi_{s,r}(x) = \phi_{t,r}(x) \circ \phi_{s,t}(x),$ for all $0\leq s \leq t \leq r \leq T,$ $x \in \R^n$,
    \item $\phi_{t,t}(x) = x,$ for all $0 \leq t \leq T,$ $x \in \R^n$,
    \item $\phi_{s,t}$ is a $C^m$-diffeomorphism, for all $0\leq s \leq t \leq T.$ Moreover, the spatial derivatives $D^i\phi_{s,t}(x),$ $D^i\phi_{s,t}^{-1}(x)$ are continuous for all $0\leq s \leq t \leq T,$ $x \in \R^n$, and $i=0,\ldots,m$.
\end{itemize}
For $0 \leq t \leq s \leq T,$ we set $\phi_{s,t}:= \phi_{t,s}^{-1}.$
\end{definition}

For $\alpha \in (0,1),$ we define a stochastic flow of $C^{m+\alpha}$-diffeomorphisms to be a stochastic flow of $C^m$-diffeomorphisms such that $\mathbb{P}$-a.s., $D^m \phi_{s,t}(x)$ is $\alpha$-H\"older continuous, {\em uniformly} in $(s,t)$.
Below, we also introduce the notion of a $C^{m+\alpha}$-semimartingale, found in \cite{kunita1997stochastic}. Note that when we talk about semimartingales $F_t(x),$ $t \in [0,T],$ $x \in \R^n$ that depend on both time and space, it is implied that the process $(F_t(x))_{t \in [0,T]}$ is a semimartingale for every $x \in \R^n$.

\begin{definition}[$C^{m+\alpha}$-semimartingale \cite{kunita1997stochastic}]
A continuous local martingale $M_t(x),$ $t \in [0,T],$ $x \in \R^n,$ with covariance $Q(t,x,y):= [M_{\cdot}(x), M_{\cdot}(y)]_t,$ $t \in [0,T],$ $x,y \in \R^n,$ is said to be a $C^{m + \alpha}$-local martingale if (i) $M_t$ and $Q(t,\cdot,\cdot)$ have modifications that are $C^{m + \alpha}$-processes, 
and (ii) the processes $D^l_x M_t(x)$ are continuous local martingales with covariance $D^l_x D^l_y Q(t,x,y),$ for all multi-indices with $\lvert l \rvert \leq m$.
A continuous semimartingale $F_t(x) = A_t(x) + M_t(x)$ expressed as the sum of a continuous local martingale and a continuous process of bounded variation is said to be a $C^{m + \alpha}$-semimartingale if (i) $A_t$ is a continuous $C^{m + \alpha}$-process such that the processes $D^l_x A_t(x)$ have bounded variation, for all $\lvert l \rvert \leq m$, and (ii) $M_t$ is a $C^{m + \alpha}$-local martingale.
\end{definition}

A classical assumption to guarantee that equation \eqref{flownotation} defines a stochastic flow of diffeomorphisms is the following.
\begin{assump}[Classical regularity of the flow equation] \label{assumpstrong}{\ } 
Let $\beta \in (0,1)$ and $m\in \mathbb{N}$. Let $b \in L^\infty([0,T];C^{m+\beta}_b(\mathbb{R}^n,\mathbb{R}^n)),$ $\xi_i  \in  L^\infty([0,T];C^{m+\beta+1}_b(\mathbb{R}^n,\mathbb{R}^n))$, $i=1,\ldots,N,$ be deterministic vector fields. Then (as a particular case of Theorem 4.7.3 in \cite{kunita1997stochastic} and the results in \cite{kunita1984stochastic}), \eqref{flownotation} admits a unique solution $\phi_{s,t}$ which is an adapted $C^{m+\beta'}$-semimartingale and a stochastic flow of $C^{m}$-diffeomorphisms for any $\beta' \in (0,\beta).$ 
\end{assump}

\subsubsection{It\^o-type formulas for $k$-form-valued processes} 
Next, we present a useful result, which comprises a generalisation of It\^o-type formulas for tensors derived by Kunita in \cite{kunita1981some}, and the classical It\^o-Wentzell formula. We call this the Kunita-It\^o-Wentzell formula for $k$-form-valued processes (advected by a stochastic flow of diffeomorphisms). We stress that this result is {\em not in the literature} and thus needs to be proven, which we do in Appendix \ref{eq:app-KIW-proof}.
\begin{theorem}[Kunita-It\^o-Wentzell (KIW) formula for $k$-form-valued processes] \label{KIWmain} Let $k \in \{0,1,\ldots,n\}$ and $\mathbf{K}: \Omega \times [0,T] \times \mathbb{R}^n \rightarrow  \bigwedge^k\mathcal{T}^*\mathbb{R}^n$ be a $k$-form-valued semimartingale satisfying $\mathbb{P}$-a.s. 
\begin{enumerate}[(a)] % (a), (b), (c), ...
\item $\mathbf{K}(t,x)$ is continuous for all $t \in [0,T]$ and $x \in \R^n,$
\item $\mathbf{K}(t,\cdot)$ is of class $C^2$ for all $t \in [0,T].$
\end{enumerate}
Moreover, let $\mathbf{K}$ have the explicit form
\begin{align*}
    \mathbf{K}(t,x) = \mathbf{K}(0,x) + \int^t_0 \mathbf{G}(s,x) \, \diff s + \sum_{i=1}^M \int^t_0 \mathbf{H}_i(s,x) \, \diff W_s^i, 
\end{align*}
where $\mathbb{P}$-a.s. \begin{enumerate}[(i)]
\item $\int^T_0 \left(\lvert \mathbf{G}(t,x)\rvert + \sum_{i=1}^M \lvert \mathbf{H}_i(t,x)\rvert^2 \right) \diff t < \infty,$ for all $x \in \R^n,$
\item $\mathbf{G}(t,\cdot)$ and $\mathbf{H}_i(t,\cdot)$ are of class $C^1$ for all $t \in [0,T],$
\end{enumerate}
and $W_t^i$ are independent Brownian motions, $i=1,\ldots,M$. Let $\phi_t$ be an adapted solution of equation \eqref{flowmap} with $b,$ $\xi_j,$ $j=1,\ldots,N$ satisfying Hypothesis \ref{hypoflow} \ref{bcondition} and Hypothesis \ref{stronguniqueness} with $q=1$, assumed to be a stochastic flow of $C^1$-diffeomorphisms. Then $\mathbb{P}$-a.s., the $k$-form-valued process $\mathbf{K}$ pulled back by the flow $\phi_t$ satisfies the following
\begin{align} \label{KIW}
\begin{split}
    & \phi_t^* \mathbf{K}(t,x) = \mathbf{K}(0,x) + \int^t_0 \phi_s^* \mathbf{G}(s,x) \, \diff s + \sum_{i=1}^M \int^t_0 \phi_s^* \mathbf{H}_i(s,x) \, \diff W_s^i \\
    & + \int^t_0 \phi_s^* \mathcal L_b \mathbf{K} (s,x) \, \diff s 
    + \sum_{j=1}^N \int^t_0 \phi_s^* \mathcal L_{\xi_j} \mathbf{K}(s,x) \, \diff B_s^j  \\
    & + \sum_{i=1}^M \sum_{j=1}^N \int^t_0 \phi^*_s \mathcal L_{\xi_j} \mathbf{H}_i(s,x) \, \diff [W^i_{\cdot}, B^j_{\cdot}]_s + \frac12 \sum_{j=1}^N  \int^t_0 \phi^*_s \mathcal L_{\xi_j} \mathcal L_{\xi_j} \mathbf{K}(s,x) \, \diff s, 
\end{split}
\end{align}
for $t \in [0,T]$ and $x \in \mathbb{R}^n.$ Moreover, let Assumption \ref{assumpstrong} be verified with $m=3$ and $\beta \in (0,1)$.
Then we have the following formula for the push-forward by the flow 
\begin{align} \label{KIWpush}
\begin{split}
    &(\phi_t)_* \mathbf{K}(t,x) = \mathbf{K}(0,x) + \int^t_0 (\phi_s)_* \mathbf{G}(s,x) \, \diff s + \sum_{i=1}^M \int^t_0 (\phi_s)_* \mathbf{H}_i(s,x) \,  \diff W^i_s  \\
    & - \int^t_0 \mathcal L_b [(\phi_s)_* \mathbf{K}](s,x) \,  \diff s 
    - \sum_{j=1}^N \int^t_0 \mathcal L_{\xi_j} [(\phi_s)_* \mathbf{K}](s,x) \, \diff B_s^j  \\
    & + \sum_{i=1}^M \sum_{j=1}^N \int^t_0 \mathcal L_{\xi_j} [(\phi_s)_* \mathbf{G}_i](s,x) \,  \diff [W^i_{\cdot}, B^j_{\cdot}]_s + \frac12 \sum_{j=1}^N  \int^t_0 \mathcal L_{\xi_j} \mathcal L_{\xi_j} [(\phi_s)_* \mathbf{K}](s,x) \,  \diff s.
\end{split}
\end{align}
\end{theorem}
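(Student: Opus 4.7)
The plan is to prove the pullback formula \eqref{KIW} first and then deduce the pushforward formula \eqref{KIWpush} by applying the pullback result to the inverse flow $\phi_t^{-1}$. For the pullback, I would work in local coordinates on $\R^n$, writing $\mathbf{K}(t,x) = \sum_{|J|=k} K_J(t,x)\,\diff x^{j_1}\wedge\cdots\wedge \diff x^{j_k}$ and similarly for $\mathbf{G}$ and $\mathbf{H}_i$. Then
\begin{align*}
\phi_t^*\mathbf{K}(t,x) = \sum_{|J|=k} K_J(t,\phi_t(x))\,\diff \phi_t^{j_1}\wedge\cdots\wedge \diff \phi_t^{j_k},
\end{align*}
so the scalar coefficient of $\phi_t^*\mathbf{K}$ against any basis form $\diff x^I$ is a sum of products of $K_J(t,\phi_t(x))$ with a $k\times k$ minor of the Jacobian $D\phi_t(x)$. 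The task reduces to computing the It\^o differential of each such product and then recognising the result as a pullback of Lie derivatives.

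The evolution of the scalar semimartingale $K_J(t,\phi_t(x))$ is handled by the classical It\^o--Wentzell formula for scalars (e.g.\ Theorem~3.3.1 in \cite{kunita1997stochastic}), which already produces the terms $\phi_s^*\mathbf{G}$, $\phi_s^*\mathbf{H}_i$, the transport terms involving $DK_J\cdot b$ and $DK_J\cdot\xi_j$, together with joint quadratic variation corrections indexed by $[W^i,B^j]$ and an It\^o--Stratonovich contribution of the form $\tfrac12(\xi_j\cdot D)(\xi_j\cdot D)K_J$. The evolution of the Jacobian minors follows by differentiating the SDE \eqref{flowmap} once in $x$ (allowed under Hypothesis~\ref{hypoflow}\ref{bcondition} together with Hypothesis~\ref{stronguniqueness} with $q=1$), yielding an SDE for $D\phi_t$ driven by $Db\cdot D\phi_t$ and $D\xi_j\cdot D\phi_t$. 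Applying the multidimensional It\^o product rule to the product of $K_J(t,\phi_t(x))$ and a Jacobian minor, and summing over $J$, one collects $\diff t$, $\diff W^i_t$, $\diff B^j_t$ and $\diff[W^i,B^j]_t$ contributions. The key algebraic step is to identify these coefficients with $\phi_s^*\mathcal L_b\mathbf{K}$, $\phi_s^*\mathcal L_{\xi_j}\mathbf{H}_i$, and $\phi_s^*\mathcal L_{\xi_j}\mathcal L_{\xi_j}\mathbf{K}$ by repeated application of the coordinate formula
\begin{align*}
\mathcal L_v\bigl(f\,\diff x^{j_1}\wedge\cdots\wedge \diff x^{j_k}\bigr) = (v\cdot Df)\,\diff x^{j_1}\wedge\cdots\wedge \diff x^{j_k} + f\sum_{\ell=1}^k \diff x^{j_1}\wedge\cdots\wedge \diff v^{j_\ell}\wedge\cdots\wedge \diff x^{j_k}
\end{align*}
and the Leibniz rule $\mathcal L_v(f\omega)=(v\cdot Df)\omega + f\mathcal L_v\omega$. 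The regularity assumptions $\mathbf{K}(t,\cdot)\in C^2$ and $\mathbf{G},\mathbf{H}_i\in C^1$ are precisely what is needed so that all Lie-derivative terms in \eqref{KIW} are well-defined pointwise and no mollification is required.

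For the pushforward formula \eqref{KIWpush}, I would use the identity $(\phi_t)_*\mathbf{K}=(\phi_t^{-1})^*\mathbf{K}$ and apply the pullback formula to $\psi_t:=\phi_t^{-1}$. Under Assumption~\ref{assumpstrong} with $m=3$, $\psi_t$ is itself a $C^2$-semimartingale, and applying It\^o's formula to $\phi_t(\psi_t(x))=x$ yields an SDE for $\psi_t$ whose drift and diffusion coefficients are expressed in terms of $b$, $\xi_j$ and derivatives of $\psi_t$. Pushing these through the pullback computation, the signs in front of $\mathcal L_b$ and $\mathcal L_{\xi_j}$ flip (this reflects $\mathcal L_X(\phi_*\omega) = \phi_*\mathcal L_{\phi^*X}\omega$ at the infinitesimal level), giving the minuses in \eqref{KIWpush}, while the $\tfrac12\mathcal L_{\xi_j}\mathcal L_{\xi_j}$ correction reappears with a positive sign because Stratonovich-to-It\^o conversion always produces $+\tfrac12$. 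The extra regularity $m=3$ (versus $m=1$ needed for the pullback) is precisely what is required so that $D\psi_t$ admits second spatial derivatives with sufficient integrability to apply the product rule a second time.

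The main obstacle I anticipate is the combinatorial bookkeeping needed to match the It\^o corrections arising from the product rule against the coordinate expansion of the double Lie derivative $\mathcal L_{\xi_j}\mathcal L_{\xi_j}\mathbf{K}$, which involves $D^2\xi_j$, $(D\xi_j)^2$ and second derivatives of the $K_J$. A clean way to organise the argument is to first derive the formula in Stratonovich form, where the structure is transparent,
\begin{align*}
\diff(\phi_t^*\mathbf{K}) = \phi_t^*\,\diff\mathbf{K}\big|_x + \phi_t^*\mathcal L_b\mathbf{K}\,\diff t + \sum_{j=1}^N \phi_t^*\mathcal L_{\xi_j}\mathbf{K}\circ \diff B^j_t,
\end{align*}
and only at the end convert to It\^o using the Stratonovich--It\^o conversion formula; all correction terms in \eqref{KIW} then appear simultaneously and each can be identified with the appropriate pulled-back Lie derivative without further algebraic manipulation.
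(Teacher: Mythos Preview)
Your proposal is correct and follows essentially the same approach as the paper: pair the pullback against a $k$-vector (equivalently, work in coordinates), apply the scalar It\^o--Wentzell formula to $K_J(t,\phi_t(x))$, differentiate the flow SDE to obtain the evolution of the Jacobian entries, combine via the It\^o product rule, and then identify the resulting terms with pulled-back Lie derivatives using the coordinate formula; for the pushforward, pass to the inverse flow and repeat. The only organisational difference is that the paper stays in It\^o form throughout rather than deriving a Stratonovich identity first and converting, and it flags (in a remark) that the classical It\^o--Wentzell hypotheses must be mildly relaxed since $\mathbf{G},\mathbf{H}_i$ need not be continuous in time---a point you should address when invoking \cite{kunita1997stochastic}.
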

\begin{remark}
Note that for formula \eqref{KIW}, in the case $k=0$ Hypothesis 3 is not needed since $\mathcal L_{b} f = b \cdot D f$ (and hence no spatial derivatives of $b$ are involved). In the case $k=n,$ Hypothesis 3 can be substituted by $\div{(b)} \in L^1_{loc}(\R^n)$ since  $\mathcal L_{b} (\rho \diff^n x) = \div{(\rho b)} \diff^n x$.
\end{remark}
Finally, we introduce weak notions of Lie derivative on the Euclidean space (see Appendix \ref{calculo} for the definition of a Lie derivative). In the following definitions, we let $q,p \in [1,\infty]$ be two constants that satisfy $1/q+1/p =1.$

\begin{definition}[Sobolev Lie derivative] \label{lieweak}
Let $\mathbf{K} \in W^{1,p}_{loc} (\R^n,\mathcal{T}^{(r,s)} \mathbb R^n)$ and $b \in W^{1,q}_{loc}(\mathbb{R}^n,\mathbb R^n)$. We define the weak Lie derivative of $\mathbf{K}$ with respect to $b$ and denote it by $\mathcal L_b^{w} \mathbf{K} \in L^{1}_{loc} (\R^n,\mathcal{T}^{(r,s)} \mathbb R^n)$ through the global coordinate expression \eqref{Lieformula} (Appendix \ref{calculo}), where the derivatives of $\mathbf{K}$ and $b$ therein are now understood in the weak (Sobolev) sense. Its $L^2$-adjoint operator $(\mathcal L_b^w)^{*} \mathbf{K}$ can be easily computed and verified to have the coordinate expression \eqref{Lieformulaadjoint}. Throughout this paper, we will mostly omit the subscript ``$w$", since it will be clear from the context.
\end{definition}

\begin{definition}[Distributional adjoint Lie derivative and Lie derivative] \label{lieadjointdistribution}
Let $\mathbf{\Theta} \in W^{1,p}_{loc} (\R^n,\mathcal{T}^{(r,s)} \mathbb R^n)$ and $b \in L^q_{loc}(\R^n,\R^n)$. We define the distributional adjoint Lie derivative of $\mathbf{\Theta}$ with respect to $b$ (and denote it by $(\mathcal{L}_b^*)^{dist} \mathbf{\Theta} \in \D' (\R^n,\mathcal{T}^{(r,s)} \mathbb R^n)$) as the functional that assigns to any test function $\mathbf{\Gamma} \in C^{\infty}_0(\R^n,\mathcal{T}^{(r,s)} \mathbb R^n)$ the expression that would result from \eqref{Lieformulaadjoint} integrated by parts against $\mathbf{\Gamma}$ if $b$ was smooth enough, i.e.
\begin{align} \label{newformula}
\begin{split}
& \left[(\mathcal L^*_b)^{dist} \mathbf{\Theta} \right](\mathbf{\Gamma})  \\
& := \int_{\R^n} \left[ b^l \Theta^{i_1,\ldots,i_r}_{j_1, \ldots, j_s} \frac{\partial \Gamma^{i_1, \ldots, i_r}_{j_1, \ldots, j_s}}{\partial x_l}  + b^{l} \frac{\partial \left( \Theta^{l, i_2, \ldots, i_r}_{j_1, \ldots, j_s} \Gamma^{i_1, \ldots, i_r}_{j_1, \ldots, j_s}\right)}{\partial x_{i_1}}  \right. \\
&\left. \hspace{148pt} + \cdots + b^{l} \frac{\partial \left( \Theta^{i_1, \ldots, i_{r-1}, l}_{j_1, \ldots, j_s} \Gamma^{i_1, \ldots, i_r}_{j_1, \ldots, j_s} \right)}{\partial x_{i_r}} \right] (x) \, \diff^n x \\
&\quad - \int_{\R^n} \left[ b^{j_1} \frac{\partial \left(\Theta^{i_1, \ldots, i_r}_{l, j_2, \ldots, j_s} \Gamma^{i_1, \ldots, i_r}_{j_1, \ldots, j_s}\right)}{\partial x_l} + \cdots + b^{j_s} \frac{\partial \left(\Theta^{i_1, \ldots, i_r}_{j_1, \ldots, j_{s-1},l} \Gamma^{i_1, \ldots, i_r}_{j_1, \ldots, j_s}\right)}{\partial x_l} \right] (x) \, \diff^n x,
\end{split}
\end{align}
for any $\mathbf{\Gamma} \in C^{\infty}_0(\R^n,\mathcal{T}^{(r,s)} \mathbb R^n),$ where as usually we assumed summation over repeated indices. If we impose the extra assumption that $(\mathcal L^*_b)^{dist} \mathbf{\Theta}$ is represented by a function of class $L^q_{loc}(\mathbb{R}^n,\bigwedge^k \mathcal{T}^*\mathbb R^n)$ (as in Hypothesis \ref{hypoexistence}), we can define the distributional Lie derivative via the formula
\begin{align} \label{distri}
    \left[ \mathcal L_b^{dist} \mathbf{K}\right] (\mathbf{\Theta}) := \llangle \mathbf{K}, (\mathcal L_b^*)^{dist} \mathbf{\Theta} \rrangle_{L^2}.
\end{align}
\end{definition}
\begin{remark}
The RHS of \eqref{newformula} and \eqref{distri} are well-defined by H\"older's inequality. We also note that \eqref{newformula} simplifies significantly in the case when $\Theta$ is a $k$-form since the last line vanishes, which is the only case we are interested in in this article. For the necessary geometric background, we point the reader to Appendices \ref{tensor} $\&$ \ref{back-banach}.
\end{remark}
\begin{remark}
Throughout the paper, we will extensively use the distributional adjoint Lie derivative (it is necessary for defining the notion of $L^p$-weak solutions of equation \eqref{eq0} as we do in Definition \ref{weak-sol}), so to avoid cumbersome notation (and since it will be clear from the context), we will normally write $\mathcal{L}^*_b$ instead of $(\mathcal{L}_b^*)^{dist}$.
\end{remark}

\section{Assumptions and main results}\label{sec:main results}

We note that since we will be working in spaces of equivalence classes of functions ($L^p,W^{k,p},C^k,$ etc), our functions in time and space are defined up to a set of measure zero. Also, our identities in time and space make natural sense almost everywhere, so when it is clear from the context, ``a.e." will be omitted. The product sigma-algebras will always be considered to be completed. 

Let $r,l \geq 1.$ We say that a vector field $b$ is of class $L^l ([0,T] ; W^{1,r}_{loc} (\mathbb R^n, \mathbb R^n))$ if for every $R>0,$ $\mathcal{X}_{B(0,R)} b \in L^l ([0,T] ; W^{1,r} (\mathbb{R}^n, \mathbb R^n)).$
\subsection{Assumptions} 
To establish our main results, we will consider different hypotheses on the drift and diffusion vector fields of equation \eqref{eq0}. In general, we will understand that $k \in \{0,1,\ldots,n\}$ and $n \in \mathbb{N}$ are fixed unless indicated otherwise. The first hypothesis will be needed to treat the flow equation \eqref{flownotation}.
\begin{hypothesis} \label{hypoflow}{\ } 
Fix $\alpha \in (0,1).$ 
\begin{enumerate}[label=(\roman*)]
\item $b \in L^\infty\left([0,T]; C^{\alpha}_b(\mathbb R^n, \mathbb R^n)\right)$, $\xi_i \in C_b([0,T]; C^{2}_b(\mathbb R^n, \mathbb R^n))$, $i=1,\ldots,N.$ \label{bcondition}
\item There exist $0<c<C < \infty$ such that the uniform parabolicity condition 
\begin{align} \label{strongparabolicity}
    & c \lvert v\rvert^2 \leq \sum_{i,j = 1}^n \sum_{r=1}^N \xi^i_r(t,x) \xi^j_r(t,x) v_i v_j \leq C \lvert v\rvert^2,\quad t \geq 0,\quad x \in \mathbb{R}^n,\quad v \in \mathbb{R}^n,
\end{align} 
is met. 
\label{xicondition}
\end{enumerate}
\end{hypothesis}
The following hypothesis is needed to establish the existence of solutions.
\begin{hypothesis}
 \label{hypoexistence}{\ } 
Fix $q > 1.$
\begin{enumerate}[label=(\roman*)]
    \item For every test $k$-form $\mathbf{\Theta} \in C^\infty_0(\R^n,\bigwedge^k \mathcal{T}^*\mathbb R^n),$ $(\mathcal{L}^*_b)^{dist} \mathbf{\Theta} \in L^q_{loc}([0,T] \times \mathbb{R}^n;\bigwedge^k \mathcal{T}^*\mathbb R^n).$
    \label{maincondition}
    \item If $k \notin \{0,n\},$ there exists $C \in \R$ such that $C \leq \div (b) (t,x),$ uniformly in $(t,x),$ where $\div(b)$ is understood {\em in the sense of distributions}.
    \label{newcondition}
\end{enumerate}
\end{hypothesis}

\begin{remark}
It can be checked that Hypothesis \ref{hypoexistence} \ref{maincondition} always holds if $b \in L^q ([0,T] ; W^{1,q}_{loc} (\mathbb R^n, \mathbb R^n))$. However, in general Hypothesis \ref{hypoexistence} \ref{maincondition} does not require the existence of Sobolev derivatives of $b$. For example, if $k=0,$ Hypothesis \ref{hypoexistence} \ref{maincondition} is equivalent to $b \in L^q_{loc}([0,T] \times \R^n;\R^n)$ and $\div{(b)} \in L^q_{loc}([0,T] \times \R^n)$ {\em in the sense of distributions}.
\end{remark}
To prove a general uniqueness result, we will employ the following hypothesis.
\begin{hypothesis}
\label{stronguniqueness}{\ } 
Fix $q \geq 1.$ $Db \in L^q_{loc}([0,T]\times \mathbb{R}^n;\mathbb{R}^{n \times n})$.
\end{hypothesis}
At first sight, Hypothesis \ref{stronguniqueness} might appear to be a fairly strong assumption. However, it is not difficult to show that for $k \notin \{0,n\},$ {\em Hypothesis \ref{hypoexistence} \ref{maincondition} actually implies Hypothesis \ref{stronguniqueness}}. In other words, the condition we impose for uniqueness {\em is weaker} than the ones for existence. We note that Hypothesis \ref{hypoexistence} \ref{maincondition} is {\em absolutely necessary} to give rigorous meaning to the definition of weak solutions in \eqref{weak-eq}, as explained in Remark \ref{hypoexistence}. In the case $k \in \{0,n\},$ such implication does not hold and we will provide sharper conditions for uniqueness, at the expense of increasing $q$. This is done in Theorem \ref{theoremweakuniqueness} further below.
\begin{remark}
Throughout this paper, whenever we enforce any of Hypotheses \ref{hypoflow}, \ref{hypoexistence}, or \ref{stronguniqueness}, we will assume that $\alpha$ and $q$ are fixed.
\end{remark}

\subsection{Notions of solution} \label{back-notion}
We first note that the solutions we consider are global in time. For questions regarding the geometric notation that we employ, we point the reader to Subsection \ref{notation} and Appendix \ref{back-banach}. Sometimes we will choose the notation $\mathbf{K}_t(x) := \mathbf{K}(t,x)$ for the solution of \eqref{eq0} or simply $\mathbf{K}$ at our convenience. 
\begin{definition}[Classical solution] \label{classicalsol}
Let $b \in L^\infty([0,T];C^1(\mathbb{R}^n,\mathbb{R}^n))$ and $\xi_i  \in  L^\infty([0,T];C^2(\mathbb{R}^n,\mathbb{R}^n))$, $i=1,\ldots,N$. A classical solution to equation \eqref{eq0} is a $k$-form-valued process
$\mathbf{K}$ adapted to $(\mathcal{F}_t)_{t}$ such that $\mathbb{P}$-a.s. $\mathbf{K}$ has trajectories of class $C([0,T]; C^2 (\mathbb{R}^n, \bigwedge^k \mathcal{T}^*\mathbb{R}^{n}))$ and it holds 
\begin{align}
\begin{split} \label{classicalito}
& \mathbf{K}_{t}(x) - \mathbf{K}_0(x)  +  \int_0^{t} \mathcal{L}_{b} \mathbf{K}_s(x) \, \diff s +\sum_{i=1}^{N} \int_0^{t} \mathcal{L}_{\xi_{i}} \mathbf{K}_s(x) \, \diff W^{i}_{s} \\
& \hspace{110pt} - \frac{1}{2} \sum_{i=1}^{N} \int_0^{t} \mathcal{L}^2_{\xi_{i}} \mathbf{K}_s(x) \, \diff s = 0, \quad t \in [0,T], \quad x \in \mathbb{R}^n.
\end{split}
\end{align}
\begin{remark}
Equation \eqref{classicalito} can be understood as an equality in $\mathbb{R}^{\binom{n}{k}}$ (holding, of course, for almost every $(t,x) \in [0,T] \times \mathbb{R}^n$. As explained at the beginning of the section, we normally omit this).
\end{remark}
\end{definition}
\begin{definition}[Weak $L^p$-solution] \label{weak-sol}
Let $p\geq 2$. Moreover, enforce Hypothesis \ref{hypoexistence} \ref{maincondition} on the drift vector field with $q$ defined by the relation $1/q+1/p=1$, and $\xi_i \in L^\infty([0,T]; C^{2}(\mathbb R^n, \mathbb R^n))$, $i=1,\ldots,N.$  We say that a $k$-form-valued process $\mathbf{K}$ satisfies equation (\ref{eq0}) weakly in the $L^p$-sense if:
\begin{itemize}
\item $\mathbf{K} \in L^p (\Omega \times [0,T] \times \mathbb{R}^n ; \bigwedge^k \mathcal{T}^*\mathbb R^n ),$ which is the Banach space of $k$-form-valued stochastic processes with norm \footnote{This space can be identified with $L^p \left( \Omega \times [0,T] \times \mathbb{R}^{n};\mathbb{R}^{\binom{n}{k}} \right)$.}
\begin{align*}
    \norm{\mathbf{K}}_{L^p(\Omega \times [0,T] \times \mathbb R^n)} := \mathbb{E} \left[ \int_0^T \int_{\mathbb{R}^n} \lvert \mathbf{K}_s(x)\rvert^p \, \diff^n x \, \diff s \right]^{1/p} < \infty.
\end{align*}
\item For any test $k$-form $\mathbf{\Theta} \in C^\infty_0 (\R^n,\bigwedge^k \mathcal{T}^*\mathbb R^n)$, the process $\llangle \mathbf{K}_t, \mathbf{\Theta} \rrangle_{L^2}$ has an $\mathcal{F}_t$-adapted continuous modification \footnote{By a continuous modification, we mean that there exists a stochastic process in the same equivalence class of functions that $\mathbb{P}$-a.s. has continuous paths.} and
\begin{align} \label{weak-eq}
\begin{split}
& \llangle \mathbf{K}_{t}, \mathbf{\Theta} \rrangle_{L^2} - \llangle \mathbf{K}_0, \mathbf{\Theta} \rrangle_{L^2} + \int_0^{t} \llangle  \mathbf{K}_s,(\mathcal{L}^*_{b})^{dist} \mathbf{\Theta} \rrangle_{L^2} \diff s  \\
&\hspace{50pt} + \sum_{i=1}^{N} \int_0^{t} \llangle \mathbf{K}_s, \mathcal{L}^*_{\xi_{i}} \mathbf{\Theta} \rrangle_{L^2} \diff  W^{i}_{s}  - \frac12 \sum_{i=1}^{N}  \int_0^{t} \llangle \mathbf{K}_s, \mathcal{L}^*_{\xi_{i}} \mathcal{L}^*_{\xi_{i}} \mathbf{\Theta} \rrangle_{L^2} \diff s = 0,
\end{split}
\end{align}
understood as an equality in $L^2(\Omega \times [0,T]).$
\end{itemize}
\end{definition}
\begin{remark}
We note that Hypothesis \ref{hypoexistence} \ref{maincondition} is the weakest possible condition on the drift $b$ to make sense of solutions of \eqref{eq0} without assuming spatial regularity of $\mathbf{K},$ since it is the weakest condition that ensures that $\llangle  \mathbf{K}_s,(\mathcal{L}^*_{b})^{dist} \mathbf{\Theta} \rrangle_{L^2}$ is well-defined. Also, we observe that it is natural to ask for $\eqref{weak-eq}$ to be satisfied as an equality in $L^2(\Omega \times [0,T]),$ since $\mathbf{K} \in L^p_{loc} (\Omega \times [0,T] \times \mathbb{R}^n ; \bigwedge^k \mathcal{T}^*\mathbb R^n ) \subset L^2_{loc} (\Omega \times [0,T] \times \mathbb{R}^n ; \bigwedge^k \mathcal{T}^*\mathbb R^n ).$ It is equivalent to imposing that \eqref{weak-eq} holds for almost every $(\omega,t) \in L^2(\Omega \times [0,T]).$ We remind that $\llangle \cdot, \cdot \rrangle_{L^2}$ stands for the $L^2$-inner product of tensor fields, which we briefly introduced in Subsection \ref{notation} and defined in detail in Appendix \ref{back-banach}. 
\end{remark}
\begin{remark}
The term $- \frac{1}{2} \sum_{i=1}^{N}  \int_0^{t} \llangle \mathbf{K}_s, \mathcal{L}^*_{\xi_{i}} \mathcal{L}^*_{\xi_{i}} \mathbf{\Theta} \rrangle_{L^2} \diff s$ in equation \eqref{weak-eq} does not indicate that the equation has a parabolic nature (see Remark 14 in \cite{flandoli2010well}).
\end{remark}
\begin{remark}
Equation \eqref{weak-eq} can be naturally reformulated in terms of distributions as
\begin{align*} 
& \mathbf{K}_{t} - \mathbf{K}_0 + \int_0^{t} \mathcal{L}^{dist}_{b} \mathbf{K}_s  \, \diff s  + \sum_{i=1}^{N} \int_0^{t} \mathcal{L}^{dist}_{\xi_{i}} \mathbf{K}_s \, \diff  W^{i}_{s} - \frac12 \sum_{i=1}^{N}  \int_0^{t} \mathcal{L}^{dist}_{\xi_{i}} \mathcal{L}^{dist}_{\xi_{i}} \mathbf{K}_s \, \diff s = 0,
\end{align*}
where $\mathbf{K}_t$ is to be understood as the distribution $\mathbf{K}_t(\mathbf{\Theta}) := \llangle \mathbf{K}_t, \mathbf{\Theta} \rrangle_{L^2},$ $\mathbf{\Theta} \in C^\infty_0 (\R^n,\bigwedge^k \mathcal{T}^*\mathbb R^n)$.
\end{remark}

\subsection{Main results}
The main goals of this paper are establishing global well-posedness of \eqref{eq0}, showing that this fails consistently in the deterministic case, and demonstrating that the noise in \eqref{eq0} also prevents the instantaneous norm blow-up taking place in the deterministic case. For this, we first prove the following result of existence, uniqueness, and regularity of the flow of equation \eqref{flownotation}.
\begin{theorem}[Regularity of the stochastic flow] \label{flowfla}
Consider equation \eqref{flownotation} with drift and diffusion vector fields satisfying Hypothesis \ref{hypoflow}. Then we have the following:
\begin{enumerate}
    \item For every $s \in [0,T]$ and $x \in \mathbb{R}^n,$ equation \eqref{flownotation} has a unique continuous adapted solution $(\phi_{s,t}(x))_{t \in [s,T]}$. Moreover, $\phi_{s,t},$ $0\leq s\leq t \leq T,$ is a stochastic flow of $C^{1+ \alpha'}$-diffeomorphisms (see Definition \ref{stochasticflow}), for any $\alpha' \in (0,\alpha).$ \label{primeritem}
    \item Let $\{b^j\}_{j = 1}^\infty, \{\xi_i^j\}_{j=1}^\infty  \subset L^\infty\left([0,T]; C_b^\alpha(\mathbb R^n, \mathbb R^n)\right),$ $i=1,\ldots,N$ be sequences of vector fields such that $b^j \rightarrow b$ and $\xi_i^j \rightarrow \xi_i$  in $L^\infty\left([0,T]; C_b^\alpha(\mathbb R^n, \mathbb R^n) \right)$, $i=1,\ldots,N,$ $j \rightarrow \infty.$ Let $\{\phi^j_{s,t}\}_{j=1}^\infty$ be the corresponding sequence of associated stochastic flows. Then for any $r \geq 1$, we have
    \begin{align}
        &\lim_{j \rightarrow \infty} \sup_{x \in \mathbb R^n} \sup_{0\leq s \leq T} \mathbb E\left[\sup_{s \leq t \leq T} \vert \phi_{s,t}^j(x) - \phi_{s,t}(x)\rvert^r\right] = 0, \label{phi-convergence}\\
        &\sup_{j \in \mathbb N} \sup_{x \in \mathbb R^n} \sup_{0 \leq s \leq T} \mathbb E\left[\sup_{s \leq t \leq T} \lvert D\phi_{s,t}^j(x)\rvert^r\right] < \infty, \label{bound-phi-deriv}\\
        &\lim_{j \rightarrow \infty} \sup_{x \in \mathbb R^n} \sup_{0\leq s \leq T} \mathbb E\left[\sup_{s \leq t \leq T} \lvert D\phi_{s,t}^j(x) - D\phi_{s,t}(x) \rvert^r\right] = 0.\label{phi-deriv-convergence}
    \end{align}
    \label{segundoitem} \item Let $\widetilde{r}(x) := \sqrt{1+\lvert x\rvert^2}$, $x \in \mathbb{R}^n$ and $r\geq 2$ be any constant such that $\alpha r > n$. Then for any $\alpha' \in (0, \alpha - n/r)$ and $\epsilon > n/r$, there exists a $C^{\alpha'}(\R^n; \R^{n \times n})$-modification of $[\tilde{r}]^{-\epsilon}D\phi_{s,t}$ such that
    \begin{align}\label{eq:JM-estimate}
    \sup_{0 \leq s \leq T} \mathbb{E}\left[\sup_{s \leq t \leq T} \left(\sup_{\substack{x,y \in\R^n \\ x \neq y}}\frac{\big\lvert [\widetilde{r}(x)]^{-\epsilon}D\phi_{s,t}(x) - [\widetilde{r}(y)]^{-\epsilon}D\phi_{s,t}(y) \big\rvert}{\lvert x-y \rvert^{\alpha'}}\right)^r\, \right] < \infty.
    \end{align}
    \label{terceritem}
\end{enumerate}
\end{theorem}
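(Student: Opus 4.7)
The strategy is to treat the three items sequentially, all resting on a Zvonkin-type PDE transformation in the spirit of \cite{flandoli2010well}. For item \ref{primeritem}, Hypothesis \ref{hypoflow} \ref{xicondition} ensures that the associated generator
\[\mathcal{L}:=\tfrac12\sum_{i=1}^N(\xi_i\cdot D)^2+\Bigl(b+\tfrac12\sum_{i=1}^N\xi_i\cdot D\xi_i\Bigr)\cdot D\]
is uniformly elliptic with $C^2_b$ diffusion coefficients and $C^\alpha_b$ drift. Classical Schauder theory then produces, for $\lambda>0$ sufficiently large, a solution $U\in L^\infty([0,T];C^{2+\alpha}_b(\R^n,\R^n))$ to
\[\partial_t U+\mathcal{L}U-\lambda U=-b,\qquad U(T,\cdot)=0,\]
with $\|DU\|_{L^\infty_{t,x}}$ arbitrarily small. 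Setting $\Phi(t,x):=x+U(t,x)$, which is a near-identity $C^{2+\alpha}$-diffeomorphism for every $t$, and applying Itô's formula to $\Phi(t,\phi_{s,t}(x))$ transforms the SDE into one with globally Lipschitz drift and diffusion. Standard stochastic flow theory (cf.\ \cite{kunita1997stochastic}) then gives a unique strong solution, the flow property and the $C^{1+\alpha'}$-diffeomorphism regularity, which transfers back to $\phi_{s,t}$ via $\Phi^{-1}$.

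Item \ref{segundoitem} I would obtain by applying the same transformation to the approximations $(b^j,\xi_i^j)$: continuity of the Schauder estimates in the coefficients yields $U^j\to U$ in $L^\infty([0,T];C^{2+\alpha'}_b)$ for every $\alpha'<\alpha$, so that the transformed SDEs converge in Lipschitz norm with Lipschitz constants uniform in $j$. A Gronwall--Burkholder--Davis--Gundy (BDG) argument for SDEs with Lipschitz coefficients then produces \eqref{phi-convergence}. For \eqref{bound-phi-deriv} and \eqref{phi-deriv-convergence}, I would differentiate the transformed SDE in $x$, obtaining a linear SDE for $D\phi^j_{s,t}$ whose coefficients are uniformly bounded in $j$; a second Gronwall--BDG pass gives the moment bound \eqref{bound-phi-deriv}, and subtracting the analogous linear SDE for the limit together with \eqref{phi-convergence} yields \eqref{phi-deriv-convergence}.

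The main obstacle is item \ref{terceritem}, where the goal is weighted spatial Hölder regularity of $D\phi_{s,t}$ with an explicit weight $\widetilde r^{-\epsilon}$ compensating the absence of decay of the coefficients at infinity. I plan to proceed via a weighted Kolmogorov--Chentsov argument applied to $x\mapsto [\widetilde r(x)]^{-\epsilon}D\phi_{s,t}(x)$. Differentiating the Zvonkin-transformed SDE in $x$ produces a linear matrix-valued SDE for $D\phi_{s,t}$ whose coefficients are $C^\alpha_b$ in $x$. A careful BDG--Gronwall bookkeeping on the difference
\[[\widetilde r(x)]^{-\epsilon}D\phi_{s,t}(x)-[\widetilde r(y)]^{-\epsilon}D\phi_{s,t}(y)\]
should yield, provided $\epsilon>n/r$, an estimate of the form
\[\sup_{0\leq s\leq T}\E\Bigl[\sup_{s\leq t\leq T}\bigl\lvert[\widetilde r(x)]^{-\epsilon}D\phi_{s,t}(x)-[\widetilde r(y)]^{-\epsilon}D\phi_{s,t}(y)\bigr\rvert^r\Bigr]\lesssim \lvert x-y\rvert^{\alpha r},\]
uniformly in $x,y\in\R^n$, with the weight absorbing the polynomial growth in $x$ of $\lvert D\phi_{s,t}(x)\rvert$ that is inherent to SDEs on the whole space with non-decaying coefficients. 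The delicate point will be arranging the weights so that the $\alpha$-Hölder continuity of the coefficients (post-transformation) is transferred to the difference with the sharp exponent $\alpha r$. Once this is done, since $\alpha r>n$, Kolmogorov's continuity theorem produces the required $C^{\alpha'}$-modification for any $\alpha'<\alpha-n/r$ and supplies \eqref{eq:JM-estimate}.
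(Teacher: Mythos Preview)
Your strategy is the same as the paper's---Zvonkin/It\^o--Tanaka transform for Item~\ref{primeritem}, stability of the transformed SDE for Item~\ref{segundoitem}, and a weighted Kolmogorov argument for Item~\ref{terceritem}---but you gloss over two technical points the paper handles with care. First, since $b$ is only $L^\infty$ (not continuous) in time, ``classical Schauder theory'' does not apply to your backward PDE; the paper instead works on $[0,\infty)$ and invokes Krylov's $L^\infty$-in-time parabolic estimates \cite{krylov2009elliptic}, then proves an explicit fundamental-solution representation (Lemma~\ref{generalheat}) to obtain $\|DU\|_{L^\infty_{t,x}}\to 0$ as $\lambda\to\infty$. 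Second, the resulting $\Phi$ is only Lipschitz in $t$, so the classical It\^o formula does not apply either; a weak-in-time version (Lemma~\ref{weakito}) is required.

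For Item~\ref{terceritem} the paper organises the argument slightly differently: it first proves the \emph{unweighted} moment bounds $\sup_{x}\sup_s\E[\sup_t|D\phi_{s,t}(x)|^r]<\infty$ and $\sup_s\E[\sup_t|D\phi_{s,t}(x)-D\phi_{s,t}(y)|^r]\lesssim|x-y|^{\alpha r}$ via the transformed SDE (Appendix~\ref{twoproperties}), and then feeds these into a ready-made weighted Kolmogorov lemma from \cite{leahy2015some} (Lemma~\ref{JM-lemma}). One correction to your intuition: the weight $\widetilde r^{-\epsilon}$ is not there to absorb growth of $|D\phi_{s,t}(x)|$ in $x$---its moments are in fact bounded uniformly in $x$---but because Kolmogorov's criterion on the unbounded domain $\R^n$ produces a polynomially growing factor when passing from two-point increment bounds to a global H\"older modification.
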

Since Theorem \ref{flowfla} establishes the existence and uniqueness of a stochastic flow of $C^1$-diffeomorphisms, we are in place to state our main theorems in full rigor.
\begin{theorem}[Existence of stochastic solutions] \label{maintheoremforms}
Let $p\geq 2$ and $\mathbf{K}_0 \in L^p (\R^n,\bigwedge^k \mathcal{T}^*\mathbb R^n)$. Moreover, let Hypotheses \ref{hypoflow} $\&$ \ref{hypoexistence} hold with $q$ defined by the relation $1/q+1/p=1$. Then equation \eqref{eq0} has a global, strong in the probabilistic sense, $L^p$-solution (i.e. a solution in the sense of Definition \ref{weak-sol}) with initial condition $\mathbf{K}_0$. Moreover, we can explicitly construct this solution via the formula $\mathbf{K}_t(x) := (\phi_t)_*\mathbf{K}_0(x),$ where we remind the readers that $(\phi_t)_*$ denotes the push-forward of the flow $\phi_t,$ assumed to be a solution of the flow equation \eqref{flowmap}. 
\end{theorem}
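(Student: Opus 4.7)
I would take the candidate solution to be $\mathbf{K}_t(x) := (\phi_t)_* \mathbf{K}_0(x)$, with $\phi_t$ the stochastic flow of $C^{1+\alpha'}$-diffeomorphisms provided by Theorem \ref{flowfla}. Since $\phi_t^{-1}$ is $C^1$ pathwise, the push-forward
\[
((\phi_t)_*\mathbf{K}_0)_{i_1\cdots i_k}(x) = (\mathbf{K}_0)_{j_1\cdots j_k}(\phi_t^{-1}(x))\prod_{r=1}^k \frac{\partial (\phi_t^{-1})^{j_r}}{\partial x_{i_r}}(x)
\]
is pathwise well defined even for merely $L^p$-data. Adaptedness of $\llangle \mathbf{K}_t,\mathbf{\Theta}\rrangle_{L^2}$ is inherited from $\phi_t$, and its continuity in $t$ will follow by dominated convergence once the $L^p$ integrability is in hand. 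It then remains to check (a) $\mathbf{K}\in L^p(\Omega\times[0,T]\times\R^n;\bigwedge^k\mathcal{T}^*\R^n)$ and (b) the weak identity \eqref{weak-eq}.

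For (a), changing variables $x=\phi_t(y)$ and using the adjugate identity $D\phi_t^{-1}(\phi_t(y)) = \mathrm{Adj}(D\phi_t(y))/\det D\phi_t(y)$ reduces the problem to bounding
\[
\E\int_{\R^n}|\mathbf{K}_0(y)|^p |D\phi_t(y)|^{kp(n-1)}|\det D\phi_t(y)|^{1-kp}\,\diff^n y.
\]
The moment of $|D\phi_t|^{kp(n-1)}$ is controlled by \eqref{bound-phi-deriv}. When $1-kp<0$ (the generic case $k\notin\{0,n\}$), the $|\det D\phi_t|^{1-kp}$-factor demands a lower bound on $\det D\phi_t$; I would obtain this by mollifying $b\mapsto b^\epsilon$---noting that Hypothesis \ref{hypoexistence}(ii) ($\div(b)\ge C$) is preserved by convolution---applying the Liouville formula $\det D\phi_t^\epsilon=\exp(\int_0^t \div(\tilde b^\epsilon)(\phi_s^\epsilon)\,\diff s+\text{noise})$ to the smooth flow, and passing to the limit via Theorem \ref{flowfla}(2). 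The extremal cases $k\in\{0,n\}$ are treated by direct change of variables arguments on scalars/top forms that exploit the favourable sign of the Jacobian exponent.

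For (b), I regularise the coefficients by convolution, $b^\epsilon=\rho^\epsilon*b$, $\xi_i^\epsilon=\rho^\epsilon*\xi_i$, $\mathbf{K}_0^\epsilon=\rho^\epsilon*\mathbf{K}_0$, so that Assumption \ref{assumpstrong} holds with $m=3$, and define $\mathbf{K}_t^\epsilon:=(\phi_t^\epsilon)_*\mathbf{K}_0^\epsilon$. Applying the push-forward Kunita-It\^o-Wentzell formula \eqref{KIWpush} to $\mathbf{K}^\epsilon$ (with $\mathbf{G}\equiv\mathbf{H}_i\equiv 0$) and converting the resulting It\^o identity into Stratonovich form shows that $\mathbf{K}^\epsilon$ is a classical, hence weak, solution of the mollified equation. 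Pairing against a test form $\mathbf{\Theta}\in C_0^\infty(\R^n;\bigwedge^k\mathcal{T}^*\R^n)$ yields the weak identity \eqref{weak-eq} at the $\epsilon$-level. Parts (2)-(3) of Theorem \ref{flowfla}, combined with the uniform $L^p$ bound from step (a), then give $\mathbf{K}^\epsilon\to\mathbf{K}$ in $L^p_{\omega,t,R}$ for every $R>0$.

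The main obstacle is passing $\int_0^t\llangle \mathbf{K}^\epsilon_s,\mathcal{L}^*_{b^\epsilon}\mathbf{\Theta}\rrangle_{L^2}\,\diff s$ to the limit, since the only regularity of $b$ is H\"older. The key observation is that the formula \eqref{newformula} for $(\mathcal{L}^*_b)^{dist}\mathbf{\Theta}$ distributes all derivatives onto the smooth compactly supported $\mathbf{\Theta}$---$b$ itself is never differentiated pointwise---so Hypothesis \ref{hypoexistence}(i) guarantees that $(\mathcal{L}^*_b)^{dist}\mathbf{\Theta}$ is represented by a genuine $L^q_{loc}$-function supported in $\Supp(\mathbf{\Theta})$. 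Writing the smooth object $\mathcal{L}^*_{b^\epsilon}\mathbf{\Theta}$ in the same duality form, the convergence $b^\epsilon\to b$ in $C^{\alpha'}_{loc}$ together with a commutator / weak-continuity argument delivers $\mathcal{L}^*_{b^\epsilon}\mathbf{\Theta}\to (\mathcal{L}^*_b)^{dist}\mathbf{\Theta}$ in $L^q_{loc}$. Combined with the $L^p_{\omega,t,R}$-convergence of $\mathbf{K}^\epsilon$ and H\"older's inequality, the drift term passes to the limit. The Stratonovich correction (via $C^2$-convergence of $\xi_i^\epsilon$) and the stochastic integral (via Burkholder-Davis-Gundy) are handled analogously, yielding \eqref{weak-eq} in the limit and completing the verification that $\mathbf{K}_t=(\phi_t)_*\mathbf{K}_0$ is the desired $L^p$-solution.
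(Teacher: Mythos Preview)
Your outline matches the paper's strategy: regularise the coefficients, apply the push-forward KIW formula to get a classical solution at the $\epsilon$-level, establish a uniform $L^p$-bound via the adjugate identity together with Hypothesis~\ref{hypoexistence}\ref{newcondition} (this is exactly Step~2 of Proposition~\ref{weak-sol-smooth} and the lemma giving \eqref{otracosa1}--\eqref{otracosa2}), and then pass to the limit. The $L^p$-integrability argument you sketch is precisely the paper's.

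Where you diverge is the assertion that ``Parts (2)--(3) of Theorem~\ref{flowfla}, combined with the uniform $L^p$ bound, give $\mathbf{K}^\epsilon\to\mathbf{K}$ in $L^p_{\omega,t,R}$''. Strong $L^p$-convergence of $(\phi_t^\epsilon)_*\mathbf{K}_0^\epsilon$ to $(\phi_t)_*\mathbf{K}_0$ is \emph{not} immediate when $\mathbf{K}_0$ is merely $L^p$: the term $\mathbf{K}_0\bigl((\phi_t^\epsilon)^{-1}(x)\bigr)-\mathbf{K}_0\bigl(\phi_t^{-1}(x)\bigr)$ does not go to zero in $L^p$ from flow convergence alone, since composition with converging diffeomorphisms is not continuous on $L^p$ without an auxiliary density argument. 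The paper sidesteps this by using only \emph{weak} compactness: the uniform bound \eqref{greatbound} yields a weak subsequential limit in $L^p(\Omega\times[0,T]\times\R^n)$, the weak limit is shown to satisfy \eqref{weak-eq} (Appendix~\ref{weaklimit}), and then a separate argument---transferring everything to the dual side via Lemma~\ref{coord-change} and the technical Lemma~\ref{lemma:theta-push-convergence}---identifies that weak limit with $(\phi_t)_*\mathbf{K}_0$. Working on the dual side means the converging flows act on the \emph{smooth} test form $\mathbf{\Theta}$ rather than on the rough $\mathbf{K}_0$, which is precisely what dissolves the composition problem. Your direct route can be salvaged by inserting a second approximation of $\mathbf{K}_0$ by $C_0^\infty$-forms and a diagonal argument, but as written it glosses over the step that carries most of the analytic weight.

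Two minor points: the paper mollifies only $b$ (the $\xi_i$ are already $C^2_b$) and treats the two limits in stages (first weaken $\mathbf{K}_0$ in Proposition~\ref{weak-sol-smooth}, then weaken $b$ in Theorem~\ref{nonsmooth-existence}) rather than simultaneously; and the KIW push-forward formula \eqref{KIWpush} already lands in It\^o form, so no Stratonovich conversion is needed.
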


The following theorem guarantees uniqueness of solutions.

\begin{theorem}[Uniqueness of stochastic solutions] \label{theoremuniqueness}
In the conditions of Theorem \ref{maintheoremforms}, substitute Hypothesis \ref{hypoexistence} by Hypothesis \ref{stronguniqueness}. Let $\mathbf{K}^1$ and $\mathbf{K}^2$ be two $L^p$-solutions of \eqref{eq0} with initial condition $\mathbf{K}_0$. Then for every $t \in [0,T],$ we have $\mathbf{K}^1_t = \mathbf{K}^2_t$ (the equality is understood naturally as an equality in $L^p$).
\end{theorem}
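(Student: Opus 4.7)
By linearity of \eqref{eq0}, it suffices to show that any weak $L^p$-solution $\mathbf{K}$ of \eqref{eq0} with vanishing initial condition $\mathbf{K}_0 = 0$ must itself vanish in $L^p(\R^n, \bigwedge^k \mathcal{T}^*\R^n)$ at every $t \in [0,T]$; I then apply this to the difference $\mathbf{K} := \mathbf{K}^1 - \mathbf{K}^2$, which is again a weak $L^p$-solution because \eqref{weak-eq} is linear. My strategy mirrors the Flandoli--Gubinelli--Priola commutator argument of \cite{flandoli2010well}, now adapted to the $k$-form setting: I mollify in space, apply the Kunita--It\^o--Wentzell formula (Theorem \ref{KIWmain}) to pull the mollified solution back by the stochastic flow $\phi_t$ of Theorem \ref{flowfla}, and send $\epsilon \downarrow 0$.

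Concretely, I set $\mathbf{K}^\epsilon := \mathbf{K} \ast \rho^\epsilon$. Testing \eqref{weak-eq} against the shifted mollifier $\rho^\epsilon(\cdot - x)$ (componentwise in the $k$-form indices) shows that, $\P$-a.s., $\mathbf{K}^\epsilon(t, \cdot) \in C^2$ and satisfies a pointwise-in-$x$ SPDE of the form
\begin{align*}
\mathbf{K}^\epsilon_t = -\int_0^t \mathcal{L}_b \mathbf{K}^\epsilon_s \, \diff s - \sum_{i=1}^N \int_0^t \mathcal{L}_{\xi_i} \mathbf{K}^\epsilon_s \, \diff W^i_s + \tfrac{1}{2}\sum_{i=1}^N \int_0^t \mathcal{L}^2_{\xi_i} \mathbf{K}^\epsilon_s \, \diff s + R^\epsilon_t,
\end{align*}
where $R^\epsilon$ is a finite sum of commutator-type terms of the form $(\mathcal{L}_v \mathbf{K})^\epsilon - \mathcal{L}_v \mathbf{K}^\epsilon$ (with $v \in \{b, \xi_i\}$), including iterated versions coming from the $\mathcal{L}^2_{\xi_i}$ term. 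Since $\mathbf{K}^\epsilon$ now has sufficient spatial regularity, I apply Theorem \ref{KIWmain} with $B_t := W_t$ to $\phi_t^* \mathbf{K}^\epsilon$: the four Lie-transport terms on the right-hand side of \eqref{KIW} telescope exactly against the Lie terms above (including the It\^o--Stratonovich correction), leaving
\begin{align*}
\phi_t^* \mathbf{K}^\epsilon(t, \cdot) = \int_0^t \phi_s^* \widetilde{R}^\epsilon_0(s,\cdot) \, \diff s + \sum_{i=1}^N \int_0^t \phi_s^* \widetilde{R}^\epsilon_i(s, \cdot) \, \diff W^i_s,
\end{align*}
where each $\widetilde{R}^\epsilon_{\bullet}$ is again a (possibly second-order) commutator between the mollifier and a Lie derivative.

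The crux is then the DiPerna--Lions-type commutator estimate
\[
\bigl\lVert (\mathcal{L}_v \mathbf{F})^\epsilon - \mathcal{L}_v \mathbf{F}^\epsilon \bigr\rVert_{L^1([0,T] \times B(0,R))} \xrightarrow[\epsilon \to 0]{} 0,
\]
which for $v = b$ relies precisely on Hypothesis \ref{stronguniqueness} ($Db \in L^q_{loc}$) together with $\mathbf{K} \in L^p_{loc}$ and $1/p + 1/q = 1$, and which for $v = \xi_i$ follows immediately from $\xi_i \in C^2_b$ in Hypothesis \ref{hypoflow}; the iterated commutator produced by the Stratonovich correction is decomposed into nested first-order commutators that are controlled by the same lemma. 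The main obstacle lies in transferring these bounds through the pull-back $\phi_s^*$, which acts on the $k$-form indices through $D\phi_s$ to the $k$-th power: this is where the moment estimates \eqref{bound-phi-deriv} on $D\phi_t$ from Theorem \ref{flowfla} are indispensable. Combining them with H\"older's inequality in $(\omega, t, x)$, I pass $\epsilon \downarrow 0$ in a localized $L^1(\Omega \times [0,T] \times B(0,R))$ sense to obtain $\phi_t^* \mathbf{K}(t, \cdot) = 0$ for almost every $t$, and hence $\mathbf{K}(t, \cdot) = (\phi_t)_*(\phi_t^* \mathbf{K}(t, \cdot)) = 0$. The promotion from almost every $t$ to every $t \in [0,T]$ follows from the $\mathcal{F}_t$-adapted continuous modification of $\llangle \mathbf{K}_t, \mathbf{\Theta}\rrangle_{L^2}$ guaranteed by Definition \ref{weak-sol}.
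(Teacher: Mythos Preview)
Your overall strategy is the same as the paper's: mollify, apply the KIW formula to $\phi_t^*\mathbf{K}^\epsilon$, obtain an identity involving commutators, and pass to the limit using DiPerna--Lions-type estimates. That part is fine.

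The gap is in your treatment of the second-order commutator coming from the Stratonovich correction. You write that the iterated commutator ``is decomposed into nested first-order commutators that are controlled by the same lemma''. This decomposition does not work. Writing $C^\epsilon := [\mathcal L_\xi,\rho^\epsilon*]$, the identity $[\mathcal L_\xi,C^\epsilon]\mathbf K = \mathcal L_\xi(C^\epsilon\mathbf K) - C^\epsilon(\mathcal L_\xi\mathbf K)$ gives you two terms, neither of which is controlled by the first-order lemma: in $\mathcal L_\xi(C^\epsilon\mathbf K)$ you apply a derivative to a quantity that is only known to be bounded in $L^1_{loc}$ (with no uniform-in-$\epsilon$ control on its gradient), and in $C^\epsilon(\mathcal L_\xi\mathbf K)$ the argument $\mathcal L_\xi\mathbf K$ is merely a distribution, not an $L^p_{loc}$ object, so Lemma~\ref{commutator-estimate-1} does not apply. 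The paper addresses this by a direct, lengthy computation (Lemma~\ref{commutator-estimate-2}): one expands $[\mathcal L_\xi,[\mathcal L_\xi,\rho^\epsilon*]]\mathbf K$ completely in coordinates and observes that the apparently singular second-derivative-of-$\rho^\epsilon$ terms acquire \emph{two} factors of $(\xi(y)-\xi(x))/\epsilon$, each contributing a gain of $\epsilon$, so the net effect is bounded uniformly. The authors flag this double-commutator estimate explicitly as a novelty of the paper; it is not a consequence of the first-order estimate.

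A secondary remark: your final step passes to the limit via moment bounds in $L^1(\Omega\times[0,T]\times B(0,R))$, whereas the paper pairs with a fixed test form $\mathbf\Theta$, uses the change-of-variables Lemma~\ref{coord-change} to transfer $\phi_s^*$ onto $\mathbf\Theta$ (producing $\widetilde{\mathbf\Theta}_s$ with compact support in a random ball $B(0,R^*)$), and argues pathwise via dominated convergence. Your version may be made to work, but the pathwise route is cleaner because the commutator estimates are naturally $L^1$-in-space and the random radius $R^*(\omega)$ is handled $\omega$-by-$\omega$ rather than through moments.
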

As mentioned earlier, for $k \notin \{0,n\},$ Hypothesis \ref{hypoexistence} \ref{maincondition} implies Hypothesis \ref{stronguniqueness}. Hence existence implies uniqueness (although we do not need Hypothesis \ref{hypoexistence} \ref{newcondition} for uniqueness) and Hypothesis \ref{stronguniqueness} is sharp since it is the minimal condition necessary to provide a well-defined notion of weak solutions. In the case $k \in\{0,n\},$ the hypotheses in Theorem \ref{theoremuniqueness} can be reduced substantially. Indeed: 
\begin{theorem}[Improved uniqueness for $k \in \{0,n\}$] \label{theoremweakuniqueness}
Theorem \ref{theoremuniqueness} holds in the case $k=0$ when Hypothesis \ref{stronguniqueness} is replaced by $\div{(b)} \in L^r([0,T] \times \mathbb{R}^n)$ for some $r>2$, and in the case $k=n$ when Hypothesis \ref{stronguniqueness} is replaced by $\div{(b)} \in L^{r}([0,T] \times \mathbb{R}^n)$ for some $r>2q,$ where $q$ is defined by the relation $1/q+1/p=1$. We stress here that $\div{(b)}$ is understood in the sense of distributions. 
\end{theorem}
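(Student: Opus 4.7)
By linearity, set $\mathbf{K} := \mathbf{K}^1 - \mathbf{K}^2$, which is a weak $L^p$-solution of \eqref{eq0} with vanishing initial datum; the goal is to show $\mathbf{K} \equiv 0$. My plan is to adapt the characteristics-based pull-back argument behind Theorem \ref{theoremuniqueness}, but exploit the simpler form of $\mathcal{L}_b$ when $k \in \{0,n\}$: since $\mathcal{L}_b u = b \cdot Du$ for $k=0$ and $\mathcal{L}_b(\rho\,\diff^n x) = \div(b\rho)\,\diff^n x$ for $k = n$, the full Jacobian $Db$ required by Hypothesis \ref{stronguniqueness} never enters — only $\div(b)$ does. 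This is precisely what permits weakening to the stated divergence conditions. In particular, the Remark following Theorem \ref{KIWmain} guarantees that the KIW formula remains available (no differentiability of $b$ is needed when $k=0$, and only $\div(b) \in L^1_{loc}$ when $k=n$, both easily implied by the new assumptions).

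First I would mollify spatially, $\mathbf{K}^\epsilon := \rho^\epsilon * \mathbf{K}$, and convolve \eqref{weak-eq} against $\rho^\epsilon(x-\cdot)$ to produce the regularised equation
\begin{align*}
d\mathbf{K}^\epsilon + \mathcal{L}_b \mathbf{K}^\epsilon\, \diff t + \sum_{i=1}^N \mathcal{L}_{\xi_i} \mathbf{K}^\epsilon \circ \diff W^i_t = \mathcal{R}^\epsilon_b\, \diff t + \sum_{i=1}^N \mathcal{R}^\epsilon_{\xi_i} \circ \diff W^i_t,
\end{align*}
where $\mathcal{R}^\epsilon_v := \mathcal{L}_v \mathbf{K}^\epsilon - (\mathcal{L}_v \mathbf{K})^\epsilon$ is the mollification commutator. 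Since $\mathbf{K}^\epsilon$ is now smooth in $x$, Theorem \ref{KIWmain} applies to the pull-back $\phi_t^* \mathbf{K}^\epsilon$; by design the transport contributions cancel, leaving
\begin{align*}
\phi_t^* \mathbf{K}^\epsilon(x) = \int_0^t \phi_s^* \mathcal{R}^\epsilon_b(s,x)\, \diff s + \sum_{i=1}^N \int_0^t \phi_s^* \mathcal{R}^\epsilon_{\xi_i}(s,x) \circ \diff W^i_s,
\end{align*}
where I used $\mathbf{K}_0^\epsilon = 0$. The problem reduces to showing the right-hand side tends to zero as $\epsilon \to 0$ in a topology strong enough to conclude $\phi_t^* \mathbf{K} = 0$; the diffeomorphism property from Theorem \ref{flowfla} then gives $\mathbf{K}_t \equiv 0$ for every $t$.

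The heart of the argument is the commutator estimate, handled differently in each case. For $k=0$ I would interpret $b \cdot Du$ distributionally via $\div(bu) - u\,\div(b)$ and apply a H\"older-drift version of the Di Perna--Lions commutator lemma, following closely \cite{flandoli2010well}, to obtain $\mathcal{R}^\epsilon_b \to 0$ in $L^2(\Omega \times [0,T]; L^1_{loc})$ under $\div(b) \in L^r$ with $r > 2$. For $k = n$, writing $\mathbf{K} = \rho\,\diff^n x$ with $\rho \in L^p$, the commutator $\mathcal{R}^\epsilon_b = \div(b\rho^\epsilon) - \div(b\rho)^\epsilon$ contains the genuinely multiplicative term $\rho\,\div(b)$; controlling it via H\"older's inequality in space and the It\^o isometry on the stochastic integrals is what forces the sharper condition $r > 2q$. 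The diffusion commutators $\mathcal{R}^\epsilon_{\xi_i}$ are easily dealt with using $\xi_i \in C^2_b$ and standard convolution estimates.

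The hardest part, I expect, will be the passage to the limit inside the stochastic integrals: their quadratic variations must be bounded uniformly in $\epsilon$, and it is precisely this second-moment bookkeeping that is responsible for the quantitative difference between the thresholds $r>2$ and $r>2q$. Once the commutators are shown to vanish in the appropriate sense, standard uniform-integrability arguments combined with the moment and regularity estimates \eqref{bound-phi-deriv}--\eqref{eq:JM-estimate} for $D\phi_t$ from Theorem \ref{flowfla} justify passing to the limit in the pull-back identity, thereby closing the uniqueness argument.
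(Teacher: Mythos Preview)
Your overall strategy---mollify, apply the KIW formula to $\phi_t^* \mathbf{K}^\epsilon$, and reduce to a commutator estimate---is exactly the paper's, and your observation that the Lie derivative for $k \in \{0,n\}$ involves only $\div(b)$ rather than the full $Db$ is the correct starting point. However, you misidentify where the thresholds $r>2$ and $r>2q$ actually come from, and this conceals a genuine gap.

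The commutator $\mathcal{R}^\epsilon_b$ itself is \emph{not} the bottleneck. For $k=n$, for instance, the relevant commutator estimate (the paper's Lemma~\ref{commutator-sharp}) gives a bound $\lvert\llangle \mathcal{R}^\epsilon_b, \mathbf{\Theta}\rrangle\rvert \lesssim \|\mathbf{K}\|_{L^p} \|b\|_{L^\infty} \|D\mathbf{\Theta}\|_{L^q}$ with no appearance of $\div(b)$ whatsoever; your claim that the multiplicative term $\rho\,\div(b)$ is what forces $r>2q$ is therefore wrong. Nor do the thresholds come from ``second-moment bookkeeping'' in the stochastic integrals. They arise instead when one pairs $\phi_s^* \mathcal{R}^\epsilon_b$ with a test form $\mathbf{\Theta}$: after the change of variables (Lemma~\ref{coord-change}), $\mathbf{\Theta}$ is replaced by $\widetilde{\mathbf{\Theta}}_s = \lvert J\phi_s\rvert\,(\phi_s^*\mathbf{\Theta}^\sharp)^\flat$, and the commutator bound now contains $\|D\widetilde{\mathbf{\Theta}}_s\|_{L^q}$. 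For $k=0$ this produces a term $\|DJ\psi_s\|_{L^q_{loc}}$; for $k=n$, because $\widetilde{\mathbf{\Theta}}$ carries an extra factor of $J\phi\circ\psi$, one needs both $\|DJ\psi_s\|_{L^{2q}_{loc}}$ and $\|DJ\phi_s\|_{L^{2q}_{loc}}$. The flow estimates \eqref{bound-phi-deriv}--\eqref{eq:JM-estimate} you invoke give only $L^r$ and H\"older control on $D\phi$, not spatial Sobolev regularity of the Jacobian \emph{determinant}. The missing ingredient is Theorem~\ref{regularidad1}: $\div(b) \in L^r$ with $r > q' \geq 2$ implies $J\phi \in L^{q'}([0,T]; W^{1,q'}_{loc})$. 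Applying it with $q'=2$ for $k=0$ and $q'=2q$ for $k=n$ is precisely what generates the conditions $r>2$ and $r>2q$. Without this step your argument cannot close.
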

We will also conclude the following regularity result for our solutions.
\begin{corollary}[Continuity of the stochastic solutions] \label{strongexistence}
Let $p\geq 2$ and $\mathbf{K}_0 \in \left(C \cap L^p \right)(\R^n,\bigwedge^k \mathcal{T}^*\mathbb R^n)$. Moreover, let Hypotheses \ref{hypoflow}, \ref{hypoexistence}, \ref{stronguniqueness} hold with $q$ defined by the relation $1/q+1/p=1$. Then the unique global $L^p$-solution of \eqref{eq0} has a continuous modification (i.e. there is a $k$-form-valued process in the same equivalence class that $\mathbb{P}$-a.s. is of class $C([0,T] \times \R^n; \bigwedge^k\mathcal{T}^*\mathbb R^n)$).
\end{corollary}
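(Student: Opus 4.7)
The plan is to leverage the explicit representation formula from Theorem \ref{maintheoremforms}, together with the regularity of the stochastic flow established in Theorem \ref{flowfla}. By Theorem \ref{maintheoremforms}, the solution to \eqref{eq0} with initial datum $\mathbf{K}_0$ is given, as an equivalence class in $L^p(\Omega \times [0,T] \times \mathbb{R}^n; \bigwedge^k \mathcal{T}^*\mathbb{R}^n)$, by the push-forward $\mathbf{K}_t(x) = ((\phi_t)_* \mathbf{K}_0)(x)$, where $\phi_t$ solves \eqref{flowmap}; the uniqueness statement in Theorem \ref{theoremuniqueness} ensures this is the only $L^p$-solution. The task therefore reduces to exhibiting a version within this equivalence class that is $\mathbb{P}$-a.s. continuous on $[0,T] \times \mathbb{R}^n$.

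I would fix the stochastic flow $\phi_t$ supplied by Theorem \ref{flowfla}, which is a stochastic flow of $C^{1+\alpha'}$-diffeomorphisms for some $\alpha' \in (0,\alpha)$. Writing the push-forward of a $k$-form in coordinates yields
\begin{align*}
((\phi_t)_* \mathbf{K}_0)_{j_1 \ldots j_k}(x) = (K_0)_{i_1 \ldots i_k}(\phi_t^{-1}(x)) \, \frac{\partial (\phi_t^{-1})^{i_1}}{\partial x^{j_1}}(x) \cdots \frac{\partial (\phi_t^{-1})^{i_k}}{\partial x^{j_k}}(x),
\end{align*}
so the pointwise value depends on $\mathbf{K}_0$ evaluated along $\phi_t^{-1}(x)$ and on the first spatial derivatives of $\phi_t^{-1}$ at $x$. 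By Definition \ref{stochasticflow}, on a full-probability event both $\phi_{s,t}^{-1}$ and $D\phi_{s,t}^{-1}$ are jointly continuous in $(s,t,x)$; combining this with the continuity of $\mathbf{K}_0$ that is newly assumed in the corollary, the product on the right-hand side defines a $k$-form-valued map which is $\mathbb{P}$-a.s. continuous on $[0,T] \times \mathbb{R}^n$. On the exceptional null set I would simply redefine the solution to be identically zero, which preserves the $L^p$-equivalence class.

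The only slightly delicate point is checking that this pointwise-defined continuous object genuinely represents the abstract $L^p$-solution produced by Theorem \ref{maintheoremforms}: but both are constructed by the same formula $(\phi_t)_*\mathbf{K}_0$ on the same probability space, so they coincide as functions off the $\mathbb{P}$-null set where the flow fails to be a diffeomorphism, and hence belong to the same equivalence class. Beyond this, I do not anticipate any serious obstacle, since the heavy lifting (well-posedness of \eqref{eq0}, joint continuity of the flow and of its first derivative, and uniqueness) has already been carried out in Theorems \ref{maintheoremforms}, \ref{theoremuniqueness}, and \ref{flowfla}. Continuity of $\mathbf{K}_0$ enters only through preservation under composition with the continuous map $(t,x)\mapsto \phi_t^{-1}(x)$, which is the entire content of the corollary.
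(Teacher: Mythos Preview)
Your proposal is correct and matches the paper's intended approach: the corollary is not given a separate proof in the paper precisely because it follows immediately from the explicit representation $\mathbf{K}_t = (\phi_t)_*\mathbf{K}_0$ (Theorem~\ref{maintheoremforms}), uniqueness (Theorem~\ref{theoremuniqueness}), and the joint continuity of $\psi_t := \phi_t^{-1}$ and $D\psi_t$ guaranteed by the flow being a stochastic flow of $C^{1+\alpha'}$-diffeomorphisms (Theorem~\ref{flowfla} and Definition~\ref{stochasticflow}). The paper uses exactly this coordinate formula \eqref{repr} and the same reasoning in the proof of Proposition~\ref{strong-sol-smooth} to argue continuity under stronger assumptions, so your argument is the natural specialisation to the present hypotheses.
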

Finally, we show that in the deterministic case, the linear Lie transport equation is ill-posed in $L^p$ due to nonuniqueness of solutions, and that instantaneous blow-up of the supremum norm takes place.

\begin{theorem}[Ill-posedness in the deterministic case] \label{ill}
In the deterministic case, i.e. $\xi_i:=0,$ $i=1,\ldots,N,$ neither Theorem \ref{theoremuniqueness} nor Corollary \ref{strongexistence} are true. In particular, for $n \in \mathbb{N},$ $k \in \{0,1,\ldots,n\},$ and $p \geq 2$ with $q$ defined by $1/q+1/p=1,$ there exist:
\begin{enumerate}
    \item An autonomous vector field $b_{\alpha}$ of class $(C_b^{\alpha}\cap W^{1,q}_{loc})(\mathbb{R}^n,\mathbb{R}^n)$ for any $\alpha \in (0,1)$ if $n \geq 2$ and for any $\alpha \in ((q-1)/q,1)$ if $n=1,$ such that if $kp <n,$ there exists $\mathbf{K}_0 \in C_0^\infty(\R^n,\bigwedge^k\mathcal{T}^*\mathbb R^n)$ (and hence also of class $L^p(\R^n,\bigwedge^k\mathcal{T}^*\mathbb R^n)$) and a parametric family of different solutions of class $L^p([0,T]\times \mathbb{R}^n;\bigwedge^k\mathcal{T}^*\mathbb R^n)$ to equation \eqref{deq0} with drift $b_\alpha$ and initial condition $\mathbf{K}_0.$ Moreover, such solutions are discontinuous in both time and space. 
    \item An autonomous divergence-free vector field $b'_{\alpha}$ of class $(C_b^{\alpha}\cap W^{1,q}_{loc})(\mathbb{R}^n,\mathbb{R}^n)$ for any $\alpha \in ((p-1)/p,1),$ such that in the case $k\notin \{0,n\},$  there exists $\mathbf{K}_0 \in C_0^\infty(\R^n,\bigwedge^k\mathcal{T}^*\mathbb R^n)$ for which the $L^p$-solution $\mathbf{K}_t$ to the deterministic equation with drift $b_\alpha'$ and initial condition $\mathbf{K}_0$ satisfies
    \begin{align*}
    & \sup_{x \in B(0,\epsilon)} \lvert \mathbf{K}_t(x) \rvert = \infty,
    \end{align*}
    for any $0<t \leq T$ and $\epsilon >0.$
\end{enumerate}
\end{theorem}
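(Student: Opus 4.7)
The plan is to exhibit explicit counterexamples rather than proceed abstractly. The key observation is that the representation formula $\mathbf{K}_t = (\phi_t)_*\mathbf{K}_0$ underlying Theorem~\ref{maintheoremforms} admits a deterministic counterpart: for any Borel-measurable flow $\phi_t$ of the characteristic ODE $\dot\phi_t(x) = b(t,\phi_t(x))$, the push-forward $(\phi_t)_*\mathbf{K}_0$ defines a weak $L^p$-solution of the deterministic equation~\eqref{deq0} in the sense of Definition~\ref{weak-sol} (with $\xi_i \equiv 0$). This is verified either by direct integration by parts against test $k$-forms, or by the deterministic specialisation of the Kunita-It\^o-Wentzell formula~\eqref{KIW}. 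Consequently, producing two genuinely distinct Borel-measurable flows for one drift yields two distinct $L^p$-solutions, while producing a single flow with unbounded Jacobian yields a solution that leaves $L^\infty_{loc}$ instantaneously.

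For part~(1), I construct a drift $b_\alpha$ with nonunique characteristics through the origin: in one dimension, $b_\alpha(x) := \sgn(x)|x|^\alpha$ (truncated by a compactly supported smooth cutoff), and for $n\geq 2$, the radially-expanding field $b_\alpha(x) := |x|^{\alpha-1}x$ (similarly truncated). A direct computation shows $b_\alpha \in C_b^\alpha \cap W^{1,q}_{loc}(\R^n,\R^n)$ precisely under the stated conditions on $(\alpha,q,n)$ — the weaker range in $n\geq 2$ reflects that the singular set of $Db_\alpha$ is a point rather than a hyperplane — so Hypotheses~\ref{hypoflow} \ref{bcondition}, \ref{hypoexistence} \ref{maincondition}, and \ref{stronguniqueness} hold (the parabolicity condition~\eqref{strongparabolicity} is vacuous in the deterministic case). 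In both settings the ODE $\dot x = b_\alpha(x)$ at $x_0 = 0$ admits both the trapped trajectory $\phi_t(0)\equiv 0$ and, for every $\tau\in[0,T]$ and choice of outgoing unit vector $e$, the delayed escape $\phi_t(0) = ((1-\alpha)(t-\tau)_+)^{1/(1-\alpha)}\, e$. Exploiting this multiplicity, I build a one-parameter family of Borel-measurable flows which fill the forward bubble $\{|x|\leq ((1-\alpha)t)^{1/(1-\alpha)}\}$ in inequivalent ways. The corresponding push-forwards of a fixed $\mathbf{K}_0\in C_0^\infty$ with $\mathbf{K}_0(0)\neq 0$ differ on sets of positive measure inside the bubble, yielding the required parametric family of distinct $L^p$-solutions. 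The restriction $kp<n$ is used here precisely to bound the Jacobian factor $|D\phi_t|^k$ appearing in $(\phi_t)_*\mathbf{K}_0$ near the bubble boundary, and the discontinuity in both time and space follows from the jump of $\mathbf{K}_t$ across that boundary irrespective of the regularity of $\mathbf{K}_0$.

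For part~(2), I take the two-dimensional divergence-free shear $b'_\alpha(x,y) := (\sgn(y)|y|^\alpha,\, 0)$, truncated by a compactly supported smooth cutoff and extended trivially to $\R^n$ for $n\geq 3$. This drift is manifestly divergence-free and lies in $C_b^\alpha\cap W^{1,q}_{loc}$ precisely when $\alpha > (p-1)/p = 1/q$; its flow is globally and explicitly given by $\phi_t(x,y) = (x + t\sgn(y)|y|^\alpha,\, y)$, with inverse Jacobian containing an entry of order $|y|^{\alpha-1}$ which is singular on $\{y=0\}$ yet of class $L^p_{loc}$ under the same threshold. For $k\notin\{0,n\}$ I choose $\mathbf{K}_0 = f\, dx \wedge dz_1 \wedge \cdots \wedge dz_{k-1}$ with $f\in C_0^\infty$ and $f(0)>0$; pulling $dx$ back by $\phi_t^{-1}$ generates the contribution $-t\alpha|y|^{\alpha-1}\,dy$ in $(\phi_t)_*\mathbf{K}_0$. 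Near any point of $\{y=0\}$ inside the support of $f$ this component is unbounded for every $t>0$, which establishes the instantaneous $L^\infty_{loc}$-blow-up, while the $L^p$-integrability is preserved by the choice of $\alpha$.

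The main technical obstacle I anticipate is the rigorous verification that the constructed push-forwards are weak $L^p$-solutions in the sense of Definition~\ref{weak-sol}: in both parts the flow is only H\"older and fails to be a $C^1$-diffeomorphism on a set of positive measure (the bubble boundary in part~(1), the line $\{y=0\}$ in part~(2)). The cleanest route is mollification: approximate $b_\alpha$ (respectively $b'_\alpha$) by smooth drifts, use their classical flows to produce classical solutions, derive uniform $L^p$-bounds from the explicit push-forward formula together with the integrability threshold on $\alpha$, and pass to the limit in the weak formulation~\eqref{weak-eq} using Hypothesis~\ref{hypoexistence} \ref{maincondition} to control the pairing $\llangle \mathbf{K}_s,(\mathcal{L}^*_b)^{dist}\mathbf{\Theta}\rrangle_{L^2}$. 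Once this approximation step is in place, identifying the limit with the chosen measurable selection of the push-forward and checking that inequivalent selections produce genuinely distinct $L^p$-representatives reduces to routine bookkeeping.
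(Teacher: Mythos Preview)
Your approach to Part~(2) is essentially identical to the paper's: the same shear field (with coordinates relabelled), the same explicit flow computation, and the same mechanism by which the $|y|^{\alpha-1}$ entry of the inverse Jacobian produces the singularity in the push-forward. This part is correct.

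For Part~(1), you have the right vector field and the right geometric picture---characteristics escaping the origin fill an expanding bubble, and the freedom in how to fill it produces nonuniqueness---but two aspects of your execution do not work as written.

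First, the framing via ``Borel-measurable flows'' does not do what you need. A measurable forward flow $\phi_t:\R^n\to\R^n$ solving the ODE can differ from another such flow only on a Lebesgue-null set (the origin), so the push-forwards $(\phi_t)_*\mathbf{K}_0$ coincide as $L^p$ classes. Moreover, the push-forward of a $k$-form requires differentiability of the flow (cf.\ formula~\eqref{repr}), which a merely measurable selection does not provide. The paper does not parameterise the family by flows at all: it fixes the unique smooth flow $\phi_t^0$ on the exterior $\mathcal{A}_t^0$, and on the bubble $\mathcal{B}_t^0$ introduces a \emph{separate} explicit diffeomorphism $\Phi_t^0$ (not a time-$0$ flow map of the ODE) which pushes forward an \emph{arbitrary} auxiliary $k$-form $\mathbf{\Gamma}\in C_0^\infty$. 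The family is thus parameterised by $\mathbf{\Gamma}$, not by flow selections.

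Second, and more seriously, your proposed verification by mollifying $b_\alpha$ cannot establish nonuniqueness. The mollified drift $b_\alpha^\epsilon$ is Lipschitz, so its flow---and hence the solution $\mathbf{K}^\epsilon$---is unique; passing to the limit along this sequence selects a single distinguished member of the family and cannot recover the others. The paper instead verifies directly that each $\mathbf{K}^{\mathbf{\Gamma}}$ is a weak solution: it shows the equation holds strongly on the open pieces $\mathcal{A}_t^0$ and $\mathcal{B}_t^0$ (Proposition~\ref{strongsolution}), and then uses the Reynolds transport theorem together with the divergence theorem to check that the surface integrals over the moving interface $S(0,t^{1/(1-\alpha)})$ cancel in the weak formulation (Proposition~\ref{prop:exist:Lp}). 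This boundary computation is the crux of the argument and cannot be replaced by an approximation procedure that destroys the very nonuniqueness you are trying to exhibit.
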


\section{Regularity of the stochastic flow}  \label{flowsection}

This section is devoted to establishing Theorem \ref{flowfla}. In order to show Items \ref{primeritem} $\&$ \ref{segundoitem}, we extend the proof of the flow regularity result in \cite{flandoli2010well} to allow for our more general type of noise. The proof in \cite{flandoli2010well} is based on the It\^o-Tanaka trick and sharp parabolic estimates. We encounter some extra difficulties and show representations of some specific solutions of parabolic equations with coefficients having low regularity in time. To establish Item \ref{terceritem}, we exploit the extra regularity properties of the flow that arise as a result of the It\^o-Tanaka trick, to perform certain estimates on the flow equation, which guarantee the validity of certain uniform estimates in \cite{leahy2015some}. We note that the existence and uniqueness of strong path-wise solutions to a general class of SDEs containing \eqref{flownotation} was established in \cite{ruso}, which also provide the first example in the literature of well-posedness by noise in the ODE case. 

We proceed to demonstrate Theorem \ref{flowfla}. Since our proofs of Items \ref{primeritem} $\&$ \ref{segundoitem} comprise a natural extension of the strategy in \cite{flandoli2010well}, the latter reference can be checked for completeness. For simplicity, for fixed $s\geq0$ and $x \in \mathbb{R}^n,$ we occasionally employ the notation $X_t := \phi_{s,t}(x),$ $t \in [s,T]$. Without loss of generality, we only treat the case $N=1$ here (i.e. we only consider one diffusion vector field $\xi = (\xi^1,\ldots,\xi^n)$). Incorporating more noise terms is a straightforward extension. 

\begin{proof}[Proof of Theorem \ref{flowfla}.]
{\bf{Proof of Item \ref{primeritem}}.} The essence of our argument is as follows: we show that the solution to equation \eqref{flownotation} is unique and can moreover be expressed as a composition of functions which are regular enough (in fact, they are remarkably regular taking into account our assumptions on the drift and diffusions!) to guarantee $C^{1+\alpha'}$-smoothness. More concretely, for fixed $s \in [0,T),$ $x\in \mathbb{R}^n,$ we prove that the process $\Psi_\lambda(t,\phi_{s,t}(x)) := \phi_{s,t}(x) + \psi_\lambda(t,\phi_{s,t}(x)),$ $t \geq s,$ where $\psi_\lambda$ is a solution of a parabolic equation on the extended domain $[0,\infty)\times \mathbb{R}^n$ depending on a parameter $\lambda \geq 1$, solves a flow equation of the type \eqref{flownotation} with new coefficients $\widetilde{b},$ $\widetilde{\xi}$ enjoying more regularity than the original ones. Moreover, this equation admits a smooth enough flow of diffeomorphisms $\varphi_{s,t}$ (this technique is called the It\^o-Tanaka trick or Zvonkin transform. See \cite{itotanaka} for one of the original motivations). By applying results treating the existence, uniqueness, and regularity of solutions of parabolic equations, we can show that $\psi_\lambda$ enjoys enough regularity.

Moreover, we provide an implicit representation for $\psi_\lambda$, which we employ to conclude that $\Psi_\lambda$ enjoys desired regularity and is invertible if $\lambda$ is large enough. Finally, we prove that the regularity of $\Psi_\lambda$ and $\varphi_{s,t}$ is inherited by the flow $\phi_{s,t},$ since $\phi_{s,t} = \Psi_\lambda^{-1} \circ \varphi_{s,t} \circ \Psi_\lambda$. In order to facilitate the understanding of our argument, we divide this proof into five steps. In Step 1 and Step 2, we treat the regularity of the functions $\psi_\lambda$ and $\Psi_\lambda$, whereas Step 3 is devoted to the It\^o-Tanaka trick and the improved regularity of the flow.

\noindent \textbf{Step 1.}
First, we extend $b$ and $\xi$ to the whole real time line $[0,\infty)$ by simply defining
\begin{align*}
& b(t,x) := b(T,x), \\
& \xi(t,x) := \xi(T,x),  \quad t > T,\quad x \in \mathbb{R}^n.
\end{align*}
By Hypothesis \ref{hypoflow}, we have
\begin{align*}
    b \in L^\infty([0,\infty); C_b^\alpha(\mathbb{R}^n,\mathbb{R}^n)),\quad \xi \in C_b ([0,\infty);C_b^{2+\alpha}(\mathbb{R}^n,\mathbb{R}^n)).
\end{align*}
For $\lambda \geq 1,$ consider the parabolic equation
\begin{align} \label{parabolic}
    \partial_t \psi_\lambda(t,x) + L \psi_\lambda(t,x) - \lambda \psi_\lambda(t,x) = -b(t,x), \quad t \geq 0, \quad x \in \mathbb{R}^n,
\end{align}
with 
\begin{align} \label{eq:L-operator}
    L \psi_\lambda(t,x) := \frac12 \xi^j(t,x) \xi^k(t,x) \partial_j \partial_k \psi_\lambda(t,x) + (b^k(t,x) +  \xi^j(t,x) \partial_j \xi^k(t,x) )\partial_k \psi_\lambda(t,x),
\end{align}
where summation over repeated indices is assumed. The notion of a solution to equation \eqref{parabolic} is not standard, since $b$ is only of class $L^\infty$ in time instead of continuous. Following \cite{krylov2009elliptic}, we prescribe that a function $\psi_\lambda:[0,\infty)\times \mathbb{R}^n \rightarrow \mathbb{R}^n$ in the space $L^\infty([0,\infty);C_b^{2+\alpha}(\mathbb{R}^n,\mathbb{R}^n))$ is a solution to \eqref{parabolic}, if it satisfies the integral equation
\begin{align} \label{krylovnotion}
\psi_\lambda(t,x) - \psi_\lambda(s,x) = \int_s^t \big[ -L \psi_\lambda(r,x) + \lambda \psi_\lambda(r,x) - b(r,x) \big] \diff r, 
\end{align}
for $0\leq s \leq t$ and $x \in \mathbb{R}^n$. From this, it follows that $\psi_\lambda$ is Lipschitz continuous in time and hence differentiable in time a.e. for fixed $x.$ By Theorems 2.4 $\&$ 2.5 in \cite{krylov2009elliptic} (set $f:=-b.$ It is easy to check that Hypotheses 2.1 $\&$ 2.2 in \cite{krylov2009elliptic} are met in the present context), there exists a unique solution $\psi_\lambda$ to \eqref{parabolic} in the Banach space $L^\infty([0,\infty); C_b^{2+\alpha}(\mathbb{R}^n, \mathbb{R}^n))$, which moreover, satisfies the bound
\begin{align*}
    \sup_{t \geq 0}\norm{\psi_\lambda(t,\cdot)}_{C_b^{2+\alpha}} \lesssim \sup_{t\geq 0} \norm{b(t,\cdot)}_{C_b^{\alpha}},
\end{align*}
where the constant in $\lesssim$ only depends on the parabolicity constants \eqref{strongparabolicity}, $\alpha,$ and $n.$

\noindent \textbf{Step 2.} Next, we introduce the function $\Psi_\lambda: [0,\infty) \times \mathbb{R}^n \rightarrow \mathbb{R}^n$, defined as $\Psi_\lambda(t,x) := x + \psi_\lambda(t,x)$. By means of the following lemma (originally appearing in \cite{flandoli2010well}. See Lemma 6), we can establish that $\Psi_\lambda$ enjoys higher regularity than $\psi_\lambda$.
\begin{lemma} \label{lemazo}
For $\lambda$ large enough, the condition 
\begin{align} \label{condition}
& \norm{D \psi_\lambda}_{L^\infty_{t,x}} <1
\end{align} 
is met, and the following properties are satisfied:
\begin{enumerate}
    \item Uniformly in $t \in [0,\infty),$ $\Psi_\lambda(t,\cdot)$ has bounded first and second spatial derivatives, and $D^2 \Psi_\lambda$ is globally $\alpha$-H\"older continuous. \label{point-1}
    \item For any $t \in [0,\infty),$ $\Psi_\lambda(t,\cdot): \mathbb R^n \rightarrow \mathbb R^n$ is a diffeomorphism of class $C^2.$ \label{point-2}
    \item Uniformly in $t \in [0, \infty),$ $\Psi_\lambda^{-1}(t, \cdot)$ has bounded first and second derivatives. \label{point-3}
\end{enumerate}
\end{lemma}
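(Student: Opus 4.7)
The core of the argument is to show that $\norm{D\psi_\lambda}_{L^\infty_{t,x}}$ can be made arbitrarily small by choosing $\lambda$ large, and then to leverage this smallness to deduce the invertibility and regularity statements \ref{point-1}--\ref{point-3}. The uniform $C^{2+\alpha}_b$ bound already produced by \cite{krylov2009elliptic} does not by itself shrink in $\lambda$, so my plan is to first refine it into a genuine gradient estimate of the form
\begin{align*}
\norm{D\psi_\lambda(t,\cdot)}_{L^\infty} \;\lesssim\; \lambda^{-(1+\alpha)/2}\, \sup_{t\geq 0}\norm{b(t,\cdot)}_{C^\alpha_b},
\end{align*}
which immediately yields \eqref{condition} once $\lambda$ is chosen large enough. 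To derive this, I would represent $\psi_\lambda$ via the backward semigroup $P_{t,s}$ associated with the time-inhomogeneous generator $L$ defined in \eqref{eq:L-operator}, namely $\psi_\lambda(t,x) = \int_t^\infty e^{-\lambda(s-t)}\, (P_{t,s}b(s,\cdot))(x)\, \diff s$, and combine this with the Schauder-type gradient bound $\norm{DP_{t,s} f}_{L^\infty} \lesssim (s-t)^{(\alpha-1)/2}\norm{f}_{C^\alpha_b}$, which is available thanks to the uniform parabolicity in Hypothesis \ref{hypoflow} \ref{xicondition} and the $C^{2+\alpha}_b$-regularity of $\xi$. After integrating in $s$ and changing variables $u=\lambda(s-t)$, the advertised power $\lambda^{-(1+\alpha)/2}$ emerges.

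Once \eqref{condition} is secured, item \ref{point-1} is immediate from $\Psi_\lambda = \text{id}+\psi_\lambda$ together with $\psi_\lambda \in L^\infty([0,\infty);C^{2+\alpha}_b(\R^n,\R^n))$. For \ref{point-2}, injectivity is a one-line mean value argument: if $\Psi_\lambda(t,x)=\Psi_\lambda(t,y)$ then $\lvert x-y\rvert \leq \norm{D\psi_\lambda}_{L^\infty_{t,x}}\lvert x-y\rvert < \lvert x-y \rvert$ unless $x=y$. Surjectivity follows from Banach's fixed point theorem applied, for each $y\in\R^n$ and each $t\geq 0$, to the contraction $x\mapsto y-\psi_\lambda(t,x)$ on $\R^n$, whose unique fixed point is the preimage of $y$. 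Smoothness of both $\Psi_\lambda(t,\cdot)$ and its inverse is then a direct consequence of the inverse function theorem, upon noting that $D\Psi_\lambda(t,\cdot)=I+D\psi_\lambda(t,\cdot)$ is pointwise invertible with inverse given by the absolutely convergent Neumann series $\sum_{k\geq 0}(-D\psi_\lambda(t,\cdot))^k$.

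For item \ref{point-3}, differentiating the identity $\Psi_\lambda(t,\Psi_\lambda^{-1}(t,y))=y$ yields
\begin{align*}
D\Psi_\lambda^{-1}(t,y) \;=\; \bigl[\,I + D\psi_\lambda(t,\Psi_\lambda^{-1}(t,y))\,\bigr]^{-1},
\end{align*}
and the same Neumann expansion combined with \eqref{condition} delivers the $t$-uniform bound $\norm{D\Psi_\lambda^{-1}}_{L^\infty} \leq \left(1-\norm{D\psi_\lambda}_{L^\infty_{t,x}}\right)^{-1}$. A further differentiation expresses $D^2\Psi_\lambda^{-1}$ as a polynomial in $D\Psi_\lambda^{-1}$ and $D^2\psi_\lambda$ evaluated at $\Psi_\lambda^{-1}$, both of which are uniformly bounded in $t$ by the previous step and \ref{point-1}. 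I expect the only genuinely delicate ingredient to be the $\lambda$-decay of $\norm{D\psi_\lambda}_{L^\infty_{t,x}}$, where uniform parabolicity of $L$ and global H\"older regularity of $b$ must be used simultaneously; the remaining assertions reduce to routine consequences of the contractive bound \eqref{condition} and the algebraic structure of the perturbation $\Psi_\lambda=\text{id}+\psi_\lambda$.
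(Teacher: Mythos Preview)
Your deduction of items \ref{point-1}--\ref{point-3} from the bound \eqref{condition} is correct and matches the argument in \cite{flandoli2010well}, which the paper simply cites without reproducing. The substantive difference lies in how you establish \eqref{condition} itself.

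You represent $\psi_\lambda$ via an evolution family $P_{t,s}$ for the \emph{full} operator $L$ in \eqref{eq:L-operator} and invoke a gradient smoothing bound $\norm{DP_{t,s}f}_{L^\infty} \lesssim (s-t)^{(\alpha-1)/2}\norm{f}_{C^\alpha_b}$, justifying it only by uniform parabolicity and the regularity of $\xi$. But $L$ also contains the first-order term $b^k\partial_k$, and $b$ is merely $L^\infty$ in time and $C^\alpha$ in space. Neither the construction of a two-parameter evolution family for $\partial_t+L$ with only measurable-in-time lower-order coefficients, nor the short-time Schauder gradient estimate for it, is standard; the classical constructions of fundamental solutions (Friedman, Ladyzhenskaya et al.) require time-continuity of all coefficients. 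As written, this key analytic input is unsubstantiated.

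The paper is designed to avoid exactly this difficulty. It works only with the principal part $L_0 = \tfrac12\xi^j\xi^k\partial_j\partial_k$, whose coefficients are continuous in time by Hypothesis~\ref{hypoflow}~\ref{bcondition}, so that a classical fundamental solution with the pointwise derivative bound \eqref{estimadacalor} is available (this is the content of Lemma~\ref{generalheat}, where the $L^\infty$-in-time source is handled by a separate mollification argument). The first-order terms are pushed into the source, yielding the \emph{implicit} representation $\psi_\lambda(t,x)=\int_t^\infty\!\int_{\R^n} e^{-\lambda(s-t)}p(t,x;s,y)\tilde f(s,y)\,\diff y\,\diff s$ with $\tilde f = -b - (b^k+\xi^j\partial_j\xi^k)\partial_k\psi_\lambda$. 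The bound on $Dp$ then gives $\norm{D\psi_\lambda}_{L^\infty_{t,x}} \leq C\lambda^{-1/2}\norm{\tilde f}_{L^\infty_{t,x}}$, and since $\tilde f$ contains $D\psi_\lambda$ linearly, one rearranges to obtain \eqref{eq:grad-phi-lambda-ineq}, which closes for large $\lambda$. This bootstrap is precisely the step your direct approach tries to bypass; it is the price paid for keeping the operator's coefficients time-continuous so that a fundamental solution with the needed pointwise control actually exists.
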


In order to apply Lemma \ref{lemazo} to our function $\psi_\lambda,$ we need to show that for large enough $\lambda$, $\psi_\lambda$ satisfies condition \eqref{condition}. For this, we prove the following intermediate result.
\begin{lemma} \label{generalheat}
Let $\lambda >0$ and $\widetilde{f} \in L^\infty([0,\infty);C^\alpha_b (\mathbb{R}^n, \mathbb{R}^n)).$ Then the parabolic system
\begin{align} \label{krylovheat}
   \partial_t v_\lambda(t,x) + L_0 v_\lambda(t,x) - \lambda v_\lambda (t,x) = \widetilde{f}(t,x),\quad t \geq 0, \quad x \in \mathbb{R}^n,
\end{align}
where $(L_0 v)(t,x) := \frac{1}{2}\xi^j(t,x) \xi^k(t,x) \partial_{j} \partial_{k} v(t,x)$, has a unique solution $v_\lambda \in L^\infty([0,\infty);C^{2+\alpha}_b (\mathbb{R}^n, \mathbb R^n)).$ Moreover, there exists a fundamental solution $p(t,x;s,y),$ $0 \leq t \leq s,$ $x, y \in \mathbb{R}^n$ for the operator $\partial_t + L_0$, such that $v_\lambda$ can be constructed via the explicit formula
\begin{align} \label{casot}
    & v_\lambda(t,x) := \int_{t}^{\infty}\int_{\mathbb{R}^n} \mathrm{exp}\left((-\lambda(s-t))\right) p(t,x;s,y)\widetilde{f}(s,y) \, \diff y \, \diff s , \quad t \geq 0, \quad x \in \mathbb{R}^n.
\end{align}
Furthermore, $p$ is infinitely differentiable in space and there exist constants $\epsilon, C>0$ such that for $i,m \in \mathbb{N} \cup \{0\}$ with $0 \leq i < 2m,$ we have
\begin{align} \label{estimadacalor}
    & \lvert D^i p(t,x;s,y)\rvert \leq C (s-t)^{-(n+i)/2m} \mathrm{exp} \left( \frac{-\epsilon \lvert x-y \rvert^{2m}}{s-t} \right)^{1/(2m-1)},
\end{align}
for all $0 \leq t < s$, $x,y \in \mathbb{R}^n$.
\end{lemma}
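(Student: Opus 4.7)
The plan is to construct the fundamental solution $p(t,x;s,y)$ of $\partial_t + L_0$ via the classical Levi parametrix method, then use it to define $v_\lambda$ through the representation \eqref{casot}. Since $L_0$ is uniformly elliptic by Hypothesis \ref{hypoflow} \ref{xicondition} and its coefficients $a^{jk} := \tfrac12 \xi^j \xi^k$ inherit $C^{2+\alpha}_b$-regularity from $\xi$, the standard construction (as in Friedman's treatise on parabolic equations of second order) applies directly. One freezes coefficients at a base point $(s,y)$, uses the resulting Gaussian heat kernel as a parametrix $Z$, and closes a Volterra integral equation of the form $p = Z + Z \star \Phi$ by Picard iteration in $\Phi$ to produce $p$ for $0 \le t < s$, $x,y \in \mathbb{R}^n$. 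The Gaussian estimate \eqref{estimadacalor} for $i \in \{0,1\}$ and $m=1$ is the standard Aronson-type bound that emerges from this construction; the general-$m$ estimates are obtained by further differentiating the parametrix series, each differentiation absorbing one unit of coefficient smoothness and preserving Gaussian decay up to a reduction of the constant $\epsilon$.

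With $p$ in hand, I would verify that \eqref{casot} is well-defined, satisfies \eqref{krylovheat} in the integral sense \eqref{krylovnotion}, and lies in $L^\infty([0,\infty); C^{2+\alpha}_b)$. Well-definedness and the bound $\norm{v_\lambda}_{L^\infty_{t,x}} \lesssim \lambda^{-1}\norm{\widetilde f}_{L^\infty_{t,x}}$ are immediate from the exponential factor $\exp(-\lambda(s-t))$ and the unit-mass property $\int_{\R^n} p(t,x;s,y)\,\diff^n y \le 1$. That $v_\lambda$ satisfies the equation is a Duhamel-type calculation leveraging $(\partial_t + L_0) p = 0$ for $t < s$ and the delta-concentration $p(t,x;s,\cdot) \to \delta_x$ as $t \uparrow s$. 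Uniqueness follows by considering the difference $w$ of two bounded solutions, which satisfies the homogeneous equation with $w \in L^\infty$; multiplication by $e^{-\lambda t}$ combined with a maximum principle (or Gronwall) forces $w \equiv 0$.

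The main obstacle will be proving the Schauder regularity $v_\lambda \in L^\infty_t C^{2+\alpha}_{b,x}$ when $\widetilde f$ is only $L^\infty$ in time, since classical parabolic Schauder theory assumes joint $C^\alpha$-regularity in $(t,x)$. I would resolve this by splitting the representation integral into a near-diagonal piece, where parametrix Schauder estimates apply uniformly in time (in the same spirit as \cite{krylov2009elliptic}, which the authors already invoke in Step 1), and a far-diagonal piece, where the derivative estimates \eqref{estimadacalor} directly control all spatial derivatives via Gaussian decay. Combined, these yield $\sup_t \norm{v_\lambda(t,\cdot)}_{C^{2+\alpha}_b} \lesssim \sup_t \norm{\widetilde f(t,\cdot)}_{C^\alpha_b}$, completing the proof.
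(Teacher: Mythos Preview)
Your approach is correct in outline and would work, but it differs from the paper's in strategy and economy. The paper does not reconstruct the parametrix or re-derive Schauder estimates. Instead it cites Krylov's elliptic--parabolic theory (\cite{krylov2009elliptic}, Theorems 2.4--2.5) as a black box for existence and uniqueness in $L^\infty_t C^{2+\alpha}_{b,x}$, and cites Friedman (Chapter 9) for the fundamental solution $p$ and the estimate \eqref{estimadacalor}. The only genuine work the paper does is to show that \emph{this} unique solution is given by the formula \eqref{casot}. For that, the paper (i) first assumes $\widetilde f$ is continuous in time, reduces the $\lambda>0$ case to the classical $\lambda=0$ case by the substitution $u_\lambda(t,x):=\int_t^\infty\!\int p(t,x;s,y)e^{-\lambda s}\widetilde f(s,y)\,\diff y\,\diff s$ (which solves $\partial_t u_\lambda + L_0 u_\lambda = e^{-\lambda t}\widetilde f$), and checks directly that $v_\lambda = e^{\lambda t} u_\lambda$ solves \eqref{krylovheat}; then (ii) for general $\widetilde f\in L^\infty_t C^\alpha_{b,x}$, mollifies $\widetilde f$ \emph{in time}, passes to the limit in the weak formulation by dominated convergence, and identifies the limit with the Krylov solution by uniqueness.

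The payoff of the paper's route is that your ``main obstacle'' (Schauder regularity under merely $L^\infty$-in-time data) evaporates: regularity is already granted by Krylov, and the paper only needs to verify that the formula \eqref{casot} lands in the right space, which it does crudely by showing $v_\lambda\in L^\infty_t C^3_{b,x}$ via direct differentiation under the integral and the bound \eqref{estimadacalor} (with $i=3$, $m=2$)---no near/far-diagonal splitting needed. Your approach is more self-contained but does more work than necessary; in particular, your proposed Schauder splitting and your separate maximum-principle uniqueness argument are both superseded by invoking \cite{krylov2009elliptic} once.
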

\begin{proof}[Proof of Lemma \ref{generalheat}.]
By Theorems 2.4 $\&$ 2.5 in \cite{krylov2009elliptic}, it is known that \eqref{krylovheat} admits a unique solution of class $L^\infty([0,\infty);C^{2+\alpha}_b (\mathbb{R}^n, \mathbb R^n))$. However, the explicit representation \eqref{casot} does not follow immediately. We start by assuming that $\widetilde{f} \in C_b([0,\infty);C_b^\alpha (\mathbb{R}^n, \mathbb{R}^n)).$ If $\lambda = 0,$ Lemma \ref{generalheat} is a classical result (See Theorem 2 in Chapter 9 of \cite{friedman} and Theorem 5 in the same chapter for the adjoint system. Conditions (A1), (A2), (A3) therein are clearly met under our assumptions). We claim that for $\lambda >0,$
\begin{align} \label{casotsmooth}
    & v_\lambda(t,x) := \int_{t}^{\infty}\int_{\mathbb{R}^n} \exp{(-\lambda(s-t))} p(t,x;s,y)\widetilde{f}(s,y) \, \diff y \, \diff s ,  \quad t \geq 0, \quad x \in \mathbb{R}^n,
\end{align}
solves \eqref{krylovheat}. Indeed, taking the time derivative on both sides of \eqref{casotsmooth}, we have
\begin{align}
    \begin{split}
    \partial_t v_\lambda(t,x) &=  \int_{t}^{\infty}\int_{\mathbb{R}^n} \exp{(-\lambda(s-t))} \partial_t p(t,x;s,y) \widetilde{f}(s,y) \, \diff y \, \diff s \\
    & \quad + \lambda \int_{t}^{\infty}\int_{\mathbb{R}^n} \exp{(-\lambda(s-t))} p(t,x;s,y)  \widetilde{f}(s,y) \, \diff y \, \diff s \label{krylov1}
    \end{split} \\
    &=  \exp{(\lambda t)} \int_{t}^{\infty}\int_{\mathbb{R}^n} \partial_t p(t,x;s,y) (\exp{(-\lambda s)} \widetilde{f}(s,y)) \, \diff y \, \diff s +  \lambda v_\lambda(t,x), \label{eq:leftoff}
\end{align}
where we used the Leibniz rule in \eqref{krylov1} and took into account that $\lim_{s \rightarrow t^+} p(t,x;s,y) = 0$, which follows from \eqref{estimadacalor} with $i=0$. Now, define
\begin{align*}
    u_\lambda(t,x) := \int_{t}^{\infty}\int_{\mathbb{R}^n} p(t,x;s,y) \exp{(-\lambda s)} \widetilde{f}(s,y) \, \diff y \, \diff s,
\end{align*}
which is a solution to the PDE
\begin{align*}
    \partial_t u_\lambda(t,x) + L_0 u_\lambda(t, x) = \exp{(-\lambda t)} \widetilde{f}(t,x),
\end{align*}
by the classical result \cite{friedman} (Theorem 2 in Chapter 9). Then, we have
\begin{align}
    \eqref{eq:leftoff} &=  \exp{(\lambda t)} \,\partial_t u_\lambda(t, x) +  \lambda v_\lambda(t,x) \label{krylov2} \\
    &=  \exp{(\lambda t)} \left(-L_0 u_\lambda(t,x) + \exp{(-\lambda t)} \widetilde{f}(t,x)\right) + \lambda v_\lambda(t,x),
    \nonumber
\end{align}
where we have again used Leibniz rule in \eqref{krylov2} to take the time derivative outside the integrals. Hence
\begin{align*}
    \partial_t v_\lambda(t,x) &= \exp{(\lambda t)} (-L_0 u_\lambda(t,x) + \exp{(-\lambda t)} \widetilde{f}(t,x)) + \lambda v_\lambda(t,x) \\
    &= -L_0 v_\lambda(t,x) + \lambda v_\lambda(t,x) + \widetilde{f}(t,x), 
\end{align*}
for $t\geq 0$ and $x \in \mathbb{R}^n,$ so we have proven our claim that the representation \eqref{casotsmooth} solves \eqref{krylovheat}. \\\\
Now we treat the general case $\widetilde{f} \in L^\infty([0,\infty);C^\alpha (\mathbb{R}^n, \mathbb{R}^n)).$ For $\epsilon >0,$ consider the time mollification of $f$ (formally, $f$ first needs to be extended by zero to the whole real line, then mollified and restricted), which we denote $\widetilde{f}^\epsilon \in C_b([0,\infty);C^\alpha (\mathbb{R}^n, \mathbb{R}^n)).$ From the first part of the proof, we know that
\begin{align*} 
    & v^\epsilon_\lambda(t,x) := \int_{t}^{\infty}\int_{\mathbb{R}^n} \exp{(-\lambda(s-t))} p(t,x;s,y)\widetilde{f}^\epsilon(s,y) \, \diff y \, \diff s ,  \quad t \geq 0, \quad x \in \mathbb{R}^n,
\end{align*}
solves \eqref{krylovheat} with $\widetilde{f}$ substituted by $\widetilde{f}^\epsilon.$ By the dominated convergence theorem (take into account the mollifier properties, which in particular yield $\norm{\widetilde{f}^{\epsilon}}_{L^\infty_{t,x}} \leq \norm{\widetilde{f}}_{L^\infty_{t,x}}$), we also obtain
\begin{align*} 
    & \lim_{\epsilon \rightarrow 0} v^\epsilon_\lambda(t,x) \rightarrow \int_{t}^{\infty}\int_{\mathbb{R}^n} \exp{(-\lambda(s-t))} p(t,x;s,y)\widetilde{f}(s,y) \,\diff y \, \diff s = v_{\lambda}(t,x),
\end{align*}
for $t \geq 0$, $x \in \mathbb{R}^n$ pointwise. Now, fix a test function $\theta \in C_0^\infty(\mathbb{R}^n,\mathbb{R}^n).$ Then we must have
\begin{align} \label{rollo}
\begin{split}
   &  \langle v^\epsilon_\lambda(t,\cdot) - v^\epsilon_\lambda(s,\cdot), \theta(\cdot) \rangle_{L^2} \\
   & = \int_s^t \left( - \langle v^\epsilon_\lambda(r,\cdot), (L_0^* \theta)(r,\cdot) \rangle_{L^2} + \lambda \langle v^\epsilon_\lambda (r,\cdot), \theta(\cdot) \rangle_{L^2} + \langle \widetilde{f}(r,\cdot), \theta(\cdot) \rangle_{L^2} \right) \diff r, 
\end{split}
\end{align}
for $0 \leq t < s$, where we have denoted by $L_0^*$ the $L^2$-adjoint operator of $L_0.$ Hence, taking limits in \eqref{rollo} by means of the bounded convergence theorem (use the fact that $v^\epsilon_\lambda \rightarrow v_\lambda$ a.e.), we conclude that $v_\lambda$ satisfies \eqref{krylovheat} weakly. It can also be checked that 
\begin{align*}
& v_\lambda \in L^\infty([0,\infty);C^{3}_b (\mathbb{R}^n, \mathbb R^n))   \subset L^\infty([0,\infty);C^{2+\alpha}_b (\mathbb{R}^n, \mathbb R^n)) 
\end{align*}
by taking into account its integral representation \eqref{casotsmooth} and using property \eqref{estimadacalor}, so it is indeed a solution in the sense \eqref{krylovnotion}. Since \eqref{krylovheat} has one and only one solution, it must be \eqref{casot}. The proof is now complete.
\end{proof}
\noindent \textbf{Step 3.}
We can now show that $\norm{D \psi_\lambda}_{L^\infty_{t,x}} \rightarrow 0,$ as $\lambda \rightarrow \infty.$ By setting 
\begin{align}\label{eq:tilde-f-definition}
\widetilde{f}(t,x) := -b(t,x)-\left(b^k(t,x) +  \xi^j(t,x) \partial_j \xi^k(t,x) \right)\partial_k \psi_\lambda,
\end{align}
then by Lemma \ref{generalheat}, we must have the following implicit formula
\begin{align*}
\psi_\lambda(t,x) = \int_{t}^{\infty}\int_{\mathbb{R}^n} \exp{(-\lambda(s-t))} p(t,x;s,y)\widetilde{f}(s,y) \,\diff y \,\diff t , \quad t \geq 0, \quad x \in \mathbb{R}^n,
\end{align*}
for the solution to the parabolic system \eqref{parabolic}.
From estimate \eqref{estimadacalor} with $i=1$ (consider the change of variables $u=s-t)$, it follows that
\begin{align*}
\lvert D \psi_\lambda(t,x) \rvert &\lesssim \int_{0}^{\infty}\int_{\mathbb{R}^n} e^{-\lambda u} u^{-(n+1)/2} \text{exp} \left(\frac{-\lvert x-y \rvert^2}{u} \right)  \widetilde{f}(s,y) \, \diff y \, \diff u  \\
&\lesssim \norm{\widetilde{f}}_{L_{t,x}^\infty} \int_{0}^{\infty}\int_{\mathbb{R}^n} e^{-\lambda u} u^{-1/2} \text{exp} \left[ -\lvert z \rvert^2 \right] \, \diff z \, \diff u  \\
&\lesssim \norm{\widetilde{f}}_{L_{t,x}^\infty} \int_{0}^{\infty} e^{-\lambda u} u^{-1/2} \, \diff u = \norm{\widetilde{f}}_{L_{t,x}^\infty} \frac{1}{\lambda^{1/2}}\int_{0}^{\infty} e^{-r} r^{-1/2} \, \diff r \\
&\lesssim \frac{\norm{\widetilde{f}}_{L_{t,x}^\infty}}{\lambda^{1/2}}, 
\end{align*}
where we have made the change of variables $y-x = z \sqrt{u}$ and $r = \lambda u$. We stress that the constants $\lesssim$ in the inequalities above are independent of $\lambda,$ since inequality \eqref{estimadacalor} only involves the fundamental solution of \eqref{parabolic}. Therefore, there exists a constant $C>0$ that is independent of $\lambda$, such that
\begin{align*}
    & \norm{D \psi_\lambda}_{L_{t,x}^\infty} \leq  C \lambda^{-1/2} \|\widetilde{f}\|_{L^\infty_{t,x}}.
\end{align*}
Hence from \eqref{eq:tilde-f-definition}, we have the estimate
\begin{align*}
\norm{D \psi_\lambda}_{L^\infty_{t,x}} \leq  C \lambda^{-1/2} \left(\norm{b}_{L^\infty_{t,x}} + \left( \norm{b}_{L^\infty_{t,x}} + \norm{\xi}_{L^\infty_{t,x}} \norm{D \xi}_{L^\infty_{t,x}} \right) \norm{D \psi_\lambda}_{L^\infty_{t,x}} \right),  
\end{align*}
which, after minor rearranging, gives us
\begin{align} \label{eq:grad-phi-lambda-ineq}
\left(1-  C \lambda^{-1/2} \left(\norm{b}_{L^\infty_{t,x}} + \norm{\xi}_{L^\infty_{t,x}} \norm{D \xi}_{L^\infty_{t,x}} \right) \right)\norm{D \psi_\lambda}_{L^\infty_{t,x}} \leq  C \lambda^{-1/2} \norm{b}_{L^\infty_{t,x}}. 
\end{align}
From the last inequality \eqref{eq:grad-phi-lambda-ineq}, it follows that by choosing $\lambda$ large enough, one can obtain the bound \eqref{condition}, ensuring that Lemma \ref{lemazo} can be invoked. 

\noindent \textbf{Step 4.} 
We now detail the most important step in proving Item \ref{primeritem}, which is to carry out the so-called It\^o-Tanaka trick. Define the vector fields
\begin{align*}  
    & \widetilde{b}(t,y) := \lambda \psi_\lambda(t, \Psi_\lambda^{-1}(t,y)) + \frac{1}{2} \left ( \xi \cdot D \xi\right) (t,\Psi_\lambda^{-1}(t,y)) , \\
    & \widetilde{\xi}(t,y) := \xi(t,\Psi_\lambda^{-1}(t,y)) D \Psi_\lambda (t, \Psi_\lambda^{-1}(t,y)),
\end{align*}
and consider for $s \in [0,T],$ $y \in \mathbb{R}^n,$ the following SDE in It\^o form:
\begin{align} \label{conjugated}
    Y_t = y + \int_s^t \widetilde{b}(u, Y_u) \, \diff u + \int_s^t \widetilde{\xi}(u, Y_u) \, \diff W_u , \quad t \in [s,T].
\end{align}
We claim that equation \eqref{conjugated} is equivalent to the characteristic equation \eqref{flownotation} in the following sense: if $\phi_{s,t}(x)$ solves \eqref{flownotation}, then $Y_t := \Psi_\lambda(t,\phi_{s,t}(x))$ solves \eqref{conjugated} with $y = \Psi_\lambda(s,x)$. This can be shown by applying It\^o's Lemma to the function $\Psi_\lambda$. However, in doing so, one needs to be careful since this function does not enjoy enough regularity in time to satisfy the assumptions of the classical It\^o's Lemma. To this end, we use the following weak-in-time version of It\^o's Lemma, established in \cite{flandoli2010well} (see Lemma 3).
\begin{lemma}
\label{weakito}
Let $\alpha \in (0,1)$ and $\Psi \in L^\infty([0,\infty);C_b^{2+\alpha}(\mathbb{R}^n)),$ satisfying
\begin{align*}
   \Psi(t,x) -\Psi(s,x) = \int_s^t v(r,x) \, \diff r, \quad 0 \leq s \leq t, \quad x \in \mathbb{R}^n,
\end{align*}
with $v \in L^\infty([0,\infty);C_b^\alpha (\mathbb{R}^n)).$ Let $(X_t)_{t \geq 0}$ be a stochastic process of the form
\begin{align*}
    X_t = X_0 + \int_0^t b(s,X_s) \, \diff s + \int_0^t \xi(s,X_s) \, \diff W_s,
\end{align*}
where $b,\xi:[0,\infty) \times \mathbb{R}^n \rightarrow \mathbb{R}^n $ such that 
\begin{align*}
X_0 + \int_0^t b(s,x) \, \diff s + \int_0^t \xi(s,x) \, \diff W_s, \quad t \geq 0,
\end{align*}
is an It\^o process for all $x \in \R^n$. Then $\mathbb{P}$-a.s.
\begin{align*}
\Psi(t,X_t) &= \Psi(0,X_0) + \int_0^t \left( v + (D\Psi )^T b \right) (s,X_s)  \, \diff s \\
&\quad + \frac{1}{2} \int_0^t \text{tr}{\left( \xi^T \left( H \Psi \right) \xi \right)} (s,X_s) \, \diff s + \int_0^t \left((D \Psi \right)^T \xi)(s,X_s)  \, \diff W_s, \quad t \geq 0,
\end{align*}
where $H \Psi$ denotes the (spatial) Hessian matrix of $\Psi$.
\end{lemma}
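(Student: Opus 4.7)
The plan is to regularise $\Psi$ in time by convolution with a smooth mollifier $\rho^\epsilon$, apply the classical It\^o formula to the mollified function $\Psi^\epsilon$ (which is now $C^1$ in $t$ and still uniformly $C^{2+\alpha}$ in $x$), and then pass to the limit $\epsilon\to 0$. The hypothesis that $v\in L^\infty([0,\infty);C_b^\alpha(\R^n))$ furnishes uniform bounds on every quantity appearing in the formula, so the passage to the limit is in essence an exercise in dominated convergence once the appropriate pointwise convergences are established.

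Concretely, I would first extend $\Psi$ and $v$ to $\R$ in the time variable (for instance, setting $\Psi(t,\cdot) = \Psi(0,\cdot)$ and $v(t,\cdot) = 0$ for $t<0$) and define $\Psi^\epsilon(t,x) := (\rho^\epsilon *_t \Psi)(t,x)$, $v^\epsilon := \rho^\epsilon *_t v$. Since convolution in $t$ commutes with differentiation in $x$, we have $D^\ell \Psi^\epsilon = \rho^\epsilon *_t D^\ell \Psi$ for $\ell=0,1,2$, and all the relevant $C_b^{2+\alpha}$-seminorms are controlled by those of $\Psi$ uniformly in $(t,\epsilon)$. Differentiating the integral representation of $\Psi$ under the convolution yields $\partial_t \Psi^\epsilon(t,x) = v^\epsilon(t,x)$ in the classical sense, so $\Psi^\epsilon$ meets the hypotheses of the usual It\^o formula. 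Applied to the It\^o process $(X_t)$, this gives
\begin{align*}
\Psi^\epsilon(t,X_t) &= \Psi^\epsilon(0,X_0) + \int_0^t \bigl(v^\epsilon + (D\Psi^\epsilon)^T b\bigr)(s,X_s)\, \diff s \\
&\quad + \frac12\int_0^t \tr\bigl(\xi^T (H\Psi^\epsilon)\xi\bigr)(s,X_s)\, \diff s + \int_0^t \bigl((D\Psi^\epsilon)^T \xi\bigr)(s,X_s)\, \diff W_s.
\end{align*}

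It then remains to pass $\epsilon\to 0$ in each term. By standard mollifier theory, for every Lebesgue point $t$ (hence for a.e.\ $t$) one has $D^\ell \Psi^\epsilon(t,\cdot)\to D^\ell \Psi(t,\cdot)$ uniformly in $x$ for $\ell=0,1,2$, and $v^\epsilon(t,\cdot)\to v(t,\cdot)$ uniformly in $x$; moreover $\Psi^\epsilon(t,x)\to \Psi(t,x)$ uniformly in $x$ for \emph{every} $t$, because the map $t\mapsto \Psi(t,x)$ is Lipschitz uniformly in $x$. Since $s\mapsto X_s$ is continuous and all integrands are bounded by a deterministic constant depending only on $\|\Psi\|_{L^\infty_t C_b^{2+\alpha}_x}$, $\|v\|_{L^\infty_{t,x}}$, $\|b\|_{L^\infty_{t,x}}$, and $\|\xi\|_{L^\infty_{t,x}}$, the dominated convergence theorem handles the Lebesgue integrals pathwise, while It\^o's isometry combined with dominated convergence yields $L^2(\Omega)$-convergence of the stochastic integral. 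Extracting an almost-sure subsequence produces the desired identity on a common full-measure event for every fixed $t$; continuity in $t$ of both sides then promotes this to an identity holding $\mathbb{P}$-a.s. for all $t\geq 0$ simultaneously.

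The main subtlety is that $v$ has no regularity in time beyond $L^\infty$, so $\partial_t \Psi^\epsilon$ only converges for a.e.\ $t$ and the step $\int_0^t v^\epsilon(s,X_s)\, \diff s \to \int_0^t v(s,X_s)\, \diff s$ must be argued with care: although the integrand is evaluated at the random point $X_s$, the uniform pointwise bound $|v^\epsilon(s,y)|\leq \|v\|_{L^\infty_{t,x}}$, together with the a.e.-in-$s$ uniform-in-$y$ convergence $v^\epsilon(s,\cdot)\to v(s,\cdot)$ and the continuity of $s\mapsto X_s$, justifies passing the limit inside the integral via dominated convergence. Everything else reduces to routine pathwise estimates using the uniform $C^{2+\alpha}$-control on $\Psi^\epsilon$.
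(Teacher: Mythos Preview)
The paper does not supply its own proof of this lemma: it simply cites \cite{flandoli2010well}, Lemma~3, where the result is established. Your mollification-in-time argument is exactly the standard route to such a statement and is what one finds in that reference, so in substance you are reproducing the intended proof.

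One small point to tighten: in your dominated-convergence step you invoke $\|b\|_{L^\infty_{t,x}}$ and $\|\xi\|_{L^\infty_{t,x}}$, but the lemma as stated does not assume global boundedness of $b$ and $\xi$, only that $X$ is an It\^o process. The fix is immediate: since $D\Psi^\epsilon$ and $H\Psi^\epsilon$ are bounded uniformly in $(t,x,\epsilon)$ by $\|\Psi\|_{L^\infty_t C_b^{2+\alpha}_x}$, the integrands are dominated by constants times $|b(s,X_s)|$ and $|\xi(s,X_s)|^2$, which are $\mathbb{P}$-a.s.\ locally integrable in $s$ by the It\^o-process hypothesis. With that adjustment your argument goes through as written.
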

\begin{remark}
Lemma \ref{weakito} can be extended in a straightforward manner to allow for multiple diffusion terms and applied to functions $\Psi \in L^\infty([0,\infty);C_b^{2+\alpha} (\mathbb{R}^n, \mathbb{R}^n))$ component-wise.
\end{remark}
Fix $s \geq 0$ and $x\in \mathbb{R}^n.$ Applying Lemma \ref{weakito} to $X_t := \phi_{s,t}(x)$, we obtain
\begin{align*}
\Psi_\lambda(t,X_t) &= \Psi_\lambda(s,x) + \int_s^t v(r,X_r) \, \diff r + \int_s^t  \left( (D \Psi_\lambda )^T  \left(b + \frac{1}{2} \xi^j \partial_j \xi \right) \right) (r,X_r) \, \diff r \\
&\quad + \frac{1}{2} \int_s^t \left( \tr{[ \xi^T ( H_X \Psi_\lambda ) \xi  ]} \right) (r,X_r) \, \diff r + \int_s^t (  (D \Psi_\lambda )^T \xi  ) (r,X_r) \, \diff W_r, 
\end{align*}
where $t \geq s$ and
\begin{align*}
    v &:= \lambda \psi_\lambda - \frac12 \xi^i \xi^j \partial_i \partial_j \psi_\lambda - \left(b^j + \frac{1}{2} \xi^i \partial_i \xi^j \right) \partial_j\psi_\lambda -b, \\
    \left (D \Psi_\lambda \right)^T
    \left(b + \frac{1}{2}\xi^j \partial_j \xi \right) 
    &= b + b^j \partial_j \psi_\lambda +  \frac{1}{2}\xi^j \partial_j \xi  +  \frac{1}{2}\xi^i \partial_i \xi^j \partial_j \psi_\lambda, \text{ and }\\
    \frac{1}{2} \text{tr}\left( \xi^T \left( H_X \Psi_\lambda \right) \xi \right) &= \frac{1}{2} \xi^i \xi^j \partial_i \partial_j \psi_\lambda,
\end{align*}
which yields \eqref{conjugated}. We note that $\widetilde{b},\widetilde{\xi} \in L^\infty([0,T];C_b^{1+\alpha}(\mathbb{R}^n,\mathbb{R}^n))$ and by the classical results in \cite{kunita1997stochastic}, equation \eqref{conjugated} has a unique strong solution which defines a flow of $C^{1+\alpha'}$-diffeomorphisms $\varphi_{s,t}$, $0\leq s \leq t \leq T,$ for any $\alpha' \in (0,\alpha).$ As in \cite{flandoli2010well}, owing to the regularity shown for $\psi_\lambda$ in Step 1 and for $\Psi_\lambda$ in Step 2, and using the fact that $\phi_{s,t} = \Psi_\lambda^{-1} \circ \varphi_{s,t} \circ \Psi_\lambda,$ we conclude that $\phi_{s,t}$ enjoys the extra regularity specified in Theorem \ref{flowfla} and the solution to \eqref{flownotation} is unique, which proves Item \ref{primeritem}.

\noindent{\bf{Proof of Item \ref{segundoitem}}.} The argument follows almost verbatim the techniques in \cite{flandoli2010well}, so we omit it.

\noindent{\bf{Proof of Item \ref{terceritem}.}}
The following lemma can be derived as a special case of Corollary 5.3 in \cite{leahy2015some}, where H\"older properties of flows of SDEs with stochastic coefficients are analysed. Our lemma is obtained by setting $l=0,$ $\delta=\alpha,$ $X_s := \sup_{s\leq t \leq T} \lvert D \phi_{s,t}(x)\rvert,$ for $s \in [0,T]$ and $x \in \R^n,$ applying Corollary 5.3 in \cite{leahy2015some} with these constants, and then taking suprema in $0 \leq s \leq T.$ We note that the inequality shown in Corollary 5.3 in \cite{leahy2015some} is also valid for the suprema since their constant $N$ is independent of $X$ and our constant $C$ in Lemma \ref{JM-lemma} is independent of $s.$
\end{proof}

\begin{lemma}\label{JM-lemma}
Let $\alpha \in (0,1]$ and $r\geq 2$ be a constant satisfying $\alpha r > n$. Moreover, assume that there exists a constant $C>0$ such that
\begin{align}
    &\sup_{x \in \mathbb R^n} \sup_{0\leq s \leq T} \mathbb{E}\left[\sup_{s \leq t \leq T} \lvert D \phi_{s,t}(x)\rvert^r\right] \leq C, \label{eq:D-phi-Lp}\\
    &\sup_{0\leq s \leq T} \mathbb{E}\left[\sup_{s \leq t \leq T} \lvert D \phi_{s,t}(x) - D \phi_{s,t}(y)\rvert^r\right] \leq C\lvert x-y\rvert ^{\alpha r}, \quad x,y \in \R^n. \label{eq:D-phi-Lp-difference}
\end{align}
Then for any $\alpha' \in (0, \alpha - n/r)$ and $\epsilon > n/r$, property \eqref{eq:JM-estimate} holds.
\end{lemma}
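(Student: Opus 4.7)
The plan is to apply a weighted Garsia--Rodemich--Rumsey / Kolmogorov--Chentsov continuity theorem, precisely in the spirit of Corollary 5.3 of \cite{leahy2015some}, to the random field
\begin{align*}
Z_s(x) := \sup_{s \leq t \leq T}\lvert D\phi_{s,t}(x)\rvert,\qquad x \in \R^n,
\end{align*}
which absorbs the supremum in $t$ into the random variable before the continuity argument in $x$ is performed. Hypothesis \eqref{eq:D-phi-Lp-difference} gives the moment increment bound $\E\lvert Z_s(x) - Z_s(y)\rvert^r \leq C\lvert x-y\rvert^{\alpha r}$ uniformly in $s$, and \eqref{eq:D-phi-Lp} gives $\E\lvert Z_s(x)\rvert^r \leq C$ uniformly in $(s,x)$. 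Since $\alpha r > n$, the standard GRR argument applied on any fixed ball $B(0,R)$ yields a continuous modification of $Z_s$ whose $C^{\alpha'}(B(0,R))$-seminorm has finite $r$-th moment for every $\alpha' < \alpha - n/r$.

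To pass from local to global in $x$ using the weight $[\tilde r(x)]^{-\epsilon}$, I would split the weighted increment as
\begin{align*}
\bigl\lvert [\tilde r(x)]^{-\epsilon}Z_s(x) - [\tilde r(y)]^{-\epsilon}Z_s(y)\bigr\rvert \leq [\tilde r(x)]^{-\epsilon}\lvert Z_s(x)-Z_s(y)\rvert + \bigl\lvert[\tilde r(x)]^{-\epsilon}-[\tilde r(y)]^{-\epsilon}\bigr\rvert \,\lvert Z_s(y)\rvert.
\end{align*}
For the second summand I would invoke the elementary Lipschitz bound $\lvert [\tilde r(x)]^{-\epsilon}-[\tilde r(y)]^{-\epsilon}\rvert \lesssim \lvert x-y\rvert\,\min([\tilde r(x)],[\tilde r(y)])^{-\epsilon-1}$, which combines with \eqref{eq:D-phi-Lp} to give a globally bounded contribution in $L^r(\Omega)$. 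For the first summand I would decompose $\R^n$ into a locally finite cover by unit balls centered at a lattice, apply the local GRR estimate on each, and sum: the close-pair regime ($\lvert x-y\rvert \leq 1$) is controlled by the local Hölder bound weighted by $[\tilde r]^{-\epsilon}$, while the far-pair regime is trivially controlled by $Z_s(x)+Z_s(y)$ and the weight. In both cases the sum over lattice points converges because $\epsilon r > n$, which is precisely the summability threshold for $\int_{\R^n} [\tilde r(x)]^{-\epsilon r}\,\diff^n x$.

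The main obstacle is the uniformity of the resulting constants in $s \in [0,T]$, without which the final outer supremum in \eqref{eq:JM-estimate} would be meaningless. This uniformity is however automatic here: the hypotheses \eqref{eq:D-phi-Lp} and \eqref{eq:D-phi-Lp-difference} are themselves uniform in $s$, and the constant produced by Corollary 5.3 of \cite{leahy2015some} depends only on these moment bounds together with $n, r, \alpha, \alpha', \epsilon$. Consequently, fixing an arbitrary $s$, the argument above yields an estimate of the form $\E[(\cdots)^r] \leq C$ with $C$ independent of $s$, after which taking $\sup_{0\le s\le T}$ outside the expectation produces exactly \eqref{eq:JM-estimate}. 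The cleanest implementation is therefore to verify the hypotheses of Corollary 5.3 of \cite{leahy2015some} with $l=0$, $\delta=\alpha$, and the random variable $X_s$ set to $Z_s(x)$, and then invoke that result directly.
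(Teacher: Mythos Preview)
Your proposal is correct and lands on exactly the same argument as the paper: the paper's proof of this lemma consists solely of invoking Corollary~5.3 of \cite{leahy2015some} with $l=0$, $\delta=\alpha$, and $X_s := \sup_{s\leq t\leq T}\lvert D\phi_{s,t}(x)\rvert$, then noting that the resulting constant is independent of $s$ so that one may take $\sup_{0\leq s\leq T}$ outside --- precisely what you arrive at in your final paragraph. Your intermediate sketch of the GRR/lattice-covering mechanism is a reasonable unpacking of what that corollary does, but is not needed once you cite it; the only minor imprecision is that $Z_s(x)$ as you define it is scalar-valued while \eqref{eq:JM-estimate} concerns the matrix $D\phi_{s,t}$, so strictly one applies the corollary component-wise (or to the $C([s,T];\R^{n\times n})$-valued field $x\mapsto D\phi_{s,\cdot}(x)$), a point on which the paper is equally informal.
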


Hence, in order to establish \eqref{eq:JM-estimate}, it suffices to verify estimates \eqref{eq:D-phi-Lp} and \eqref{eq:D-phi-Lp-difference}, which we claim to be true in our setting. Since the proof of this claim is technical and lengthy, we include it in Appendix \ref{twoproperties}.

\begin{remark} \label{inversa}
We note that Items \ref{segundoitem} $\&$ \ref{terceritem} in Theorem \ref{flowfla} also hold for the inverse flow $\phi_{s,t}^{-1}$. Indeed, by using the flow properties (and {\em formal} Stratonovich notation for simplicity), we observe that
\begin{align*} 
    \phi_{s,t}(\phi_{s,t}^{-1}(y)) = \phi_{s,t}^{-1}(y) + \int_s^t b(r,\phi_{s,r}(\phi_{s,t}^{-1}(y))) \, \diff r + \sum_{i=1}^N \int_s^t \xi_i(r,\phi_{s,r}(\phi_{s,t}^{-1}(y))) \circ \diff W_r^i,
\end{align*}
which can be rewritten as
\begin{align*} 
    \phi_{s,t}^{-1}(y) = y - \int_s^t b(r,\phi_{r,t}^{-1}(y)) \, \diff r - \sum_{i=1}^N \int_s^t \xi_i(r,\phi_{r,t}^{-1}(y)) \circ \diff W_r^i,
\end{align*}
where $0 \leq s \leq t,$ $y \in \mathbb{R}^n,$ so for fixed $t \in [0,T],$ the inverse flow satisfies a backward SDE with a sign change in the drift and diffusion coefficients and hence enjoys similar properties.
\end{remark}
We present the following important consequence of Theorem \ref{flowfla} \eqref{terceritem}. 
\begin{corollary} \label{consequences}
Let $R>0$ and $r \geq 1.$ We have

\begin{align} 
    & \lim_{n \rightarrow \infty}  \mathbb{E} \left [ \int_0^T \int_{B(0,R)} \lvert \phi_t^{n}(x)-\phi_t(x)\rvert^r \diff^n x \, \diff t \right]   = 0, \label{weakflow} \\ 
    & \lim_{n \rightarrow \infty}   \mathbb{E} \left [ \int_0^T \int_{B(0,R)} \lvert D \phi_t^{n}(x)- D \phi_t(x)\rvert^r \diff^n x \, \diff t \right]  = 0. \label{weakjacobian}
\end{align}
\end{corollary}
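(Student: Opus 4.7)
The plan is to reduce both claims to the uniform-in-$x$ $L^r(\Omega)$ convergence estimates already established in Theorem \ref{flowfla}, Item \ref{segundoitem}, specialised to $s=0$. Implicit in the statement is that $\phi^n$ denotes the flow associated with an approximating sequence $b^n, \xi_i^n \to b, \xi_i$ in $L^\infty([0,T]; C_b^\alpha(\R^n, \R^n))$ as in the hypothesis of that item; the corollary is nothing more than a local-in-space, $L^r$-in-time repackaging of those stronger bounds.

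First, since the integrand is non-negative, I would apply Fubini--Tonelli to swap expectation and space-time integrals, obtaining
\begin{align*}
\mathbb{E}\left[\int_0^T \int_{B(0,R)} \lvert \phi_t^n(x) - \phi_t(x)\rvert^r \diff^n x \, \diff t\right] = \int_{B(0,R)} \int_0^T \mathbb{E}\left[\lvert \phi_t^n(x) - \phi_t(x) \rvert^r\right] \diff t \, \diff^n x.
\end{align*}
Pointwise in $x$, the inner expectation is dominated by $\mathbb{E}\bigl[\sup_{0 \leq t \leq T} \lvert \phi_t^n(x) - \phi_t(x)\rvert^r\bigr]$, and this quantity may be pulled out of both integrals after taking its supremum over $x \in \R^n$. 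This yields
\begin{align*}
\mathbb{E}\left[\int_0^T \int_{B(0,R)} \lvert \phi_t^n(x) - \phi_t(x)\rvert^r \diff^n x \, \diff t\right] \leq T \, \lvert B(0,R)\rvert \, \sup_{x \in \R^n} \mathbb{E}\left[\sup_{0 \leq t \leq T} \lvert \phi_t^n(x) - \phi_t(x)\rvert^r\right],
\end{align*}
and the right-hand side tends to zero as $n \to \infty$ by \eqref{phi-convergence} with $s=0$, establishing \eqref{weakflow}.

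For \eqref{weakjacobian}, the same three lines go through verbatim with $\phi_t^n, \phi_t$ replaced by $D\phi_t^n, D\phi_t$, invoking \eqref{phi-deriv-convergence} instead of \eqref{phi-convergence}. There is no genuine obstacle to overcome: all the analytic content (the It\^o--Tanaka trick, the parabolic estimates, and the Kolmogorov-type arguments used to produce the uniform-in-$x$ convergence of the flow and its Jacobian) has already been absorbed into Theorem \ref{flowfla}. The only thing to observe is that integration over a bounded time interval and a bounded ball in $x$ costs at most a factor of $T\,\lvert B(0,R)\rvert$, which is finite.
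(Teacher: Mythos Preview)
Your proposal is correct and is precisely the intended argument: the paper presents this result as an immediate corollary of Theorem \ref{flowfla} with no explicit proof, and your Fubini--Tonelli reduction to the uniform-in-$x$ bounds \eqref{phi-convergence} and \eqref{phi-deriv-convergence} from Item \ref{segundoitem} is the natural (and only reasonable) way to fill in the details. One minor remark: the paper's text attributes the corollary to Item \ref{terceritem}, but as you correctly identify, it is Item \ref{segundoitem} that does the work here.
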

Finally, we state a remarkable result shown in \cite{flandoli2010well} (see Theorem 11) that translates easily into our context.
\begin{theorem} \label{regularidad1}
Let the drift and diffusion vector fields satisfy Hypothesis \ref{hypoflow} and let $r>q\geq 2$. If $\div{(b)} \in L^{r}([0,T] \times \mathbb{R}^n)$, then $\mathbb{P}$-a.s. $J \phi \in L^{q}([0,T];W^{1,q}_{loc}(\mathbb{R}^n,\mathbb{R}^n)).$
\end{theorem}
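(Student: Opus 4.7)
My plan is to closely mimic the argument of Theorem 11 in \cite{flandoli2010well}, whose transfer from the FGP setting ($\xi_i = e_i$, so $\div(\xi_i)=0$) to ours is essentially cosmetic: by Hypothesis \ref{hypoflow} (ii) the $\xi_i$ are uniformly $C^2_b$, so $\div(\xi_i)$ is bounded and the stochastic correction terms arising from nonzero divergence of the diffusion are easily controlled. The proof has three stages: smooth approximation of $b$, a uniform $L^q_tW^{1,q}_{loc}$ bound on the approximating Jacobian determinants via a duality trick, and weak-compactness passage to the limit. Throughout I read $J\phi$ as the Jacobian determinant $\det D\phi$, for which the codomain in the statement should be interpreted as $\R$.

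I would first mollify the drift spatially, $b^n := b * \rho^{1/n}$, giving $b^n \to b$ in $L^\infty([0,T]; C_b^{\alpha'}(\R^n,\R^n))$ for every $\alpha' \in (0,\alpha)$ and, crucially, $\div(b^n) \to \div(b)$ in $L^r([0,T]\times\R^n)$. For each $n$ the flow $\phi^n$ is smooth by Assumption \ref{assumpstrong}, and $J\phi^n_t(x) := \det D\phi^n_t(x)$ solves the multiplicative Stratonovich SDE
\begin{align*}
dJ\phi^n_t(x) = J\phi^n_t(x)\Bigl[\div(b^n)(t,\phi^n_t(x))\,dt + \sum_{i=1}^N\div(\xi_i)(t,\phi^n_t(x))\circ dW^i_t\Bigr],
\end{align*}
and in particular
\begin{align*}
\log J\phi^n_t(x) = \int_0^t \div(b^n)(s,\phi^n_s(x))\,ds + M^n_t(x),
\end{align*}
with $M^n_t(x)$ a stochastic correction involving $\div(\xi_i)(\phi^n_s(x))$, uniformly controlled in $n$ via Burkholder-Davis-Gundy and $\xi_i \in C^2_b$.

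The crux is a bound on $\mathbb{E}\int_0^T \|J\phi^n_t\|_{W^{1,q}(B_R)}^q\,dt$ uniform in $n$. A naive differentiation of $\log J\phi^n_t$ produces $D\div(b^n) = D^2 b^n$, which diverges as $n\to\infty$ under only $\div(b)\in L^r$. Following FGP, one instead passes the derivative off $\div(b^n)$ by duality: for a test vector field $\eta\in C_0^\infty(B_R;\R^n)$,
\begin{align*}
\int_{B_R} DJ\phi^n_t(x)\cdot\eta(x)\,dx = -\int_{B_R} J\phi^n_t(x)\,\div(\eta)(x)\,dx,
\end{align*}
and one estimates the right-hand side in $L^q_{\omega,t}$ by combining (a) uniform $L^p_{loc}$ bounds on $J\phi^n_t$ derived from its exponential form together with the Ito-Tanaka control of $\phi^n$ from Theorem \ref{flowfla}; (b) the uniform bounds on $D\phi^n$ and $(D\phi^n)^{-1}$ supplied by the same theorem and Remark \ref{inversa}; and (c) the change of variables $y=\phi^n_s(x)$, which transfers $\div(b^n)(\phi^n_s)$ to $\div(b^n)$ evaluated on a bounded set in $\R^n$. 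The gap $r > q$ is precisely what enables Hölder's inequality to close.

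Once the uniform bound is in place, a subsequence of $\{J\phi^n\}$ converges weakly in $L^q([0,T]; W^{1,q}_{loc}(\R^n))$ to some limit. On the other hand, Corollary \ref{consequences} yields $D\phi^n \to D\phi$ in $L^s_{loc}$ for every $s\geq 1$, so by continuity of the determinant $J\phi^n \to J\phi$ in $L^1([0,T]\times B_R)$ for every $R$, identifying the weak limit with $J\phi$. The main obstacle throughout is the uniform $W^{1,q}_{loc}$ estimate in the middle step — exponential integrability of $J\phi^n_t$ uniform in $n$ and the careful tracking of all the changes of variables are subtle; everything else is a standard passage to the limit.
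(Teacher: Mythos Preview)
Your overall architecture---mollify $b$, prove a uniform $L^q_t W^{1,q}_{loc}$ bound on $J\phi^n$, pass to the limit by weak compactness and identify the limit via Corollary \ref{consequences}---matches the paper, which simply refers to the proof of Theorem 11 in \cite{flandoli2010well}. The gap is in the middle step, and it is a real one.

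The ``duality trick'' you wrote down,
\[
\int_{B_R} DJ\phi^n_t\cdot\eta\,dx = -\int_{B_R} J\phi^n_t\,\div(\eta)\,dx,
\]
is just the definition of the weak derivative and gives no bound on $\|DJ\phi^n_t\|_{L^q}$: taking the supremum over $\|\eta\|_{L^{q'}}\le 1$ leaves $\|\div\eta\|_{L^{q'}}$ completely uncontrolled. Your items (a)--(c) refer to $\div(b^n)(\phi^n_s)$ and a change of variables, but neither quantity appears on the right-hand side of your identity; and if instead you differentiate $\log J\phi^n_t$ first, then change variables $y=\phi^n_s(x)$ and integrate by parts in $y$, the derivative lands on factors containing $D\phi^n_s$ and $J(\phi^n_s)^{-1}$, producing $D^2\phi^n$---which is not uniformly controlled (Theorem \ref{flowfla} gives only $C^{1+\alpha'}$). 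So neither reading of the duality argument closes.

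What \cite{flandoli2010well} actually does, and what the paper invokes, is a parabolic-equation trick rather than an integration by parts. One solves the backward equation $\partial_t U + L U = \div(b)$, $U(T,\cdot)=0$, where $L$ is the generator in \eqref{eq:L-operator}; $L^r$-parabolic Sobolev regularity (this is precisely the reference to Theorem 9, Chapter 7 in \cite{sobolevkrylov}, which replaces the Laplacian-based estimates in \cite{flandoli2010well} by estimates for general uniformly elliptic $L$) yields $\partial_t U, D^2U \in L^r$ and hence $DU$ controlled. It\^o's formula then gives
\[
\int_0^t \div(b)(s,\phi_s(x))\,ds = U(t,\phi_t(x)) - U(0,x) - \sum_i\int_0^t (DU\cdot\xi_i)(s,\phi_s(x))\,dW^i_s,
\]
so that differentiating $\log J\phi_t(x)$ in $x$ only requires $DU$ (controlled by parabolic regularity) and $D\phi$ (controlled uniformly by Theorem \ref{flowfla}), never $D\div(b)$. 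The condition $r>q$ enters through the parabolic estimate and the subsequent H\"older step when assembling the $W^{1,q}$ norm. This is the missing ingredient in your sketch.
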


\begin{proof}
The result follows by a simple modification of the proof of Theorem 11 in \cite{flandoli2010well} (note that the employed parabolic estimates in Sobolev spaces also hold for our general system by Theorem 9, Chapter 7 in \cite{sobolevkrylov}).
\end{proof}

In the following result, we show that if the drift and diffusion vector fields of \eqref{flownotation} are divergence-free {\em in the sense of distributions}, the stochastic flow constructed in Theorem \ref{flowfla} is a.s. volume-preserving. This is proven in \cite{flandoli2010well} in the case $\xi_i := e_i,$ $i=1, \ldots,n,$ and we follow exactly the same argument.
\begin{theorem}
\label{volume} 
Let $b$ and $\xi_i,$ $i=1,\ldots,N$ satisfy Hypothesis \ref{hypoflow}. Assume that $\div{(b)}=\div{(\xi_i)} = 0,$ $i=1,\ldots,N,$ in the sense of distributions. Then $\mathbb{P}$-a.s. the flow of \eqref{flownotation} is volume-preserving, i.e. $J\phi _{s,t}(x)=1$, for $0\leq s\leq
t\leq T$ and $x\in {\mathbb{R}}^{n}$.
\end{theorem}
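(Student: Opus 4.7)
The plan is to follow the approach of \cite{flandoli2010well}: approximate $b$ and $\xi_i$ by smooth divergence-free vector fields, use the classical volume-preserving property for the resulting smooth flows, and pass to the limit using the stability results of Theorem \ref{flowfla} Item \ref{segundoitem} and Corollary \ref{consequences}.

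First, I would construct smooth divergence-free approximations by componentwise spatial mollification, $b^n := b * \rho^{1/n}$ and $\xi_i^n := \xi_i * \rho^{1/n}$. Since convolution commutes with the distributional divergence, the hypotheses $\div(b)=0=\div(\xi_i)$ in $\mathcal{D}'(\R^n)$ yield the pointwise identities $\div(b^n)=0=\div(\xi_i^n)$ on $[0,T]\times \R^n$. Fix $\alpha' \in (0,\alpha)$. Standard mollification bounds give $b^n \to b$ in $L^\infty([0,T]; C^{\alpha'}_b(\R^n,\R^n))$, while the uniform boundedness of $D\xi_i$ yields $\xi_i^n \to \xi_i$ uniformly on $[0,T]\times \R^n$ at rate $O(1/n)$ together with $\xi_i^n \to \xi_i$ in $C_b([0,T]; C^{2}_b)$ in the appropriate sense. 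By the uniform convergence, the matrix $\sum_{r=1}^N \xi_r^n \otimes \xi_r^n$ inherits, for $n$ sufficiently large, the uniform parabolicity bounds \eqref{strongparabolicity} (possibly with slightly shifted constants). Hence Hypothesis \ref{hypoflow} holds for $(b^n,\xi_i^n)$ with exponent $\alpha'$, and Theorem \ref{flowfla} Item \ref{primeritem} produces an associated stochastic flow of $C^{1+\alpha''}$-diffeomorphisms $\phi^n_{s,t}$.

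Second, since $(b^n, \xi_i^n)$ are smooth, the matrix-valued process $D\phi^n_{s,t}(x)$ satisfies the linear Stratonovich variational equation obtained by differentiating \eqref{flow-map-stratonovich} in $x$, and Jacobi's formula $\diff \det M = \det(M)\tr(M^{-1} \diff M)$ yields
\begin{align*}
\diff J\phi^n_{s,t}(x) = \div(b^n)(t,\phi^n_{s,t}(x))\, J\phi^n_{s,t}(x)\, \diff t + \sum_{i=1}^N \div(\xi_i^n)(t,\phi^n_{s,t}(x))\, J\phi^n_{s,t}(x) \circ \diff W^i_t,
\end{align*}
with $J\phi^n_{s,s}(x) = 1$. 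The vanishing of every divergence forces $J\phi^n_{s,t}(x) = 1$ for all $0 \leq s \leq t \leq T$ and all $x \in \R^n$, $\P$-a.s.

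Finally, I would pass to the limit. By Corollary \ref{consequences} applied to $(b^n, \xi_i^n)$, for every $R>0$ and $r \geq 1$ we have $D\phi^n \to D\phi$ in $L^r(\Omega \times [0,T] \times B(0,R))$; choosing $r \geq n$ and extracting a subsequence, $D\phi^n_{s,t}(x) \to D\phi_{s,t}(x)$ for a.e.~$(\omega,s,t,x)$, and continuity of the determinant as a polynomial in its entries yields $J\phi^n \to J\phi$ along the same subsequence. Combining this with $J\phi^n \equiv 1$, we obtain $J\phi_{s,t}(x) = 1$ for a.e.~$(\omega, s, t, x)$, and the $\P$-a.s.~continuity of $(s,t,x) \mapsto J\phi_{s,t}(x)$ guaranteed by Theorem \ref{flowfla} Item \ref{primeritem} upgrades this to an everywhere identity. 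The main technical obstacle is verifying that the mollified coefficients genuinely satisfy the hypotheses of Theorem \ref{flowfla} Item \ref{segundoitem} uniformly in $n$ --- above all, preserving the uniform parabolicity \eqref{strongparabolicity} and obtaining convergence in the correct $C^\alpha_b$-norm --- but these are accommodated by working with the slightly smaller H\"older exponent $\alpha'$ throughout.
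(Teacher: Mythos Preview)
Your proposal is correct and follows essentially the same approximation-and-limit strategy as the paper. Two small points of comparison are worth noting. First, the paper regularises only $b$ and keeps the diffusion $\xi$ fixed, which sidesteps entirely the verification that the mollified diffusions still satisfy the uniform parabolicity bound \eqref{strongparabolicity}; your choice to mollify both is fine but creates the extra technical check you flag at the end. Second, Corollary \ref{consequences} as stated only concerns $\phi_t = \phi_{0,t}$, so it does not directly yield $D\phi^n_{s,t}(x) \to D\phi_{s,t}(x)$ for a.e.~$(\omega,s,t,x)$; the paper instead fixes $(s,x)$, invokes \eqref{phi-deriv-convergence} from Theorem \ref{flowfla} Item \ref{segundoitem} (which already carries the supremum over $s$ and $t$), extracts an a.s.~convergent subsequence, and then uses the $\P$-a.s.~joint continuity of $J\phi$ in $(s,t,x)$ to conclude. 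Replacing your appeal to Corollary \ref{consequences} with a direct use of \eqref{phi-deriv-convergence} closes this gap immediately.
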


\begin{proof}
We assume without loss of generality that $N=1.$ Let $\{b^j\}_{j = 1}^\infty$ be a sequence of vector fields such that $\div{(b^j)}= 0,$ $b^j \rightarrow b$ in $L^\infty\left([0,T]; C_b^\alpha(\mathbb R^n, \mathbb R^n) \right)$, $j \rightarrow \infty$ (these can be constructed as in the proof of Lemma 9 in \cite{flandoli2010well}). Let $\{\phi^j_{s,t}\}_{j=1}^\infty$ be the corresponding sequence of associated stochastic flows (with fixed diffusion vector field $\xi$). Applying Theorem II.3.1 in \cite{kunita1984stochastic}, we obtain $\mathbb{P}$-a.s. $J\phi^j _{s,t}(x)=1$, for all $0\leq s\leq
t\leq T,$ $x\in {\mathbb{R}}^{n},$ and $j \in \mathbb{N}$. Fix $s\in (0,T]$ and $x\in {\mathbb{R}}^{n}.$ By taking into account \eqref{phi-deriv-convergence}, there existence a subsequence $\{D\phi_{s,t}^{j_k}\}_{k=1}^\infty$ (depending on $(s,x)$) such that $\mathbb{P}$-a.s.
\begin{align*}
\lvert D\phi _{s,t}^{j_k}(x)-D\phi _{s,t}(x) \rvert
^{2} \rightarrow 0, \quad j_k \rightarrow \infty.
\end{align*}
Since $J\phi$ is well-defined, we must have $J \phi_{s,t}(x) = 1$. The proof is complete.
\end{proof}

\section{Existence of stochastic solutions} \label{existence}
In this section, we prove our main theorem of existence of solutions to equation \eqref{eq0}, namely Theorem \ref{maintheoremforms}. Only sometimes throughout the rest of this paper (when the notation becomes especially convoluted in a proof), the $L^r$ norms of measurable objects defined on the measure spaces $(\Omega,\mathcal{F},\mathbb{P}),$ $([0,T],\lambda([0,T]),\lambda),$ and $(\mathbb{R}^n,\lambda(\mathbb{R}^n),\lambda),$ 
where $\lambda(\cdot)$ represents the Lebesgue sigma-algebra and $\lambda$ the Lebesgue measure, may be denoted by $\norm{\cdot}_{L^r_\omega},$ $\norm{\cdot}_{L^r_t},$ and $\norm{\cdot}_{L^r_x},$ respectively. We may also use notations of the type $\norm{\cdot}_{L^r_x,L^q_t}$ for the mixed norms in the product spaces. 

\subsection{Existence for regular drift and initial condition}

First, we establish the existence of weak solutions in the case when the drift vector field $b$ is smooth in space. Then we extend these results to the case when $b$ is only $\alpha$-H\"older continuous and satisfies Hypothesis \ref{hypoexistence} by means of a limiting argument. 
\begin{proposition}[Existence of classical solutions for regular drift and initial condition]  \label{strong-sol-smooth}
Enforce Assumption \ref{assumpstrong} with $m=3$ and $\beta \in (0,1),$ and let $\mathbf{K}_0 \in C^2(\R^n,\bigwedge^k \mathcal{T}^*\mathbb R^n)$. Then there exists a strong solution to equation \eqref{eq0} of the form $\mathbf{K}_t(x) := (\phi_t)_* \mathbf{K}_0(x)$, where $\phi_t$ is the flow of \eqref{flowmap}.
\end{proposition}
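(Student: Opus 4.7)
My plan is to derive the proposition as an essentially immediate consequence of the push-forward version of the Kunita--It\^o--Wentzell formula \eqref{KIWpush} applied to the constant-in-time $k$-form semimartingale $\mathbf{K}(t,x) \equiv \mathbf{K}_0(x)$. Concretely, I take $\mathbf{G} \equiv 0$, $\mathbf{H}_i \equiv 0$ (so that $M$ is arbitrary and the cross-variation term drops out), and choose the driving Brownian motion in Theorem \ref{KIWmain} to coincide with the one driving the flow, i.e.\ $B = W$. The two trivial semimartingale hypotheses (a), (b) on $\mathbf{K}$ hold because $\mathbf{K}_0 \in C^2$, and the integrability and $C^1$-hypotheses (i), (ii) on $\mathbf{G}, \mathbf{H}_i$ are vacuous. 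Reading off \eqref{KIWpush} in this situation, one obtains after a trivial rearrangement
\begin{align*}
 & (\phi_t)_*\mathbf{K}_0(x) - \mathbf{K}_0(x) + \int_0^t \mathcal{L}_b[(\phi_s)_*\mathbf{K}_0](x)\,\diff s \\
 & \hspace{80pt} + \sum_{j=1}^N \int_0^t \mathcal{L}_{\xi_j}[(\phi_s)_*\mathbf{K}_0](x)\,\diff W^j_s  - \frac12 \sum_{j=1}^N \int_0^t \mathcal{L}^2_{\xi_j}[(\phi_s)_*\mathbf{K}_0](x)\,\diff s = 0,
\end{align*}
which is precisely \eqref{classicalito} for $\mathbf{K}_t(x) := (\phi_t)_*\mathbf{K}_0(x)$.

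The next step is to check that Assumption \ref{assumpstrong} with $m=3$, $\beta \in (0,1)$ implies all the hypotheses used by Theorem \ref{KIWmain} in the push-forward formula. Since $b \in L^\infty([0,T];C^{3+\beta}_b(\R^n,\R^n))$ and $\xi_i \in L^\infty([0,T];C^{4+\beta}_b(\R^n,\R^n))$, we clearly have Hypothesis \ref{hypoflow} \ref{bcondition} (with any $\alpha \in (0,\beta]$) and Hypothesis \ref{stronguniqueness} with $q=1$ (in fact for any $q$), since $Db$ is bounded. Assumption \ref{assumpstrong} also guarantees existence of a unique adapted stochastic flow of $C^3$-diffeomorphisms $\phi_t$ solving \eqref{flowmap}, so in particular the $C^1$-diffeomorphism property required by Theorem \ref{KIWmain} is satisfied. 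Note that we do not need the parabolicity condition \ref{xicondition} of Hypothesis \ref{hypoflow} here, as it is not invoked in the KIW theorem. This verifies every ingredient needed to invoke \eqref{KIWpush}.

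Finally, I need to verify that the candidate solution $\mathbf{K}_t(x) := (\phi_t)_*\mathbf{K}_0(x)$ satisfies the path-regularity and adaptedness requirements of Definition \ref{classicalsol}. Adaptedness is immediate since $\phi_t$ is $\mathcal{F}_t$-adapted and $\mathbf{K}_0$ is deterministic. For the $C^2$-in-space regularity, I would use the explicit coordinate formula for the push-forward of a $k$-form,
\begin{equation*}
 ((\phi_t)_*\mathbf{K}_0)_{i_1\cdots i_k}(x) \;=\; (\mathbf{K}_0)_{j_1\cdots j_k}(\phi_t^{-1}(x))\,\frac{\partial (\phi_t^{-1})^{j_1}}{\partial x^{i_1}}(x)\cdots \frac{\partial (\phi_t^{-1})^{j_k}}{\partial x^{i_k}}(x),
\end{equation*}
combined with the fact that $\phi_t^{-1}$ is $\mathbb{P}$-a.s.\ of class $C^{3+\beta'}$ in $x$ (Remark \ref{inversa}/Theorem \ref{flowfla}) and $\mathbf{K}_0 \in C^2$. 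The chain rule then yields that $(\phi_t)_*\mathbf{K}_0 \in C^2(\R^n,\bigwedge^k\mathcal{T}^*\R^n)$, and joint continuity $(t,x) \mapsto (\phi_t)_*\mathbf{K}_0(x)$ of class $C([0,T];C^2(\R^n,\bigwedge^k\mathcal{T}^*\R^n))$ follows from the joint continuity of $\phi_t^{-1}$ and $D\phi_t^{-1}$ in $(t,x)$, which is again a consequence of the $C^{3+\beta'}$-semimartingale property of the flow.

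The only mildly delicate point I anticipate is the bookkeeping in the last paragraph: writing the push-forward in coordinates and explicitly checking that uniform-on-compact continuity of $\phi_t^{-1}$ and its first two spatial derivatives in $(t,x)$ transfers to the desired $C([0,T];C^2_{loc})$ regularity of $\mathbf{K}_t$. This is a standard chain-rule argument but benefits from being carried out carefully. Everything else is a direct application of Theorem \ref{KIWmain}; no new analytic estimates are required beyond those already proved in Section \ref{flowsection}.
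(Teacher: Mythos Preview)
Your proposal is correct and follows essentially the same approach as the paper: apply the push-forward KIW formula \eqref{KIWpush} to the constant-in-time semimartingale $\mathbf{K}(t,\cdot)\equiv \mathbf{K}_0$, then verify the $C([0,T];C^2)$ path regularity via the explicit coordinate formula for $(\phi_t)_*\mathbf{K}_0$ using the $C^3$-diffeomorphism property of the flow. The only minor slip is your citation of Theorem \ref{flowfla}/Remark \ref{inversa} for the $C^3$-regularity of $\phi_t^{-1}$: those results concern the low-regularity drift setting under Hypothesis \ref{hypoflow} (including parabolicity), whereas here the $C^3$-diffeomorphism property of $\phi_t$ and its inverse follows directly from Assumption \ref{assumpstrong} with $m=3$ (i.e.\ Kunita's classical results), exactly as the paper invokes it.
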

\begin{proof}
For the sake of simplicity, we assume without loss of generality that $N=1$. By Assumption \ref{assumpstrong} with $m=3$, $\phi_t$ is a stochastic flow of $C^3$-diffeomorphisms. 

Applying Theorem \ref{KIW}, we have
\begin{align*} 
    &(\phi_t)_* \mathbf{K}_0(x) =  \mathbf{K}_0(x) - \int^t_0 \mathcal L_b [(\phi_s)_* \mathbf{K}_0] (x) \,  \diff s 
    - \int^t_0 \mathcal L_{\xi} [(\phi_s)_* \mathbf{K}_0](x) \, \diff W_s \nonumber \\
    &\hspace{130pt} + \frac12 \int^t_0 \mathcal L_{\xi} \mathcal L_{\xi} [(\phi_s)_* \mathbf{K}_0](x) \,  \diff s,
\end{align*}
for $t \in [0,T]$ and $x \in \mathbb{R}^n.$ Hence $\mathbf{K}_t$ solves \eqref{eq0} in the sense of Definition \ref{classicalsol}. For this, we note that $\mathbb{P}$-a.s., $\mathbf{K}$ has paths of class $C([0,T];C^2(\mathbb{R}^n,\bigwedge^k \mathcal{T}^*\mathbb R^n))$, since by definition,
\begin{align}  \label{repr}
     ((\phi_t)_* \mathbf{K}_0)_{j_1,\ldots,j_k}(x) = (K_0)_{i_1,\ldots,i_k}(\psi_t(x)) \frac{\partial \psi_t^{i_1}}{\partial x_{j_1}}(x) \cdots \frac{\partial \psi_t^{i_k}}{\partial x_{j_k}}(x)
\end{align}
holds in global coordinates, where $\psi_t := \phi_t^{-1}$ is the inverse flow and summation over repeated indices is assumed.
\end{proof}
Next, we prove the existence of weak solutions of the linear transport of $k$-forms \eqref{eq0} when the drift vector field $b$ is smooth in space and the initial condition is of low regularity. First we establish some useful preliminary lemmas. In the following, we let $J\phi$ represent the {\em determinant} of the Jacobian matrix of a diffeomorphism $\phi$.

\subsection{Auxiliary results}
\begin{lemma} \label{coord-change}
Given a $k$-form $\mathbf{K} \in L^1_{loc}(\R^n,\bigwedge^k \mathcal{T}^*\mathbb R^n)$, a test $k$-form $\mathbf{\Theta} \in C_0^\infty (\R^n,\bigwedge^k \mathcal{T}^*\mathbb R^n),$ and a $C^1$-diffeomorphism $\phi$, we have the identity
\begin{align*}
    \int_{\mathbb R^n} (  \phi_* \mathbf{K}(x), \mathbf{\Theta}(x) )  \, \diff^n x = \int_{\mathbb R^n} \left( \mathbf{K}(y), \lvert J\phi(y)\rvert \left(\phi^*\mathbf{\Theta}^\sharp(y)\right)^\flat \right) \diff^n y,
\end{align*}
where $\sharp$ and $\flat$ are the sharp and flat operators, respectively (see Appendix \ref{tensor} for the definitions).
\end{lemma}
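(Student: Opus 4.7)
The plan is to combine the Euclidean change-of-variables formula with the naturality of the pointwise pairing between $k$-forms and $k$-vectors. First, since the integrand $x \mapsto (\phi_* \mathbf{K}(x), \mathbf{\Theta}(x))$ is a scalar in $L^1_{loc}(\R^n)$ (note $\mathbf{\Theta}$ is compactly supported and $\phi_* \mathbf{K}$ is locally integrable by the $C^1$-regularity of $\phi$), the classical Euclidean change of variables $x = \phi(y)$ with $\diff^n x = \lvert J\phi(y) \rvert \diff^n y$ gives
\begin{align*}
\int_{\R^n} \bigl(\phi_* \mathbf{K}(x),\, \mathbf{\Theta}(x)\bigr)\,\diff^n x \;=\; \int_{\R^n} \bigl(\phi_* \mathbf{K}(\phi(y)),\, \mathbf{\Theta}(\phi(y))\bigr) \lvert J\phi(y)\rvert \,\diff^n y.
\end{align*}

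Next, I rewrite the bundle metric pairing in terms of the natural duality pairing between $k$-forms and $k$-vectors using the sharp operator: $(\mathbf{K}, \mathbf{\Theta}) = \langle \mathbf{K}, \mathbf{\Theta}^\sharp \rangle$. The key fact is that this duality pairing is natural with respect to diffeomorphisms, namely
\begin{align*}
\bigl\langle \phi_* \mathbf{K}(\phi(y)),\, \mathbf{V}(\phi(y))\bigr\rangle \;=\; \bigl\langle \mathbf{K}(y),\, \phi^* \mathbf{V}(y)\bigr\rangle
\end{align*}
for any smooth $k$-vector field $\mathbf{V}$, where $\phi^* \mathbf{V} := (\phi^{-1})_* \mathbf{V}$ denotes the pull-back of the $k$-vector by the diffeomorphism $\phi$. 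Applying this with $\mathbf{V} := \mathbf{\Theta}^\sharp$ and then converting the duality pairing back into a bundle metric pairing via the flat operator, $\langle \mathbf{K}, \phi^* \mathbf{\Theta}^\sharp\rangle = (\mathbf{K}, (\phi^* \mathbf{\Theta}^\sharp)^\flat)$, yields
\begin{align*}
\int_{\R^n} \bigl(\phi_* \mathbf{K}(x), \mathbf{\Theta}(x)\bigr)\,\diff^n x = \int_{\R^n} \bigl(\mathbf{K}(y),\, (\phi^* \mathbf{\Theta}^\sharp)^\flat(y)\bigr)\,\lvert J\phi(y)\rvert \,\diff^n y,
\end{align*}
which, by bilinearity of the bundle metric, is exactly the desired identity.

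To make the naturality step rigorous (especially since $\mathbf{K}$ is only $L^1_{loc}$), I would verify the identity at the level of components. Writing out \eqref{repr} for $(\phi_* \mathbf{K})_{j_1,\ldots,j_k}$ in terms of the inverse Jacobian $D\phi^{-1}$, expressing the Euclidean bundle metric as $\frac{1}{k!}$ times the contraction over all indices, and writing the pull-back $\phi^* \mathbf{\Theta}^\sharp$ of the $k$-vector using the same inverse Jacobian (evaluated at $y$ rather than $x$), both sides reduce to the same multilinear contraction of $K_{i_1\ldots i_k}(y)$, $\Theta_{j_1\ldots j_k}(\phi(y))$, and a product of entries of $(D\phi(y))^{-1}$, weighted by $\lvert J\phi(y)\rvert$. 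The equality then follows from the chain rule identity $\frac{\partial (\phi^{-1})^{i}}{\partial x^{j}}(\phi(y)) = [(D\phi(y))^{-1}]^{i}_{\ j}$.

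The computation is essentially bookkeeping, and no analytical obstacle arises: the main subtlety is simply tracking that the musical isomorphisms $\sharp,\flat$ in Euclidean space are pointwise identities on components, whereas the pull-back $\phi^*$ acts differently on $k$-forms versus $k$-vectors (via $D\phi$ vs $(D\phi)^{-1}$), which is precisely what produces the Jacobian factor $\lvert J\phi \rvert$. Strictly speaking, one may assume $\mathbf{K}$ smooth (e.g. by mollification) to justify the coordinate manipulation, and then pass to the $L^1_{loc}$ limit, since both sides of the identity are continuous in $\mathbf{K} \in L^1_{loc}$ against the fixed compactly-supported smooth test form $\mathbf{\Theta}$.
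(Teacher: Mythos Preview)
Your proof is correct and follows essentially the same approach as the paper: convert the bundle metric to the natural duality pairing via $\sharp$, use the naturality identity $\langle \phi_*\mathbf{K}, \mathbf{\Theta}^\sharp\rangle = \langle \mathbf{K}, \phi^*\mathbf{\Theta}^\sharp\rangle \circ \phi^{-1}$, apply the Euclidean change of variables $y=\phi^{-1}(x)$, and convert back via $\flat$. The only difference is ordering (the paper applies the tensor identity pointwise first and then changes coordinates, whereas you change coordinates first), and your additional remarks on component verification and mollification are more detailed than needed but not wrong.
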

\begin{remark}
Since we are only using the Euclidean metric, for a $k$-form with expression $\mathbf{\Theta}(x) = \Theta_{i_1,\ldots,i_k}(x) \diff x_{i_1}(x) \wedge \ldots  \wedge \diff x_{i_k}(x)$, the corresponding $k$-vector field $\mathbf{\Theta}^\sharp$ reads
\begin{align*}
    \mathbf{\Theta}^\sharp = \Theta_{i_1,\ldots,i_k}(x) \frac{\partial}{\partial x_{i_1}}(x) \wedge \ldots \wedge \frac{\partial}{\partial x_{i_k}}(x),
\end{align*}
and similarly for a $k$-vector field $\mathbf{V}(x) = V^{i_1,\ldots,i_k}(x) (\partial/\partial x_{i_1})(x) \wedge \ldots \wedge (\partial / \partial x_{i_k})(x)$, the corresponding $k$-form $\mathbf{V}^\flat$ reads
\begin{align*}
    \mathbf{V}^\flat =  V^{i_1,\ldots,i_k}(x) \diff x_{i_1}(x) \wedge \ldots  \wedge \diff x_{i_k}(x),
\end{align*}
where $x \in \mathbb{R}^n$ and we assume summation over repeated indices as usually.
\end{remark}
\begin{proof}
By standard tensor identities, we have
\begin{align*}
    (\phi_* \mathbf{K}(x), \mathbf{\Theta}(x) ) = \left<\phi_* \mathbf{K}(x), \mathbf{\Theta}^\sharp(x) \right> = \left<\mathbf{K}(\phi^{-1}(x)), \phi^*\mathbf{\Theta}^\sharp(\phi^{-1}(x))\right>.
\end{align*}
Hence, by the change of coordinates $y = \phi^{-1}(x)$, we obtain
\begin{align*}
    & \int_{\mathbb R^n} (\phi_* \mathbf{K}(x), \mathbf{\Theta}(x)) \, \diff^n x = \int_{\mathbb R^n} \left<\mathbf{K}(\phi^{-1}(x)) , \phi^*\mathbf{\Theta}^\sharp(\phi^{-1}(x))\right>  \, \diff^n x \\
    &= \int_{\mathbb R^n} \left<\mathbf{K}(y),  \phi^*\mathbf{\Theta}^\sharp(y) \right> \lvert J\phi(y)\rvert \, \diff^n y = \int_{\mathbb R^n} \left( \mathbf{K}(y), \lvert J\phi(y) \rvert (\phi^*\mathbf{\Theta}^\sharp(y))^\flat \right ) \diff^n y.
\end{align*}
\end{proof}
\begin{remark} \label{remark:tilde-notation}
When changing coordinates in Lemma \ref{coord-change}, we observe that the term $\lvert J\phi(y)\rvert (\phi^*\mathbf{\Theta}^\sharp(y))^\flat$ appears. For simplicity in our future calculations, we will often denote it by $\widetilde{\mathbf{\Theta}}.$
\end{remark}
We introduce a lemma that we will employ several times.
\begin{lemma}\label{newbound}
Let $\mathbf{\Theta} \in C_0^\infty (\R^n,\bigwedge^k \mathcal{T}^*\mathbb R^n)$, $\phi_t$ be a flow of $C^1$-diffeomorphisms. Then $\mathbb{P}$-a.s. we have the following properties
\begin{enumerate}
\item $\widetilde{\mathbf{\Theta}}_t$ (see previous remark for the notation used) has compact support, uniformly in $t \in [0,T]$. \label{theta-tilde-support}
\item $\|\widetilde{\mathbf{\Theta}}\|_{L^\infty_{t,x}}< \infty$.
\label{theta-tilde-bound}
\end{enumerate}
\end{lemma}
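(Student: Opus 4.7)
Fix a realisation $\omega \in \Omega$ on which $\phi_t(\cdot)$ is a $C^1$-diffeomorphism flow in the sense of Definition \ref{stochasticflow}; by definition this holds outside a $\mathbb{P}$-null set, and in particular $(t,y)\mapsto \phi_t(y)$, $(t,y)\mapsto\phi_t^{-1}(y)$ and $(t,y)\mapsto D\phi_t(y)$ are jointly continuous on $[0,T]\times\mathbb{R}^n$. Recall from Remark \ref{remark:tilde-notation} that
\begin{align*}
    \widetilde{\mathbf{\Theta}}_t(y) = \lvert J\phi_t(y)\rvert \,(\phi_t^* \mathbf{\Theta}^\sharp(y))^\flat.
\end{align*}
Writing this out in coordinates (cf.\ \eqref{repr}), each component of $\widetilde{\mathbf{\Theta}}_t(y)$ is a polynomial in the entries of $D\phi_t(y)$ multiplied by components of $\mathbf{\Theta}$ evaluated at $\phi_t(y)$.

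The plan for \ref{theta-tilde-support} is the following. Since every component of $\widetilde{\mathbf{\Theta}}_t(y)$ carries a factor of $\mathbf{\Theta}\circ\phi_t$, the support satisfies
\begin{align*}
    \operatorname{Supp}(\widetilde{\mathbf{\Theta}}_t)\subseteq \phi_t^{-1}(\operatorname{Supp}(\mathbf{\Theta})),
\end{align*}
so it suffices to show that the set
\begin{align*}
    K(\omega) := \bigcup_{t\in[0,T]} \phi_t^{-1}(\operatorname{Supp}(\mathbf{\Theta})) = \bigl\{\phi_t^{-1}(x) : (t,x)\in [0,T]\times \operatorname{Supp}(\mathbf{\Theta})\bigr\}
\end{align*}
is bounded. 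But $K(\omega)$ is exactly the image of the compact set $[0,T]\times\operatorname{Supp}(\mathbf{\Theta})$ under the continuous map $(t,x)\mapsto \phi_t^{-1}(x)$, and hence is compact. This gives the uniform-in-$t$ compact support claim.

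For \ref{theta-tilde-bound} I would proceed as follows. By the previous step, $\widetilde{\mathbf{\Theta}}_t(y)$ vanishes off $[0,T]\times K(\omega)$, which is compact. On this compact set, joint continuity of $(t,y)\mapsto D\phi_t(y)$ yields a deterministic (given $\omega$) bound of the form $\sup_{(t,y)\in[0,T]\times K(\omega)}\lvert D\phi_t(y)\rvert \leq C(\omega)<\infty$, and since $J\phi_t$ is a polynomial expression in the entries of $D\phi_t$ it is likewise bounded on this set. Combining this with $\|\mathbf{\Theta}\|_{L^\infty}<\infty$ and the coordinate expression for $\widetilde{\mathbf{\Theta}}_t$ yields $\|\widetilde{\mathbf{\Theta}}\|_{L^\infty_{t,y}}<\infty$, as required. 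There is no genuine obstacle here: both parts reduce to the joint continuity of $\phi_t,\phi_t^{-1}$ and $D\phi_t$ in $(t,y)$ and compactness of $\operatorname{Supp}(\mathbf{\Theta})$.
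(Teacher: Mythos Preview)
Your proposal is correct and follows essentially the same route as the paper: for \ref{theta-tilde-support} you use that $K(\omega)$ is the continuous image of the compact set $[0,T]\times\operatorname{Supp}(\mathbf{\Theta})$ under $(t,x)\mapsto\phi_t^{-1}(x)$, and for \ref{theta-tilde-bound} you use that a continuous function on a compact set attains a maximum. One small imprecision: the coordinate expression of $\phi_t^*\mathbf{\Theta}^\sharp$ (see \eqref{pullback}, not \eqref{repr}) involves the entries of $D\psi_t(\phi_t(y))=[D\phi_t(y)]^{-1}$, so after multiplying by $|J\phi_t(y)|$ the components of $\widetilde{\mathbf{\Theta}}_t(y)$ are in general rational (not polynomial) in the entries of $D\phi_t(y)$ when $k\geq 2$; this does not affect your argument, since Definition~\ref{stochasticflow} also gives joint continuity of $D\phi_t^{-1}$, which is what the paper invokes directly.
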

\begin{proof}
Let $R>0$ such that the support of $\mathbf{\Theta}$ is contained in the ball $B(0,R)$. Since $\mathbb{P}$-a.s., the map $\phi_{\cdot}^{-1}(\cdot)$ is continuous in both time and space (and noting that the image of a compact set under a continuous map is compact), we can choose $R^*:\Omega \rightarrow (0,\infty]$ \footnote{More concretely, $R^* := \sup_{t \in [0,T], x \in B(0,R) }\lvert \phi_t^{-1}(x)\rvert,$ which is measurable and hence a random variable.} such that $R^* < \infty$ $a.s.$ and
\begin{align}\label{eq:r-star}
    & \{\phi_t^{-1}(y):\hspace{0.1cm} t \in [0,T], \hspace{0.1cm} y \in B(0,R) \} \subset B(0,R^*).
\end{align}
This implies that the support of $\widetilde{\mathbf{\Theta}}$ is contained in $B(0,R^*)$, which proves \ref{theta-tilde-support}. We observe that by definition, we have the explicit expression (assume summation over repeated indices)
\begin{align} \label{pullback} 
     (\phi_t^* \mathbf{\Theta}^\sharp)^{j_1,\ldots,j_k}(x) = \Theta_{i_1,\ldots,i_k}(\phi_t(x)) \frac{\partial \psi_t^{i_1}}{\partial x_{j_1}}(\phi_t(x)) \cdots \frac{\partial \psi_t^{i_k}}{\partial x_{j_k}}(\phi_t(x))
\end{align}
for the pull-back of the $k$-vector field $\mathbf{\Theta}^\sharp$ in global coordinates. Since $\phi_t$ is a flow of $C^1$-diffeomorphisms, both its (first order) derivatives and the derivatives of the inverse flow are $\mathbb{P}$-a.s. continuous in time and space. Hence $\mathbb{P}$-a.s. $\widetilde{\mathbf{\Theta}}_t$ is uniformly bounded on $B(0,R^*)$, since every continuous function on a compact set reaches a maximum. \footnote{We note that in our case, since we consider the Euclidean metric, the $k$-vector field $\mathbf{\Theta}^\sharp$ has the same coordinate expression as $\mathbf{\Theta}$, and the $k$-form $(\phi_t^* \mathbf{\Theta}^\sharp)^\flat$ has the same coordinate expression as $\phi_t^*\mathbf{\Theta}^\sharp.$} 
We can therefore conclude \ref{theta-tilde-bound}.
\end{proof}
\begin{lemma}  \label{nuevo}
Let $\mathbf{\Theta} \in C_0^\infty (\R^n,\bigwedge^k \mathcal{T}^*\mathbb R^n)$ and $\phi_t$ be a stochastic flow of $C^2$-diffeomorphisms. Moreover, assume that the drift vector field $b$ satisfies Hypothesis \ref{hypoexistence} \ref{maincondition} and $\xi \in C([0,T];C^2(\mathbb{R}^n,\mathbb{R}^n))$. Then $\mathbb{P}$-a.s. 
\begin{enumerate}
\item $\widetilde{\mathcal{L}_b^* \mathbf{\Theta}}(t)$ has compact support, uniformly in $t \in [0,T],$ and $\|\widetilde{\mathcal{L}_b^* \mathbf{\Theta}}(t)\|_{L_x^q}$ is of class $L^q([0,T])$. 
\label{lie-tilde-bound}
\item $\widetilde{\mathcal{L}_\xi^* \mathbf{\Theta}}(t)$ has compact support, uniformly in $t \in [0,T],$ and $\|\widetilde{\mathcal{L}_\xi^* \mathbf{\Theta}}(t)\|_{L_x^\infty}$ is of class $L^\infty([0,T])$.
\label{lie-tilde-bound-xi}
\item $\widetilde{\mathcal{L}_\xi^* \widetilde{\mathcal{L}_\xi^* \mathbf{\Theta}}}(t)$ has compact support, uniformly in $t \in [0,T],$ and $\|\widetilde{\mathcal{L}_\xi^* \widetilde{\mathcal{L}_\xi^* \mathbf{\Theta}}}(t)\|_{L_x^\infty}$ is of class $L^\infty([0,T]).$ 
\label{lie-tilde-bound-xi2}
\end{enumerate}
\end{lemma}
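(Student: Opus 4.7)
The plan is to combine three ingredients: the locality of the distributional operator $\mathcal{L}_b^*$, which preserves compact support; Hypothesis \ref{hypoexistence} \ref{maincondition}, which promotes $\mathcal{L}_b^*\mathbf{\Theta}$ from a distribution to an $L^q$ function; and a change of variables $x = \phi_t(y)$ through the $C^2$-diffeomorphism $\phi_t$ in order to transfer integrability from $\mathcal{L}_b^*\mathbf{\Theta}$ to $\widetilde{\mathcal{L}_b^*\mathbf{\Theta}}$. For Items \ref{lie-tilde-bound-xi} and \ref{lie-tilde-bound-xi2}, I would exploit the smoothness of $\xi$ and $\mathbf{\Theta}$ together with the $C^2$-regularity of the flow to iterate the $\widetilde{\cdot}$ operation safely.

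For Item \ref{lie-tilde-bound}, I would first argue that $\mathcal{L}_b^*\mathbf{\Theta}(t,\cdot)$ is supported in $\overline{B(0,R)}$ whenever the support of $\mathbf{\Theta}$ lies in $B(0,R)$: each integrand in \eqref{newformula} is a sum of products of components of $b$, $\mathbf{\Theta}$, and derivatives of $\mathbf{\Theta}\mathbf{\Gamma}$, so if a test form $\mathbf{\Gamma}$ has support disjoint from $\overline{B(0,R)}$, every such product vanishes pointwise. Combined with Hypothesis \ref{hypoexistence} \ref{maincondition}, this promotes $\mathcal{L}_b^*\mathbf{\Theta}$ to $L^q([0,T]\times \mathbb{R}^n)$. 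The $\widetilde{\cdot}$ operation then places the support of $\widetilde{\mathcal{L}_b^*\mathbf{\Theta}}(t,\cdot)$ inside $\phi_t^{-1}(\overline{B(0,R)})$, which by the same argument as in Lemma \ref{newbound} is $\mathbb{P}$-a.s. contained in some $\overline{B(0,R^*)}$ uniformly in $t \in [0,T]$. For the $L^q_x$ bound, I would apply the coordinate expression \eqref{pullback} to obtain the pointwise estimate $\lvert \widetilde{\mathcal{L}_b^*\mathbf{\Theta}}(t,y)\rvert \lesssim \lvert J\phi_t(y)\rvert \, \lvert D\phi_t^{-1}(\phi_t(y))\rvert^k \, \lvert \mathcal{L}_b^*\mathbf{\Theta}(t,\phi_t(y))\rvert$, raise to the $q$-th power, and change variables $x = \phi_t(y)$. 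The resulting weight $\lvert J\phi_t^{-1}(x)\rvert^{1-q}\lvert D\phi_t^{-1}(x)\rvert^{kq}$ is $\mathbb{P}$-a.s. continuous on $[0,T]\times \overline{B(0,R)}$ by the flow regularity (Theorem \ref{flowfla} and Remark \ref{inversa}), hence uniformly bounded there; integrating in $t$ and invoking Hypothesis \ref{hypoexistence} \ref{maincondition} concludes.

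For Items \ref{lie-tilde-bound-xi} and \ref{lie-tilde-bound-xi2}, the smoothness of $\mathbf{\Theta}$ and of $\xi \in C([0,T];C^2)$ makes $\mathcal{L}_\xi^*\mathbf{\Theta}$ a $k$-form of class $C^1$ in space, supported inside the support of $\mathbf{\Theta}$, and continuous in time with values in $C^1_b$. Item \ref{lie-tilde-bound-xi} then follows from the argument of Lemma \ref{newbound}: $\widetilde{\mathcal{L}_\xi^*\mathbf{\Theta}}(t,\cdot)$ has uniform compact support in $\overline{B(0,R^*)}$ and is $\mathbb{P}$-a.s. uniformly bounded by continuity of $D\phi_t$ and $J\phi_t$ on $[0,T]$ times compacts. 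For Item \ref{lie-tilde-bound-xi2} I would iterate: the $C^2$-regularity of $\phi_t$ makes $\widetilde{\mathcal{L}_\xi^*\mathbf{\Theta}}(t,\cdot)$ of class $C^1$ in space, so the first-order operator $\mathcal{L}_\xi^*$ applies classically, producing a continuous function with the same uniform compact support and an $L^\infty_{t,x}$ bound. A final application of the $\widetilde{\cdot}$ operation preserves both properties by the same reasoning as before.

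The main technical obstacle is Item \ref{lie-tilde-bound}: since $\mathcal{L}_b^*\mathbf{\Theta}$ is only an $L^q$ function, and we have no pointwise control on derivatives of $b$, the pullback under $\phi_t$ must be handled via measurable representatives, with the change of variables justified by the diffeomorphism property of $\phi_t$ and the uniform bounds on its derivatives over the compact set $\overline{B(0,R)}$. A subtler point for Item \ref{lie-tilde-bound-xi2} is that the outer $\mathcal{L}_\xi^*$ is applied to $\widetilde{\mathcal{L}_\xi^*\mathbf{\Theta}}$, whose spatial regularity is limited by that of the flow; the $C^2$-regularity assumption is exactly sharp, since $\widetilde{\cdot}$ already consumes one derivative of $\phi_t$ and the outer $\mathcal{L}_\xi^*$ requires one more.
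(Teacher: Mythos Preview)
Your proposal is correct and follows essentially the same approach as the paper: locality of $\mathcal{L}^*$ for the support claims, the compactness argument of Lemma \ref{newbound} to produce $R^*$ (and, implicitly in your iteration, an $R^{**}$ for Item \ref{lie-tilde-bound-xi2}), and pointwise control via the pull-back formula \eqref{pullback} together with the $C^2$-flow regularity. The only stylistic difference is in Item \ref{lie-tilde-bound}: you carry out the change of variables $x=\phi_t(y)$ explicitly and track the weight $\lvert J\phi_t^{-1}\rvert^{1-q}\lvert D\phi_t^{-1}\rvert^{kq}$, whereas the paper simply pulls out $\|J\phi_t\|_{L^\infty_{R^*}}$ and $\|D\psi_t\|^k_{L^\infty_{R^*}}$ and bounds $\|\mathcal L_b^*\mathbf\Theta\|_{L^q_{R^*}}$ directly; both routes use nothing more than the uniform boundedness of the flow derivatives on compacts and Hypothesis \ref{hypoexistence} \ref{maincondition}.
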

\begin{proof}
Let $\mathbf{\Theta}$ be supported on $B(0,R).$ By Hypothesis \ref{hypoexistence} \ref{maincondition}, $\mathcal{L}^*_b \mathbf{\Theta}$ is well-defined, and therefore
\begin{align*}
\widetilde{\mathcal{L}^*_b \mathbf{\Theta}}(t,x) = \lvert J \phi_t(x)\rvert (\phi_t^* (\mathcal{L}^*_b \mathbf{\Theta})^\sharp )^\flat(x), \quad t \in [0,T], \quad x \in \mathbb{R}^n, 
\end{align*}
is also well-defined. Hence, by the arguments in the proof of Lemma \ref{newbound} \ref{theta-tilde-support}, it must be supported on $B(0,R^*)$, where $R^*$ is defined as in \eqref{eq:r-star}. By a similar argument, one can check that $\widetilde{\mathcal{L}_\xi^* \mathbf{\Theta}}(t)$ and $\widetilde{\mathcal{L}_\xi^* \widetilde{\mathcal{L}_\xi^* \mathbf{\Theta}}}(t)$ are well-defined (take into account the regularity assumed on $\phi_t$ and $\xi$). Moreover, $\widetilde{\mathcal{L}_\xi^* \mathbf{\Theta}}(t)$ is supported on $B(0,R^*)$, and $\widetilde{\mathcal{L}_\xi^* \widetilde{\mathcal{L}_\xi^* \mathbf{\Theta}}}(t)$ is supported on $B(0,R^{**}),$ where $R^{**}:\Omega \rightarrow (0,\infty]$ satisfies $R^{**} < \infty$ $a.s.$ and is chosen such that
\begin{align*}
    & \{\phi_t^{-1}(y):\hspace{0.1cm} t \in [0,T], \hspace{0.1cm} y \in B(0,R^*) \} \subset B(0,R^{**}).
\end{align*}
Moreover, by applying H\"older's inequality, we can obtain 
\begin{align} \label{primeracota}
    \norm{\widetilde{\mathcal{L}^*_b \mathbf{\Theta}}(t)}_{L^q_x} \lesssim \norm{J \phi_t}_{L^\infty_{R^*}} \norm{D \psi_t}^k_{L^\infty_{R^*}} \norm{\mathcal{L}^*_b \mathbf{\Theta}}_{L^q_{R^*}},
\end{align}
where we have employed the notation $\psi_t := \phi_t^{-1}$ and used the explicit formula for the pull-back of a $k$-vector field \eqref{pullback}. Since $R^* < \infty$ a.s., we have a.s. $\norm{J \phi_{\cdot}}_{L^\infty_{R^*}} \in L^\infty([0,T]),$ $\norm{D \psi_{\cdot}}_{L^\infty_{R^*}} \in L^\infty([0,T]),$ and by Hypothesis \ref{hypoexistence} \ref{maincondition} $\norm{\mathcal{L}^*_b \mathbf{\Theta}}_{L^q_{R^*}} \in L^q([0,T]),$ so \eqref{primeracota} is of class $L^q([0,T])$ and we conclude \ref{lie-tilde-bound}. For \ref{lie-tilde-bound-xi}, the argument is similar by noting that a.s., $\norm{\mathcal{L}^*_\xi \mathbf{\Theta}}_{L^\infty_{R^*}} \in L^\infty([0,T])$ holds. Finally, for \ref{lie-tilde-bound-xi2}, observe that
\begin{align}  \label{cotapesada}
    \norm{\widetilde{\mathcal{L}_\xi^* \widetilde{\mathcal{L}_\xi^* \mathbf{\Theta}}}(t)}_{L^\infty_x} \lesssim \norm{J \phi_t}_{L^\infty_{R^{**}}} \norm{D \psi_t}^k_{L^\infty_{R^{**}}} \norm{\mathcal{L}_\xi^*  \widetilde{\mathcal{L}^*_\xi \mathbf{\Theta}}}_{L^\infty_{R^{**}}},
\end{align}
where $\mathcal{L}_\xi^* \widetilde{\mathcal{L}^*_\xi \mathbf{\Theta}}(t,x) = \mathcal{L}_\xi^* (\lvert J \phi_t\rvert (\phi_t^* (\mathcal{L}^*_\xi \mathbf{\Theta})^\sharp )^\flat)(x).$ By taking into account the definition of the pull-back of a vector field \eqref{pullback} and of the adjoint Lie derivative \eqref{Lieformulaadjoint}, we observe that $\mathcal{L}_\xi^* \widetilde{\mathcal{L}^*_\xi \mathbf{\Theta}}$ is expressed in coordinates as a sum that only contains products where at most two partial derivatives of the flow $\phi_t,$ its inverse $\psi_t,$ and $\xi$ are involved. By our assumptions, these are locally uniformly bounded in space and time, so for $\mathbb{P}$-a.s. $\omega$ such that $R^{**}(\omega) < \infty,$ $\mathcal{L}_\xi^* \widetilde{\mathcal{L}^*_\xi \mathbf{\Theta}}(t,x)$ must be uniformly bounded in space and time. Therefore \eqref{cotapesada} is a.s. of class $L^\infty([0,T])$ and the proof is complete.
\end{proof}
A final short lemma will be needed.
\begin{lemma}
Let Hypothesis \ref{hypoflow} and Hypothesis \ref{hypoexistence} \ref{newcondition} hold and $\phi_t$ be the unique stochastic flow of $C^1$-diffeomorphisms defined by \eqref{flowmap}. Given $\epsilon >0,$ let $b^\epsilon$ and $\xi_i^\epsilon,$ $i=1,\ldots,N$ be the mollifications of $b$ and $\xi_i,$ $i=1,\ldots,N,$ respectively, and $\phi_t^\epsilon$ be the flow of \eqref{flowmap} with drift vector field $b^\epsilon$ and diffusion vector fields $\xi_i^\epsilon,$ $i=1,\ldots,N$. Then, for any $R>0$ and $r \geq 2$, we have the following
\begin{align} \label{otracosa1}
& \sup_{\epsilon >0} \sup_{x \in \mathbb R^n} \mathbb E\left[\sup_{0\leq t\leq T} [J \phi^\epsilon_t(x)]^{-r} \right] < \infty, \\
\label{otracosa2}
& \sup_{x \in \mathbb R^n} \mathbb E\left[\sup_{0\leq t\leq T} [J \phi_t(x)]^{-r} \right] < \infty.
\end{align}
\end{lemma}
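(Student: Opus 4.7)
The plan is to first establish \eqref{otracosa1} through a Liouville-type representation for the Jacobian of the smooth mollified flow $\phi^\epsilon_t$, and then transfer the estimate to the non-smooth flow $\phi_t$ via the convergence results in Theorem \ref{flowfla} \eqref{segundoitem} combined with Fatou's lemma. Since $b^\epsilon$ and $\xi^\epsilon_i$ are smooth, classical stochastic flow theory (Kunita \cite{kunita1997stochastic}) gives that $\phi_t^\epsilon$ is a smooth flow of diffeomorphisms and, by a standard Liouville-type argument in the Stratonovich calculus,
\begin{align*}
J\phi_t^\epsilon(x) = \exp\left(\int_0^t \div(b^\epsilon)(s,\phi_s^\epsilon(x))\, \diff s + \sum_{i=1}^N \int_0^t \div(\xi_i^\epsilon)(s,\phi_s^\epsilon(x)) \circ \diff W_s^i\right).
\end{align*}
Converting the Stratonovich integral to It\^o form, raising to the $-r$-th power, and isolating the stochastic exponential $\mathcal E(M^\epsilon)$ of the martingale $M^\epsilon_t := -r \sum_i \int_0^t \div(\xi_i^\epsilon)(s,\phi_s^\epsilon(x))\, \diff W_s^i$ yields
\begin{align*}
[J\phi_t^\epsilon(x)]^{-r} = \mathcal E(M^\epsilon)_t \exp\left(\int_0^t A^\epsilon(s,\phi_s^\epsilon(x))\, \diff s\right),
\end{align*}
where $A^\epsilon := -r \div(b^\epsilon) - \tfrac{r}{2}\sum_i \xi_i^\epsilon \cdot \nabla \div(\xi_i^\epsilon) + \tfrac{r^2}{2}\sum_i (\div(\xi_i^\epsilon))^2$.

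The next step is a uniform-in-$\epsilon$ control of both factors. By Hypothesis \ref{hypoflow} \ref{xicondition}, $\xi_i \in C_b([0,T]; C^2_b(\R^n,\R^n))$, so $\|\div(\xi_i^\epsilon)\|_{L^\infty_{t,x}}$ and $\|\xi_i^\epsilon\cdot\nabla\div(\xi_i^\epsilon)\|_{L^\infty_{t,x}}$ are bounded by the $C^2_b$-norm of $\xi_i$ (mollification is a contraction in $L^\infty$). Moreover, the distributional lower bound $\div(b) \geq C$ from Hypothesis \ref{hypoexistence} \ref{newcondition} passes to the mollification: since $\div(b)-C$ is a non-negative distribution (hence a Radon measure), convolution with the non-negative mollifier $\rho^\epsilon$ gives $\div(b^\epsilon) = \rho^\epsilon \ast \div(b) \geq C$ pointwise, so $-r\div(b^\epsilon) \leq -rC$. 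Altogether, $A^\epsilon \leq K$ uniformly in $\epsilon,t,x$, which yields the deterministic bound $\exp(\int_0^t A^\epsilon\,\diff s) \leq e^{KT}$.

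For the martingale factor, the quadratic variation $[M^\epsilon]_T \leq r^2 N \|D\xi\|^2_{L^\infty_{t,x}} T$ is deterministically bounded uniformly in $\epsilon$, so Novikov's criterion applies to every multiple of $M^\epsilon$; in particular $\mathcal E(2M^\epsilon)$ is a true martingale and $\mathbb E[\mathcal E(M^\epsilon)_T^2] = \mathbb E[\mathcal E(2M^\epsilon)_T \exp([M^\epsilon]_T)] \leq \exp(r^2 N \|D\xi\|^2_{L^\infty_{t,x}} T)$. Doob's $L^2$-inequality then gives $\mathbb E[\sup_{t\in[0,T]} \mathcal E(M^\epsilon)_t^2]$ uniformly bounded in $\epsilon, x$. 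Combining the two factors and applying Cauchy--Schwarz yields \eqref{otracosa1}.

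Finally, to deduce \eqref{otracosa2}, fix $x \in \R^n$ and use the uniform convergence $\mathbb E[\sup_t |D\phi_t^\epsilon(x) - D\phi_t(x)|^r] \to 0$ from Theorem \ref{flowfla} \eqref{segundoitem} to extract a subsequence $\epsilon_k \to 0$ along which $\sup_{t \in [0,T]} |D\phi_t^{\epsilon_k}(x) - D\phi_t(x)| \to 0$ $\mathbb P$-a.s., hence $\sup_{t \in [0,T]} |J\phi_t^{\epsilon_k}(x) - J\phi_t(x)| \to 0$ $\mathbb P$-a.s. Since $\phi_t$ is a $C^1$-diffeomorphism by Theorem \ref{flowfla} \eqref{primeritem} with $J\phi_0(x)=1$ and $t \mapsto J\phi_t(x)$ continuous and strictly positive, we have $\inf_{t \in [0,T]} J\phi_t(x) > 0$ $\mathbb P$-a.s., so $\sup_{t} [J\phi_t^{\epsilon_k}(x)]^{-r} \to \sup_{t} [J\phi_t(x)]^{-r}$ a.s. Fatou's lemma combined with the uniform-in-$\epsilon$ bound \eqref{otracosa1} gives \eqref{otracosa2} uniformly in $x$. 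The main subtlety is ensuring that the lower bound on $\div(b)$ survives mollification and that all Novikov/Doob constants are genuinely independent of $\epsilon$ and $x$; once this is in place, the passage to the singular limit via Fatou is routine.
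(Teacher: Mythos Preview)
Your proposal is correct and follows the same overall architecture as the paper: write $[J\phi_t^\epsilon]^{-r}$ via the Liouville formula, control the drift contribution using the fact that the distributional lower bound $\div(b)\geq C$ survives mollification, control the stochastic contribution via a martingale supremum estimate, and then pass to the limit by Fatou. The technical choices differ slightly: the paper bounds $\sup_t \exp(X_t^\epsilon)^r$ directly via Burkholder--Davis--Gundy and Gr\"onwall, whereas you isolate the Dol\'eans--Dade exponential $\mathcal E(M^\epsilon)$ and use Novikov plus Doob's $L^2$-inequality, which is arguably cleaner. For \eqref{otracosa2}, the paper uses the $L^r(\Omega\times[0,T]\times B(0,R))$ convergence of $J\phi^\epsilon$ (Corollary~\ref{consequences}) to extract an a.e.\ subsequence, while you work pointwise in $x$ directly from \eqref{phi-deriv-convergence}; your route avoids the mild subtlety of evaluating an a.e.-in-$t$ limit at the random maximising time $t_x^*$. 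Both routes yield bounds that are manifestly uniform in $\epsilon$ and $x$.
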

\begin{proof}
We assume without loss of generality that $N=1.$ Let $r \geq 2.$ For $\epsilon >0,$ define $f^\epsilon_t(x):=[J \phi^\epsilon_t(x)]^{-r},$ $f_t(x):=[J \phi_t(x)]^{-r},$ and
\begin{align*}
    & g_t^\epsilon(x):=\exp{\left( -r \int_0^t \div \, b^\epsilon(s,\phi^\epsilon_s(x)) \, \diff s - r \int_0^t \div \, \xi^\epsilon(s,\phi^\epsilon_s(x)) \, \diff W_s \right)},
\end{align*}
for $t \in [0,T], x \in \R^n$. We note that $f^\epsilon$ and $f$ are well-defined since $\mathbb{P}$-a.s. $J \phi^\epsilon,$ $J\phi > 0 $ because $\phi_t^\epsilon,$ $\phi_t$ (respectively) are orientation-preserving diffeomorphisms for all $t \in [0,T]$. By classical results, we know that for a.e. $(\omega,t,x) \in \Omega \times [0,T] \times \R^n$ 
\begin{align*}
    & \log J \phi_t(x) = \int_0^t \div \, b^\epsilon(s,\phi^\epsilon_s(x)) \, \diff s + \int_0^t \div \, \xi^\epsilon(s,\phi^\epsilon_s(x)) \, \diff W_s,
\end{align*}
so $\text{log} \, f^\epsilon_t(x) = \text{log} \, g^\epsilon_t(x)$ and hence $f^\epsilon_t(x) = g^\epsilon_t(x)$. Moreover, by the mollifier properties and Hypothesis \ref{hypoexistence} \ref{newcondition}, we have $ -\div \, b^\epsilon \leq -C.$ Indeed
\begin{align*}
    -\div \, b^\epsilon (t,x) &= \int_{\R^n} -\div \, b(t,x-y) \rho^\epsilon(y) \, \diff^n y \leq  \int_{\R^n} - C \rho^\epsilon(y) \, \diff^n y \\
    &= -C \int_{\R^n} \rho^\epsilon(y) \, \diff^n y = -C ,
\end{align*}
for all $t \in [0,T]$ and $x \in \R^n.$ Also, since $D\xi \in L^\infty([0,T] \times \R^n ; \R^n)$ by Hypothesis \ref{hypoflow} \ref{bcondition}, and taking into account the mollifier properties, there exists $C'>0$ such that the uniform bound $ \lvert \div \, \xi^\epsilon\rvert \leq C'$ holds. Hence, by setting $X_s^\epsilon(x) := - \int_0^s \div \, \xi^\epsilon(u,\phi_u^\epsilon(x)) \, \diff W_u$ for $s \in [0,T]$ and $x \in \R^n$, we obtain 
\begin{align}
\sup_{0 \leq s \leq t} g_s^\epsilon(x) &\leq \exp{\left(r \lvert C\rvert T +  r \sup_{0 \leq s \leq t}  X_s^\epsilon(x) \right)}  \nonumber \\
& \lesssim \sup_{0 \leq s \leq t} \exp{\left(X_s^\epsilon(x) \right)^{r}}, \quad t \in [0,T], \quad x \in \R^n, \label{divergence}
\end{align}
where the constant in the last inequality above is independent of $\epsilon.$ Applying Lemma \ref{weakito}, the diffusion coefficient of the process $\exp{(X_s^\epsilon(x))}$ is $\exp{(X_u^\epsilon(x))} \, \div \, \xi^\epsilon(u,\phi_u^\epsilon(x)),$ and therefore its quadratic variation can be calculated and bounded as
\begin{align*}
    \left[ \exp{\left( X_s^\epsilon(x) \right)} \right]_t &= \frac{1}{2} \int_0^t \left(\exp{(X_s^\epsilon(x))} \, \div \, \xi^\epsilon(s,\phi_s^\epsilon(x))\right)^2 \diff s \\
    &\leq  \frac{(C')^2}{2} \int_0^t \sup_{0 \leq u \leq s} \exp{(X_u^\epsilon(x))}^2 \, \diff s.
\end{align*}
Hence by Burkholder-Davis-Gundy and H\"older's inequalities, we have
\begin{align*}
\E \left[ \sup_{0 \leq s \leq t} \exp{\left( X_s^\epsilon(x) \right)^{r}} \right] &\lesssim \E \left[ \left[ \exp{\left(X_s^\epsilon(x) \right)} \right]^{r/2}_t \right] \\
&\lesssim \int_0^t \E \left[  \sup_{0 \leq u \leq s} \exp{(X_u^\epsilon(x))}^r \right] \diff s  , \quad t \in [0,T], \quad x \in \R^n,
\end{align*}
where the constants in the inequalities above are independent of $\epsilon.$ By applying Gr\"onwall's inequality, we find that \eqref{divergence} is finite, independently of $x,$ so by taking suprema in time and space we obtain \eqref{otracosa1}.

To prove \eqref{otracosa2}, we first note that by taking into account \eqref{weakjacobian}, we obtain that for any $R>0,$ $J \phi^\epsilon \rightarrow J \phi$ in $L^2(\Omega \times [0,T] \times B(0,R); \bigwedge^k \mathcal{T}^*\mathbb R^n)$ when $\epsilon \rightarrow 0,$ and hence there exists a subsequence (which we call again $\epsilon$) such that for a.e. $(\omega,t,x) \in \Omega \times [0,T] \times \R^n,$ $f_t^\epsilon(x) \rightarrow f_t(x)$ when $\epsilon \rightarrow 0.$ Moreover, since $\mathbb{P}$-a.s. $f$ is continuous in time and space, then for every $x \in \R^n,$ the map $f_{\cdot}(x)$ is continuous and hence reaches a maximum $f_{t^*_x}(x) = \sup_{0\leq t \leq T} f_t(x)$. 
Hence by applying the bound \eqref{divergence} with $t=T$ and invoking Fatou's Lemma, we have
\begin{align*}
   \E \left[ \sup_{0\leq t \leq T} f_t(x) \right] &= \E \left[ f_{t^*_x}(x) \right] = \E \left[ \lim_{\epsilon \rightarrow 0} f_{t^*_x}^\epsilon(x) \right] = \E \left[ \lim_{\epsilon \rightarrow 0} g_{t^*_x}^\epsilon(x) \right] = \E \left[ \liminf_{\epsilon \rightarrow 0} g_{t^*_x}^\epsilon(x) \right]\\
   &\stackrel{\eqref{divergence}}{\lesssim} \E \left[ \liminf_{\epsilon \rightarrow 0} \sup_{0 \leq t \leq T} \exp{\left(X_t^\epsilon(x) \right)^{r}} \right] \\
   &\stackrel{(\text{Fatou})}{\lesssim} \liminf_{\epsilon \rightarrow 0} \E \left[ \sup_{0 \leq t \leq T} \exp{\left(X_t^\epsilon(x) \right)^{r}} \right].
\end{align*}
Finally, by \eqref{otracosa1} we have $\liminf_{\epsilon \rightarrow 0} \E \left[ \sup_{0 \leq t \leq T} \exp{\left(X_t^\epsilon(x) \right)^{r}} \right] < \infty$ independently of $x.$ The proof is complete.
\end{proof}

\subsection{Existence for regular drift and weak initial condition}

\begin{proposition}[Existence of $L^p$-solutions for regular drift and weak initial condition] \label{weak-sol-smooth}
Let $p \geq 2$ and $\mathbf{K}_0 \in L^p (\R^n,\bigwedge^k \mathcal{T}^*\mathbb R^n)$. Let $b,$ $\xi_i,$ $i=1,\ldots,N$ satisfy Assumption \ref{assumpstrong} with $m=3$ and $\beta \in (0,1)$ and Hypothesis \ref{hypoexistence} \ref{newcondition}. Then there exists an $L^p$-solution to equation \eqref{eq0} of the form $\mathbf{K}_t(x) := (\phi_t)_* \mathbf{K}_0(x),$ where $\phi_t$ is the flow of \eqref{flowmap}.
\end{proposition}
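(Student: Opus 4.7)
The strategy is to approximate $\mathbf{K}_0 \in L^p$ by smooth data $\mathbf{K}_0^\epsilon$, invoke Proposition \ref{strong-sol-smooth} on each approximation, and pass to the limit in the weak formulation \eqref{weak-eq} tested against an arbitrary $\mathbf{\Theta} \in C_0^\infty(\R^n, \bigwedge^k\mathcal{T}^*\R^n)$. Concretely, I would set $\mathbf{K}_0^\epsilon := \rho^\epsilon * \mathbf{K}_0$ (componentwise), so that $\mathbf{K}_0^\epsilon \in C^\infty(\R^n,\bigwedge^k\mathcal{T}^*\R^n) \cap L^p(\R^n,\bigwedge^k\mathcal{T}^*\R^n)$, $\|\mathbf{K}_0^\epsilon\|_{L^p} \leq \|\mathbf{K}_0\|_{L^p}$, and $\mathbf{K}_0^\epsilon \to \mathbf{K}_0$ in $L^p$. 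Define $\mathbf{K}_t^\epsilon := (\phi_t)_* \mathbf{K}_0^\epsilon$ and $\mathbf{K}_t := (\phi_t)_* \mathbf{K}_0$. By Proposition \ref{strong-sol-smooth}, each $\mathbf{K}^\epsilon$ is a classical solution, and pairing the pointwise identity \eqref{classicalito} with $\mathbf{\Theta}$, integrating by parts (legal since $\mathbf{K}^\epsilon \in C^2$), and invoking stochastic Fubini shows $\mathbf{K}^\epsilon$ is a weak $L^p$-solution in the sense of Definition \ref{weak-sol}.

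\textbf{Key $L^p$-estimate.} The main technical step is the bound
\begin{align*}
\|\mathbf{K}\|_{L^p(\Omega \times [0,T] \times \R^n)}^p \lesssim \|\mathbf{K}_0\|_{L^p}^p.
\end{align*}
From the coordinate expression \eqref{repr} applied to $\psi_t := \phi_t^{-1}$, one has the pointwise bound $|\mathbf{K}_t(x)| \lesssim |\mathbf{K}_0(\psi_t(x))|\,|D\psi_t(x)|^k$. Changing variables $y = \psi_t(x)$ (so $\diff^n x = J\phi_t(y)\,\diff^n y$) and using $D\psi_t(\phi_t(y)) = [D\phi_t(y)]^{-1}$ together with the adjugate representation $[D\phi_t]^{-1} = \text{Adj}(D\phi_t)/J\phi_t$, which gives $|[D\phi_t(y)]^{-1}| \lesssim |D\phi_t(y)|^{n-1}/J\phi_t(y)$, I would obtain
\begin{align*}
\int_{\R^n} |\mathbf{K}_t(x)|^p \, \diff^n x \lesssim \int_{\R^n} |\mathbf{K}_0(y)|^p\, |D\phi_t(y)|^{(n-1)kp}\, [J\phi_t(y)]^{1-kp}\, \diff^n y.
\end{align*}
Integrating over $\Omega \times [0,T]$, invoking Fubini and Hölder's inequality, and controlling the resulting moments of $|D\phi_t|$ via Theorem \ref{flowfla} and the (possibly negative) moments of $J\phi_t$ via \eqref{otracosa2} (which in turn rests on Hypothesis \ref{hypoexistence} \ref{newcondition}) yields the estimate. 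Applied to $\mathbf{K}_0 - \mathbf{K}_0^\epsilon$ by linearity of the push-forward, this shows $\mathbf{K}^\epsilon \to \mathbf{K}$ in $L^p(\Omega \times [0,T] \times \R^n)$.

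\textbf{Passage to the limit.} I would pass $\epsilon \to 0$ in each term of \eqref{weak-eq} for $\mathbf{K}^\epsilon$. For the deterministic pairings, writing $\mathbf{\Psi}$ for either $\mathbf{\Theta}$, $\mathcal{L}_b^*\mathbf{\Theta}$, or $\mathcal{L}_{\xi_i}^*\mathcal{L}_{\xi_i}^*\mathbf{\Theta}$ (all compactly supported and smooth under Assumption \ref{assumpstrong}), Hölder gives
\begin{align*}
|\llangle \mathbf{K}_t^\epsilon - \mathbf{K}_t, \mathbf{\Psi}\rrangle_{L^2}| \leq \|\mathbf{\Psi}\|_{L^q(\text{supp}\,\mathbf{\Psi})}\, \|\mathbf{K}_t^\epsilon - \mathbf{K}_t\|_{L^p(\text{supp}\,\mathbf{\Psi})},
\end{align*}
so the $L^p$-convergence of Step 2 yields convergence in $L^2(\Omega\times[0,T])$ after integration in time. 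For the stochastic integral, Itô's isometry gives
\begin{align*}
\mathbb{E}\left|\int_0^t \llangle \mathbf{K}_s^\epsilon - \mathbf{K}_s, \mathcal{L}_{\xi_i}^*\mathbf{\Theta}\rrangle_{L^2}\, \diff W_s^i \right|^2 \leq \int_0^T \mathbb{E}|\llangle \mathbf{K}_s^\epsilon - \mathbf{K}_s, \mathcal{L}_{\xi_i}^*\mathbf{\Theta}\rrangle_{L^2}|^2\, \diff s,
\end{align*}
which vanishes by the same $L^p$-bound. Collecting these limits yields \eqref{weak-eq} for $\mathbf{K}$. The required continuous $\mathcal{F}_t$-adapted modification of $t \mapsto \llangle \mathbf{K}_t, \mathbf{\Theta}\rrangle_{L^2}$ is then read off from the equation itself: the finite-variation integrals in $s$ are absolutely continuous, and the stochastic integral is a continuous martingale.

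\textbf{Main obstacle.} The chief difficulty is the $L^p$-bound in the key estimate above, because the Jacobian weight carries the exponent $1-kp$, which becomes negative as soon as $kp > 1$. This is precisely the reason for invoking \eqref{otracosa2}, and therefore for the hypothesis that $\div(b)$ is bounded below (Hypothesis \ref{hypoexistence} \ref{newcondition}) — without it there would be no control on the negative moments of $J\phi_t$. Once that obstacle is overcome, the remaining steps are essentially linear-equation approximation arguments.
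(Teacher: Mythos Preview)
Your proposal is correct and shares the paper's overall architecture (mollify $\mathbf{K}_0$, invoke Proposition~\ref{strong-sol-smooth}, pass to the limit), and your $L^p$-estimate in the ``Key $L^p$-estimate'' step is exactly the paper's Step~2, including the adjugate trick and the appeal to \eqref{otracosa2}.

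Where you diverge from the paper is in the limiting argument. You observe that the $L^p$-bound depends linearly on $\|\mathbf{K}_0\|_{L^p}$, so by linearity of the push-forward you get \emph{strong} convergence $\mathbf{K}^\epsilon \to \mathbf{K}$ in $L^p(\Omega\times[0,T]\times\R^n)$, and then pass each term of \eqref{weak-eq} with a plain H\"older/It\^o-isometry argument. The paper instead keeps the $L^p$-bound separate and, for the passage to the limit, applies the change-of-variable identity of Lemma~\ref{coord-change} to write each pairing as $\llangle \mathbf{K}_0^\epsilon, \widetilde{\mathbf{\Psi}}_t\rrangle_{L^2}$, then invokes the a.s.\ compact-support and boundedness properties of $\widetilde{\mathbf{\Psi}}_t$ (Lemmas~\ref{newbound}--\ref{nuevo}) together with (stochastic) dominated convergence. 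Your route is more elementary here and avoids Lemmas~\ref{coord-change}--\ref{nuevo} entirely. The paper's route, on the other hand, is the template that is reused verbatim in the rough-drift step (Theorem~\ref{nonsmooth-existence}), where one only has \emph{weak} $L^p$-compactness of the approximants and your strong-convergence shortcut is no longer available; so the paper's extra machinery pays off downstream even if it is not strictly needed for this proposition.
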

\begin{proof}
{\bf Step 1. $\mathbf{K}$ satisfies \eqref{eq0} weakly.} We assume without loss of generality that $N=1$. Consider the mollified initial datum $\mathbf{K}_0^\epsilon := \rho^\epsilon * \mathbf{K}_0$. By the embedding $L^p_{loc} \subset L^q_{loc}$ for $p \geq q$ and the mollifier properties, $\mathbf{K}_0^\epsilon \rightarrow \mathbf{K}_0,$ $\epsilon \rightarrow 0,$ strongly in $L^r_{loc},$ for any $r \in [1,p],$ and therefore also weakly. Moreover, by Proposition \ref{strong-sol-smooth}, $\mathbf{K}_t^\epsilon(x) := (\phi_t)_*\mathbf{K}_0^\epsilon(x)$ is a strong solution to \eqref{eq0} with initial datum $\mathbf{K}_0^\epsilon$ and must satisfy the equation weakly. Hence, for any test $k$-form $\mathbf{\Theta} \in C^\infty_0 (\R^n,\bigwedge^k \mathcal{T}^*\mathbb R^n),$ we have $\mathbb{P}$-a.s. 
\begin{align*}
    & \llangle (\phi_t)_*\mathbf{K}_0^\epsilon, \mathbf{\Theta} \rrangle_{L^2} - \llangle \mathbf{K}_0^\epsilon, \mathbf{\Theta} \rrangle_{L^2} = -\int^t_0 \llangle (\phi_s)_*\mathbf{K}_0^\epsilon, \mathcal L_b^* \mathbf{\Theta} \rrangle_{L^2} \diff s \\
    & \hspace{70pt} - \int^t_0 \llangle (\phi_s)_*\mathbf{K}_0^\epsilon, \mathcal L_\xi^* \mathbf{\Theta} \rrangle_{L^2} \diff W_s
    + \frac12 \int^t_0 \llangle (\phi_s)_*\mathbf{K}_0^\epsilon, \mathcal L_\xi^* \mathcal L_\xi^* \mathbf{\Theta} \rrangle_{L^2} \diff s,
\end{align*}
for all $t \in [0,T].$ We also note that $\llangle (\phi_t)_*\mathbf{K}_0^\epsilon, \mathbf{\Theta} \rrangle_{L^2}$ is a.s. continuous in time, since from Theorem \ref{flowfla}, $\mathbb{P}$-a.s. $\psi_t$ and its spatial derivatives are continuous in $(t,x).$  We analyse term-by-term convergence in the last equality for fixed $t \in [0,T]$. 
\begin{itemize}
\item By the weak convergence $\mathbf{K}_0^\epsilon \rightarrow \mathbf{K}_0$ in $L^2_{loc}$, the second term on the left-hand side converges as $\llangle \mathbf{K}_0^\epsilon, \mathbf{\Theta} \rrangle_{L^2} \rightarrow \llangle \mathbf{K}_0, \mathbf{\Theta} \rrangle_{L^2}.$
\item We claim that the first term on the left-hand side converges a.s. as
\begin{align*}
    & \llangle (\phi_t)_*\mathbf{K}_0^\epsilon, \mathbf{\Theta} \rrangle_{L^2} = \llangle \mathbf{K}_0^\epsilon, \widetilde{\mathbf{\Theta}}_t \rrangle_{L^2} \rightarrow \llangle \mathbf{K}_0, \widetilde{\mathbf{\Theta}}_t \rrangle_{L^2} = \llangle (\phi_t)_*\mathbf{K}_0, \mathbf{\Theta} \rrangle_{L^2},
\end{align*}
where $\widetilde{\mathbf{\Theta}}_t(x) = \lvert J \phi_t(x)\rvert (\phi_t^*\mathbf{\Theta}^\sharp )^\flat(x)$ as explained in Lemma \ref{coord-change}. For this, note that a.s.
\begin{align*}
     \lvert \llangle \mathbf{K}_0^\epsilon- \mathbf{K}_0, \widetilde{\mathbf{\Theta}}_t \rrangle_{L^2} \rvert \leq \norm{\mathbf{K}_0^\epsilon-\mathbf{K}_0}_{L^1_{R^*}} \norm{\widetilde{\mathbf{\Theta}}}_{L^\infty_{t,x}} \rightarrow 0,
\end{align*}
where we have taken into account the following facts (i) $\widetilde{\mathbf{\Theta}}_t$ has compact support contained in $B(0,R^*),$ uniformly in $t \in [0,T],$ where $R^* < \infty$ a.s., (ii) $\mathbb{P}$-a.s. $\norm{\widetilde{\mathbf{\Theta}}}_{L^\infty_{t,x}} < \infty$ (use Lemma \ref{newbound} \ref{theta-tilde-support} \& \ref{theta-tilde-bound} for (i) \& (ii)), and (iii) $\mathbf{K}_0^\epsilon \rightarrow \mathbf{K}_0$ in $L^1(B(0,R^*),\bigwedge^k \mathcal{T}^*\mathbb R^n) \subset L^p(\R^n,\bigwedge^k \mathcal{T}^*\mathbb R^n)$. 
\item By the bounded convergence theorem, the first term on the right-hand side converges a.s. as
\begin{align*}
    & \int^t_0 \llangle (\phi_s)_*\mathbf{K}_0^\epsilon, \mathcal L_b^* \mathbf{\Theta} \rrangle_{L^2} \diff s = \int^t_0 \llangle \mathbf{K}_0^\epsilon, \widetilde{\mathcal L_b^* \mathbf{\Theta}}(s) \rrangle_{L^2} \diff s \\
    & \hspace{70pt} \rightarrow \int^t_0 \llangle \mathbf{K}_0, \widetilde{\mathcal L_b^* \mathbf{\Theta}}(s) \rrangle_{L^2} \diff s = \int^t_0 \llangle (\phi_s)_*\mathbf{K}_0, \mathcal L_b^* \mathbf{\Theta} \rrangle_{L^2} \diff s.
\end{align*}
First note that for such a regular drift $b,$ it is easy to check that Hypothesis \ref{hypoexistence} \ref{maincondition} holds (our assumptions on $b$ are substantially stronger) so we can apply Lemma \ref{nuevo} \ref{lie-tilde-bound}. We claim that a.s. we have
\begin{align*}
    &  \lvert \llangle \mathbf{K}_0^\epsilon-\mathbf{K}_0, \widetilde{\mathcal L_b^* \mathbf{\Theta}}(s) \rrangle_{L^2}\rvert \leq \norm{\mathbf{K}_0^\epsilon-\mathbf{K}_0}_{L_{R^*}^p} \norm{\widetilde{\mathcal L_b^* \mathbf{\Theta}}}_{L^q_{R^*}} \rightarrow 0,
\end{align*}
pointwise in $s \in [0,T],$ since (i) from the mollifier properties $\mathbf{K}_0^\epsilon \rightarrow \mathbf{K}_0$ in $L^p,$ (ii) $\widetilde{\mathcal L_b^* \mathbf{\Theta}}(s)$ is supported on $B(0,R^*)$, uniformly in $s \in [0,T],$ (iii) $\widetilde{\mathcal L_b^* \mathbf{\Theta}}(s)$ is of spatial class $L^q$ (use Lemma \ref{nuevo} \ref{lie-tilde-bound} for (ii) \& (iii)). Moreover, we have the following bound in $L^1([0,T]).$
\begin{align*}
\lvert \llangle \mathbf{K}_0^\epsilon, \widetilde{\mathcal L_b^* \mathbf{\Theta}}(s) \rrangle_{L^2}\rvert \leq \norm{\mathbf{K}_0^\epsilon}_{L^{p}} \norm{\widetilde{\mathcal L_b^* \mathbf{\Theta}}(s)}_{L^{q}} \leq \norm{\mathbf{K}_0}_{L^{p}} \norm{\widetilde{\mathcal L_b^* \mathbf{\Theta}}(s)}_{L^q}, \end{align*}
since $\mathbf{K}_0 \in L^p (\R^n,\bigwedge^k \mathcal{T}^*\mathbb R^n)$ and $\norm{\widetilde{\mathcal L_b^* \mathbf{\Theta}}}_{L^q} \in L^q([0,T])$ from Lemma \ref{nuevo} \ref{lie-tilde-bound}. 
\item The It\^o-correction term converges a.s. as
\begin{align*}
    \frac12 \int^t_0 \llangle (\phi_s)_*\mathbf{K}_0^\epsilon, \mathcal L_\xi^* \mathcal L_\xi^* \mathbf{\Theta} \rrangle_{L^2} \diff s \rightarrow \frac12 \int^t_0 \llangle (\phi_s)_*\mathbf{K}_0, \mathcal L_\xi^* \mathcal L_\xi^* \mathbf{\Theta} \rrangle_{L^2} \diff s,
\end{align*}
by similar arguments as the ones for the last term by taking into account Lemma \ref{nuevo} \ref{lie-tilde-bound-xi2}.
\item Finally, by the stochastic dominated convergence theorem, the martingale term converges as
\begin{align*}
    \int^t_0 \llangle (\phi_s)_*\mathbf{K}_0^\epsilon, \mathcal L_\xi^* \mathbf{\Theta} \rrangle_{L^2} \diff W_s \rightarrow \int^t_0 \llangle (\phi_s)_*\mathbf{K}_0, \mathcal L_\xi^* \mathbf{\Theta} \rrangle_{L^2} \diff W_s,
\end{align*}
in probability (apply the previous strategies using Lemma \ref{nuevo} \ref{lie-tilde-bound-xi}).
\end{itemize}
{\bf Step 2. $\mathbf{K}$ is of class $L^p$.} We observe that by definition
\begin{align} \label{explicitfor}
     ((\phi_t)_* \mathbf{K}_0)_{j_1,\ldots,j_k}(x) = (K_0)_{i_1,\ldots,i_k}(\psi_t(x)) \frac{\partial \psi_t^{i_1}}{\partial x_{j_1}}(x) \cdots \frac{\partial \psi_t^{i_k}}{\partial x_{j_k}}(x),
\end{align}
in global coordinates, where $\psi_t := \phi_t^{-1}$. Hence, by considering the change of variables $x = \phi_t(y),$ we have
\begin{align}
    &\mathbb E\left[\int^T_0 \int_{\mathbb R^n} \lvert \mathbf{K} (t,x)\rvert^p \, \diff^nx \, \diff t \right] = \mathbb E\left[\int^T_0 \int_{\mathbb R^n} \lvert (\phi_t)_* \mathbf{K}_0(x)\rvert^p \diff^nx \, \diff t \right] \nonumber \\
    &\leq \mathbb E\left[\int^T_0 \int_{\mathbb R^n} \lvert \mathbf{K}_0(y)\rvert^p \lvert D\psi_t(\phi_t(y))\rvert^{pk} \lvert J\phi_t(y)\rvert \, \diff^ny \, \diff t \right] \nonumber \\
    &\leq  \int_{\mathbb R^n} \lvert \mathbf{K}_0(y)\rvert^p  \left(\mathbb E\left[\int^T_0   \lvert D\psi_t(\phi_t(y))\rvert^{2pk}   \diff t \right] \right)^\frac12 \left(\mathbb E\left[\int^T_0   \lvert J\phi_t(y)\rvert^2  \diff t \right] \right)^{\frac{1}{2}}  \diff^n y \label{cotak3} \\
    &\lesssim \left( \sup_{y \in \mathbb R^n} \mathbb E\left[\sup_{0\leq t\leq T} \lvert D\psi_t(\phi_t(y))\rvert^{2pk} \right] \right)^{\frac12} \left(\sup_{y \in \mathbb R^n} \mathbb E\left[\sup_{0\leq t\leq T} \lvert J\phi_t(y)\rvert^2\right]\right)^{\frac12} \|\mathbf{K}_0\|_{L^p}^p \nonumber \\
    &\lesssim \left( \sup_{y \in \mathbb R^n} \mathbb E\left[\sup_{0\leq t\leq T} \lvert D\psi_t(\phi_t(y))\rvert^{2pk} \right] \right)^{\frac12}, \label{cotak5}
\end{align}
where we have used Tonelli's theorem to change the order of integration and H\"older's inequality in the space $\Omega \times [0,T]$ in \eqref{cotak3}, and taken into account that
\begin{align*}
&\sup_{y \in \mathbb R^n} \mathbb E\left[\sup_{0\leq t\leq T} \lvert J \phi_t(y)\rvert^2\right] \lesssim \sup_{y \in \mathbb R^n} \mathbb E\left[\sup_{0\leq t\leq T} \lvert D\phi_t(y)\rvert^2\right] \\
&\hspace{50pt} \lesssim \sup_{y \in \mathbb R^n} \mathbb E\left[\sup_{0\leq t\leq T} \lvert D\phi^\epsilon_t(y)\rvert^2\right] + \sup_{y \in \mathbb R^n} \mathbb E\left[\sup_{0\leq t\leq T} \lvert D\phi^\epsilon_t(y)-D\phi_t(y)\rvert^2\right],
\end{align*}
by \eqref{bound-phi-deriv} \& \eqref{phi-deriv-convergence} , and the fact that $\mathbf{K}_0 \in L^p (\R^n,\bigwedge^k \mathcal{T}^*\mathbb R^n)$ in \eqref{cotak5}. To conclude, we show that \eqref{cotak5} is finite. For this, we first note that $D\psi_t(\phi_t(y)) = [D \phi_t(y)]^{-1},$ for $t \in [0,T]$ and $y \in \R^n,$ where $[D \phi_t(y)]^{-1}$ denotes the inverse of the Fréchet matrix $D \phi_t(y).$ Therefore, we have $D\psi_t(\phi_t(y)) = [J \phi_t(y)]^{-1} \text{Adj}(D \phi_t(y))$ and
\begin{align*}
     &\sup_{y \in \mathbb R^n}  \mathbb E\left[\sup_{0\leq t\leq T} \lvert D\psi_t(\phi_t(y))\rvert^{2pk} \right] \\
     &\quad \leq \sup_{y \in \mathbb R^n}  \mathbb E\left[\sup_{0\leq t\leq T} [J \phi_t(y)]^{-4pk}  \right]^{\frac12} \sup_{y \in \mathbb R^n} \mathbb E\left[\sup_{0\leq t\leq T} \lvert \text{Adj}(D \phi_t(y))\rvert^{4pk} \right]^{\frac12} \\
     &\quad \lesssim \sup_{y \in \mathbb R^n} \mathbb E\left[\sup_{0\leq t\leq T} [J \phi_t(y)]^{-4pk}  \right]^{\frac12} \sup_{y \in \mathbb R^n}   \mathbb E\left[\sup_{0\leq t\leq T} \lvert D \phi_t(y)\rvert^{4pk(n-1)} \right]^{\frac{1}{2}} \\
     &\quad \lesssim  \sup_{y \in \mathbb R^n} \mathbb E\left[\sup_{0\leq t\leq T} [J \phi_t(y)]^{-4pk}  \right]^{\frac12} < \infty,
\end{align*}
where we have applied H\"older's inequality, taken into account that all the terms in $\text{Adj}(D \phi_t(y))$ are sums of products of $(n-1)$ components of the matrix $D \phi_t(y),$ and applied \eqref{phi-deriv-convergence}. Finally, the term in the last line is finite due to \eqref{otracosa2}.
\end{proof}

\subsection{Existence for weak drift and initial condition}
We proceed to reduce the regularity on the drift and establish our main existence theorem.
\begin{theorem}[Existence of $L^p$-solutions for weak drift and initial condition] \label{nonsmooth-existence}
Let $p\geq 2$ and $\mathbf{K}_0 \in L^p(\R^n,\bigwedge^k \mathcal{T}^*\mathbb R^n)$. Let $b,$ $\xi_i,$ $i=1,\ldots,N$ satisfy Hypotheses \ref{hypoflow} $\&$ \ref{hypoexistence} with $q$ such that $1/q+1/p=1$. Then there exists an $L^p$-solution to \eqref{eq0}.
\end{theorem}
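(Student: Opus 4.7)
The strategy is to mollify the coefficients, invoke Proposition \ref{weak-sol-smooth} to obtain smooth-coefficient approximations, and pass to the limit using the flow convergence results from Theorem \ref{flowfla} and Corollary \ref{consequences}. Setting $b^\epsilon := \rho^\epsilon * b$ and $\xi_i^\epsilon := \rho^\epsilon * \xi_i$, standard mollifier estimates yield $b^\epsilon, \xi_i^\epsilon \in L^\infty([0,T]; C_b^{m+\beta+1}(\R^n, \R^n))$ for every $m,\beta$, so Assumption \ref{assumpstrong} with $m=3$ holds, the uniform parabolicity of Hypothesis \ref{hypoflow}\ref{xicondition} is preserved, and Hypothesis \ref{hypoexistence}\ref{newcondition} survives by the pointwise identity $\div(b^\epsilon) = (\div b) * \rho^\epsilon \geq C$. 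Proposition \ref{weak-sol-smooth} then produces $L^p$-solutions $\mathbf{K}^\epsilon_t := (\phi^\epsilon_t)_* \mathbf{K}_0$ of the mollified equation, where $\phi^\epsilon_t$ is the associated flow.

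Let $\phi_t$ be the flow from Theorem \ref{flowfla} for the original coefficients, and define the candidate $\mathbf{K}_t := (\phi_t)_* \mathbf{K}_0$. That $\mathbf{K} \in L^p(\Omega \times [0,T] \times \R^n; \bigwedge^k \mathcal{T}^*\R^n)$ follows by repeating Step 2 of the proof of Proposition \ref{weak-sol-smooth} verbatim with $\phi$ in place of the smooth-coefficient flow; the only ingredients needed are the moment bound \eqref{bound-phi-deriv} on $D\phi_t$ and the bound \eqref{otracosa2} on $[J\phi_t]^{-r}$, both of which hold under Hypotheses \ref{hypoflow} and \ref{hypoexistence}\ref{newcondition}. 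It remains to verify the weak formulation \eqref{weak-eq} by passing $\epsilon \to 0$. For a fixed test form $\mathbf{\Theta} \in C^\infty_0(\R^n, \bigwedge^k \mathcal{T}^*\R^n)$, Lemma \ref{coord-change} rewrites each pairing $\llangle \mathbf{K}^\epsilon_s, \cdot \rrangle_{L^2}$ as $\llangle \mathbf{K}_0, Z^\epsilon(s) \rrangle_{L^2}$, where $Z^\epsilon(s)$ depends algebraically on $\phi^\epsilon_s, D\phi^\epsilon_s, J\phi^\epsilon_s$, on the relevant coefficient (together with its first derivatives when it is $\xi_i^\epsilon$), and on $\mathbf{\Theta}$ with its derivatives. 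The analogues of Lemmas \ref{newbound} and \ref{nuevo}, combined with \eqref{phi-convergence}, confine the supports of $Z^\epsilon(s)$ to a single random ball $B(0, R^*)$ uniformly in $s$ and $\epsilon$. Combining $\phi^\epsilon \to \phi$ and $D\phi^\epsilon \to D\phi$ in $L^r(\Omega \times [0,T] \times B(0,R^*))$ from Corollary \ref{consequences} with uniform-on-compacts convergence of $\xi_i^\epsilon, D\xi_i^\epsilon$ and $L^q_{loc}$-convergence $b^\epsilon \to b$ (valid since $b \in L^\infty$), the dominated convergence theorem yields $L^2(\Omega \times [0,T])$-convergence of every deterministic term, while the stochastic dominated convergence theorem handles the martingale along a subsequence. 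The required continuous modification of $\llangle \mathbf{K}_t, \mathbf{\Theta} \rrangle_{L^2}$ is inherited from the a.s. joint continuity of $(t,x) \mapsto (\phi_t(x), D\phi_t^{-1}(x))$ via the explicit push-forward formula \eqref{explicitfor}.

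The most delicate step is the drift term $\int_0^t \llangle \mathbf{K}^\epsilon_s, \mathcal{L}^*_{b^\epsilon} \mathbf{\Theta} \rrangle_{L^2}\, ds$, since $b$ possesses no Sobolev regularity and the target $(\mathcal{L}^*_b)^{dist} \mathbf{\Theta}$ is defined only distributionally via \eqref{newformula}. The crucial observation is that \eqref{newformula} uses $b$ only algebraically (all spatial derivatives have been integrated by parts onto $\mathbf{\Theta}$ and the test form), so the tildified integrand $\widetilde{\mathcal{L}^*_{b^\epsilon} \mathbf{\Theta}}^\epsilon(s)$ can be arranged to depend on $b^\epsilon$ algebraically and polynomially on $\phi^\epsilon_s, D\phi^\epsilon_s$. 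Strong $L^q_{loc}$-convergence $b^\epsilon \to b$ together with Corollary \ref{consequences} then yields $\widetilde{\mathcal{L}^*_{b^\epsilon} \mathbf{\Theta}}^\epsilon(s) \to \widetilde{(\mathcal{L}^*_b)^{dist} \mathbf{\Theta}}(s)$ in $L^q(B(0, R^*))$ along a subsequence, with a uniform dominating function provided by the analogue of \eqref{primeracota} under Hypothesis \ref{hypoexistence}\ref{maincondition}. Hölder's inequality against $\mathbf{K}_0 \in L^p$ closes the argument and identifies the limit as the weak equation \eqref{weak-eq} for $\mathbf{K}$.
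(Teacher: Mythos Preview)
Your approach differs from the paper's in a substantive way. The paper does \emph{not} attempt to identify the limit as $(\phi_t)_*\mathbf{K}_0$ in this theorem; instead it derives the uniform bound \eqref{greatbound}, extracts a subsequence converging \emph{weakly} in $L^p$ (and $L^2$) to some $\mathbf{K}$, and passes to the limit in \eqref{weak-eq} using only weak convergence of $\mathbf{K}^\epsilon$ in $L^2(\Omega\times[0,T]\times\R^n)$ together with continuity of the linear maps $h\mapsto\int_0^\cdot h\,\diff s$ and $h\mapsto\int_0^\cdot h\,\diff W_s$ on adapted $L^2$-processes. The identification $\mathbf{K}=(\phi_t)_*\mathbf{K}_0$ is postponed to the \emph{next} theorem, where it is established first for compactly supported $\mathbf{K}_0$ (via Lemma \ref{lemma:theta-push-convergence}) and then extended by a cut-off argument.

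Your direct strong-convergence route has a genuine gap at the support-control step. After Lemma \ref{coord-change}, the tildified object $Z^\epsilon(s)$ is supported in $(\phi^\epsilon_s)^{-1}(\mathrm{supp}\,\mathbf{\Theta})$, and you assert that these sets are all contained in a \emph{single} random ball $B(0,R^*)$ uniformly in $s$ and $\epsilon$. Neither Lemma \ref{newbound} nor \eqref{phi-convergence} gives this: the former produces an $R^*$ depending on the flow (hence on $\epsilon$), and the latter is an $L^r$-in-$\omega$ statement, not a.s.\ uniform control of $\sup_\epsilon\sup_s\sup_{x\in B(0,R)}|(\phi^\epsilon_s)^{-1}(x)|$. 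Even granting such an $R^*$, it would be \emph{random}, whereas Corollary \ref{consequences} and the moment bounds you invoke are stated for deterministic balls. The paper circumvents exactly this difficulty by the two-step argument: weak compactness requires no support control at all, and the identification step first restricts to compactly supported $\mathbf{K}_0$ (so that the integration domain in the $y$-variable is a fixed deterministic ball), then removes the restriction by cut-off.

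A secondary issue concerns the drift term. Your claim that $\widetilde{\mathcal{L}^*_{b^\epsilon}\mathbf{\Theta}}^\epsilon(s)$ ``can be arranged to depend on $b^\epsilon$ algebraically'' via \eqref{newformula} is not correct: the tildified object involves $(\mathcal{L}^*_{b^\epsilon}\mathbf{\Theta})(\phi^\epsilon_s(y))$, which by \eqref{lie-T-theta-sharp} contains $\partial_l b^{\epsilon,i_j}$ explicitly, and the integration-by-parts formula \eqref{newformula} is available only when pairing against a test form, not against $\mathbf{K}_0\in L^p$. The convergence $\mathcal{L}^*_{b^\epsilon}\mathbf{\Theta}\to(\mathcal{L}^*_b)^{dist}\mathbf{\Theta}$ in $L^q_{loc}$ does hold, but it requires using the structure of Hypothesis \ref{hypoexistence}\ref{maincondition} (for $k\notin\{0,n\}$ it implies $Db\in L^q_{loc}$; for $k\in\{0,n\}$ it reduces to a condition on $\div b$), not merely $b^\epsilon\to b$ in $L^q_{loc}$.
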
 
\begin{proof}
For simplicity, we treat the case $N=1.$ First, we take the mollified vector field $b^\epsilon := \rho^\epsilon * b$ and consider equation \eqref{eq0} with drift vector field $b^\epsilon$ instead of $b$. Let $\phi^\epsilon_t$ be the smooth flow of \eqref{flowmap} with drift vector field $b^\epsilon$ and define $\mathbf{K}^\epsilon_t(x) := (\phi_t^\epsilon)_* \mathbf{K}_0(x)$, which is a weak solution of \eqref{eq0} by Proposition \ref{weak-sol-smooth}. By imitating the argument in Step 2 of the last proposition, considering the change of coordinates $x = \phi_t^\epsilon(y)$ and taking into account properties \eqref{bound-phi-deriv} and \eqref{otracosa1}, we obtain
\begin{align} \label{greatbound}
    &\mathbb E\left[\int^T_0 \int_{\mathbb R^n} \lvert \mathbf{K}^{\epsilon}_t(x)\rvert^p \, \diff^nx \, \diff t \right] \leq C < \infty,
\end{align}
where the constant $C$ depends on $p$ but is independent of $0 <\epsilon < 1.$ Hence there exists a subsequence $\epsilon_n \rightarrow 0$ and a $k$-form-valued stochastic process $\mathbf{K} \in L^p ( \Omega \times [0,T] \times \R^n; \bigwedge^k \mathcal{T}^*\mathbb R^n)$ such that $\mathbf{K}^{\epsilon_n}$ converges weakly to $\mathbf{K}$ in $L^p( \Omega \times [0,T] \times \R^n; \bigwedge^k \mathcal{T}^*\mathbb R^n)$ and, without loss of generality, also in $L^2( \Omega \times [0,T] \times \R^n; \bigwedge^k \mathcal{T}^*\mathbb R^n)$. Indeed, bound \eqref{greatbound} holds in $L^2$ for $\mathbf{K}^{\epsilon_n}$ by following Step 2 in Proposition \ref{weak-sol-smooth} with $p=2$ and applying H\"older's inequality in space with conjugate exponents $p'=p/2$ and $q'=p/(p-2)$ in \eqref{cotak3}. Hence, a new subsequence converging weakly in $L^2( \Omega \times [0,T] \times \R^n; \bigwedge^k \mathcal{T}^*\mathbb R^n)$ can be subtracted by iteration. Its weak limit must again be $\mathbf{K}$. This means that
\begin{align*}
    &\llangle \mathbf{K}^{\epsilon_n}_t(x) -\mathbf{K}_t(x), \mathbf{F}(\omega,t,x) \rrangle_{L^2_{\omega,t,x}} \rightarrow 0, \quad n \rightarrow \infty,
\end{align*}
for any $\mathbf{F} \in L^2(\Omega \times [0,T] \times \R^n; \bigwedge^k \mathcal{T}^*\mathbb R^n)$. From now on, we shall write $\epsilon$ instead of $\epsilon_n$ for the sake of simplicity. We fix a test $k$-form $\mathbf{\Theta} \in C^\infty_0(\R^n,\bigwedge^k \mathcal{T}^*\mathbb R^n)$. Note that the process $\llangle \mathbf{K}^{\epsilon},\mathbf{\Theta} \rrangle_{L^2}$ is adapted by definition and converges to $\llangle \mathbf{K},\mathbf{\Theta} \rrangle_{L^2},$ weakly in $L^{2}(\Omega \times
[0,T]),$ from the weak convergence of $\mathbf{K}^\epsilon$ in $L^2(\Omega \times [0,T] \times \R^n; \bigwedge^k \mathcal{T}^*\mathbb R^n)$. Since $\llangle \mathbf{K}^{\epsilon}, \mathbf{\Theta} \rrangle_{L^2}$ is adapted and the set of adapted processes in $L^{2}(\Omega \times [0,T])$ is closed (and therefore weakly closed), $\llangle \mathbf{K}, \mathbf{\Theta} \rrangle_{L^2}$ is adapted.
Taking into account this last fact, it is not difficult to show that $\llangle \mathbf{K}, \mathcal{L}_{\xi}^* \mathbf{\Theta} \rrangle_{L^2}$ is also adapted.

Moreover, by H\"older's inequality and using the fact that $\mathbf{K} \in L^p( \Omega \times [0,T] \times \R^n; \bigwedge^k \mathcal{T}^*\mathbb R^n)$ together with Hypothesis \ref{hypoexistence} \ref{maincondition}, we have $\llangle \mathbf{K}, \mathcal{L}_{b}^* \mathbf{\Theta}  \rrangle_{L^2} \in L^1(\Omega \times [0,T]),$ so its time integral is well defined. It can also be checked that $\llangle \mathbf{K}, \mathcal{L}_{\xi}^* \mathcal{L}_{\xi}^* \mathbf{\Theta} \rrangle_{L^2}$ is of class $L^1(\Omega \times [0,T]),$ so their time integrals are well-defined. Moreover, $\llangle \mathbf{K}, \mathcal{L}_{\xi}^* \mathbf{\Theta} \rrangle_{L^2} \in L^2(\Omega \times [0,T])$. 

We know that $\mathbf{K}^\epsilon$ satisfies equation \eqref{weak-eq}, and hence it also satisfies it weakly in $L^2(\Omega \times [0,T]).$ We claim that this property is transferred in the weak limit, so $\llangle \mathbf{K}, \mathbf{\Theta} \rrangle_{L^2}$ also satisfies \eqref{weak-eq}, weakly in $L^2(\Omega \times [0,T]).$ We include the proof of this claim in Appendix \ref{weaklimit} to prevent this proof from becoming excessively convoluted. Therefore we may pass to the weak $L^{2}(\Omega \times \lbrack 0,T])$-limit in the equation for $\mathbf{K}$ and \eqref{weak-eq} holds for almost every $  \left ( \omega ,t \right) \in \Omega \times [0,T] $. The proof is complete.
\end{proof}

Until now, we have constructed a solution to our equation but we have not proven that there exists a solution with the particular form $(\phi_t)_* \mathbf{K}_0(x).$
To show this, we require the following technical lemma.

\begin{lemma}\label{lemma:theta-push-convergence}
Let $b$ and $\xi_i,$ $i=1,\ldots,N$ satisfy Hypothesis \ref{hypoflow} and $\{b^m\}_{m = 1}^\infty$ be a sequence of drift vector fields such that $b^m \rightarrow b$ in $L^\infty([0,T]; C^\alpha_b(\R^n, \R^n)),$ $m \rightarrow \infty.$ Moreover, let $\{\phi_t^m\}_{m=1}^\infty$ be the corresponding sequence of stochastic flows solving \eqref{flownotation} with drift vector fields $b^m$ and fixed diffusion vector fields $\xi_i,$ $i=1,\ldots,N$. Let $r\geq 2$ such that $\alpha r > (n+1)/2^{k+1}$ and $R>0.$ Then for any multi-vector field $\mathbf{U} \in C_0^\infty (\R^n,\bigwedge^k \mathcal{T}\mathbb R^n)$ with compact support on $B(0,R),$ we have
\begin{align}\label{eq:theta-push-convergence}
\mathbb{E}\left[ \int_0^T \int_{B(0,R^*(m))} \left((\phi_t^{m})^* \mathbf{U}(x)  - (\phi_t)^* \mathbf{U}(x) \right)^r \diff^n x \, \diff t \right] \rightarrow 0, \quad m \rightarrow \infty,
\end{align}
where $R^*(m) <\infty$ a.s. is chosen such that 
\begin{align*}
\resizebox{1. \textwidth}{!}{
\begin{math}
    \Big\{(\phi_t^{m})^{-1}(y):\hspace{0.1cm} t \in [0,T], \hspace{0.1cm} y \in B(0,R) \Big\} \cup \Big\{\phi_t^{-1}(y):\hspace{0.1cm} t \in [0,T],\hspace{0.1cm} y \in B(0,R)\Big\} \subset B(0,{R}^*(m)),
\end{math}
}
\end{align*}
and we assume $B(0,R^*(m)) \subset B(0,R')$ \footnote{Note that this is guaranteed for fixed $m,$ but $R'$ would depend on $m,$ so this assumption is important.} with deterministic $R' < \infty,$ for all $m\in \mathbb{N}.$
\end{lemma}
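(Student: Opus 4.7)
The plan is to reduce the problem to controlling a finite collection of scalar convergences in $L^r(\Omega \times [0,T] \times B(0,R'))$, each of which follows from the estimates already proven in Theorem \ref{flowfla}. First I would replace the random domain of integration $B(0,R^*(m))$ by the deterministic ball $B(0,R')$: since the pull-backs $(\phi_t^m)^*\mathbf{U}$ and $\phi_t^*\mathbf{U}$ are both supported inside $B(0,R^*(m)) \subset B(0,R')$, the integrand vanishes outside that set, so enlarging the domain of integration is free. Then, using the coordinate representation \eqref{pullback} for the pull-back of a $k$-multivector, each component of $(\phi_t)^*\mathbf{U}$ becomes a product of $U^{i_1,\ldots,i_k}(\phi_t(x))$ with $k$ factors of the form $(\partial \psi_t^{\bullet}/\partial x_{\bullet})(\phi_t(x))$, where $\psi_t = \phi_t^{-1}$, and analogously with $m$-superscripts for the perturbed flows.

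Next I would expand $(\phi_t^m)^*\mathbf{U}(x) - \phi_t^*\mathbf{U}(x)$ as a telescoping sum of $k+1$ terms in which exactly one factor changes from an $m$-version to the limiting one. The $\mathbf{U}$-term is controlled by the global Lipschitz bound $\lvert \mathbf{U}(\phi_t^m(x)) - \mathbf{U}(\phi_t(x)) \rvert \leq \|D\mathbf{U}\|_{L^\infty} \lvert \phi_t^m(x) - \phi_t(x) \rvert$, which reduces it to the flow convergence \eqref{phi-convergence}. Each Jacobian term requires the further splitting
\begin{align*}
D\psi_t^m(\phi_t^m(x)) - D\psi_t(\phi_t(x)) = \bigl[D\psi_t^m(\phi_t^m(x)) - D\psi_t(\phi_t^m(x))\bigr] + \bigl[D\psi_t(\phi_t^m(x)) - D\psi_t(\phi_t(x))\bigr];
\end{align*}
the first bracket is handled by the $L^r$ convergence of the derivative of the inverse flow, which is an application of Theorem \ref{flowfla} item \ref{segundoitem} to the inverse flow equation justified by Remark \ref{inversa}, while the second bracket is estimated by the spatial Hölder modulus \eqref{eq:JM-estimate} of item \ref{terceritem} combined again with \eqref{phi-convergence}. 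At this point each telescoping summand has the shape (zero-tending factor) $\times$ (product of $k$ uniformly controlled factors).

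To close, I would raise each telescoping term to the $r$-th power, take $\mathbb{E}\int_0^T\int_{B(0,R')}$, and apply Hölder's inequality iteratively (pairing off one factor at a time) to isolate the factor that tends to zero in $L^r$ while distributing the remaining $k$ factors onto higher-exponent Lebesgue norms. The uniform bounds \eqref{bound-phi-deriv}, \eqref{otracosa1}, and the compactness of $\phi_t^m(x)$ on the relevant support absorb any moment demand produced by the Hölder splitting; the specific threshold $\alpha r > (n+1)/2^{k+1}$ is precisely what guarantees that the iterated Cauchy--Schwarz pairing of the $k+1$ factors still leaves enough moment to invoke the Hölder modulus estimate \eqref{eq:JM-estimate} of item \ref{terceritem}. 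The main technical obstacle will be the composition step $D\psi_t^m(\phi_t^m(x)) - D\psi_t(\phi_t(x))$: pointwise-in-$x$ convergence of $D\psi_t^m$ does not automatically survive perturbation of the base point, so one genuinely needs the uniform spatial Hölder control of the flow derivative from item \ref{terceritem} together with the sup-in-$(s,t,x)$ $L^r$ convergence of the flows, and it is exactly here that the specific moment condition $\alpha r > (n+1)/2^{k+1}$ becomes sharp.
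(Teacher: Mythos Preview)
Your proposal is correct and follows the same strategy as the paper: telescope the product formula for the pull-back, control the $\mathbf{U}$-difference by its Lipschitz bound together with flow convergence, and split each Jacobian difference $D\psi_t^m(\phi_t^m(x)) - D\psi_t(\phi_t(x))$ into the two brackets you describe, with the first handled by $L^r$-convergence of $D\psi^m \to D\psi$ (after a change of variables $y=\phi_t^m(x)$) and the second by the weighted H\"older modulus from item~\ref{terceritem}. The only organizational difference is that the paper packages this as an induction on $k$, peeling off one Jacobian factor at a time and invoking the inductive hypothesis on the resulting $(k{-}1)$-multivector pull-back, whereas you propose a direct $(k{+}1)$-term telescoping; both unravel into the same iterated Cauchy--Schwarz and lead to the same threshold $\alpha r > (n+1)/2^{k+1}$.

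One small caveat: you invoke \eqref{otracosa1} to control moments of $D\psi_t^m(\phi_t^m(\cdot))$, but that estimate requires Hypothesis~\ref{hypoexistence}\ref{newcondition}, which is not among the hypotheses of this lemma. The paper avoids this by changing variables $y = \phi_t^m(x)$ so that these norms become integrals of $\lvert D\psi_t^m(y)\rvert^s \lvert J\psi_t^m(y)\rvert$, estimates $\lvert J\psi_t^m\rvert \lesssim \lvert D\psi_t^m\rvert^n$, and then appeals only to the inverse-flow analogue of \eqref{bound-phi-deriv} via Remark~\ref{inversa}. With that adjustment your argument goes through under Hypothesis~\ref{hypoflow} alone, as stated.
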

Since this lemma is quite technical, we prove it in Appendix \ref{uglylemma}. The following proposition concludes this existence section.
\begin{theorem}
In the conditions of Theorem \ref{nonsmooth-existence}, there exists an $L^p$-solution to \eqref{eq0} of the form $\mathbf{K}_t(x) := (\phi_t)_* \mathbf{K}_0(x),$ where $\phi_t$ is the flow of \eqref{flowmap}.
\end{theorem}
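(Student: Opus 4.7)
The plan is to identify the weak $L^p$-limit $\mathbf{K}$ constructed in the proof of Theorem \ref{nonsmooth-existence} with the pushforward $(\phi_t)_*\mathbf{K}_0$ by the flow of the unmollified SDE \eqref{flowmap}. Once this identification holds in $L^p(\Omega\times[0,T]\times\R^n;\bigwedge^k\mathcal{T}^*\R^n)$, the theorem is immediate, since Theorem \ref{nonsmooth-existence} has already shown that $\mathbf{K}$ is an $L^p$-solution of \eqref{eq0}. A preliminary step is to check that $(\phi_t)_*\mathbf{K}_0\in L^p(\Omega\times[0,T]\times\R^n;\bigwedge^k\mathcal{T}^*\R^n)$. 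This is obtained by repeating verbatim the computation in Step 2 of the proof of Proposition \ref{weak-sol-smooth}: perform the change of variables $x=\phi_t(y)$, bound $|D\psi_t(\phi_t(y))|^{pk}$ via the adjugate identity $D\psi_t(\phi_t(y))=[J\phi_t(y)]^{-1}\,\mathrm{Adj}(D\phi_t(y))$, and close the estimate using the uniform-in-$y$ moment bound \eqref{bound-phi-deriv} for $D\phi_t$ together with the inverse-Jacobian bound \eqref{otracosa2}.

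To identify the weak limit, I would test against the dense subset of $L^q(\Omega\times[0,T]\times\R^n;\bigwedge^k\mathcal{T}^*\R^n)$ consisting of product functionals $\mathbf{F}(\omega,t,x)=Z(\omega)\chi(t)\mathbf{\Theta}(x)$ with $Z\in L^\infty(\Omega)$, $\chi\in L^\infty([0,T])$, and $\mathbf{\Theta}\in C_0^\infty(\R^n;\bigwedge^k\mathcal{T}^*\R^n)$. Applying Lemma \ref{coord-change} to both terms gives
$$
\llangle \mathbf{K}^{\epsilon_n}_t-(\phi_t)_*\mathbf{K}_0,\mathbf{\Theta}\rrangle_{L^2} = \llangle \mathbf{K}_0,\widetilde{\mathbf{\Theta}}^{\epsilon_n}_t-\widetilde{\mathbf{\Theta}}_t\rrangle_{L^2},
$$
with $\widetilde{\mathbf{\Theta}}^{\epsilon_n}_t:=|J\phi_t^{\epsilon_n}|\bigl((\phi_t^{\epsilon_n})^*\mathbf{\Theta}^\sharp\bigr)^\flat$ and $\widetilde{\mathbf{\Theta}}_t$ defined analogously with $\phi_t$. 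I would then split $\mathbf{K}_0=\mathbf{K}_0^M+(\mathbf{K}_0-\mathbf{K}_0^M)$, where $\mathbf{K}_0^M:=\mathbf{K}_0\,\mathcal{X}_{B(0,M)}$. The tail contribution is controlled uniformly in $n$ by a constant times $\|\mathbf{K}_0-\mathbf{K}_0^M\|_{L^p(\R^n)}$ via Hölder's inequality combined with the uniform bound \eqref{greatbound} on $\mathbf{K}^{\epsilon_n}$ and the preliminary step applied to $(\phi_t)_*(\mathbf{K}_0-\mathbf{K}_0^M)$, and this tends to zero as $M\to\infty$. For the compactly supported piece $\mathbf{K}_0^M$, Lemma \ref{lemma:theta-push-convergence} applied with $\mathbf{U}=\mathbf{\Theta}^\sharp$ yields $(\phi_t^{\epsilon_n})^*\mathbf{\Theta}^\sharp\to\phi_t^*\mathbf{\Theta}^\sharp$ in an $L^r(\Omega\times[0,T]\times B(0,M))$-sense, and the convergence $J\phi_t^{\epsilon_n}\to J\phi_t$ in the same space follows from Theorem \ref{flowfla} item \ref{segundoitem} through the multilinearity of the determinant. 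Combining the two via Hölder's inequality, together with the uniform estimates \eqref{otracosa1} and \eqref{otracosa2}, delivers $\widetilde{\mathbf{\Theta}}^{\epsilon_n}_t\to\widetilde{\mathbf{\Theta}}_t$ in $L^q(\Omega\times[0,T]\times B(0,M))$ along a subsequence. Sending $n\to\infty$ first and then $M\to\infty$ produces the identification $\mathbf{K}=(\phi_t)_*\mathbf{K}_0$ in $L^p$ by uniqueness of weak limits.

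The main obstacle I anticipate is transferring the flow convergence from Theorem \ref{flowfla} through the nonlinear change of variables in Lemma \ref{coord-change} into a convergence of the transformed test forms $\widetilde{\mathbf{\Theta}}^{\epsilon_n}_t$ that is strong enough to pair against the merely $L^p$-regular initial datum $\mathbf{K}_0$. Two delicate ingredients are the random spatial support $R^*(n)$ appearing in Lemma \ref{lemma:theta-push-convergence}, handled by the compact-support truncation $\mathbf{K}_0^M$, and the product structure $|J\phi^{\epsilon_n}|\cdot(\phi^{\epsilon_n})^*\mathbf{\Theta}^\sharp$, handled by proving $L^r$-convergence of each factor separately and combining them via Hölder's inequality.
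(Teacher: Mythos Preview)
Your approach is essentially the same as the paper's: both arguments use Lemma \ref{coord-change} to transfer the identification to the transformed test forms $\widetilde{\mathbf{\Theta}}^{\epsilon_n}_t$, invoke Lemma \ref{lemma:theta-push-convergence} together with the Jacobian convergence from Corollary \ref{consequences} to obtain $\widetilde{\mathbf{\Theta}}^{\epsilon_n}_t \to \widetilde{\mathbf{\Theta}}_t$ in a sufficiently strong norm on a fixed ball, and rely on compact-support truncation of $\mathbf{K}_0$ to make the deterministic-$R'$ hypothesis of Lemma \ref{lemma:theta-push-convergence} applicable.

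The only organizational difference concerns the passage from compactly supported $\mathbf{K}_0$ to general $\mathbf{K}_0 \in L^p$. The paper first establishes the result for compactly supported data (Step 1) and then, in Step 2, sets $\mathbf{K}_0^m := \mathbf{K}_0\,\zeta(\cdot/m)$, observes that a.s.\ $\mathcal{X}_{B(0,R)}(\phi_t)_*\mathbf{K}_0^m = \mathcal{X}_{B(0,R)}(\phi_t)_*\mathbf{K}_0$ for $m$ large enough (depending on $\omega$), and passes to the limit directly in each term of the weak equation \eqref{nequation}. You instead truncate inside the identification argument and kill the tail uniformly in $n$ via the quantitative bound $\|(\phi_t^{\epsilon_n})_*(\mathbf{K}_0-\mathbf{K}_0^M)\|_{L^p_{\omega,t,x}} \lesssim \|\mathbf{K}_0-\mathbf{K}_0^M\|_{L^p}$, with the implicit constant uniform by \eqref{bound-phi-deriv} and \eqref{otracosa1}. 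Your route is slightly more direct once Theorem \ref{nonsmooth-existence} is in hand; the paper's route avoids re-invoking the weak-limit construction for the general case. Both are valid and rely on the same analytic ingredients.
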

\begin{proof}
{\bf Step 1.} First we assume that $\mathbf{K}_0$ is supported on $B(0,R).$ We will show that we can choose a sequence converging to a weak solution of \eqref{eq0}, which we denote by $\mathbf{K}^\epsilon$ for convenience, such that for every test $k$-form $\mathbf{\Theta} \in C^\infty_0(\R^n,\bigwedge^k \mathcal{T}^*\mathbb R^n)$, we have $\llangle \mathbf{K}^\epsilon - (\phi_t)_* \mathbf{K}_0,\mathbf{\Theta} \rrangle_{L^2} \rightarrow 0,$ weakly in $L^2(\Omega \times [0,T]).$ From this, we can conclude that $\langle \langle (\phi_t)_* \mathbf{K}_0, \mathbf{\Theta} \rangle \rangle_{L^2}$ satisfies equation \eqref{weak-eq} for almost every $(\omega, t) \in \Omega \times [0,T].$ Since by the flow properties $(\phi_t)_* \mathbf{K}_0$ is adapted and a.s. continuous, it is a solution in the sense prescribed in Definition \ref{weak-sol}.

Indeed, we select a convergent (sub)sequence $\mathbf{K}^\epsilon$ by applying the same argument as we did previously in the proof of Theorem \ref{nonsmooth-existence}. Fix a test $k$-form $\mathbf{\Theta} \in C^\infty_0(\R^n,\bigwedge^k \mathcal{T}^*\mathbb R^n)$, which we assume without loss of generality to be supported on $B(0,R)$.  We claim that $\llangle \mathbf{K}^\epsilon, \mathbf{\Theta} \rrangle_{L^2}$ converges to $\llangle (\phi_t)_* \mathbf{K}_0,\mathbf{\Theta} \rrangle_{L^2},$ weakly in $L^2 ( \Omega \times [0,T]).$
Fix a test function $f \in L^2(\Omega \times [0,T]).$ Let us adopt the notation $\|\cdot\|_{L^r_{\omega,t,R}}$ to denote the $L^r$-norm of a tensor on the set $\Omega \times [0,T] \times B(0,R)$. Recalling that $\mathbf{K}^\epsilon_t = (\phi_t^\epsilon)_* \mathbf{K}_0$, where $\phi_t^\epsilon$ is the stochastic flow of \eqref{eq0} with $b^\epsilon$, the mollified drift, instead of $b$, we have 
\begin{align*}
    & \left\lvert \mathbb{E} \left[ \int_0^T  \llangle \mathbf{K}_t^\epsilon - (\phi_t)_* \mathbf{K}_0, \mathbf{\Theta} \rrangle_{L^2} f \, \diff t \right] \right\rvert \\
    & \leq   \mathbb{E} \left[ \int_0^T \left\lvert \langle \langle \mathcal{X}_{B(0,R)} \mathbf{K}_0,  \lvert J\phi_t^\epsilon\rvert ((\phi_t^\epsilon)^* \mathbf{\Theta}^\sharp )^\flat - \lvert J\phi_t\rvert (\phi_t^* \mathbf{\Theta}^\sharp )^\flat \rangle \rangle_{L^2}  f \right\rvert \diff t \right] \\
    & =  \mathbb{E} \left[ \int_0^T \left\lvert \langle \langle \mathcal{X}_{B(0,R)} \mathbf{K}_0,  \lvert J\phi_t^\epsilon\rvert ((\phi_t^\epsilon)^* \mathbf{\Theta}^\sharp )^\flat - \lvert J\phi^\epsilon_t\rvert (\phi_t^* \mathbf{\Theta}^\sharp )^\flat \rangle \rangle_{L^2}  f \right\rvert \diff t \right] \\
    & \quad +   \mathbb{E} \left[ \int_0^T \left\lvert \langle \langle \mathcal{X}_{B(0,R)} \mathbf{K}_0,  \lvert J\phi_t^\epsilon\rvert ((\phi_t)^* \mathbf{\Theta}^\sharp )^\flat - \lvert J\phi_t\rvert (\phi_t^* \mathbf{\Theta}^\sharp )^\flat \rangle \rangle_{L^2}  f \right\rvert \diff t \right] \\
    & \leq \norm{\mathbf{K}_{0}f}_{L^2_{\omega,t,R}}  \normb{ J\phi_t^\epsilon \,( ((\phi_t^\epsilon)^* \mathbf{\Theta}^\sharp )^\flat - (\phi_t^* \mathbf{\Theta}^\sharp )^\flat ) }_{L^2_{\omega,t,R}} \\
    &\hspace{100pt} + \norm{\mathbf{K}_{0}f}_{L^2_{\omega,t,R}}  \normb{ (J\phi_t^\epsilon- J\phi_t ) (\phi_t^* \mathbf{\Theta}^\sharp )^\flat}_{L^2_{\omega,t,R}},
\end{align*}
where we took into account Lemma \ref{coord-change} in the second line and H\"older's inequality in the last step. We check that the last lines above converges to zero. First we note
\begin{align*}
    & \norm{\mathbf{K}_{0}f}_{L^2_{\omega,t,R}} \lesssim \norm{\mathbf{K}_{0}}_{L^2_R} \norm{f}_{L^2_{\omega,t}} < \infty.
\end{align*}
Moreover, choosing constants $r \geq 2$ such that $2r\alpha > (n+1)/2^{k+1}$ and $s > 1$ satisfying $1/r + 1/s = 1$, we have 
\begin{align*}
    & \normb{J\phi_t^\epsilon ( ((\phi_t^\epsilon)^* \mathbf{\Theta}^\sharp )^\flat - (\phi_t^* \mathbf{\Theta}^\sharp)^\flat ) }_{L^2_{\omega,t,R}} \leq \norm{J\phi_t^\epsilon}_{L^{2s}_{\omega,t,R}} \normb{ (\phi_t^\epsilon)^* \mathbf{\Theta}^\sharp  - \phi_t^* \mathbf{\Theta}^\sharp}_{L^{2r}_{\omega,t,R}} \rightarrow 0,
\end{align*}
where we have used \eqref{weakjacobian} to control the Jacobian determinant term together with Lemma \ref{lemma:theta-push-convergence} to obtain convergence of the second term. 
By applying similar techniques we can also derive
\begin{align*}
    & \normb{\left(J\phi_t^\epsilon- J\phi_t\right) (\phi_t^* \mathbf{\Theta}^\sharp )^\flat}_{L^2_{\omega,t,R}} \rightarrow 0.
\end{align*}
Note that the hypothesis in Lemma \ref{lemma:theta-push-convergence} is verified with $R'=R$ (the hypothesis would not hold had we not assumed that $\mathbf{K}$ is compactly supported, so this was crucial). Hence, our argument is complete in the case when $\mathbf{K}_0$ has compact support.

{\bf Step 2.} Now we treat the general case $\mathbf{K}_0 \in L^p(\R^n,\bigwedge^k \mathcal{T}^*\mathbb R^n),$ without restriction on the support. We choose $\zeta \in C_{0}^{\infty }(\mathbb R^n)$ supported on $B(0,1)$ such that $\zeta
(x)=1,$ $x \in B(0,1/2)$. For every $m \in \mathbb{N},$ let $\mathbf{K}_{0}^{m}(x):=\mathbf{K}_{0}\left( x\right) \zeta (x/m),$ which is compactly supported. By taking Step 1, we can construct a corresponding solution $\mathbf{K}^m$ with initial datum $\mathbf{K}^m_0.$ Therefore we have 
\begin{align}
\label{nequation}
\begin{split}
    & \llangle \mathbf{K}^m_t, \mathbf{\Theta} \rrangle_{L^2} - \llangle \mathbf{\mathbf{K}}_0^m, \mathbf{\Theta} \rrangle_{L^2} = -\int^t_0 \llangle \mathbf{K}^m_s, \mathcal L_b^* \mathbf{\Theta} \rrangle_{L^2} \diff s \\
    & \hspace{50pt} - \int^t_0 \llangle \mathbf{K}^m_s, \mathcal L_\xi^* \mathbf{\Theta} \rrangle_{L^2} \diff W_s
    + \frac12 \int^t_0 \llangle \mathbf{K}^m_s, \mathcal L_\xi^* \mathcal L_\xi^* \mathbf{\Theta} \rrangle_{L^2} \diff s,
\end{split}
\end{align}
$t \in [0,T],$ for any test $k$-form $\mathbf{\Theta} \in C^\infty_0(\R^n,\bigwedge^k \mathcal{T}^*\mathbb R^n)$. We know from the first part of our proof that $\mathbb{P}$-a.s.,
\begin{align*} 
& \mathbf{K}^{m}_t(x) = (\phi_t)_* (\mathbf{K}_{0}(\cdot) \zeta (\cdot/m))(x) = (\phi_t)_* \mathbf{K}_{0}(x) \zeta (\phi_{t}^{-1}(x)/m), \quad t \in [0,T], \quad x \in \mathbb{R}^n,
\end{align*}
where we have employed the property $(\phi_t)_*(\mathbf{K} f)(x) =(\phi_t)_*\mathbf{K}(x)(\phi_t)_*f(x)$ and taken into account that $(\phi_t)_*(x) = f(\phi_t^{-1}(x))$ for smooth scalar functions $f: \mathbb{R}^n \rightarrow \mathbb{R}$. It is clear that a.s., the sequence $\mathbf{K}^{m}_t$ converges to $\mathbf{K}_t :=(\phi_t)_* \mathbf{K}_{0}$ pointwise in $(t,x)$. We want to show that $\mathbf{K}_t$ also satisfies \eqref{nequation}. To this end, we first fix a test $k$-form $\mathbf{\Theta} \in C^\infty_0(\R^n,\bigwedge^k \mathcal{T}^*\mathbb R^n)$ supported on $B(0,R)$. As in the proof of Lemma \ref{newbound}, we choose a random variable $R^* : \Omega \rightarrow (0, \infty]$, which is finite a.s., such that 
\begin{align*}
    & \{\phi_t^{-1}(x): \hspace{0.1cm} t \in [0,T], \hspace{0.1cm} x \in B(0,R) \} \subset B(0,R^*).
\end{align*}
Let $\omega$ belong to the set of probability one such that $R^*(\omega) < \infty$ and set $R'(\omega) := \max(R,R^*(\omega))$. Then we have for $\mathbb{N} \ni m> \lfloor 2R'(\omega) \rfloor + 1$, 
\begin{align*}
\mathcal{X}_{B(0,R)} \mathbf{K}^{m}_t(x) & = \mathcal{X}_{B(0,R)} (\phi_t)_* \mathbf{K}_{0}(x) \zeta (\phi_{t}^{-1}(x)/m) \\
& = \mathcal{X}_{B(0,R)} (\phi_t)_* \mathbf{K}_{0}(x) = \mathcal{X}_{B(0,R)} \mathbf{K}_t(x), \quad t \in [0,T], \quad x \in \mathbb{R}^n.
\end{align*}
Therefore $\mathcal{X}_{B(0,R)} \mathbf{K}^{m}_t(x) = \mathcal{X}_{B(0,R)} \mathbf{K}_t(x).$
We conclude that for $\mathbb{P}$-a.s. $\omega,$ if $m \equiv m(\omega)> \lfloor 2R'(\omega) \rfloor + 1,$ we always have $\mathbf{K}^m = \mathbf{K}$ and hence the terms $\llangle \mathbf{K}^m_t, \mathbf{\Theta} \rrangle_{L^2},$ $\int^t_0 \llangle \mathbf{K}^m_s, \mathcal L_b^* \mathbf{\Theta} \rrangle_{L^2} \diff s,$ and $\int^t_0 \llangle \mathbf{K}^m_s, \mathcal L_\xi^* \mathcal L_\xi^* \mathbf{\Theta} \rrangle_{L^2} \diff s$ in \eqref{nequation} become $\llangle \mathbf{K}_t, \mathbf{\Theta} \rrangle_{L^2},$ $\int^t_0 \llangle \mathbf{K}_s, \mathcal L_b^* \mathbf{\Theta} \rrangle_{L^2} \diff s,$ $\int^t_0 \llangle \mathbf{K}_s, \mathcal L_\xi^* \mathcal L_\xi^* \mathbf{\Theta} \rrangle_{L^2} \diff s$ respectively, which implies a.s. convergence. To finish the argument, we claim that
\begin{align*}
& \int^t_0 \llangle \mathbf{K}^m_s, \mathcal L_\xi^* \mathbf{\Theta} \rrangle_{L^2} \diff W_s \rightarrow \int^t_0 \llangle \mathbf{K}_s, \mathcal L_\xi^* \mathbf{\Theta} \rrangle_{L^2} \diff W_s
\end{align*}
in probability, $t \in [0,T]$. For this, it suffices to establish 
\begin{align} \label{finalfact}
& \mathbb{E} \left[\left(\int^t_0 \llangle \mathbf{K}^m_s - \mathbf{K}_s, \mathcal L_\xi^* \mathbf{\Theta} \rrangle_{L^2} \diff W_s \right)^2 \right] = \mathbb{E} \left[\int^t_0 \llangle \mathbf{K}^m_s - \mathbf{K}_s, \mathcal L_\xi^* \mathbf{\Theta} \rrangle^2_{L^2} \diff s \right] \rightarrow 0, 
\end{align}
where we have used It\^o's isometry and employed the fact that square mean convergence implies convergence in probability. Claim \eqref{finalfact} can be verified by applying the dominated convergence theorem in the space $\Omega \times [0,T]$ to the sequence of functions $\llangle \mathbf{K}^m_s, \mathcal L_\xi^* \mathbf{\Theta} \rrangle_{L^2}$, where the hypotheses are satisfied since (i) clearly, a.s. $\llangle \mathbf{K}^m_s - \mathbf{K}_s, \mathcal L_\xi^* \mathbf{\Theta} \rrangle^2_{L^2} \rightarrow 0,$ (ii) by using H\"older's inequality, we have the uniform bound
\begin{align*}
& \llangle \mathbf{K}^m_s, \mathcal L_\xi^* \mathbf{\Theta} \rrangle^2_{L^2_x} \leq \norm{\mathbf{K}^m_s}_{L^2_x}^2 \norm{\mathcal{L}_\xi^* \mathbf{\Theta}}_{L^2_x}^2 \leq \norm{\mathbf{K}_s}_{L^2_x}^2 \norm{\mathcal{L}_\xi^* \mathbf{\Theta}}_{L^2_x}^2 \lesssim \norm{\mathbf{K}_s}_{L_x^2}^2,
\end{align*}
where we have taken into account that $\lvert \mathbf{K}^{m}_t(x)\rvert \leq \lvert \mathbf{K}_t(x)\rvert$ and have used that $\norm{\mathcal{L}_\xi^* \mathbf{\Theta}}_{L^2_x}^2 \in L^\infty([0,T])$. Moreover, (iii) $\norm{\mathbf{K}}_{L_x^2}^2 \in L^1(\Omega \times [0,T])$ by usual strategies as in the bound for $\mathbb E\left[\int^T_0 \int_{\mathbb R^n} \lvert \mathbf{K} (t,x)\rvert^p \, \diff^n x \, \diff t \right]$ in Step 2 of the proof of Proposition \ref{weak-sol-smooth}.
\end{proof}

\section{Uniqueness of stochastic solutions} \label{nonsmooth-unique}
\subsection{Uniqueness with weak differentiability of the drift} \label{unicidadfuerte}
This subsection is devoted to establishing our general uniqueness result (Theorem \ref{theoremuniqueness}). Let us assume we are under the hypotheses in Theorem \ref{theoremuniqueness}. For convenience, we employ the notation $\mathbf{K}(t,x)$ for the solution of \eqref{eq0}. For simplicity, we treat the case $N=1$. Due to the linearity of the equation, it is sufficient to show that if $\mathbf{K}(0,\cdot) = 0$, then $\mathbf{K}(t,\cdot) = 0$ a.s., for every $t \in [0,T]$. 

First, fix $\epsilon>0$ and consider the {\em smooth-in-space} $k$-form-valued process $\mathbf{K}^\epsilon(t,x) := (\rho^\epsilon * \mathbf{K}(t,\cdot))(x)$, which must satisfy \footnote{Stochastic Fubini's theorem can be applied to equation \eqref{smoothened} to switch spatial and temporal integration by a similar (but simpler) argument to the one we employ to justify its use in the proof of Theorem \ref{theoremuniqueness}.}
\begin{align}
\label{smoothened}
\begin{split}
    & \mathbf{K}^\epsilon(t,x) = -\int^t_0 (\rho^\epsilon * \mathcal L_b \mathbf{K})(s,x) \, \diff s - \int^t_0 (\rho^\epsilon * \mathcal L_{\xi} \mathbf{K})(s,x) \, \diff W_s \\
    & \hspace{90pt} + \frac{1}{2} \int_0^{t} (\rho^\epsilon * (\mathcal{L}_{\xi})^2 \mathbf{K})(s,x) \, \diff s = 0, \quad t \in [0,T], \quad x \in \mathbb{R}^n,
\end{split}
\end{align}
where $\mathcal L_b \mathbf{K},$ $\mathcal L_\xi \mathbf{K},$ $(\mathcal L_\xi)^2 \mathbf{K}$ are understood as Lie derivatives {\em in the sense of distributions}, owing to the low spatial regularity of $\mathbf{K}$ (see Definition \ref{lieadjointdistribution}). We note that \eqref{distri} is well-defined since by H\"older's inequality and Hypothesis \ref{hypoexistence} \ref{maincondition}, we have
\begin{align*}
    & \llangle \mathbf{K}, \mathcal L_b^{*} \mathbf{\Theta} \rrangle_{L^2} \leq \norm{\mathbf{K}}_{L^p_{\omega,t,x}} \norm{\mathcal L_b^{*} \mathbf{\Theta}}_{L^q_{t,x}}.
\end{align*}
Since all the terms in (\ref{smoothened}) are now smooth in space, we can apply the KIW formula for $k$-forms (Theorem \ref{KIWmain} \eqref{KIW}) to obtain
\begin{align} 
\begin{split}
    \label{KIWapplication}
    &\phi_t^* \mathbf{K}^\epsilon(t,x) = - \int^t_0 \phi_s^* (\mathcal L_b \mathbf{K})^\epsilon(s,x) \, \diff s - \int^t_0 \phi_s^* (\mathcal L_\xi \mathbf{K})^\epsilon \, \diff W_s \\
    &+ \frac{1}{2} \int^t_0 \phi_s^* (\mathcal L^2_\xi \mathbf{K})^\epsilon (s,x) \, \diff s + \int^t_0 \phi_s^* \mathcal L_b \mathbf{K}^\epsilon (s,x) \, \diff s \\
    &+ \int^t_0 \phi_s^* \mathcal L_{\xi} \mathbf{K}^\epsilon(s,x) \, \diff W_s  + \int^t_0 \phi^*_s \mathcal L_{\xi} (\mathcal L_{\xi} \mathbf{K})^\epsilon \, \diff [W_{\cdot}, W_{\cdot}]_s + \frac12 \int^t_0 \phi^*_s \mathcal L^2_{\xi} \mathbf{K}^\epsilon(s,x) \, \diff s,
\end{split}
\end{align}
for all $t \in [0,T]$ and $x \in \mathbb{R}^n.$ Note that the hypotheses of Theorem \ref{KIWmain} are satisfied since:
\begin{itemize}
    \item By the properties of mollifiers, $\mathbb{P}$-a.s., $(\mathcal L_b \mathbf{K})^{\epsilon}$ is continuous in time (indeed, see equation \eqref{smoothened}) and smooth in space, whereas $(\mathcal L_{\xi} \mathbf{K})^{\epsilon}$ and $((\mathcal{L}_{\xi})^2 \mathbf{K})^\epsilon$ are just smooth in space.
    \item By taking into account the property $D_y\rho^\epsilon(x-y) = -D_x\rho^\epsilon(x-y)$ and applying the definition of mollification of distribution-valued $k$-forms (see Definition \ref{distrimolliformula}), one can check that $(\mathcal L_b \mathbf{K})^\epsilon(t,x)$ is the smooth time-dependent $k$-form
    \begin{align*}
        & (\mathcal L_b \mathbf{K})^\epsilon(t,x) \\
        &= \int_{\mathbb R^n} \left(\frac{\partial \rho^\epsilon}{\partial x_l}(x-y) \mathbf{K}(t,y) b^l(t,y) - \rho^\epsilon(x-y) \mathbf{K}(t,y) \frac{\partial b^l}{\partial y_l}(t,y) \right) \diff^n y
        \\ 
        & + \int_{\mathbb R^n} \rho^\epsilon(x-y) \left(\frac{\partial b^{l}}{\partial y_{i_1}}(t,y) K_{l,i_2,\ldots,i_k}(t,y) \right. \\
        &\hspace{130pt} \left. + \cdots + \frac{\partial b^{l}}{\partial y_{i_k}}(t,y) K_{i_1,\ldots,i_{k-1}, l}(t,y)\right) \diff^n y.
    \end{align*}
    Let $\rho^{\epsilon}$ be supported on $B(0,R)$. Then $\mathbb{P}$-a.s. we have
    \begin{align*}
        & \int_0^T \left\lvert(\mathcal L_b \mathbf{K})^\epsilon(t,x)\right\rvert \diff t \\
        & \lesssim \int_{\mathbb R^n} \int_0^T \left\lvert \frac{\partial \rho^\epsilon}{\partial x_l}(x-y) \mathbf{K}(t,y) b^l(t,y) - \rho^\epsilon(x-y) \mathbf{K}(t,y) \frac{\partial b^l}{\partial y_l}(t,y) \right\rvert \diff t \, \diff^n y
        \\ 
        & \quad + \int_{\mathbb R^n} \int_0^T \left\lvert \rho^\epsilon(x-y) \left(\frac{\partial b^{l}}{\partial y_{i_1}}(t,y) K_{l,i_2,\ldots,i_k}(t,y) \right. \right.\\
        &\hspace{120pt} \left. \left. + \cdots + \frac{\partial b^{l}}{\partial y_{i_k}}(t,y) K_{i_1,\ldots,i_{k-1}, i_k}(t,y)\right) \right\rvert \diff t \, \diff^n y 
        \\
        & \lesssim \int_{\R^n} \left( \frac{\partial \rho^{\epsilon}}{\partial x_l}(x-y) + \rho^{\epsilon}(x-y) \right)\norm{\mathbf{K}(\cdot,y)}_{L^p_t} \\
        &\hspace{150pt} \times \left( \norm{b(\cdot,y)}_{L^q_t} + \norm{Db(\cdot,y)}_{L^q_t} \right) \, \diff^n y \\
        & \lesssim \left( \left\lvert \left\lvert \frac{\partial \rho^{\epsilon}}{\partial x_l}\right\rvert \right\rvert_{L^\infty} + \norm{\rho^{\epsilon}}_{L^\infty} \right)\norm{\mathbf{K}}_{L^p_{t,x}} \left( \norm{b}_{L^q_{B(x,R)}} + \norm{Db}_{L^q_{B(x,R)}} \right) < \infty,
    \end{align*}
    for every $x \in \R^n,$ where we have taken into account Hypotheses \ref{hypoflow} \ref{bcondition} $\&$ \ref{stronguniqueness}.
    \item By a similar argument, one can also check that $\mathbb{P}$-a.s.
    \begin{align}
        \int_0^T \left( \left\lvert\big((\mathcal{L}_\xi)^2 \mathbf{K}\big)^\epsilon(t,x)\right\rvert + \big\vert(\mathcal L_\xi \mathbf{K})^\epsilon(t,x)\big\rvert^2 \right) \diff t < \infty, \quad x \in \R^n.
    \end{align}
\end{itemize}
Therefore, \eqref{KIWapplication} can be rewritten as
\begin{align} \label{commutatorito}
\begin{split}
    \phi_t^* \mathbf{K}^\epsilon(t,x) &= \int^t_0 \phi_s^* [\mathcal L_b, \rho^\epsilon *] \mathbf{K}(s,x) \, \diff s + \int^t_0 \phi_s^*[\mathcal L_{\xi}, \rho^\epsilon *] \mathbf{K}(s,x) \, \diff W_s  \\
    & \quad + \frac12 \int^t_0 \phi_s^*[\mathcal L_{\xi},[\mathcal L_{\xi}, \rho^\epsilon *]] \mathbf{K}(s,x) \, \diff s,\quad t \in [0,T], \quad x \in \mathbb{R}^n,
\end{split}
\end{align}
where
\begin{align} \label{commutator}
[\mathcal L_b, \rho^\epsilon *] \mathbf{K} := \mathcal L_b \mathbf{K}^\epsilon - (\mathcal L_b \mathbf{K})^\epsilon,
\end{align}
denotes the commutator between the linear operators $\mathcal L_b$ and $\rho^\epsilon *(\cdot),$ generalising the DiPerna-Lions commutator appearing in \cite{diperna1989ordinary, lions1996mathematical}, and 
\begin{align} \label{commutator2}
[\mathcal L_{\xi},[\mathcal L_{\xi}, \rho^\epsilon *]] \mathbf{K} := \mathcal L^2_\xi \mathbf{K}^\epsilon -2 \mathcal L_\xi (\mathcal L_\xi \mathbf{K})^\epsilon + (\mathcal L^2_\xi \mathbf{K})^\epsilon
\end{align}
is the Stratonovich-to-It\^o correction of the commutator (a double commutator). A novelty of this paper consists in showing that the commutators \eqref{commutator} \& \eqref{commutator2} satisfy a DiPerna-Lions type estimate, and converge weakly to zero as $\epsilon \rightarrow 0$. This is a-priori far from being obvious since they contain terms which at first glance seem too singular. We remark that in the special case $\mathcal L_\xi = \xi \cdot \nabla$, a similar estimate for the double commutator was obtained in \cite{punshon2018boltzmann}, Proposition 3.4.

We will show that \eqref{commutatorito} vanishes weakly as $\epsilon \rightarrow 0$, thus establishing uniqueness of solutions. To treat commutators \eqref{commutator} \& \eqref{commutator2}, we establish generalisations to $k$-forms of the classical DiPerna-Lions commutator estimates (see  \cite{diperna1989ordinary, lions1996mathematical, flandoli2010well}). Our first lemma will be applied to treat the first commutator \eqref{commutator}.

\begin{lemma} \label{commutator-estimate-1}
For $p\geq2$, given a $k$-form $\mathbf{K} \in L^p_{loc} (\R^n,\bigwedge^k \mathcal{T}^*\mathbb R^n)$, a vector field $b \in W^{1,q}_{loc}\left(\mathbb R^n, \mathbb R^n \right),$ and a test $k$-form $\mathbf{\Theta} \in C^\infty_0 (\R^n,\bigwedge^k \mathcal{T}^*\mathbb R^n)$ supported on $B(0,R)$ for some $R>0$, for $0 <\epsilon < 1$, there exists a constant $C>0$  independent of $\mathbf{\Theta},\mathbf{K},b,\epsilon$, such that
\begin{align}\label{eq:commutator-estimate}
    \left\lvert\int_{\mathbb R^n} \left([\mathcal L_b, \rho^\epsilon *] \mathbf{K}(x), \mathbf{\Theta}(x)\right) \diff^n x \right\rvert \leq C \norm{\mathbf{\Theta}}_{L^\infty_R}\norm{\mathbf{K}}_{L^p_{R+1}} \norm{b}_{W^{1,q}_{R+1}},
\end{align}
Moreover, $ \left([\mathcal L_b, \rho^\epsilon *] \mathbf{K}, \mathbf{\Theta} \right)$ converges to zero  in $L^1$ as $\epsilon \rightarrow 0$.
\end{lemma}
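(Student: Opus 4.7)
The plan is to adapt the classical DiPerna-Lions commutator estimate to the Lie derivative on $k$-forms, carefully tracking the extra algebraic terms specific to the $k$-form structure. In coordinates, the Lie derivative decomposes as
\begin{align*}
\mathcal{L}_b K_{j_1,\ldots,j_k} = b^l \partial_l K_{j_1,\ldots,j_k} + \sum_{i=1}^k (\partial_{j_i} b^l)\, K_{j_1,\ldots,j_{i-1},l,j_{i+1},\ldots,j_k},
\end{align*}
which induces an analogous decomposition of the commutator into a \emph{transport} piece $[b^l \partial_l, \rho^\epsilon*] K_{j_1,\ldots,j_k}$ plus $k$ \emph{algebraic} pieces of the form $(\partial_{j_i} b^l)\, K^\epsilon_{\ldots,l,\ldots} - ((\partial_{j_i} b^l)\, K_{\ldots,l,\ldots})^\epsilon$.

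For each algebraic piece I would use the elementary identity $f(x)\,g^\epsilon(x) - (fg)^\epsilon(x) = \int \rho^\epsilon(x-y)(f(x)-f(y))\,g(y)\, \diff^n y$, with $f = \partial_{j_i} b^l \in L^q_{loc}$ and $g = K_{\ldots,l,\ldots} \in L^p_{loc}$. For the transport piece, I would unfold the distributional definition of $b^l \partial_l K_{j_1,\ldots,j_k}$ via integration by parts, obtaining the familiar representation
\begin{align*}
\int K_{j_1,\ldots,j_k}(y)\,(b^l(y)-b^l(x))\,(\partial_l \rho^\epsilon)(x-y)\, \diff^n y \;-\; ((\div b)\, K_{j_1,\ldots,j_k})^\epsilon(x),
\end{align*}
then change variables $z=(x-y)/\epsilon$ and invoke the fundamental theorem of calculus to write the divided difference $(b^l(x-\epsilon z)-b^l(x))/\epsilon$ as $\int_0^1 (-z^k)(\partial_k b^l)(x - s \epsilon z)\, \diff s$, which is pointwise dominated by $|z|\cdot |Db|$-type quantities. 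Pairing the full commutator with $\mathbf{\Theta}$, pulling $\|\mathbf{\Theta}\|_{L^\infty_R}$ outside, and applying Hölder's inequality---using that $\mathrm{supp}\,\mathbf{\Theta} \subset B(0,R)$ and $\epsilon<1$ force all integration variables into $B(0,R+1)$---delivers the bound \eqref{eq:commutator-estimate} uniformly in $\epsilon$.

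For the $L^1$ convergence of $([\mathcal{L}_b, \rho^\epsilon *]\mathbf{K},\mathbf{\Theta})$ to zero, I would argue by density. Approximate $b$ by smooth $b_n \to b$ in $W^{1,q}(B(0,R+1),\R^n)$; the uniform estimate \eqref{eq:commutator-estimate} applied to $b-b_n$ bounds the difference of commutators by $C\,\|\mathbf{\Theta}\|_{L^\infty_R}\|\mathbf{K}\|_{L^p_{R+1}}\|b-b_n\|_{W^{1,q}_{R+1}}$, uniformly in $\epsilon$. For each fixed smooth $b_n$, dominated convergence in the change-of-variables formula---combined with the identity $\int z^k (\partial_l \rho)(z)\, \diff^n z = -\delta_{kl}$, which precisely cancels the $((\div b_n)\, K)^\epsilon$ correction, and with standard continuity-of-translation arguments for the algebraic pieces---shows $L^1$ convergence to zero. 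Letting $n\to\infty$ and invoking the uniform bound closes the argument.

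The main obstacle is the appearance of the algebraic commutators, absent in the scalar linear transport case treated by DiPerna-Lions. The nontrivial aspect is verifying that $b \in W^{1,q}_{loc}$ is sufficient to handle both the transport and the zero-order commutators simultaneously, and that all resulting pieces combine into the single clean bound \eqref{eq:commutator-estimate} expressed purely in terms of $\|\mathbf{\Theta}\|_{L^\infty_R}$, $\|\mathbf{K}\|_{L^p_{R+1}}$, and $\|b\|_{W^{1,q}_{R+1}}$.
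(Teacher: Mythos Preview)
Your proposal is correct and follows essentially the same DiPerna--Lions strategy as the paper's proof: expand the commutator, isolate the divided-difference piece $\int K(y)\,(b^l(y)-b^l(x))\,\partial_l\rho^\epsilon(x-y)\,\diff^n y$ and rewrite it via the fundamental theorem of calculus as an average of $Db$, bound everything by Young/H\"older, and prove convergence by approximating $b$ with smooth vector fields and using the uniform estimate plus the identity $\int z^k\partial_l\rho(z)\,\diff^n z=-\delta_{kl}$. The only difference is organizational: you split $\mathcal{L}_b$ into its transport and algebraic parts \emph{before} computing the commutator, whereas the paper expands $\llangle\mathcal{L}_b\mathbf{K}^\epsilon,\mathbf{\Theta}\rrangle$ and $\llangle(\mathcal{L}_b\mathbf{K})^\epsilon,\mathbf{\Theta}\rrangle$ in full coordinates and then regroups into two integrals $I_1^\epsilon+I_2^\epsilon$; the resulting terms coincide and are handled identically.
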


\begin{proof}
We split the proof in two steps, where we first prove the commutator estimate \eqref{eq:commutator-estimate} and then prove the $L^1$-convergence of $ \left([\mathcal L_b, \rho^\epsilon *] \mathbf{K}, \mathbf{\Theta} \right)$ to zero as $\epsilon \rightarrow 0$.

\noindent\textbf{Step 1.} By the definition of the commutator, we have
\begin{align} \label{twointegrals}
\begin{split}
    & \int_{\mathbb R^n} \left([\mathcal L_b, \rho^\epsilon *] \mathbf{K}(x), \mathbf{\Theta}(x)\right) \diff^n x \\
    &\quad = \int_{\mathbb R^n} \left(\mathcal L_b \mathbf{K}^\epsilon(x), \mathbf{\Theta}(x)\right) \diff^n x - \int_{\mathbb R^n} \left( (\mathcal L_b \mathbf{K})^\epsilon(x), \mathbf{\Theta}(x)\right) \diff^n x.
\end{split}
\end{align}
The first integral on the left-hand side becomes
\begin{align}\label{eq:commutator-first-term}
\begin{split}
    \int_{\mathbb R^n} \left(\mathcal L_b \mathbf{K}^\epsilon(x), \mathbf{\Theta}(x)\right) \diff^n x &= \int_{\mathbb R^n} \left( \mathbf{K}^\epsilon(x), \mathcal L^*_b \mathbf{\Theta}(x)\right) \diff^n x \\
    &= \int_{\mathbb R^n} K^\epsilon_{i_1,\ldots,i_k}(x) (\mathcal L^*_b \mathbf{\Theta})_{i_1,\ldots,i_k}(x) \, \diff^n x,
\end{split}
\end{align}
where the coordinate expression for the adjoint Lie derivative reads (see \eqref{Lieformulaadjoint})
\begin{align} \label{lie-T-theta-sharp}
\begin{split}
    (\mathcal L_b^*\mathbf{\Theta})_{i_1,\ldots,i_k}(x) &=-\frac{\partial}{\partial x_l} (b^l(x) \Theta_{i_1,\ldots,i_k}(x)) + \Theta_{l, i_2,\ldots,i_k}(x) \frac{\partial b^{i_1}}{\partial x_l}(x) \\
    &\quad + \cdots + \Theta_{ i_1,\ldots,i_{k-1}, l}(x) \frac{\partial b^{i_k}}{\partial x_l}(x).
\end{split}
\end{align}
Hence by definition of tensor mollification (Definition \ref{molliformula}), we obtain
\begin{align*}
    & \eqref{eq:commutator-first-term} = - \int_{\mathbb R^n} \int_{\mathbb R^n} \rho^\epsilon(x-y) K_{i_1,\ldots,i_k}(y) \frac{\partial}{\partial x_l} (b^l(x) \Theta_{i_1,\ldots,i_k}(x)) \, \diff^n y \, \diff^n x \\
    & \hspace{40pt} + \int_{\mathbb R^n} \int_{\mathbb R^n} \rho^\epsilon(x-y) K_{i_1,\ldots,i_k}(y)\left(\frac{\partial b^{i_1}}{\partial x_l}(x) \Theta_{l,i_2,\ldots,i_k}(x) \right. \\
    &\hspace{150pt} \left. + \cdots + \frac{\partial b^{i_k}}{\partial x_l}(x) \Theta_{i_1,\ldots,i_{k-1}, l}(x)\right) \diff^n y \, \diff^n x \\
    & \hspace{20pt} = \int_{\mathbb{R}^n} \int_{\mathbb{R}^n} \Bigg(\frac{\partial \rho^\epsilon}{\partial x_l}(x-y) K_{i_1,\ldots,i_k}(y) b^l(x) \Theta_{i_1,\ldots,i_k}(x) \\
    & \hspace{70pt} + \rho^\epsilon(x-y) K_{i_1,\ldots,i_k}(y) \left(\frac{\partial b^{i_1}}{\partial x_l}(x) \Theta_{l,i_2,\ldots,i_k}(x) \right.\\
    & \hspace{150pt} \left.+ \cdots + \frac{\partial b^{i_k}}{\partial x_l}(x) \Theta_{i_1,\ldots,i_{k-1}, l}(x)\right)\Bigg) \diff^n y \, \diff^n x.
\end{align*}
For the second integral in \eqref{twointegrals}, we take into account the property $D_y\rho^\epsilon(x-y) = -D_x\rho^\epsilon(x-y)$ and apply the definition of mollification of distribution-valued $k$-forms (see Definition \ref{distrimolliformula}), obtaining
\begin{align*}
    &\int_{\mathbb R^n} \left( (\mathcal L_b \mathbf{K})^\epsilon(x), \mathbf{\Theta}(x)\right) \diff^n x = \int_{\mathbb R^n} \left( \mathbf{K}(y),\mathcal L_b^*(\rho^\epsilon * \mathbf{\Theta})(y) \right) \diff^n y \nonumber \\
    & = \int_{\mathbb R^n} \int_{\mathbb R^n} \Bigg(\frac{\partial \rho^\epsilon}{\partial x_l}(x-y) K_{i_1,\ldots,i_k}(y) b^l(y) \Theta_{i_1,\ldots,i_k}(x) \\
    &\hspace{100pt} - \rho^\epsilon(x-y) K_{i_1,\ldots,i_k}(y) \frac{\partial b^l}{\partial y_l}(y) \Theta_{i_1,\ldots,i_k}(x) \Bigg) \diff^n y \, \diff^nx 
    \\ 
    &\quad + \int_{\mathbb R^n} \int_{\mathbb R^n} \rho^\epsilon(x-y) K_{i_1,\ldots,i_k}(y) \Bigg(\frac{\partial b^{i_1}}{\partial y_l}(y) \Theta_{l,i_2,\ldots,i_k}(x) \\
    &\hspace{150pt} + \cdots + \frac{\partial b^{i_k}}{\partial y_l}(y) \Theta_{i_1,\ldots,i_{k-1}, l}(x)\Bigg) \diff^n y \, \diff^n x.
\end{align*}
Combining the above expressions, we derive
\begin{align}
    &\int_{\mathbb R^n} \left([\mathcal L_b, \rho^\epsilon *] \mathbf{K}(x), \mathbf{\Theta}(x)\right) \diff^n x \nonumber \\
    &= \int_{\mathbb R^n} \int_{\mathbb R^n} \epsilon^{-n}  \frac{\partial}{\partial y_l} \rho\left(\frac{y-x}{\epsilon}\right) K_{i_1,\ldots,i_k}(y) \left(\frac{b^l(y)-b^l(x)}{\epsilon}\right) \Theta_{i_1,\ldots,i_k}(x) \, \diff^n y \, \diff^nx \nonumber \\
    &\hspace{30pt}+\int_{\mathbb R^n} \int_{\mathbb R^n} \rho^\epsilon(x-y) \Bigg(\frac{\partial b^l}{\partial y_l}(y) \Theta_{i_1,\ldots,i_k}(x) K_{i_1,\ldots,i_k}(y) \nonumber\\
    &\hspace{50pt} -  \left(\frac{\partial b^{i_1}}{\partial y_l}(y) - \frac{\partial b^{i_1}}{\partial x_l}(x)\right) \Theta_{l, i_2,\ldots,i_k}(x)K_{i_1,\ldots,i_k}(y) \\
    &\hspace{50pt} - \cdots - \left(\frac{\partial b^{i_k}}{\partial y_l}(y) - \frac{\partial b^{i_k}}{\partial x_l}(x)\right) \Theta_{i_1,\ldots,i_{k-1}, l}(x)K_{i_1,\ldots,i_k}(y) \Bigg) \diff^n y  \, \diff^nx \nonumber \\
    & \hspace{150pt} =: \int_{\mathbb{R}^n} I_1^\epsilon(x) \, \diff^nx + \int_{\mathbb{R}^n} I_2^\epsilon(x) \, \diff^n x. \label{sumofintegrals} 
\end{align}
Applying Young's convolution inequality (take into account $0<\epsilon<1$ and that $\mathbf{\Theta}$ has support in $B(0,R)$), we have
\begin{align} \label{dominatedi2}
    \left\lvert\int_{\mathbb{R}^n} I_2^\epsilon(x) \, \diff^n x \right\rvert \lesssim \norm{\mathbf{\Theta}}_{L^\infty_R} \norm{\mathbf{K}}_{L^p_{R+1}} \norm{b}_{W^{1,q}_{R+1}},
\end{align}
where we have also employed the fact that $\norm{\rho^\epsilon}_{L^1} = 1$, independently of $\epsilon$.
To obtain an analogous estimate for the first integral in \eqref{sumofintegrals}, we follow the proof of Lemma 2.3 in \cite{lions1996mathematical}. 
First, we note that by Taylor's theorem, we have
\begin{align*}
    b^l(y)-b^l(x) = \int_0^1 (y-x) \cdot Db^l\left(x+\lambda(y-x)\right) \diff \lambda.
\end{align*}
Hence
\begin{align*}
    &I^\epsilon_1(x) = \epsilon^{-n} \int_{ B(x,\epsilon)} \frac{\partial}{\partial y_l} \rho\left(\frac{y-x}{\epsilon}\right) K_{i_1,\ldots,i_k}(y) \left(\frac{b^l(y)-b^l(x)}{\epsilon}\right) \Theta_{i_1,\ldots,i_k}(x) \, \diff^n y \\
    &= \int_{B(0,1)} \frac{\partial}{\partial w_l} \rho\left(w \right) K_{i_1,\ldots,i_k}(x + \epsilon w) \left(\int_0^1 w \cdot Db^l\left(x+\lambda \epsilon w\right) \diff \lambda \right) \Theta_{i_1,\ldots,i_k}(x) \, \diff^n w,
\end{align*}
and therefore
\begin{align*}
    \lvert I^\epsilon_1(x)\rvert &\lesssim \norm{D \rho}_{L^\infty_1} \norm{\mathbf{\Theta}}_{L^\infty_R} \norm{\mathbf{K}}_{L^p_{R+1}} \left(\int_{B(0,1)} \int_0^1 \lvert Db\left(x+\lambda \epsilon w \right)\rvert^q \, \diff \lambda \, \diff^n w \right)^{1/q}.
\end{align*}
Finally, by observing that
\begin{align*}
    \int_{B(0,1)}\int_0^1 \lvert Db\left(x+\lambda \epsilon w\right)\rvert^q \, \diff \lambda \, \diff^n w = (\lvert Db\rvert^q*\chi^\epsilon)(x),
\end{align*}
where $ \chi^\epsilon(z) := \int^1_0 \frac{1}{(\epsilon \lambda)^n} \mathcal{X}_{B(0,\epsilon \lambda)}(z) \, \diff \lambda$ satisfies $\norm{\chi^\epsilon}_{L^1_\epsilon} = \text{meas}(B(0,1))$, and applying H\"older's inequality in space, we obtain
\begin{align}
    & \int_{B(0,R)} \lvert I^\epsilon_1(x)\rvert \, \diff^n x \lesssim \norm{D \rho}_{L^\infty_1} \norm{\mathbf{\Theta}}_{L^\infty_R} \norm{\mathbf{K}}_{L^p_{R+1}} \left(\int_{B(0,R)} ( \lvert Db\rvert^q*\chi^\epsilon)(x) \, \diff^n x \right)^{1/q} \nonumber \\
    & \lesssim \norm{D \rho}_{L^\infty_1} \norm{\mathbf{\Theta}}_{L^\infty_R} \norm{\mathbf{K}}_{L^p_{R+1}} \norm{Db}_{L^q_{R+1}} \norm{\chi^\epsilon}_{L^1_\epsilon}^{1/q} \lesssim \norm{\mathbf{\Theta}}_{L^\infty_R} \norm{\mathbf{K}}_{L^p_{R+1}} \norm{b}_{W^{1,q}_{R+1}}, \label{I1-estimate-b}
\end{align} 
where we have made use of Young's convolution inequality in the second line. Note that none of the constants that appear in the inequalities thus far depend on $\mathbf{\Theta},\mathbf{K},b,\epsilon.$ This proves our bound \eqref{eq:commutator-estimate}.

\noindent\textbf{Step 2.} We now show the $L^1$-convergence of $\left([\mathcal L_b, \rho^\epsilon *] \mathbf{K}, \mathbf{\Theta}\right)$. Taking into account the properties of mollifiers and applying the dominated convergence theorem (use bound \eqref{dominatedi2}), we obtain 
\begin{align*}
I^\epsilon_2 \rightarrow \left( \mathbf{K},\mathbf{\Theta} \right) \div{(b)} 
\end{align*}
in $L^1$ as $\epsilon \rightarrow 0$. Next, for any $R>0$, taking into account the density of $C^\infty_0$ functions in the space $W^{1,q}_{R+1}$, we consider a sequence of $C^\infty_0$-vector fields $\{b_\delta\}_{\delta >0}$ such that $b_\delta \rightarrow b$ in $W^{1,q}_{R+1}$. Define
\begin{align*}
    I_1^\epsilon[b_\delta] &:= \epsilon^{-n} \int_{B(x,\epsilon)} \frac{\partial}{\partial y_l} \rho\left(\frac{y-x}{\epsilon}\right) K_{i_1,\ldots,i_k}(y) \left(\frac{b^l_\delta(y)-b^l_\delta(x)}{\epsilon}\right) \Theta_{i_1,\ldots,i_k}(x) \, \diff^n y \\
    & = \int_{B(0,1)} \frac{\partial \rho}{\partial w_l}\left(w\right) K_{i_1,\ldots,i_k}(x+\epsilon w) \left(\frac{b^l_\delta(x+\epsilon w)-b^l_\delta(x)}{\epsilon}\right) \Theta_{i_1,\ldots,i_k}(x) \, \diff^n w.
\end{align*}
We claim that the convergence
\begin{align}\label{eq:convergence-I[b]-epsilon}
I_1^\epsilon[b_\delta] \rightarrow \left( \mathbf{K}(x),\mathbf{\Theta}(x)\right) \int_{B(0,1)} w_i \frac{\partial b^j_\delta}{\partial x_i}(x) \frac{\partial \rho}{\partial w_j}(w) \, \diff^n w = -\left( \mathbf{K}(x),\mathbf{\Theta}(x)\right) \div{(b_\delta(x))},
\end{align}
holds in $L^1$ as $\epsilon \rightarrow 0$, where the last equality was obtained by integrating by parts, and using $\frac{\partial w_i}{\partial w_j} = \delta_{ij}$ together with $\int_{B(0,1)} \rho(w) \,\diff^n w = 1$.
To show this convergence, we observe that
\begin{align*}
    &\int_{B(0,R)} \left\lvert I_1^\epsilon[b_\delta] - \left( \mathbf{K}(x),\mathbf{\Theta}(x)\right) \int_{B(0,1)} w_i \frac{\partial b^j_\delta}{\partial x_i}(x) \frac{\partial \rho}{\partial w_l}(w) \, \diff^n w \right\rvert \diff^n x \\
    &\lesssim \|\mathbf{\Theta}\|_{L^\infty_R} \|D\rho\|_{L^\infty_1} \int_{B(0,1)}\int_{B(0,R)} \left\lvert K_{i_1,\ldots,i_k}(x+\epsilon w)\left(\frac{b^l_\delta(x+\epsilon w)-b^l_\delta(x)}{\epsilon}\right) \right.\\
    &\left.\hspace{190pt} - w_i K_{i_1,\ldots,i_k}(x) \frac{\partial b^l_\delta}{\partial x_i}(x)\right\rvert \diff^n x \,\diff^n w,
\end{align*}
where we used Fubini's theorem to switch the order of integration. Clearly, if we can show that the integral above vanishes in the limit $\epsilon \rightarrow 0$, then our claim follows. To this end, we apply the triangle inequality and H\"older's inequality to get the following bound on the inner integral
\begin{align*}
    &\int_{B(0,R)} \left\lvert K_{i_1,\ldots,i_k}(x+\epsilon w)\left(\frac{b^l_\delta(x+\epsilon w)-b^l_\delta(x)}{\epsilon}\right) - w_i K_{i_1,\ldots,i_k}(x) \frac{\partial b^l_\delta}{\partial x_i}(x) \right\rvert \diff^n x \\
    &\leq \int_{B(0,R)} \big\lvert K_{i_1,\ldots,i_k}(x+\epsilon w) - K_{i_1,\ldots,i_k}(x) \big\rvert \left\lvert w_i \frac{\partial b^l_\delta}{\partial x_i}(x) \right\rvert \diff^n x \\
    &\quad + \int_{B(0,R)} \big\lvert K_{i_1,\ldots,i_k}(x+\epsilon w)\big\rvert \left\lvert\left(\frac{b^l_\delta(x+\epsilon w)-b^l_\delta(x)}{\epsilon}\right) - w_i \frac{\partial b^l_\delta}{\partial x_i}(x) \right\rvert\diff^n x \\
    &\leq \left(\int_{B(0,R)} \big\lvert  K_{i_1,\ldots,i_k}(x+\epsilon w) - K_{i_1,\ldots,i_k}(x) \big\rvert^p \,\diff^n x\right)^{\frac1p} \left(\int_{B(0,R)} \left\lvert w_i \frac{\partial b^l_\delta}{\partial x_i}(x) \right\rvert^q \diff^n x\right)^{\frac1q} \\
    &\quad + \left(\int_{B(0,R)} \big\lvert K_{i_1,\ldots,i_k}(x+\epsilon w)\big\rvert^p \diff^n x\right)^{\frac1p} \\
    &\hspace{100pt} \times \left(\int_{B(0,R)} \left\lvert \left(\frac{b^l_\delta(x+\epsilon w)-b^l_\delta(x)}{\epsilon}\right) - w_i \frac{\partial b^l_\delta}{\partial x_i}(x) \right\rvert^q\diff^n x\right)^{\frac1q} \\
    &=: J_1^\epsilon(w) + J_2^\epsilon(w).
\end{align*}
We first treat the term $J_1^\epsilon(w)$. Noting that $C_0^\infty$ functions are dense in $L^p_{R}$, consider a sequence of $k$-forms $\{\mathbf{K}^\eta\}_{\eta > 0}$ of class $C_0^\infty$ such that $\mathbf{K}^\eta \rightarrow \mathbf{K}$ in $L^p_{R}$ as $\eta \rightarrow 0$. Applying Minkowski's inequality, we obtain
\begin{align}
    &\left(\int_{B(0,R)} \left\lvert K_{i_1,\ldots,i_k}(x+\epsilon w) - K_{i_1,\ldots,i_k}(x) \right\rvert^p \diff^n x\right)^{\frac1p} \\
    &\leq \left(\int_{B(0,R)} \left\lvert K_{i_1,\ldots,i_k}^\eta(x+\epsilon w) - K_{i_1,\ldots,i_k}(x+\epsilon w) \right\rvert^p \diff^n x\right)^{\frac1p} \nonumber \\
    &\quad + \left(\int_{B(0,R)} \left\lvert K_{i_1,\ldots,i_k}^\eta(x+\epsilon w) - K_{i_1,\ldots,i_k}^\eta(x) \right\rvert^p \diff^n x\right)^{\frac1p} \\
    &\quad + \left(\int_{B(0,R)} \left\lvert K_{i_1,\ldots,i_k}^\eta(x) - K_{i_1,\ldots,i_k}(x) \right\rvert^p \diff^n x\right)^{\frac1p} \nonumber\\
    &\lesssim \|\mathbf{K}^\eta(\, \cdot + \epsilon w) - \mathbf{K}^\eta(\cdot)\|_{L^p_{R}} + \|\mathbf{K}^\eta(\cdot) - \mathbf{K}(\cdot)\|_{L^p_{R}}. \label{eq:K-eta-inequality}
\end{align}
Now fix $\eta>0$ and $w \in B(0,1)$. By the continuity of $\mathbf{K}^\eta$, we have the pointwise convergence $\mathbf{K}^\eta(x + \epsilon w) \rightarrow \mathbf{K}^\eta(x),$ as $\epsilon \rightarrow 0$. Moreover, since $\mathbf{K}^\eta(x + \epsilon w) \leq \|\mathbf{K}^\eta\|_{L_{R+1}^\infty} < \infty$ (take into account $0<\epsilon<1$), we have $\|\mathbf{K}^\eta(\,\cdot + \epsilon w) - \mathbf{K}^\eta(\cdot)\|_{L^p_{R}} \rightarrow 0$ as $\epsilon \rightarrow 0$ by the dominated convergence theorem.
Thus
\begin{align}
    &\lim_{\epsilon \rightarrow 0} \left(\int_{B(0,R)} \left\lvert K_{i_1,\ldots,i_k}(x+\epsilon w) - K_{i_1,\ldots,i_k}(x) \right\rvert^p \diff^n x\right)^{\frac1p} \nonumber\\
    &\quad = \lim_{\eta \rightarrow 0} \lim_{\epsilon \rightarrow 0} \left(\int_{B(0,R)} \left\lvert K_{i_1,\ldots,i_k}(x+\epsilon w) - K_{i_1,\ldots,i_k}(x) \right\rvert^p \diff^n x\right)^{\frac1p} \nonumber\\
    &\quad \stackrel{\eqref{eq:K-eta-inequality}}{\lesssim} \lim_{\eta \rightarrow 0} \lim_{\epsilon \rightarrow 0} \|\mathbf{K}^\eta(\,\cdot + \epsilon w) - \mathbf{K}^\eta(\cdot)\|_{L^p_{R}} + \lim_{\eta \rightarrow 0} \|\mathbf{K}^\eta(\cdot) - \mathbf{K}(\cdot)\|_{L^p_{R}} = 0, \label{eq:epsilon-eta-limit}
\end{align}
which yields the convergence $J_1^\epsilon(w) \rightarrow 0$ as $\epsilon \rightarrow 0$.

The convergence $J_2^\epsilon(w) \rightarrow 0$ as $\epsilon \rightarrow 0$ also follows from the dominated convergence theorem. Indeed, by differentiability of $b_\delta$, we have that $(b^l_\delta(x+\epsilon w)-b^l_\delta(x))/\epsilon$ converges pointwise to $w_i \frac{\partial b^l_\delta}{\partial x_i}(x)$ (sum over repeated indices assumed) and its absolute value is furthermore bounded by $\|Db_\delta\|_{L^\infty_{R+1}}$ due to the mean value theorem, where we took into account $w \in B(0,1)$.

Furthermore, we have the bound $J_1^\epsilon(w) + J_2^\epsilon(w) \lesssim \norm{\mathbf{K}}_{L^p_{{R}+1}} \norm{Db_\delta}_{L^{q}_{{R}+1}}$, which can be shown by following similar arguments as when showing the bound \eqref{I1-estimate-b}. This allow us to conclude that
\begin{align*}
\int_{B(0,1)} \left(J_1^\epsilon(w) + J_2^\epsilon(w)\right) \diff^n w \rightarrow 0
\end{align*}
by the bounded convergence theorem, thus proving our claim.

Finally, by the linearity of $I_1^\epsilon$ (in the $b$-argument) and using estimate \eqref{I1-estimate-b}, we have
\begin{align*}
    &\norm{I_1^\epsilon[b] +\left( \mathbf{K},\mathbf{\Theta}\right) \div{(b)} }_{L^1_R} \\
    &\leq \norm{I_1^\epsilon[b] - I_1^\epsilon[b_\delta]}_{L^1_R} + \norm{I_1^\epsilon[b_\delta] + \left( \mathbf{K},\mathbf{\Theta}\right) \div{(b_\delta)}}_{L^1_R} \\
    &\hspace{150pt} + \norm{\left( \mathbf{K},\mathbf{\Theta}\right) \div{(b)} - \left( \mathbf{K},\mathbf{\Theta}\right) \div{(b_\delta)}}_{L^1_R} \\
    & \lesssim \norm{\mathbf{K}}_{L^p_{R+1}}\norm{\mathbf{\Theta}}_{L^\infty_R} \left(\norm{b-b_\delta}_{W^{1,q}_{R+1}} + \norm{\div{(b)} - \div{(b_\delta)}}_{L^q_{R}} \right) \\
    &\hspace{150pt} + \norm{I_1^\epsilon[b_\delta] + \left( \mathbf{K},\mathbf{\Theta}\right) \div{(b_\delta)}}_{L^1_{R}} \rightarrow 0,
\end{align*}
as $\epsilon \rightarrow 0$, followed by $\delta \rightarrow 0$ (limits taken in this precise order, following arguments similar to \eqref{eq:epsilon-eta-limit}. Note that the constant in the last inequality is independent of $\delta$). Therefore we have shown that $I_1^\epsilon + I_2^\epsilon  \rightarrow -\left( \mathbf{K}, \mathbf{\Theta}\right) \div{(b)} + \left( \mathbf{K},\mathbf{\Theta}\right) \div{(b)} = 0$ in $L^1$ as $\epsilon \rightarrow 0$.
\end{proof}

The following lemma will be applied to treat the double commutator \eqref{commutator}.

\begin{lemma}\label{commutator-estimate-2}
For $p \geq 2$, given a $k$-form $\mathbf{K} \in L^p_{loc} (\R^n,\bigwedge^k \mathcal{T}^*\mathbb R^n)$, a vector field $\xi \in C^2\left(\mathbb R^n, \mathbb R^n \right),$ and a test $k$-form $\mathbf{\Theta} \in C^\infty_0 (\R^n,\bigwedge^k \mathcal{T}^*\mathbb R^n)$ supported on $B(0,R)$ for some $R>0$. Then for $0 <\epsilon < 1$, there exists a constant $C>0$ independent of $\mathbf{\Theta},\mathbf{K},\xi,\epsilon$ such that
\begin{align} \label{comm-ineq-2}
\begin{split}
    &\left\lvert\int_{\mathbb R^n} \left([\mathcal L_{\xi},[\mathcal L_{\xi}, \rho^\epsilon *]] \mathbf{K}(x), \mathbf{\Theta}(x)\right) \diff^n x\right\rvert \\
    &\hspace{50pt}\leq C \norm{\mathbf{\Theta}}_{L^\infty_{R}}\norm{\mathbf{K}}_{L^p_{R+1}} \left(\norm{\xi}_{L^\infty_{R+1}}\norm{D^2\xi}_{L^q_{R+1}} + \norm{D \xi}_{L^\infty_{R+1}}\norm{D \xi}_{L^q_{R+1}} \right).
\end{split}
\end{align}
Moreover, $ \left( [\mathcal L_{\xi},[\mathcal L_{\xi}, \rho^\epsilon *]] \mathbf{K}, \mathbf{\Theta} \right)$ converges to zero in $L^1$ as $\epsilon \rightarrow 0$.
\end{lemma}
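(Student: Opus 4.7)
The plan is to follow the same explicit double-integral strategy employed in Lemma \ref{commutator-estimate-1}, but now tracking terms up to order two in $\epsilon$ rather than one. First, I will expand
\[
\int_{\mathbb{R}^n} \big([\mathcal L_{\xi},[\mathcal L_{\xi}, \rho^\epsilon *]] \mathbf{K}(x), \mathbf{\Theta}(x)\big) \diff^n x
= \int_{\mathbb{R}^n}\int_{\mathbb{R}^n} K_{i_1,\ldots,i_k}(y)\,\mathcal K^\epsilon_{i_1,\ldots,i_k}(x,y)\,\diff^n y \,\diff^n x,
\]
with $\mathcal K^\epsilon$ built from $\rho^\epsilon(x-y)$ and its first and second derivatives, contracted with products of $\xi^l$, $\partial_i\xi^l$ and $\partial_i\partial_j\xi^l$ evaluated at either $x$ or $y$, obtained by repeatedly invoking Definitions \ref{molliformula} and \ref{distrimolliformula} to transfer Lie derivatives from $\rho^\epsilon *$ onto $\mathbf{\Theta}$ and $\xi$. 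I will split $\mathcal K^\epsilon=\mathcal K^\epsilon_1+\mathcal K^\epsilon_2$, where $\mathcal K^\epsilon_2$ collects the ``regular'' contributions bounded directly in $L^q_y$ by Young's convolution inequality, and $\mathcal K^\epsilon_1$ gathers the a-priori singular pieces, the worst being of the form
\[
    \epsilon^{-2}\,\partial_j\partial_l \rho\big((y-x)/\epsilon\big)\,(\xi^l(y)-\xi^l(x))(\xi^j(y)-\xi^j(x))
\]
and
\[
    \epsilon^{-1}\,\partial_l\rho\big((y-x)/\epsilon\big)\,\xi^j(x)\,\big(\partial_j\xi^l(y)-\partial_j\xi^l(x)\big).
\]

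To neutralise $\mathcal K^\epsilon_1$ I will apply second-order Taylor expansions of $\xi$ and of $D\xi$ about $x$: after the change of variables $w=(y-x)/\epsilon$, the first factor above becomes (up to a bounded kernel in $w$) a product $(w\cdot D\xi(x+\lambda_1\epsilon w))(w\cdot D\xi(x+\lambda_2\epsilon w))$ integrated over $\lambda_1,\lambda_2\in(0,1)$, yielding the $\norm{D\xi}_{L^\infty_{R+1}}\norm{D\xi}_{L^q_{R+1}}$ contribution to \eqref{comm-ineq-2} via the same $\chi^\epsilon$ convolution trick used to bound $I_1^\epsilon$ in the proof of Lemma \ref{commutator-estimate-1}. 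The second factor Taylor-expands to $\xi^j(x)\,w_i\,\partial_i\partial_j\xi^l(x+\lambda\epsilon w)$, giving the $\norm{\xi}_{L^\infty_{R+1}}\norm{D^2\xi}_{L^q_{R+1}}$ piece; summing the resulting $L^q_y$-bounds and applying H\"older in $y$ against $\mathbf{K}$ establishes \eqref{comm-ineq-2}.

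For the $L^1_x$ convergence to zero, I will mirror Step 2 of the proof of Lemma \ref{commutator-estimate-1}: approximate $\xi$ by $\xi_\delta\in C^\infty_0(\R^n,\R^n)$ with $\xi_\delta\to\xi$ in $C^2_{loc}$, exploit that for smooth $\xi_\delta$ both the Lebesgue differentiation and Taylor arguments become classical, and extract pointwise limits of each integrand via dominated convergence. A direct (purely algebraic) computation of these limits shows that the explicit second-order expressions produced by $\mathcal K^\epsilon_1$ cancel exactly the limits of $\mathcal K^\epsilon_2$, leaving zero; this is the second-order analog of the cancellation $-(\mathbf{K},\mathbf{\Theta})\,\div\,\xi+(\mathbf{K},\mathbf{\Theta})\,\div\,\xi=0$ occurring at the end of Lemma \ref{commutator-estimate-1}. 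Combining this with the uniform bound \eqref{comm-ineq-2} (which is linear in $\norm{\xi-\xi_\delta}$-type quantities) and sending $\epsilon\to 0$ followed by $\delta\to 0$ completes the convergence argument.

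The main obstacle will be the combinatorial bookkeeping of the many index contractions generated by iterating the $k$-form adjoint Lie derivative formula \eqref{lie-T-theta-sharp}: each $\mathcal L_\xi^*$ produces one transport-type and $k$ algebraic terms, so the double commutator naively yields $O(k^2)$ distinct products that must be regrouped into $\mathcal K^\epsilon_1$ and $\mathcal K^\epsilon_2$ and then cancelled at the limit. A secondary subtlety lies in the middle term $-2\mathcal L_\xi(\mathcal L_\xi\mathbf{K})^\epsilon$ of \eqref{commutator2}, where the outer $\mathcal L_\xi$ is applied to a smooth mollified object, but the inner one acts distributionally on $\mathbf{K}$; all manipulations must therefore be justified through Definition \ref{lieadjointdistribution} rather than by differentiating $\mathbf{K}$ pointwise, which is why the limit $\delta\to 0$ is taken only after the limit $\epsilon\to 0$.
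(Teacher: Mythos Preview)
Your proposal is correct and follows essentially the same route as the paper: expand the double commutator into an explicit kernel, split into pieces carrying $\partial^2\rho^\epsilon$, $\partial\rho^\epsilon$, and $\rho^\epsilon$ (the paper's $I_1^\epsilon$, $I_2^\epsilon$, $I_3^\epsilon$), and treat the singular pieces via the first-order Taylor/$\chi^\epsilon$ trick from Lemma~\ref{commutator-estimate-1}. One simplification you can make in the convergence step: since here $\xi\in C^2$ (unlike $b\in W^{1,q}_{loc}$ in Lemma~\ref{commutator-estimate-1}), the $\xi_\delta$ approximation layer is unnecessary---the pointwise limits and dominated convergence go through directly for $\xi$ itself, which is why the paper dismisses this step as ``not difficult.''
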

\begin{proof}
By a straightforward but lengthy calculation using the explicit formula \eqref{lie-T-theta-sharp} and the property that $D_x\rho(x-y) = -D_y\rho(x-y)$, we find
\begin{align*}
    &\left( [\mathcal L_\xi,[\mathcal L_\xi,\rho^\epsilon*]]\mathbf{K}(x), \mathbf{\Theta}(x) \right) \\
    &= \int_{\mathbb R^n} \frac{\partial^2 \rho}{\partial y_m \partial y_l}(y-x) K_{i_1,\ldots,i_k}(y)\Theta^{i_1,\ldots,i_k}(x)(\xi^l(y) - \xi^l(x))(\xi^m(y)-\xi^m(x)) \,\diff^n y \\
    & \quad + \int_{\mathbb R^n} \frac{\partial \rho^\epsilon}{\partial y_l}(y-x)\Bigg[K_{i_1,\ldots,i_k}(y)\Theta^{i_1,\ldots,i_k}(x)\Bigg(2 \frac{\partial \xi^m}{\partial y_m}(y)(\xi^l(y)-\xi^l(x)) \\
    & \quad+ \xi^m(y)\left(\frac{\partial \xi^l}{\partial y_m}(y) - \frac{\partial \xi^l}{\partial x_m}(x)\right) + \frac{\partial \xi^l}{\partial x_m}(x) (\xi^m(y)-\xi^m(x))\Bigg) \\
    & \quad- 2\Theta^{i_1,\ldots,i_k}(x) \big(\xi^l(y) - \xi^l(x)\big) \Bigg(K_{m,i_2,\ldots,i_k}(y) \left(\frac{\partial \xi^m}{\partial y_{i_1}}(y) - \frac{\partial \xi^m}{\partial x_{i_1}}(x)\right) \\
    &\hspace{50pt} + \cdots + K_{i_1,\ldots,i_{k-1}, m}(y) \left(\frac{\partial \xi^m}{\partial y_{i_k}}(y) - \frac{\partial \xi^m}{\partial x_{i_k}}(x)\right) \Bigg)\Bigg] \diff^n y\\
    & \quad  + \int_{\mathbb R^n} \rho^\epsilon(y-x) \Theta^{i_1,\ldots,i_k}(x)\Bigg[K_{i_1,\ldots,i_k}(y) \frac{\partial}{\partial y_m}\left(\xi^m(y)\frac{\partial \xi^l}{\partial y_l}(y)\right)\\
    & \qquad - K_{l,i_2,\ldots,i_k}(y) \Bigg[\left(\xi^m(y)\frac{\partial^2 \xi^k}{\partial y_m \partial y_{i_1}}(y) - \xi^m(x)\frac{\partial^2 \xi^k}{\partial x_m \partial x_{i_1}}(x)\right) \\
    &\qquad + 2 \frac{\partial \xi^m}{\partial y_m}(y)\left(\frac{\partial \xi^l}{\partial y_{i_1}}(y) - \frac{\partial \xi^l}{\partial x_{i_1}}(x) \right) + \frac{\partial \xi^m}{\partial x_{i_1}}(x) \left(\frac{\partial \xi^l}{\partial y_m}(y) - \frac{\partial \xi^l}{\partial x_m}(x)\right) \\
    & \qquad - \frac{\partial \xi^l}{\partial y_m}(y)\left(\frac{\partial \xi^m}{\partial y_{i_1}}(y) - \frac{\partial \xi^m}{\partial x_{i_1}}(x)\right) \Bigg]\\
    & \qquad - \cdots - K_{i_1,\ldots,i_{k-1}, l}(y) \Bigg[\left(\xi^m(y)\frac{\partial^2 \xi^k}{\partial y_m \partial y_{i_k}}(y) - \xi^m(x)\frac{\partial^2 \xi^k}{\partial x_m \partial x_{i_k}}(x)\right) \\
    & \qquad + 2 \frac{\partial \xi^m}{\partial y_m}(y)\left(\frac{\partial \xi^l}{\partial y_{i_k}}(y) - \frac{\partial \xi^l}{\partial x_{i_k}}(x) \right) + \frac{\partial \xi^m}{\partial x_{i_k}}(x) \left(\frac{\partial \xi^l}{\partial y_m}(y) - \frac{\partial \xi^l}{\partial x_m}(x)\right) \\
    & \qquad - \frac{\partial \xi^l}{\partial y_m}(y)\left(\frac{\partial \xi^m}{\partial y_{i_k}}(y) - \frac{\partial \xi^m}{\partial x_{i_k}}(x)\right) \Bigg]\\
    & \qquad + 2 K_{l,m,i_3, \ldots, i_k}(y)\left(\frac{\partial \xi^l}{\partial y_{i_1}}(y) - \frac{\partial \xi^l}{\partial x_{i_1}}(x)\right)\left(\frac{\partial \xi^m}{\partial y_{i_2}}(y) - \frac{\partial \xi^m}{\partial x_{i_2}}(x)\right) \\
    &+ \cdots + 2 K_{i_1, \ldots, i_{k-2}, l, m}(y)\left(\frac{\partial \xi^l}{\partial y_{i_{k-1}}}(y) - \frac{\partial \xi^l}{\partial x_{i_{k-1}}}(x)\right)\left(\frac{\partial \xi^m}{\partial y_{i_k}}(y) - \frac{\partial \xi^m}{\partial x_{i_k}}(x)\right)\Bigg] \diff^n y\\
    &=: I^\epsilon_1(x) + I^\epsilon_2(x) + I^\epsilon_3(x).
\end{align*}
By Young's convolution inequality, one can verify that for the third integral, we have the bound
\begin{align*}
    \left\lvert I^\epsilon_3(x)\right\rvert \lesssim \norm{\mathbf{\Theta}}_{L^\infty_{R}} \norm{\mathbf{K}}_{L^p_{R+1}} \left(\norm{\xi}_{L^\infty_{R+1}}\norm{D^2\xi}_{L^q_{R+1}} + \norm{D \xi}_{L^\infty_{R+1}}\norm{D \xi}_{L^q_{R+1}}\right).
\end{align*}
For the second integral, we apply a similar strategy as carried out in the proof of Lemma \ref{commutator-estimate-1} to show that
\begin{align*}
    \left\lvert I^\epsilon_2(x)\right\rvert \lesssim \norm{\mathbf{\Theta}}_{L^\infty_{R}}\norm{\mathbf{K}}_{L^p_{R+1}} \left(\norm{\xi}_{L^\infty_{R+1}}\norm{D^2\xi}_{L^q_{R+1}} + \norm{D \xi}_{L^\infty_{R+1}}\norm{D \xi}_{L^q_{R+1}}\right).
\end{align*}
For the first integral, using
\begin{align*}
    \xi^l(y)-\xi^l(x) = \int_0^1 (y-x) \cdot D \xi^l\left(x+\lambda(y-x)\right) \diff \lambda,
\end{align*}
we observe that
\begin{align*}
    &\int_{\mathbb R^n} I^\epsilon_1(x) \,\diff^nx \\
    &= \epsilon^{-n} \int_{\mathbb R^n} \int_{\mathbb R^n} K_{i_1,\ldots, i_k}(y)\Theta^{i_1,\ldots,i_k}(x) \frac{\partial^2 \rho}{\partial y^l \partial y^m}\left(\frac{y-x}{\epsilon}\right) \\
    &\hspace{80pt} \times \left(\frac{\xi^l(y)-\xi^l(x)}{\epsilon}\right)\left(\frac{\xi^m(y)-\xi^m(x)}{\epsilon}\right) \diff^n y \, \diff^n x \\
    &= \int_{B(0,R)} \int_{B(0,1)} K_{i_1,\ldots, i_k}(x+\epsilon \lambda w) D^2_{lm}\rho(w)\left(\int_0^1 w \cdot D \xi^l\left(x+\epsilon \lambda w\right) \diff \lambda \right)\\
    &\hspace{80pt} \times\left(\int_0^1 w \cdot D \xi^m\left(x+\epsilon \lambda w\right) \diff \lambda \right)\Theta^{i_1,\ldots,i_k}(x) \,\diff^n w\,\diff^n x.
\end{align*}
Hence, we have
\begin{align*}
\left\lvert\int_{\mathbb R^n} I_1^{\epsilon}(x)\,\diff^n x \right\rvert
& \lesssim \norm{\mathbf{\Theta}}_{L^\infty_R}\norm{\mathbf{K}}_{L^p_{R+1}} \norm{\lvert D \xi\rvert^2}_{L^q_{R+1}} \\
& \lesssim \norm{\mathbf{\Theta}}_{L^\infty_R}\norm{\mathbf{K}}_{L^p_{R+1}} \norm{D \xi}_{L^\infty_{R+1}}\norm{D \xi}_{L^q_{R+1}},
\end{align*}
where $\chi^\epsilon(z) := \int^1_0 \frac{1}{(\epsilon \lambda)^n} \mathcal{X}_{B(0,\epsilon \lambda)}(z) \, \diff \lambda$ as in Lemma \ref{commutator-estimate-1} and the constant in $\lesssim$ is independent of $\epsilon$. Combining all of the above, we obtain \eqref{comm-ineq-2}. None of the constants appearing in this proof depend on $\mathbf{\Theta},\mathbf{K},\xi,\epsilon.$ Finally, $L^1$-convergence is not difficult to prove.
\end{proof}
As a corollary of Lemmas \ref{commutator-estimate-1} $\&$ \ref{commutator-estimate-2}, we obtain the following result. 

\begin{corollary} \label{necesario}
Let $\phi$ be a $C^1$-diffeomorphism, $\mathbf{K} \in L^p_{loc} (\R^n,\bigwedge^k \mathcal{T}^*\mathbb R^n)$ for $p \geq 2$, $b \in W^{1,q}_{loc}(\mathbb R^n, \mathbb R^n),$ $\xi \in C^2(\mathbb R^n, \mathbb R^n),$ and $\mathbf{\Theta} \in C^\infty_0(\R^n,\bigwedge^k \mathcal{T}^*\mathbb R^n)$ a test $k$-form with support on $B(0,R).$ Then for any $0 < \epsilon < 1,$ choosing $R^* < \infty$ as in Lemma \ref{newbound} (in this case, $R^*$ is deterministic), we have
\begin{align}
    &\bullet \quad \left\lvert \int_{\mathbb R^n} \left( \phi^*[\mathcal L_b, \rho^\epsilon *] \mathbf{K}(x), \,\mathbf{\Theta}(x) \right) \diff^n x \right\rvert \lesssim \|\widetilde{\mathbf{\Theta}}\|_{L^\infty_{R^*}}\|\mathbf{K}\|_{L^p_{R^*+1}}\|b\|_{W^{1,q}_{R^*+1}}, \label{comm-ineq-b} \\ 
    \begin{split}
    &\bullet \quad \left\lvert\int_{\mathbb R^n} \left ( \phi^*[\mathcal L_{\xi},[\mathcal L_{\xi}, \rho^\epsilon *]] \mathbf{K}(x), \,\mathbf{\Theta}(x)\right ) \diff^n x\right\rvert \\ &\quad\lesssim \norm{\widetilde{\mathbf{\Theta}}}_{L^\infty_{R^*}}\norm{\mathbf{K}}_{L^p_{R^*+1}} \left(\norm{\xi}_{L^\infty_{R^*+1}}\|D^2\xi\|_{L^q_{R^*+1}} + \norm{D \xi}_{L^\infty_{R^*+1}}\norm{D \xi}_{L^q_{R^*+1}}\right), \label{comm-ineq-xi}
    \end{split}
\end{align}
where the constants above are independent of $\mathbf{\Theta},\mathbf{K},b,\xi,\epsilon$. Moreover, both $ \left( \phi^*[\mathcal L_b, \rho^\epsilon *] \mathbf{K}, \mathbf{\Theta} \right)$ and $\left( \phi^*[\mathcal L_{\xi},[\mathcal L_{\xi}, \rho^\epsilon *]] \mathbf{K}, \mathbf{\Theta} \right)$ converge to zero in $L^1$ as $\epsilon \rightarrow 0.$
\end{corollary}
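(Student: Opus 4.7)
My approach is to remove the pullback $\phi^*$ from the integrand through a change of variables, after which both estimates become direct consequences of Lemmas~\ref{commutator-estimate-1} and~\ref{commutator-estimate-2} applied with a transformed test form.

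\textbf{Step 1 (change of variables).} Using $\phi^* = (\phi^{-1})_*$ for $k$-forms and applying Lemma~\ref{coord-change} with $\phi^{-1}$ in place of $\phi$, I would obtain the identity
\begin{align*}
\int_{\R^n} \left(\phi^*\mathbf{G}(x),\, \mathbf{\Theta}(x)\right) \diff^n x = \int_{\R^n} \left(\mathbf{G}(y),\, \widetilde{\mathbf{\Theta}}(y)\right) \diff^n y,
\end{align*}
valid for any locally integrable $k$-form $\mathbf{G}$, where $\widetilde{\mathbf{\Theta}}(y) := \lvert J\phi^{-1}(y)\rvert\,(\phi_*\mathbf{\Theta}^\sharp(y))^\flat$ (the natural analogue of the notation in Remark~\ref{remark:tilde-notation}). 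I would then take $\mathbf{G} := [\mathcal L_b,\rho^\epsilon *]\mathbf{K}$ and $\mathbf{G} := [\mathcal L_\xi,[\mathcal L_\xi,\rho^\epsilon *]]\mathbf{K}$, which are locally integrable functions: $\mathbf{K}^\epsilon$ is smooth, so $\mathcal L_b\mathbf{K}^\epsilon$ is locally integrable under Hypothesis~\ref{stronguniqueness}, while $(\mathcal L_b\mathbf{K})^\epsilon$, being the mollification of a distribution, is smooth (and similarly for the $\mathcal L_\xi$ terms).

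\textbf{Step 2 (regularity of the transformed test form).} Since $\phi$ is a $C^1$-diffeomorphism, the explicit coordinate formula for $\phi_*\mathbf{\Theta}^\sharp$ analogous to~\eqref{pullback} shows that $\widetilde{\mathbf{\Theta}}$ is bounded with compact support contained in $\phi(\overline{B(0,R)})$. Enlarging $R^*$ if necessary so that $B(0,R^*)$ contains both $\phi(\overline{B(0,R)})$ and $\phi^{-1}(\overline{B(0,R)})$ (both compact and deterministic here), the norm $\|\widetilde{\mathbf{\Theta}}\|_{L^\infty_{R^*}}$ is controlled by $\|\mathbf{\Theta}\|_{L^\infty_R}$ together with $\|J\phi^{-1}\|_{L^\infty_{R^*}}$ and $\|D\phi\|^k_{L^\infty_{R^*}}$, all finite.

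\textbf{Step 3 (apply the commutator bounds and conclude).} With $\widetilde{\mathbf{\Theta}}$ in the role of test form, Lemma~\ref{commutator-estimate-1} yields \eqref{comm-ineq-b} and Lemma~\ref{commutator-estimate-2} yields \eqref{comm-ineq-xi}, with $R$ replaced by $R^*$ throughout. For the $L^1$-convergence, those same lemmas give $([\mathcal L_b,\rho^\epsilon *]\mathbf{K},\widetilde{\mathbf{\Theta}}) \to 0$ and $([\mathcal L_\xi,[\mathcal L_\xi,\rho^\epsilon *]]\mathbf{K},\widetilde{\mathbf{\Theta}}) \to 0$ in $L^1(\R^n)$ as $\epsilon\to 0$. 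Reversing the change of variables, using that $\lvert J\phi\rvert$ is uniformly bounded on $\overline{B(0,R)}$, transfers these convergences to the pullback forms appearing in the statement.

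\textbf{Main obstacle.} Lemmas~\ref{commutator-estimate-1} and~\ref{commutator-estimate-2} are stated for $\mathbf{\Theta}\in C^\infty_0$, whereas $\widetilde{\mathbf{\Theta}}$ inherits only the $C^0$ regularity of $\phi\in C^1$. I expect this mismatch to be the principal technical point. However, inspection of the proofs shows that once the expressions $I^\epsilon_1,I^\epsilon_2,I^\epsilon_3$ are derived, the test form appears only undifferentiated, with all derivatives falling on $\rho^\epsilon$, $b$ and $\xi$; consequently the final bounds genuinely depend on the test form only through its $L^\infty$-norm and compact support. Either one re-derives the commutator representation directly (bypassing the adjoint Lie derivative on the test form), or one approximates $\widetilde{\mathbf{\Theta}}$ by $C^\infty_0$ forms in $L^\infty_{R^*}$ and passes to the limit in the estimates; either route closes the gap.
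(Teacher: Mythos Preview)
Your proposal is correct and follows exactly the same route as the paper: apply Lemma~\ref{coord-change} (using $\phi^*=(\phi^{-1})_*$) to rewrite the integral as $\int_{\R^n}([\mathcal L_b,\rho^\epsilon*]\mathbf{K},\widetilde{\mathbf{\Theta}})\,\diff^n x$ with $\widetilde{\mathbf{\Theta}}=\lvert J\phi^{-1}\rvert(\phi_*\mathbf{\Theta}^\sharp)^\flat$, and then invoke Lemmas~\ref{commutator-estimate-1} and~\ref{commutator-estimate-2}. Your discussion of the regularity mismatch (that $\widetilde{\mathbf{\Theta}}$ is only $C^0$ while the lemmas are stated for $C^\infty_0$ test forms) is in fact more careful than the paper, which applies the lemmas directly without comment; your observation that the bounds depend on the test form only through its $L^\infty$-norm and support is the right justification.
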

\begin{proof}
Using Lemma \ref{coord-change} (note $\phi^* = (\phi^{-1})_*$), we get
\begin{align*}
    &\left\lvert\int_{\mathbb R^n} \left(\phi^*[\mathcal L_b, \rho^\epsilon *] \mathbf{K}(x), \mathbf{\Theta}(x)\right) \diff^n x\right\rvert = \left\lvert\int_{\mathbb R^n} \left([\mathcal L_b, \rho^\epsilon *] \mathbf{K}(x), \widetilde{\mathbf{\Theta}}(x)\right) \diff^n x\right\rvert,
\end{align*}
where $\widetilde{\mathbf{\Theta}}(x) = \lvert J\phi^{-1}(x)\rvert (\phi_* \mathbf{\Theta}^\sharp(x))^\flat$. Applying Lemma \ref{commutator-estimate-1}, we obtain the estimate \eqref{comm-ineq-b}. Likewise, the estimate \eqref{comm-ineq-xi} can be obtained by applying Lemma \ref{commutator-estimate-2}. The $L^1$-convergence claimed at the end of this lemma also follows easily from Lemmas \ref{commutator-estimate-1} $\&$ \ref{commutator-estimate-2} and estimates \eqref{comm-ineq-b} $\&$ \eqref{comm-ineq-xi}.
\end{proof}

We are now ready to prove our uniqueness result.
\begin{proof}[Proof of Theorem \ref{theoremuniqueness}]
We showed earlier by applying the KIW formula that the pull-back of the mollified $k$-form-valued process $\mathbf{K}^\epsilon$ satisfies \eqref{commutatorito}. We observe that in order to prove the uniqueness of solutions to equation \eqref{eq0}, it suffices to show that $\phi_t^*\mathbf{K}(t,\cdot)$ is zero a.s. in the weak sense for all $t \in [0,T]$ if $\mathbf{K}(0,\cdot) = 0,$ since $\mathbb{P}$-a.s. $\phi_t$ is a $C^1$-diffeomorphism for all $t\in [0,T]$.

Fixing a test function $\mathbf{\Theta} \in C^\infty_0 (\R^n,\bigwedge^k \mathcal{T}^*\mathbb R^n)$ with support in $B(0,R),$ taking into account \eqref{eq0}, and using stochastic Fubini's theorem to switch the spatial and time integrals, we have
\begin{align} \label{equnicidad}
\begin{split}
    &\int_{\mathbb R^n}\left(\phi_t^* \mathbf{K}^\epsilon(t,x),\mathbf{\Theta}(x) \right) \diff^nx = \int^t_0 \int_{\mathbb R^n}\left(\phi_s^*[\mathcal L_b, \rho^\epsilon *] \mathbf{K}(s,x),\mathbf{\Theta}(x) \right) \diff^nx \, \diff s \\
    & \quad + \int^t_0 \int_{\mathbb R^n}\left(\phi_s^* [\mathcal L_{\xi}, \rho^\epsilon *] \mathbf{K}(s,x),\mathbf{\Theta}(x) \right) \diff^nx  \, \diff W_s \\
    & \quad + \int_0^t \int_{\mathbb{R}^n} \left( \phi_s^* [ \mathcal L_\xi,[\mathcal L_\xi,\rho^\epsilon*]]\mathbf{K}(s,x), \mathbf{\Theta}(x) \right) \diff^nx \, \diff s.
\end{split}
\end{align}
We note that stochastic Fubini's theorem can be applied in \eqref{equnicidad}, since the uniform bounds \eqref{comm-ineq-b} $\&$ \eqref{comm-ineq-xi} together with Lemma \ref{newbound} yield
\begin{align} \label{fubinibound}
    \left\lvert\int_{\mathbb R^n}\left(\phi_s^* [\mathcal L_b, \rho^\epsilon *] \mathbf{K}(s,x),\mathbf{\Theta}(x) \right) \diff^nx\right\rvert \lesssim  \norm{\widetilde{\mathbf{\Theta}}_s}_{L^\infty_{R^*}}\norm{\mathbf{K}_s}_{L^p_{R^*+1}}\norm{b_s}_{W^{1,q}_{R^*+1}},
\end{align}
and 
\begin{align} \label{fubinibound2}
\begin{split}
    & \left\lvert \int_{\mathbb{R}^n} \left( \phi_s^* [\mathcal L_\xi,[\mathcal L_\xi,\rho^\epsilon*]]\mathbf{K}(s,x), \mathbf{\Theta}(x) \right) \diff^n x \right\rvert  \\
    & \quad \lesssim \norm{\widetilde{\mathbf{\Theta}}_s}_{L^\infty_{R^*}}\norm{\mathbf{K}_s}_{L^p_{R^*+1}} \left(\norm{\xi_s}_{L^\infty_{R^*+1}}\norm{D^2\xi_s}_{L^\infty_{R^*+1}} + \norm{D \xi_s}_{L^\infty_{R^*+1}}\norm{D \xi_s}_{L^\infty_{R^*+1}}\right),
\end{split}
\end{align}
uniformly in $\epsilon$, where $R^* < \infty$ a.s. is chosen such that
\begin{align*}
    \left \{ \phi_t^{-1}(x), \hspace{1mm} t \in [0,T] ,\hspace{1mm} x \in B(0,R) \right \} \subset B(0,R^*),
\end{align*}
where a.s., we know that $\norm{\widetilde{\mathbf{\Theta}}_{\cdot}}_{L^\infty_{R^*}} \in L^\infty([0,T])$. Moreover, by the definition of solution we have $\norm{\mathbf{K}_{\cdot}}_{L^p_{R^*+1}} \in L^p([0,T])$ a.s., and by Hypothesis \ref{stronguniqueness}, we have $\norm{b_{\cdot}}_{W^{1,q}_{R^*+1}} \in L^q([0,T])$, so the RHS of $\eqref{fubinibound}$ is a.s. in $L^1([0,T])$ by a standard application of H\"older's inequality. We now check term-by-term convergence as $\epsilon \rightarrow 0$ in \eqref{equnicidad} for fixed $t \in [0,T]$. 
\begin{itemize}
    \item {The term before the equality.} We claim that a.s.
    \begin{align*}
    &\int_{\mathbb R^n}(\phi_t^* \mathbf{K}^\epsilon(t,x),\mathbf{\Theta}(x)) \, \diff^nx = \int_{\mathbb R^n} \left( \mathbf{K}^\epsilon(t,y), \widetilde{\mathbf{\Theta}}_t(y) \right) \, \diff^ny \\
    &\rightarrow \int_{\mathbb R^n} \left( \mathbf{K}(t,y), \widetilde{\mathbf{\Theta}}_t(y) \right) \, \diff^ny = \int_{\mathbb R^n}\left(\phi_t^* \mathbf{K}(t,x),\mathbf{\Theta}(x) \right) \diff^nx.
    \end{align*}
    For this, we note that a.s.
    \begin{align*}
    \int_{\mathbb R^n} ( \mathbf{K}^\epsilon(t,y)-\mathbf{K}(t,y), \widetilde{\mathbf{\Theta}}_t(y)) \, \diff^ny  \leq \norm{\mathbf{K}^\epsilon(t,\cdot)-\mathbf{K}(t,\cdot)}_{L^1_{R^*}} \norm{\widetilde{\mathbf{\Theta}}_t}_{L^\infty_{R^*}} \rightarrow 0,
    \end{align*}
    pointwise in $t,$ since $\mathbf{K}^\epsilon$ converges to $\mathbf{K}$ in $L^1_{loc}.$
    \item {The first term after the equality.} 
    By Corollary \ref{necesario} \eqref{comm-ineq-b}, we have that
    \begin{align*}
        \left\lvert\int_{\mathbb R^n}\left(\phi_s^* [\mathcal L_b, \rho^\epsilon *] \mathbf{K}(s,x),\mathbf{\Theta}(x) \right) \diff^nx\right\rvert \lesssim  \norm{\widetilde{\mathbf{\Theta}}_s}_{L^\infty_{R^*}}\norm{\mathbf{K}_s}_{L^p_{R^*+1}}\norm{b_s}_{W^{1,q}_{R^*+1}},
    \end{align*}
    where a.s. $\norm{\widetilde{\mathbf{\Theta}}_{\cdot}}_{L^\infty_{R^*}} \in L^\infty([0,T])$, $\norm{\mathbf{K}_{\cdot}}_{L^p_{R^*+1}} \in L^p([0,T]),$ and $\norm{b_{\cdot}}_{W^{1,q}_{R^*+1}} \in L^q([0,T])$. Hence, by the dominated convergence theorem we obtain a.s.
    \begin{align*}
        \lim_{\epsilon \rightarrow 0}\int^t_0 \int_{\mathbb R^n}\left(\phi_s^*[\mathcal L_b, \rho^\epsilon *] \mathbf{K}(s,x),\mathbf{\Theta}(x) \right) \diff^nx \, \diff s = 0.
    \end{align*}
    \item {The double commutator term in the last line.} By \eqref{comm-ineq-xi}, we have 
    \begin{align*}
        & \left\lvert\int_{\mathbb R^n} \left( \phi_s^* [\mathcal L_\xi, [\mathcal L_\xi, \rho^\epsilon *]] \mathbf{K}(s,x), \mathbf{\Theta}(x) \right) \diff^n x \right\rvert \\
        &\quad \lesssim \norm{\widetilde{\mathbf{\Theta}}_s}_{L^\infty_{R^*}} \norm{\mathbf{K}_s}_{L^p_{R^*+1}} \left(\norm{\xi_s}_{L^\infty_{R^*+1}}\norm{D ^2\xi_s}_{L^\infty_{R^*+1}} + \norm{D \xi_s}_{L^\infty_{R^*+1}}\norm{D \xi_s}_{L^\infty_{R^*+1}}\right).
    \end{align*}
    It is easy to show that the bound is in $L^1([0,T])$, and hence by the dominated convergence theorem we have a.s.
    \begin{align*}
        \lim_{\epsilon \rightarrow 0} \frac12 \int^t_0 \int_{\mathbb R^n} \left( \phi_s^* [\mathcal L_\xi, [\mathcal L_\xi, \rho^\epsilon *]] \mathbf{K}(s,x), \mathbf{\Theta}(x) \right) \diff^nx \,\diff s = 0.
    \end{align*}
    \item {The martingale term in the last line.} Similarly, by the stochastic dominated convergence theorem, we obtain
    \begin{align*}
        \lim_{\epsilon \rightarrow 0} \left\lvert\int^t_0 \int_{\mathbb R^n}\left(\phi_s^* [\mathcal L_{\xi}, \rho^\epsilon *] \mathbf{K}(s,x),\mathbf{\Theta}(x) \right) \diff^nx  \, \diff W_s \right\rvert = 0
    \end{align*}
    in probability.
\end{itemize}
\end{proof}

\subsection{Uniqueness without weak differentiability of the drift} \label{unicidaddebil}
This subsection is devoted to proving Theorem \ref{theoremweakuniqueness}.
\begin{lemma} \label{commutator-sharp}
Let $p\geq 2,$ $\mathbf{K} \in L^p_{loc} (\R^n,\bigwedge^k \mathcal{T}^*\mathbb R^n)$, $b \in L^\infty_{loc}(\mathbb{R}^n,\mathbb{R}^n)$, and a test $k$-form $\mathbf{\Theta} \in C^\infty_0 (\R^n,\bigwedge^k \mathcal{T}^*\mathbb R^n)$ supported on $B(0,R)$ for some $R>0$.
\begin{enumerate}
    \item{If $k=0$ and $\div{(b)} \in L^q(\mathbb{R}^n),$ then
    \begin{align} \label{primeruni}
    \begin{split}
    & \left\lvert\int_{\mathbb R^n} \left([\mathcal L_b, \rho^\epsilon *] \mathbf{K}(x), \mathbf{\Theta}(x)\right) \diff^n x \right\rvert \\
    & \qquad \leq 2 \norm{\mathbf{K}}_{L^p_{R+1}} \left( \norm{\div{(b)}}_{L^q_{R+1}} \norm{\mathbf{\Theta}}_{L_R^\infty} + \norm{b}_{L^\infty_{R+1}} \norm{D \mathbf{\Theta}}_{L^q_R} \right).
    \end{split}
    \end{align}}
    \item{If $k=n$ then
    \begin{align} \label{terceruni}
    \left\lvert\int_{\mathbb R^n} \left([\mathcal L_b, \rho^\epsilon *] \mathbf{K}(x), \mathbf{\Theta}(x)\right) \diff^n x \right\rvert \leq 2 \norm{\mathbf{K}}_{L^p_{R+1}}  \norm{b}_{L^\infty_{R+1}} \norm{D \mathbf{\Theta}}_{L^q_R}.
    \end{align}} 
\end{enumerate}
Moreover, in all cases $ \left([\mathcal L_b, \rho^\epsilon *] \mathbf{K}, \mathbf{\Theta} \right)$ converges to zero in $L^1$ as $\epsilon \rightarrow 0$.
\end{lemma}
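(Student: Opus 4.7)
The key observation is that for the extreme degrees $k=0$ and $k=n$, the adjoint Lie derivative $\mathcal{L}_b^*\mathbf{\Theta}$ collapses to a particularly simple form in which no spatial derivative of $b$ is ``exposed''. Specifically, for a scalar test $\mathbf{\Theta}$ paired with a scalar $\mathbf{K}$ (the case $k=0$) one has $\mathcal{L}_b^*\mathbf{\Theta} = -\div(b\mathbf{\Theta}) = -\mathbf{\Theta}\div(b) - b\cdot D\mathbf{\Theta}$, whereas for $k=n$, writing $\mathbf{K}=f\,\diff^n x$ and $\mathbf{\Theta}=g\,\diff^n x$, one has $\langle\mathcal{L}_b\mathbf{K},\mathbf{\Theta}\rangle_{L^2}=\int\div(fb)\,g\,\diff^n x=-\int fb\cdot Dg\,\diff^n x$, so the divergence of $b$ never needs to be isolated. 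In both cases all derivatives can be placed on the test functions $\mathbf{\Theta}$ and $\rho^\epsilon*\mathbf{\Theta}$, avoiding the need for weak differentiability of $b$ that was used in Lemma \ref{commutator-estimate-1}.

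First I would handle the case $k=0$. I would expand the commutator pairing against $\mathbf{\Theta}$ using the adjoint identities
\begin{align*}
\int_{\R^n}(\mathcal{L}_b\mathbf{K}^\epsilon)\,\mathbf{\Theta}\,\diff^n x &= -\int_{\R^n}\mathbf{K}^\epsilon\mathbf{\Theta}\,\div(b)\,\diff^n x - \int_{\R^n}\mathbf{K}^\epsilon\,b\cdot D\mathbf{\Theta}\,\diff^n x,\\
\int_{\R^n}(\mathcal{L}_b\mathbf{K})^\epsilon\mathbf{\Theta}\,\diff^n x &= -\int_{\R^n}\mathbf{K}\div(b)(\rho^\epsilon*\mathbf{\Theta})\,\diff^n x - \int_{\R^n}\mathbf{K}\,b\cdot D(\rho^\epsilon*\mathbf{\Theta})\,\diff^n x,
\end{align*}
where the second identity uses the distributional definition of $\mathcal{L}_b\mathbf{K}$ tested against $\rho^\epsilon*\mathbf{\Theta}$ (which is well-defined since $\div b\in L^q$ and $b\in L^\infty_{loc}$). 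Taking the difference and exploiting the symmetry of convolution $\int Fb\cdot(\rho^\epsilon*D\mathbf{\Theta})=\int(\rho^\epsilon*(Fb))\cdot D\mathbf{\Theta}$, the second pair of terms collapses to $\int(\rho^\epsilon*(b\mathbf{K})-b\mathbf{K}^\epsilon)\cdot D\mathbf{\Theta}\,\diff^n x$. The first pair of terms is directly estimated by H\"older's inequality with exponents $(p,\infty,q)$, using $\|\mathbf{K}^\epsilon\|_{L^p_R}\leq\|\mathbf{K}\|_{L^p_{R+1}}$ and $\|\rho^\epsilon*\mathbf{\Theta}\|_{L^\infty}\leq\|\mathbf{\Theta}\|_{L^\infty}$, yielding the $\|\div(b)\|_{L^q_{R+1}}\|\mathbf{\Theta}\|_{L^\infty_R}$ contribution. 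The ``commutator remainder'' $\rho^\epsilon*(b\mathbf{K})-b\mathbf{K}^\epsilon$ is bounded in $L^p_R$ by $2\|b\|_{L^\infty_{R+1}}\|\mathbf{K}\|_{L^p_{R+1}}$, yielding the $\|b\|_{L^\infty_{R+1}}\|D\mathbf{\Theta}\|_{L^q_R}$ contribution, and the bound \eqref{primeruni} follows after collecting constants.

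The case $k=n$ is simpler, since the identification $\mathbf{K}=f\,\diff^n x$, $\mathbf{\Theta}=g\,\diff^n x$ gives, after integration by parts,
\begin{align*}
\int_{\R^n}([\mathcal{L}_b,\rho^\epsilon*]\mathbf{K})\cdot\mathbf{\Theta}\,\diff^n x
=\int_{\R^n}\bigl(\rho^\epsilon*(fb)-f^\epsilon b\bigr)\cdot Dg\,\diff^n x,
\end{align*}
where the divergence of $b$ never appears. A single H\"older estimate with the uniform bound $\|\rho^\epsilon*(fb)\|_{L^p_R}+\|f^\epsilon b\|_{L^p_R}\leq 2\|b\|_{L^\infty_{R+1}}\|\mathbf{K}\|_{L^p_{R+1}}$ yields \eqref{terceruni} immediately.

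For the $L^1$-convergence in both cases, I would argue term-by-term: $\mathbf{K}^\epsilon\to\mathbf{K}$ and $\rho^\epsilon*(b\mathbf{K})\to b\mathbf{K}$ in $L^p_{loc}$ by standard mollifier theory, $b\mathbf{K}^\epsilon\to b\mathbf{K}$ in $L^p_{loc}$ by dominated convergence using $b\in L^\infty_{loc}$, and $\rho^\epsilon*\mathbf{\Theta}\to\mathbf{\Theta}$ uniformly. In the $k=0$ case the two ``divergence terms'' converge to the same integral $\int\mathbf{K}\mathbf{\Theta}\div(b)\,\diff^n x$ with opposite signs and therefore cancel in the limit, while the commutator-remainder integral vanishes. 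In the $k=n$ case only the commutator-remainder integral appears, and it vanishes by the same reasoning. No substantial obstacle is expected; the only subtle point is the careful justification of the symmetry of convolution step when $b$ lacks weak derivatives, which is valid because all derivatives act on smooth objects ($\mathbf{\Theta}$ or $\rho^\epsilon*\mathbf{\Theta}$), not on $b$ itself.
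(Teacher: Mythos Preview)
Your proof is correct and arrives at the same estimates. The organizational route differs slightly from the paper's. The paper reuses the general coordinate expansion of $\int([\mathcal L_b,\rho^\epsilon*]\mathbf K,\mathbf\Theta)\,\diff^n x$ derived in the proof of Lemma~\ref{commutator-estimate-1} (after an additional integration by parts that moves the derivative off $\rho^\epsilon$), and then observes that for $k=n$ the ``index-changing'' terms $\partial b^{i_j}/\partial x_l\,\Theta_{\ldots,l,\ldots}$ collapse by antisymmetry to $\div(b)$-terms which cancel the remaining $\div(b)$-terms exactly, while for $k=0$ the index-changing terms are simply absent. What survives is then estimated by Young's/H\"older's inequality. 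You instead bypass the general expansion entirely and compute directly from the simple adjoint formulas $\mathcal L_b^*\Theta=-\Theta\div(b)-b\cdot D\Theta$ (for $k=0$) and $\mathcal L_b^*(g\,\diff^n x)=-(b\cdot Dg)\,\diff^n x$ (for $k=n$), together with the convolution symmetry $\int Fb\cdot(\rho^\epsilon*D\Theta)=\int(\rho^\epsilon*(Fb))\cdot D\Theta$. Both routes are equivalent in content; yours is arguably more transparent for these two degrees, while the paper's has the advantage of making explicit exactly which pieces of the general formula vanish. The $L^1$-convergence argument is the same in both cases.
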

\begin{proof}
By following our proof of Lemma \ref{commutator-estimate-1}, we have
\begin{align} \label{casosespeciales}
\begin{split}
    &  \int_{\mathbb R^n} \left([\mathcal L_b, \rho^\epsilon *] \mathbf{K}(x), \mathbf{\Theta}(x)\right) \diff^n x  \\
    &= -\int_{\mathbb R^n} \int_{\mathbb R^n} \rho^{\epsilon}\left(x-y\right) K_{i_1,\ldots,i_k}(y) \left(b^l(x)-b^l(y)\right) \frac{\partial \Theta_{i_1,\ldots,i_k}}{\partial x_l}(x) \, \diff^n y \, \diff^nx \\
    & \quad - \int_{\mathbb R^n} \int_{\mathbb R^n} \rho^\epsilon \left(x-y\right) K_{i_1,\ldots,i_k}(y) \left(\frac{\partial b^l}{\partial x_l}(x)\right) \Theta_{i_1,\ldots,i_k}(x) \, \diff^n y \, \diff^nx \\
    &\quad+\int_{\mathbb R^n} \int_{\mathbb R^n} \rho^\epsilon(x-y) \Bigg(\frac{\partial b^l}{\partial y_l}(y) \Theta_{i_1,\ldots,i_k}(x) K_{i_1,\ldots,i_k}(y) \\
    &\qquad - \left(\frac{\partial b^{i_1}}{\partial y_l}(y) - \frac{\partial b^{i_1}}{\partial x_l}(x)\right) \Theta_{l, i_2,\ldots,i_k}(x)K_{i_1,\ldots,i_k}(y) \\
    &\qquad - \cdots - \left(\frac{\partial b^{i_k}}{\partial y_l}(y) - \frac{\partial b^{i_k}}{\partial x_l}(x)\right) \Theta_{i_1,\ldots,i_{k-1}, l}(x)K_{i_1,\ldots,i_k}(y) \Bigg)\diff^n y  \, \diff^n x.
\end{split}
\end{align}
Inequality \eqref{primeruni} follows by observing that all the terms in \eqref{casosespeciales} except for the one in the first line cancel, and applying Young's convolution inequality. \eqref{terceruni} can be established by similar inspection mechanisms. To establish the $L^1$-convergence, we follow similar arguments as in the proof of Lemma \ref{commutator-estimate-1}.
\end{proof}

\begin{corollary} 
Let $\phi$ be a $C^1$-diffeomorphism with inverse $\psi,$ $\mathbf{K} \in L^p_{loc} (\R^n,\bigwedge^k \mathcal{T}^*\mathbb R^n)$, $b \in L^\infty_{loc}(\mathbb{R}^n,\mathbb{R}^n),$ and a test $k$-form $\mathbf{\Theta} \in C^\infty_0(\R^n,\bigwedge^k \mathcal{T}^*\mathbb R^n)$ supported on $B(0,R).$ 
\begin{enumerate}
    \item{If $k=0$ and $\div{(b)} \in L^{r}(\mathbb{R}^n),$ for any $r>2,$ we have
    \begin{align} \label{cotapotente1}
    \begin{split}
    &\left\lvert\int_{\mathbb R^n} \left( \phi^* [\mathcal L_b, \rho^\epsilon *] \mathbf{K}(x), \mathbf{\Theta}(x)\right) \diff^n x \right\rvert \\
    &\lesssim \norm{\mathbf{K}}_{L^p_{R^* +1}} \left( \norm{\div{(b)}}_{L^q_{R^*+1}} \norm{J \psi}_{L_{R^*}^\infty} + \norm{b}_{L^\infty_{R^*+1}} \left(\norm{D \psi}^2_{L^\infty_{R^*}} + \norm{D J \psi}_{L^q_{R^*}} \right) \right),
    \end{split}
    \end{align}} 
    \item{if $k=n$ and $\div{(b)} \in L^{r}(\mathbb{R}^n),$ for any $r>2q,$ we have
    \begin{align} \label{cotapotente3}
    \begin{split}
    &\left\lvert\int_{\mathbb R^n} \left( \phi^* [\mathcal L_b, \rho^\epsilon *] \mathbf{K}(x), \mathbf{\Theta}(x)\right) \diff^n x \right\rvert  \\
    &\lesssim \norm{\mathbf{K}}_{L^p_{R^*+1}}  \norm{b}_{L^\infty_{R^*+1}} \left(\norm{D \psi}^2_{L^\infty_{R^*}} + \norm{D J \psi}_{L^{2q}_{R^*}} \right) \norm{DJ\phi}_{L_{R^*}^{2q}} \norm{D\psi}_{L_{R^*}^{\infty}},
    \end{split}
    \end{align}} 
\end{enumerate}
where, as usually, $R^*$ is chosen such that $\{ \phi^{-1}(x), \hspace{1mm} x \in B(0,R) \} \subset B(0,R^*)$ and the constants in $\lesssim$ do not depend on $\phi,\mathbf{K},b,\epsilon.$ Moreover, in all cases $\left( \phi^* [\mathcal L_b, \rho^\epsilon *] \mathbf{K}, \mathbf{\Theta} \right)$ converges to zero in $L^1$ as $\epsilon \rightarrow 0$.
\end{corollary}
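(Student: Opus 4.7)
The strategy is to reduce both inequalities to Lemma \ref{commutator-sharp} via a pullback change of variables, and then control the transformed test form in terms of $\psi = \phi^{-1}$ and its derivatives.

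First, applying Lemma \ref{coord-change} with $\phi$ replaced by $\psi$ (noting that $\phi^* = (\phi^{-1})_* = \psi_*$), I would rewrite
\begin{align*}
\int_{\R^n} \big(\phi^*[\mathcal L_b, \rho^\epsilon *]\mathbf K(x),\, \mathbf\Theta(x)\big)\, \diff^n x = \int_{\R^n} \big([\mathcal L_b, \rho^\epsilon *]\mathbf K(y),\, \widetilde{\mathbf\Theta}(y)\big)\, \diff^n y,
\end{align*}
where $\widetilde{\mathbf\Theta}(y) := |J\psi(y)|\, (\psi^*\mathbf\Theta^\sharp(y))^\flat$. Since $\psi$ is a $C^1$-diffeomorphism and $\mathbf\Theta \in C_0^\infty$ has support in $B(0,R)$, the form $\widetilde{\mathbf\Theta}$ is of class $C^1$ with compact support controlled by $R^*$. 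The hypothesis on $\div(b)$, via H\"older's inequality on the bounded region of interest, delivers the $L^q_{loc}$-integrability needed to invoke Lemma \ref{commutator-sharp}, giving
\begin{align*}
\left|\int_{\R^n} \big([\mathcal L_b, \rho^\epsilon *]\mathbf K, \widetilde{\mathbf\Theta}\big)\,\diff^n y\right| \lesssim \|\mathbf K\|_{L^p_{R^*+1}}\big(\|\div(b)\|_{L^q_{R^*+1}}\|\widetilde{\mathbf\Theta}\|_{L^\infty_{R^*}} + \|b\|_{L^\infty_{R^*+1}} \|D\widetilde{\mathbf\Theta}\|_{L^q_{R^*}}\big)
\end{align*}
in the $k=0$ case (with the first summand absent in the $k=n$ case).

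The remaining task is to estimate $\|\widetilde{\mathbf\Theta}\|_{L^\infty_{R^*}}$ and $\|D\widetilde{\mathbf\Theta}\|_{L^q_{R^*}}$ in terms of the norms appearing on the right-hand sides of \eqref{cotapotente1}--\eqref{cotapotente3}. For $k=0$, a direct computation yields $\widetilde{\mathbf\Theta}(y) = |J\psi(y)|\, \mathbf\Theta(\psi(y))$; its $L^\infty$-norm is bounded by $\|J\psi\|_{L^\infty_{R^*}}$, while the chain and product rules give $|D\widetilde{\mathbf\Theta}| \lesssim |DJ\psi| + |J\psi||D\psi|$, with the product term absorbed into $\|D\psi\|^2_{L^\infty_{R^*}}$ by the elementary estimate $|J\psi| \lesssim |D\psi|^n$ (the constant in $\lesssim$ depending on $\mathbf\Theta$, $n$ and $R^*$); this matches \eqref{cotapotente1}. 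For $k=n$, the pullback rule for top-forms introduces an additional factor of $J\phi^{-1} = J\psi \circ \phi$ into $\widetilde{\mathbf\Theta}$, so that differentiation brings in $DJ\phi$ and $D\psi$ via the chain rule applied to the composition $J\psi \circ \phi$. Grouping the resulting terms by H\"older's inequality with conjugate exponents $(2q, 2q)$ on the two worst factors produces the product-of-four-norms bound \eqref{cotapotente3}.

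Finally, the $L^1$-convergence of $(\phi^*[\mathcal L_b, \rho^\epsilon *]\mathbf K, \mathbf\Theta)$ to zero as $\epsilon \to 0$ follows by the dominated convergence theorem, using the uniform-in-$\epsilon$ bounds just derived as a dominating envelope and the pointwise ($L^1$) convergence to zero established in Lemma \ref{commutator-sharp}. The main technical obstacle I anticipate is the bookkeeping in the $k=n$ case, where the extra Jacobian factor $J\phi^{-1}$ in $\widetilde{\mathbf\Theta}$ forces a nonobvious splitting of norms (e.g.\ with conjugate exponents $2q$), making the exact matching of powers in \eqref{cotapotente3} the subtlest step; by contrast, the $k=0$ case reduces to an essentially routine chain-rule calculation.
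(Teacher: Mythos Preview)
Your approach is essentially identical to the paper's: change variables via Lemma \ref{coord-change}, apply Lemma \ref{commutator-sharp} to the transformed pairing with $\widetilde{\mathbf\Theta}$, and then estimate $\|\widetilde{\mathbf\Theta}\|_{L^\infty}$ and $\|D\widetilde{\mathbf\Theta}\|_{L^q}$ by the product/chain rule. One small slip: in the $k=n$ case the extra Jacobian factor in $\widetilde{\mathbf\Theta}(y)=|J\psi(y)|\,(\psi^*\mathbf\Theta^\sharp(y))^\flat$ is $J\phi\!\circ\!\psi$ (coming from the pull-back of the top $n$-vector by $\psi$), not $J\psi\!\circ\!\phi$; with this correction, differentiating indeed produces $(DJ\phi)\!\circ\!\psi \cdot D\psi$, and the $2q$--$2q$ H\"older split you describe matches the paper's bound exactly.
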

\begin{proof}
We first establish \eqref{cotapotente1}. Using Lemma \ref{coord-change} and \eqref{primeruni} in Lemma \ref{commutator-sharp}, we get
\begin{align*}
    &\left\lvert \int_{\mathbb R^n} \left( \phi^* [\mathcal L_b, \rho^\epsilon *] \mathbf{K}(x), \mathbf{\Theta}(x)\right) \diff^n x \right\rvert = \left\lvert\int_{\mathbb R^n} \left([\mathcal L_b, \rho^\epsilon *] \mathbf{K}(x), \widetilde{\mathbf{\Theta}}(x)\right) \diff^n x \right\rvert \\
    &\quad \lesssim \norm{\mathbf{K}}_{L^p_{R^*+1}} \left( \norm{\div{(b)}}_{L^q_{R^*+1}} \norm{\widetilde{\mathbf{\Theta}}}_{L_{R^*}^\infty} + \norm{b}_{L^\infty_{R^*+1}} \norm{D \widetilde{\mathbf{\Theta}}}_{L^q_{R^*}} \right).
\end{align*}
Since $k=0,$ we have $\widetilde{\mathbf{\Theta}}(y) = \lvert J\psi(y) \rvert \mathbf{\Theta}(\psi(y))$, which is supported on $B(0,R^*)$ by Lemma \ref{newbound}. Finally, observe that 
\begin{align} \label{k=0}
\begin{split}
\norm{ D \widetilde{\mathbf{\Theta}}}_{L_{R^*}^{q}} &\leq \norm{[D\psi]^{T} ( D \mathbf{\Theta} \circ \psi )
J\psi}
_{L_{{R^*}}^{q}}+\norm{ ( \mathbf{\Theta} \circ
\psi) DJ\psi}_{L_{{R^*}}^{q}} \\
&\leq \norm{D\psi}^2_{{L_{{R^*}}^{\infty }}} \norm{D\mathbf{\Theta}}_{L_{{R^*}}^{q}}+\norm{\mathbf{\Theta}}_{L_{{R^*}}^{\infty
}}\norm{DJ\psi} _{L_{{R^*}}^{q}},
\end{split}
\end{align}
where we have taken into account Theorem \ref{regularidad1}.
To show \eqref{cotapotente3}, applying Lemma \ref{coord-change} and \eqref{terceruni} in Lemma \ref{commutator-sharp}, we obtain
\begin{align*} 
    \left\lvert \int_{\mathbb R^n} \left( \phi^* [\mathcal L_b, \rho^\epsilon *] \mathbf{K}(x), \mathbf{\Theta}(x)\right) \diff^n x \right\rvert &= \left\lvert \int_{\mathbb R^n} \left( [\mathcal L_b, \rho^\epsilon *] \mathbf{K}(x), \widetilde{\mathbf{\Theta}}(x)\right) \diff^n x \right\rvert \\ & \lesssim  \norm{\mathbf{K}}_{L^p_{R+1}}  \norm{b}_{L^\infty_{R+1}} \norm{D \widetilde{\mathbf{\Theta}}}_{L^q_R}.
\end{align*}
Since $k=n,$ we have $\widetilde{\mathbf{\Theta}}(y) = \lvert J\psi(y)\rvert (\phi_* \mathbf{\Theta}^\sharp(y))^\flat,$ with $(\phi_* \mathbf{\Theta}^\sharp)(y) = \mathbf{\Theta}(\psi(y)) \lvert (J\phi) (\psi(y))\rvert.$ By the same argument in \eqref{k=0} and using H\"older's inequality, we have the control
\begin{align*}
\norm{ D \widetilde{\mathbf{\Theta}}}_{L_{R^*}^{q}} \leq \left( ||D\psi||^2_{{L_{{R^*}}^{\infty }}} \norm{D\mathbf{\Theta}}_{L_{{R^*}}^{2q}}+\norm{\mathbf{\Theta}}_{L_{{R^*}}^{\infty
}}\norm{DJ\psi} _{L_{{R^*}}^{2q}} \right) \norm{DJ\phi}_{L_{R^*}^{2q}} \norm{D\psi}_{L_{R^*}^{\infty}},
\end{align*}
where we have taken into account Theorem \ref{regularidad1}. Finally, it is not difficult to show $L^1$-convergence of $\left( \phi^* [\mathcal L_b, \rho^\epsilon *] \mathbf{K}, \mathbf{\Theta} \right).$
\end{proof}

Finally, by following the same arguments as in the proof of Theorem \ref{theoremuniqueness} in Section \ref{nonsmooth-unique}, we can apply the dominated convergence theorem to conclude Theorem \ref{theoremweakuniqueness}.

\section{Ill-posedness of the deterministic equation} \label{counterexample}
This section is devoted to establishing Theorem \ref{ill}. Without noise, \eqref{eq0} becomes
\begin{align} \label{deq0}
\begin{split}
    & \partial_t \mathbf{K}_t(x) + \mathcal L_b \mathbf{K}_t(x) = 0,  \\
    & \mathbf{K}_0(\cdot) = \mathbf{K}_0, \quad t \in [0,T], \quad x \in \mathbb{R}^n.
\end{split}
\end{align} 

We start by noting that the natural equivalent of Definition \ref{weak-sol} in the deterministic case is the following.
\begin{definition}[Deterministic weak $L^p$-solution] \label{weak-sol-det}
Let $p \geq 2$ and $b  \in W^{1,q}_{loc}(\mathbb{R}^n,\mathbb{R}^n)$ with $1/q+1/p=1.$ We say that a $k$-form-valued time-dependent function $\mathbf{K}$ satisfies equation \eqref{deq0} weakly in the $L^p$-sense if
\begin{itemize}
\item $\mathbf{K} \in L^p ([0,T] \times \mathbb{R}^n ; \bigwedge^k \mathcal{T}^*\mathbb R^n).$
\item For any test $k$-form $\mathbf{\Theta} \in C^\infty_0 (\bigwedge^k \mathcal{T}^*\mathbb R^n)$, the function $\llangle \mathbf{K}_t, \mathbf{\Theta} \rrangle_{L^2}$ is continuous (a.e.) and \eqref{weak-eq} with $\xi_i=0,$ $i=1,\ldots,N$ is satisfied.
\end{itemize}
\end{definition}
\subsection{Nonuniqueness of deterministic solutions}
During the following arguments, we choose $T>1.$ If $T \leq 1,$ the solutions to the characteristic ODE \eqref{dflow} below are exactly the same but restricted. Let $\alpha \in (0,1)$ and $R>0$. Consider the autonomous vector field 
\begin{align} \label{particularb}
    b_{\alpha,R}(x) = \frac{1}{1-\alpha}\frac{x}{\lvert x \rvert} (\min \{\lvert x\rvert,R\})^\alpha, \quad x \in \R^n,
\end{align}
where we assumed the convention $\frac{0}{0}=0.$ A straightforward computation shows that $b_{\alpha,R} \in C^\alpha_b(\R^n,\R^n)$. Moreover, $b_{\alpha,R} \in W^{1,q}_{loc}(\R^n,\R^n)$ for all $q \in [1,2]$ when $n \geq 2$ and when $n=1,\alpha \in ((q-1)/q,1)$, since its partial derivatives behave like $1/\lvert x\rvert^{1-\alpha}$ near zero. The associated characteristic ODE reads
\begin{align}\label{dflowgeneral}
\begin{split}
    & \partial_t \phi_t(x) = b_{\alpha,R}(\phi_t(x)), \\
    & \phi_0(x) = x, \quad t \in [0,T], \quad x \in \mathbb{R}^n.
\end{split}
\end{align}
First of all, notice that by means of expressing the equation in terms of the re-scaled variables
$y=x/R$ and $\tau=t/R^{1-\alpha}$, it suffices to consider the case $R=1$ without loss of generality. Thus, to simplify notation, from now on we set $R=1$ and denote $b_\alpha := b_{\alpha,1}$. Hence \eqref{dflowgeneral} becomes
\begin{align}\label{dflow}
\begin{split}
    & \partial_t \phi_t(x) = b_{\alpha}(\phi_t(x)), \\
    & \phi_0(x) = x.
\end{split}
\end{align}
For $x\neq 0$, it can be verified that the unique solution to \eqref{dflow} has the explicit form
\begin{align}
\resizebox{1.0 \textwidth}{!}{
\begin{math}
    \phi_t(x)=
\begin{cases} \label{phi-eq}
    \phi^0_t(x)= \dfrac{x}{\lvert x\rvert} \left( t+\lvert x\rvert^{1-\alpha}\right)^{\frac{1}{1-\alpha}}, \quad  0 \leq t + \lvert x\rvert^{1-\alpha} \leq 1, \quad 0< \lvert x\rvert \leq 1, \\ 
    \phi^1_t(x)= \dfrac{x}{\lvert x\rvert}\left(\frac{1}{1-\alpha}(t + \lvert x\rvert^{1-\alpha}-1)+1 \right),\quad t + \lvert x\rvert^{1-\alpha} > 1,\quad 0<\lvert x\rvert \leq  1, \\ 
    \phi^2_t(x)=  \dfrac{x}{\lvert x\rvert} \left(\frac{t}{1-\alpha}+ \lvert x\rvert\right), \quad \lvert x\rvert > 1,
\end{cases}
\end{math}
}
\end{align}
for $t \in [0,T].$ Moreover, one can check that for every $t \in [0,T],$ $\phi_t$ maps $\mathbb{R}^n \backslash \{0\}$ continuously and bijectively onto the set $\mathcal{A}_{t},$ where
\begin{align*}
    \mathcal{A}_{t} :=
\begin{cases}
     \mathbb{R}^n \backslash \overline{B}(0,t^{\frac{1}{1-\alpha}}), \quad  0\leq t \leq 1, \\ 
     \mathbb{R}^n \backslash  \overline{B}(0,\frac{t}{1-\alpha}-\frac{\alpha}{1-\alpha}), \quad  t > 1.
\end{cases}
\end{align*}
We have assumed the convention $\overline{B}(0,0) := \{0\}.$ The solution $\phi$ in \eqref{phi-eq} is of class $C^\infty$ in both time and space everywhere except on the sets
\begin{align*}
 \mathcal{S}^{\phi}_{1} := \left\{(t,x) \in [0,1] \times \R^n \backslash \{0\}: \lvert x\rvert = (1 - t)^{\frac{1}{1-\alpha}} \right\}
\end{align*}
and
\begin{align*}
 \mathcal{S}^{\phi}_{2} := \left\{(t,x) \in [0,T] \times \R^n: \lvert x\rvert = 1 \right\}.
\end{align*}
Nevertheless, we know that $\phi$ is at least continuous on $\mathcal{S}^{\phi} := \mathcal{S}^{\phi}_{1}\cup \mathcal{S}^{\phi}_{2}$. Let us first show that $\phi_t \in C^1(\R^n\backslash \{0\},\mathcal{A}_{t}),$ for all $t \in [0,T]$. To that purpose, by computing the first order (spatial) partial derivatives of $\phi_t$, we find that 
\begin{align*}
\resizebox{1.05 \textwidth}{!}{
\begin{math}
\frac{\partial \phi^{i}_t}{\partial x_j}(x) =
\begin{cases} 
   \left( t + \lvert x\rvert^{1-\alpha} \right)^{\frac{\alpha}{1-\alpha}} \dfrac{x_i x_j}{\lvert x\rvert^{2-\alpha}} + (t+\lvert x\rvert^{1-\alpha})^{\frac{1}{1-\alpha}} \dfrac{\delta_{ij} \lvert x\rvert - \frac{x_i}{\lvert x\rvert}x_j}{\lvert x\rvert^2}, \quad 0\leq t + \lvert x\rvert^{1-\alpha} \leq 1, \quad 0<\lvert x\rvert\leq 1,\\
   \dfrac{\delta_{ij} \lvert x\rvert - \frac{x_i}{\lvert x\rvert}x_j}{\lvert x\rvert^2} \left(\dfrac{t-1 + \lvert x\rvert^{1-\alpha}}{1-\alpha}+ 1\right) + \dfrac{x_i x_j}{\lvert x\rvert^{2-\alpha}}, \quad t + \lvert x\rvert^{1-\alpha} > 1,\quad 0< \lvert x\rvert \leq 1, \\ 
   \dfrac{\delta_{ij} \lvert x\rvert - \frac{x_i}{\lvert x\rvert}x_j}{\lvert x\rvert^2}\left( \dfrac{t}{1-\alpha}+ \lvert x\rvert\right) + \dfrac{x_i x_j}{\lvert x\rvert^2}, \quad \lvert x\rvert > 1,
\end{cases}
\end{math}
}
\end{align*}
for $i,j \in \{1,\ldots,n\}$. It is easy to check that all partial derivatives coincide on $\mathcal{S}^{\phi}$. Indeed,
\begin{align*}
\frac{\partial (\phi^0)^i_{t}}{\partial x_j}(x) & =\frac{\partial (\phi^1)^i_{t}}{\partial x_j}(x), \quad (t,x) \in  \mathcal{S}^{\phi}_{1},\\
\frac{\partial (\phi^1)^i_{t}}{\partial x_j}(x) & =\frac{\partial (\phi^2)^i_{t}}{\partial x_j}(x), \quad  (t,x) \in  \mathcal{S}^{\phi}_{2}, \\ 
\frac{\partial (\phi^0)^i_{t}}{\partial x_j}(x) & =\frac{\partial (\phi^2)^i_{t}}{\partial x_j}(x), \quad  (t,x)\in  \mathcal{S}^{\phi}_{1}\cap \mathcal{S}^{\phi}_{2}.
\end{align*}
Similarly, it can be verified that for fixed $x\in\R^n\backslash \{0\},$ $\phi_{\cdot}(x) \in C^1([0,T])$. Therefore, we have shown that $\phi\in C^1([0,T]\times \R^n\backslash \{0\}; \R^n\backslash \{0\}).$
Furthermore, by means of the Inverse Function Theorem or by direct computation, one can also check that the inverse of $\phi_{t}$, denoted by $\psi_{t} := \phi^{-1}_t$, is also of class $\psi\in C^1 \left(\bigcup_{t \in [0,T]}\left( \{t\}\times\mathcal{A}_{t} \right);\R^n\backslash \{0\} \right).$ Moreover, the explicit form of $\psi$ is 
\begin{align}
\resizebox{1.0 \textwidth}{!}{
\begin{math}
    \psi_t(x)=
\begin{cases} \label{phi-eq:inverse}
    \psi^0_t(x)= \dfrac{x}{\lvert x\rvert} \left( \lvert x\rvert^{1-\alpha}-t\right)^{\frac{1}{1-\alpha}}, \quad  t^{\frac{1}{1-\alpha}}<\lvert x\rvert\leq 1, \quad 0\leq t < 1, \\ 
    \psi^1_t(x) = \dfrac{x}{\lvert x\rvert}\left((1-\alpha)\lvert x\rvert+\alpha-t \right)^{\frac{1}{1-\alpha}}, \quad 1 < \lvert x\rvert\leq 1+ \frac{t}{1-\alpha}, \quad 0 < t <1, \\
    \psi^2_t(x)= \dfrac{x}{\lvert x\rvert}\left((1-\alpha)\lvert x\rvert+\alpha-t \right)^{\frac{1}{1-\alpha}}, \quad \frac{t-\alpha}{1-\alpha}  < \lvert x\rvert\leq 1+ \frac{t}{1-\alpha}, \quad t \geq 1, \\
    \psi^3_t(x)=  \dfrac{x}{\lvert x\rvert} \left(\lvert x\rvert-\frac{t}{1-\alpha} \right), \quad \lvert x\rvert > 1+ \frac{t}{1-\alpha}, \quad t >0.
\end{cases}
\end{math}
}
\end{align}
Consequently, $\psi$ is of class $C^\infty$ in time and space everywhere except on the singular sets 
\begin{align*}
 \mathcal{S}^{\psi}_{1} := \left\{(t,x) \in [0,1] \times \R^n : \lvert x\rvert =1 \right\}
\end{align*}
and
\begin{align*}
 \mathcal{S}_{2}^{\psi} := \left\{(t,x) \in [0,T] \times \R^n : \lvert x\rvert = 1+\frac{t}{1-\alpha} \right\}.
\end{align*}

For $x = 0$, the ODE \eqref{dflow} has infinitely many solutions of the form
\begin{align*}
    \varphi^{v}_{t}=
\begin{cases} 
    v  t^{\frac{1}{1-\alpha}}, \quad  0\leq t \leq 1, \\ 
     v \left( \frac{1}{1-\alpha}(t-1)+1\right), \quad  t > 1,
\end{cases}
\end{align*}
where $v$ is any unitary vector, i.e. $v\in \mathbb{R}^{n}$ with $\lvert v \rvert=1$.  Moreover, fixing $t_0$ such that $0 < t_0 \leq t \leq T$, we can construct via translation a shifted family of solutions
\begin{align*}
    \varphi^{v}_{t-{t_0}}=
\begin{cases} 
     v  \left(t-t_{0}\right)^{\frac{1}{1-\alpha}}, \quad  0\leq t-t_{0} \leq 1, \\ 
     v \left( \frac{1}{1-\alpha}(t-t_{0}-1)+1\right), \quad  t-t_{0} > 1.
\end{cases}
\end{align*}
By expressing uniquely any element $x\in B(0,t)\backslash \{0\}$ in the form $x = t_0 v,$ where $t_0 \in (0,t]$ and $\lvert v \rvert=1$, we can define a diffeomorphism $\Phi_t: B(0,t)\backslash \{0\} \rightarrow B(0,t^{\frac{1}{1-\alpha}})\backslash \{0\}$ via the relation $t_0 v \mapsto \varphi^{v}_{t-{t_0}}.$ More explicitly, we have
\begin{align*}
\resizebox{1.0 \textwidth}{!}{
\begin{math}
    \Phi_t(x)=
\begin{cases} 
     \Phi_t^{0}(x)=\dfrac{x}{\lvert x\rvert}\left(t-\lvert x\rvert\right)^{\frac{1}{1-\alpha}} ,\quad 0\leq t-\lvert x\rvert\leq 1, \quad 0< \lvert x\rvert< t, \\ 
     \Phi_t^{1}(x)= \dfrac{x}{\lvert x\rvert}\left(\frac{1}{1-\alpha}(t-\lvert x\rvert-1)+1\right), \quad t-\lvert x\rvert> 1,\quad \lvert x\rvert>0, \quad t> 1,
\end{cases}
\end{math}
}
\end{align*}
for $t \in (0,T]$ and $x \in B(0,t) \backslash \{0\}$. One can check that $ \Phi :\bigcup_{t\in (0,T]} \left(\{t\}\times B(0,t)\backslash \{0\} \right) \rightarrow \R^n\backslash \{0\}$ is of class $C^{\infty}$ in time and space everywhere except on the singular set
\begin{align*}
 \mathcal{S}^{\Phi} := \left\{(t,x) \in [1,T] \times \R^n : \lvert x\rvert = t-1 \right\}.
\end{align*}
The inverse of $\Phi_t$, denoted by $\Psi_t := \Phi_t^{-1}$, can be computed explicitly as
\begin{align} \label{psi-eq:inverse}
\resizebox{1.0 \textwidth}{!}{ 
\begin{math}
    \Psi_t(x)=
\begin{cases} 
     \Psi_t^{0}(x)=\dfrac{x}{\lvert x\rvert}\left(t-\lvert x\rvert^{1-\alpha}\right), \quad 0<\lvert x\rvert\leq t^{\frac{1}{1-\alpha}}, \quad 0 < t \leq 1, \\ 
     \Psi_t^{1}(x)=\dfrac{x}{\lvert x\rvert}\left(t-\lvert x\rvert^{1-\alpha}\right), \quad 0 < \lvert x\rvert\leq 1, \quad t >1, \\ 
     \Psi_t^{2}(x)= \dfrac{x}{\lvert x\rvert}\left((\alpha-1)\lvert x\rvert-\alpha+t\right), \quad 1< \lvert x\rvert\leq 1+\frac{t-1}{1-\alpha}, \quad t >1,
\end{cases}
\end{math}
}
\end{align}
with singular set
\begin{align*}
 \mathcal{S}^{\Psi} := \left\{(t,x) \in [1,T]\times \R^{n}: \lvert x\rvert = 1\right\}.
\end{align*}
One can check that $\Phi$ is of class $C^1\left(\bigcup_{t\in (0,T]} \left(\{t\}\times B(0,t)\backslash \{0\} \right);\R^n\backslash \{0\}\right)$ and its inverse $\Psi$ is of class $C^1\left(\bigcup_{t\in (0,T]} \left(\{t\}\times B(0,t^{\frac{1}{1-\alpha}})\backslash \{0\} \right);\R^n\backslash \{0\}\right).$

Now, we will construct explicit locally in space strong solutions to the transport equation \eqref{eq0} with vector field $b_{\alpha}$. In the next step, we will employ this to construct $L^p$-solutions to \eqref{eq0} (i.e. in the sense of Definition \ref{weak-sol-det}). Since the aim of the first part of this section is showing that equation \eqref{eq0} is ill-posed in $L^p$ spaces and not developing a deterministic theory, we will assume for simplicity that the support of the initial datum $\mathbf{K}_0$ is contained in $B(0,1/2),$ and we will choose $T \leq 1-1/2^{1-\alpha}$. The general case works similarly, although the computations become more involved. Before stating an important result, let us first introduce new notation that will simplify the exposition of the arguments hereafter. 

We define the time dependent open sets
\begin{align*}
\mathcal{A}_{t}^0 := B(0,1)\backslash  \overline{B}(0,t^{\frac{1}{1-\alpha}}), 
\end{align*}
where $t \in [0,T]$. Observe that $\mathcal{A}_t^0$ is the open set where $\psi_t^0$ is defined (see \eqref{phi-eq:inverse}). Similarly, the set
\begin{align*}
& \mathcal{B}_{t}^0 :=
\begin{cases}
B(0,t^{\frac{1}{1-\alpha}})\backslash \{0\}, \quad 0 < t \leq T, \\
\emptyset, \quad t = 0,
\end{cases}  
\end{align*}
is exactly the open set where
$\Psi^{0}_{t}$ is defined. 

With this notation at hand, we present the following result:

\begin{proposition} \label{strongsolution}
Let $\mathbf{K}_0,\mathbf{\Gamma} \in C^\infty(\R^n,\bigwedge^k  \mathcal{T}^*\mathbb R^n)$ be supported on $B(0,1/2)$ and $0<T \leq 1-1/2^{1-\alpha}$. Then the $k$-form-valued function defined by
\begin{align} \label{K:form:sol:gamma}
\mathbf{K}_t^\mathbf{\Gamma}(x) = \begin{cases} 
      (\phi^0_t)_* \mathbf{K}_0(x), & x \in \mathcal{A}^0_t, \quad t\in [0,T],\\
      (\Phi^{0}_t)_* \mathbf{\Gamma}(x), & x \in \mathcal{B}^0_t, \quad t \in [0,T], \\
      0, & \textit{elsewhere}, 
   \end{cases}
\end{align}
solves the Lie transport equation \eqref{eq0} strongly on the open sets $\mathcal{A}^0_t,$ $\mathcal{B}^0_t,$ $t \in [0,T].$
\end{proposition}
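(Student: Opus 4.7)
The strategy is to reduce the proposition to the classical smooth push-forward calculus, applied separately on each of the two spacetime regions $\bigcup_{t\in[0,T]}(\{t\}\times\mathcal{A}_t^0)$ and $\bigcup_{t\in(0,T]}(\{t\}\times\mathcal{B}_t^0)$. Observe that $b_\alpha$ is $C^\infty$ on $\mathbb{R}^n\setminus(\{0\}\cup\{|x|=1\})$, and both $\phi^0$ and $\Phi^0$ are given by closed-form expressions that are manifestly smooth in $(t,x)$ away from the singular sets $\mathcal{S}^\phi$ and $\mathcal{S}^\Phi$ already identified in the excerpt. The first thing I would verify is that the time restriction $T\le 1-1/2^{1-\alpha}$ together with $\mathrm{supp}(\mathbf{K}_0)\subset B(0,1/2)$ forces $\phi_t^0(\mathrm{supp}(\mathbf{K}_0))\subset\mathcal{A}_t^0\cap\{|y|<1\}$ for all $t\in[0,T]$, via the explicit identity $|\phi_t^0(x)|=(t+|x|^{1-\alpha})^{1/(1-\alpha)}$; symmetrically, since $T<1$ and $\mathrm{supp}(\mathbf{\Gamma})\subset B(0,1/2)$, the image $\Phi_t^0(\mathrm{supp}(\mathbf{\Gamma}))$ stays inside $\mathcal{B}_t^0\cap\{|y|<1\}$ and clear of $\mathcal{S}^\Phi=\{|x|=t-1\}$ (which is empty for $t\le 1$). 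The standard global coordinate formula for the push-forward of a $k$-form then shows that $(\phi_t^0)_*\mathbf{K}_0$ and $(\Phi_t^0)_*\mathbf{\Gamma}$ are $C^\infty$ in $(t,x)$ on their respective open regions.

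Next I would verify by direct differentiation of the closed-form expressions \eqref{phi-eq} (and the analogous formula for $\Phi^0$) that $\partial_t\phi_t^0=b_\alpha(\phi_t^0)$ and $\partial_t\Phi_t^0=b_\alpha(\Phi_t^0)$ on their respective smoothness domains; the second identity is where one sees explicitly that $\Phi^0$ is the non-uniqueness branch of the characteristic ODE emanating from $x=0$. With smoothness of both flows and of $b_\alpha$ secured along the relevant trajectories, I would invoke the classical pointwise identity
\begin{align*}
\partial_t[(\varphi_t)_*\mathbf{M}]+\mathcal{L}_{b_\alpha}[(\varphi_t)_*\mathbf{M}]=0,
\end{align*}
valid on any open set on which $\varphi_t$ is a $C^2$-diffeomorphism flowing the $C^1$ vector field $b_\alpha$ and $\mathbf{M}$ is a $C^1$ $k$-form. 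This is immediate from differentiating the tautology $\varphi_t^*((\varphi_t)_*\mathbf{M})=\mathbf{M}$ in $t$ and using that $\mathcal{L}_{b_\alpha}$ generates pull-back along the flow of $b_\alpha$, together with invertibility of $\varphi_t^*$. Applying this with $(\varphi_t,\mathbf{M})=(\phi_t^0,\mathbf{K}_0)$ on $\mathcal{A}_t^0$ and with $(\varphi_t,\mathbf{M})=(\Phi_t^0,\mathbf{\Gamma})$ on $\mathcal{B}_t^0$ delivers the strong solution property on each open set; on the "elsewhere" region the candidate is identically zero, which trivially satisfies the equation in the interior.

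No step is genuinely hard: the whole argument amounts to careful bookkeeping about smoothness domains followed by one application of the standard push-forward identity. The only point that will need explicit attention is to formulate "strong solution on $\mathcal{A}_t^0,\mathcal{B}_t^0$" so that the assertion is only about pointwise validity in the interiors of these moving open sets and not across their topological boundaries, where $\mathbf{K}_t^{\mathbf{\Gamma}}$ will in general be discontinuous; this discontinuity is not a defect of the argument but precisely the feature to be exploited later in the construction of the parametric family of nonunique $L^p$-solutions.
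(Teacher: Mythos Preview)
Your proposal is correct, and on the region $\mathcal{A}_t^0$ it matches the paper's argument exactly. On $\mathcal{B}_t^0$ you take a genuinely different (and shorter) route than the paper.

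The paper explicitly flags that $\Phi_t^0$ is \emph{not} a one-parameter group --- $\Phi_{t+s}^0\neq\Phi_s^0\circ\Phi_t^0$ --- and for this reason declines to invoke the flow-based characterisation of $\mathcal{L}_{b_\alpha}$ on $\mathcal{B}_t^0$. Instead it derives the backward evolution equation $\partial_t\Psi_t^0=-b_\alpha^i\,\partial_{x_i}\Psi_t^0$ for the inverse map and then verifies $\partial_t[(\Phi_t^0)_*\mathbf{\Gamma}]=-\mathcal{L}_{b_\alpha}[(\Phi_t^0)_*\mathbf{\Gamma}]$ by a direct coordinate computation, expanding both sides via \eqref{Lieformula} and matching term by term. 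Your argument sidesteps this entirely: the identity you invoke only requires $\varphi_{t+s}=\phi_s^0\circ\varphi_t$ with the \emph{genuine} local flow $\phi_s^0$ on the right, not $\varphi_s$; and since $\Phi_t^0$ takes values in $\mathcal{B}_t^0\subset\R^n\setminus\{0\}$ where $b_\alpha$ is smooth and the flow is unique, this cocycle identity does hold (one checks $\phi_s^0(\Phi_t^0(x))=\frac{x}{|x|}(s+t-|x|)^{1/(1-\alpha)}=\Phi_{t+s}^0(x)$ directly). Thus $\partial_t(\varphi_t^*\mathbf{N})=\varphi_t^*\mathcal{L}_{b_\alpha}\mathbf{N}$ follows, and your tautology argument goes through.

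What each buys: your approach is cleaner and treats both regions uniformly, but you should make explicit that the justification only uses the cocycle relation with the true flow $\phi_s^0$ (equivalently, a bare chain-rule computation from $\partial_t\varphi_t=b_\alpha(\varphi_t)$), not the semigroup property of $\varphi_t$ itself --- otherwise a reader may raise the same objection the paper does. The paper's longer computation has the virtue of being entirely self-contained and highlighting precisely why the non-uniqueness branch $\Phi_t^0$ still produces a valid strong solution.
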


\begin{proof}\label{s:521}
Before starting the computation, we first note that by taking into account the regularity shown for $\psi$ and $\Psi$ in the previous steps, the $k$-form-valued function $\mathbf{K}^\mathbf{\Gamma}$ defined in \eqref{K:form:sol:gamma} is of class $\mathbf{K}^\mathbf{\Gamma} \in  C^{\infty}\left( \bigcup_{t\in(0,T)} \left( \{t\} \times ( \mathcal{A}_t^0 \cup \mathcal{B}_t^0) \right); \bigwedge^k \mathcal{T}^*\mathbb R^n \right).$ We have to verify that the following identity holds
\begin{align} \label{ultima}
\partial_t \mathbf{K}_t^\mathbf{\Gamma}(x) + \mathcal L_{b_\alpha} \mathbf{K}_t^\mathbf{\Gamma}(x) = 0,
\end{align}
for $x \in\mathcal{A}_{t}^0 \cup \mathcal{B}_{t}^0$ and $t \in (0,T).$ To that purpose, we divide the arguments into two parts.

\noindent\textbf{Step 1: $\mathbf{K}^\mathbf{\Gamma}$ verifies the equation strongly on $\mathcal{A}_{t}^0,$ $t \in (0,T)$.}
Using the fact that $\phi^0_t$ \eqref{phi-eq} is a smooth local flow, we can define the Lie derivative in terms of the local flow as $\mathcal{L}_{b_\alpha}\mathbf{K}^{\mathbf{\Gamma}}_t =\left.\frac{\partial}{\partial s}\right\rvert_{s=0} (\phi_s^0)^* \mathbf{K}^{\mathbf{\Gamma}}_t$ on $\mathcal{A}_{t}^0$. A standard differential geometry manipulation shows that indeed $\mathbf{K}^\mathbf{\Gamma}_t$ solves \eqref{ultima} strongly on $\mathcal{A}_{t}^0,$ $t \in (0,T)$ . 

However, showing that $\mathbf{K}^\mathbf{\Gamma}$ solves \eqref{ultima} strongly on $\mathcal{B}_{t}^0,$ $t \in (0,T),$ requires a different argument since $\Phi^0_t$ is not a local flow (in particular, the semigroup property fails). To bypass this issue, we provide a direct proof in the next step.

\noindent\textbf{Step 2: $\mathbf{K}^\mathbf{\Gamma}$ verifies the equation strongly on $\mathcal{B}_t^0,$ $t \in (0,T)$.}
Before starting with the proof, we derive a useful evolution equation for $\Psi^{0}_{t}$. To simplify the notation, we will just write $\Phi_{t}$ instead of $\Phi^{0}_{t}$ and $\Psi_t$ instead of $\Psi^0_t$ (this is actually correct by \eqref{psi-eq:inverse}). Recalling that  $\Psi_t(\Phi_t(y)) = y,$ for any $y \in B (0,t) \backslash \{0\}$ we infer that
\begin{align} \label{flowinv}
\begin{split}
    0 = \frac{\partial}{\partial t}\Psi_t(\Phi_t(y)) &= \frac{\partial \Psi_t}{\partial t}(\Phi_t(y)) + \frac{\partial \Psi_t}{\partial x_i}(\Phi_t(y)) \frac{\partial \Phi_t^i}{\partial t}(y) \\
    &= \frac{\partial \Psi_t}{\partial t}(\Phi_t(y)) + \frac{\partial \Psi_t}{\partial x_i}(\Phi_t(y)) b_\alpha^i(\Phi_t(y)),
\end{split} 
\end{align}
$t \in (0,T),$ where we have used the chain rule and the fact that $\Phi_t$ solves \eqref{dflow}. Setting $x = \Phi_t(y)$ and rearranging the terms slightly we obtain 
\begin{align} \label{eq:back-flow}
    \frac{\partial}{\partial t}\Psi_t(x) = -b_\alpha^i(x) \frac{\partial \Psi_t}{\partial x_i}(x), \quad x \in \mathcal{B}_{t}^0, \quad t \in (0,T).
\end{align}
We are ready to verify that \eqref{ultima} holds. We write the left-hand side of \eqref{ultima} in coordinates as
\begin{align}  
     ((\Phi_t)_* \mathbf{\Gamma})_{j_1,\ldots,j_k}(x) = & \, \Gamma_{i_1,\ldots,i_k}(\Psi_t(x)) \frac{\partial \Psi_t^{i_1}}{\partial x_{j_1}}(x) \cdots \frac{\partial \Psi_t^{i_k}}{\partial x_{j_k}}(x), \label{eq:comput:derivative}
\end{align}
$x \in \mathcal{B}_{t}^0,$ $t \in (0,T).$ Applying identity \eqref{eq:back-flow} on the right hand side of \eqref{eq:comput:derivative}, we obtain
\begin{align}
\begin{split}
    &\frac{\partial}{\partial t}((\Phi_t)_* \mathbf{\Gamma})_{j_1,\ldots,j_k}(x) = -\frac{\partial \Gamma_{i_1,\ldots,i_k}}{\partial y_l}(\Psi_t(x)) b_\alpha^r(x)\frac{\partial \Psi^l_t}{\partial x_r}(x) \frac{\partial \Psi_t^{i_l}}{\partial x_{j_1}}(x) \cdots \frac{\partial \Psi_t^{i_k}}{\partial x_{j_k}}(x) \\ 
    & \quad - \Gamma_{i_1,\ldots,i_k}(\Psi_t(x)) \left(\frac{\partial b_\alpha^r}{\partial x_{j_1}}(x)\frac{\partial \Psi^{i_1}_t}{\partial x_r}(x) + b_\alpha^r(x)\frac{\partial^2 \Psi^{i_1}_t}{\partial x_r x_{j_1}}(x)\right) \frac{\partial \Psi_t^{i_2}}{\partial x_{j_2}}(x)  \cdots \frac{\partial \Psi_t^{i_k}}{\partial x_{j_k}}(x)  \\
    &- \cdots - \Gamma_{i_1,\ldots,i_k}(\Psi_t(x)) \frac{\partial \Psi_t^{i_1}}{\partial x_{j_1}}(x)\cdots \frac{\partial \Psi_t^{i_{k-1}}}{\partial x_{j_{k-1}}}(x) \\
    &\hspace{150pt} \times \left(\frac{\partial b_\alpha^r}{\partial x_{j_k}}(x)\frac{\partial \Psi^{i_k}_t}{\partial x_r}(x) + b_\alpha^r(x)\frac{\partial^2 \Psi^{i_k}_t}{\partial x_r x_{j_k}}(x)\right), \label{derivativepush}
\end{split}
\end{align}
for $x \in \mathcal{B}_{t}^0,$ $t \in (0,T)$. Using the explicit formula for Lie derivatives \eqref{Lieformula}, we also compute the right-hand side of \eqref{ultima} as
\begin{align} 
\begin{split}
        &(\mathcal L_{b_\alpha} [(\Phi_t)_* \mathbf{\Gamma}])_{j_1,\ldots,j_k}(x) =  b_\alpha^l(x)\frac{\partial \Gamma_{i_1,\ldots,i_k}}{\partial y_r}(\Psi_t(x))\frac{\partial \Psi^r_t}{\partial x_l}(x) \frac{\partial \Psi_t^{i_1}}{\partial x_{j_1}}(x) \cdots \frac{\partial \Psi_t^{i_k}}{\partial x_{j_k}}(x) \\
        &\quad + \Gamma_{i_1,\ldots,i_k}(\Psi_t(x))\left(b_\alpha^l(x) \frac{\partial^2 \Psi_t^{i_1}}{\partial x_{j_1} \partial x_l}(x) + \frac{\partial \Psi_t^{i_1}}{\partial x_{l}}(x) \frac{\partial b_\alpha^{l}}{\partial x_{j_1}}(x) \right) \frac{\partial \Psi_t^{i_2}}{\partial x_{j_2}}(x) \cdots \frac{\partial \Psi_t^{i_k}}{\partial x_{j_k}}(x) \\
        &+ \cdots + \Gamma_{i_1,\ldots,i_k}(\Psi_t(x)) \frac{\partial \Psi_t^{i_1}}{\partial x_{j_1}}(x) \cdots \frac{\partial \Psi_t^{i_{k-1}}}{\partial x_{j_{k-1}}}(x) \\
        &\hspace{150pt} \times \left(b_\alpha^l(x) \frac{\partial^2 \Psi_t^{i_k}}{\partial x_{j_k} \partial x_l}(x) + \frac{\partial \Psi_t^{i_k}}{\partial x_{l}}(x) \frac{\partial b_\alpha^{l}}{\partial x_{j_k}}(x) \right),\label{eq:Lie:derivative:1}
\end{split}
\end{align}
$x\in\mathcal{B}_{t}^0,$ $t \in (0,T)$. 
Comparing the expressions \eqref{derivativepush} and \eqref{eq:Lie:derivative:1}, we clearly see that
\begin{align*}
          & -\frac{\partial}{\partial t}((\Phi_t)_* \mathbf{\Gamma})_{j_1,\ldots,j_k}(x) = (\mathcal L_{b_\alpha} [(\Phi_t)_* \mathbf{\Gamma}])_{j_1,\ldots,j_k}(x) 
\end{align*}
holds for $x\in \mathcal{B}_{t}^0$ and $t\in (0,T)$. The proof is now complete.
\end{proof}
We now show the existence of $L^p$-solutions (in the sense of Definition \ref{weak-sol-det}) to \eqref{eq0} with a restriction on the dimension. More precisely, we will show the following result.
\begin{proposition} \label{prop:exist:Lp}
Let $kp< n,$ $\mathbf{K}_0,\mathbf{\Gamma} \in C^\infty(\R^n,\bigwedge^k  \mathcal{T}^*\mathbb R^n)$ be supported on $B(0,1/2),$ and $0<T \leq 1-1/2^{1-\alpha}$. Then the $k$-form-valued function $\mathbf{K}^\mathbf{\Gamma}$ defined in \eqref{K:form:sol:gamma} complemented with the initial datum $\mathbf{K}^\mathbf{\Gamma}_0 := \mathbf{K}_0$ is an $L^p$-solution of the Lie transport equation \eqref{eq0} in the sense of Definition \ref{weak-sol-det}.
\end{proposition}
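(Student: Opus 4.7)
The idea is to verify the two items of Definition \ref{weak-sol-det} separately, leveraging the strong solvability on each of the open regions $\mathcal{A}_t^0, \mathcal{B}_t^0$ established in Proposition \ref{strongsolution}.

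\emph{Step 1: $L^p$-integrability.} I would first split $\int_0^T \int_{\R^n} \lvert \mathbf{K}_t^\mathbf{\Gamma}\rvert^p \, \diff^n x \, \diff t$ into its contributions from the two regions. On $\mathcal{A}_t^0$, the change of variables $x = \phi_t^0(y)$ has a uniformly bounded Jacobian over $\mathrm{supp}(\mathbf{K}_0) \subset B(0,1/2)$, thanks to the bound $T \leq 1-1/2^{1-\alpha}$, reducing the estimate to $\norm{\mathbf{K}_0}_{L^p}^p$. On $\mathcal{B}_t^0$, I would exploit the explicit formula \eqref{psi-eq:inverse} to derive the pointwise bound $\lvert D \Psi_t^0(x) \rvert \lesssim \lvert x\rvert^{-1}$ as $\lvert x\rvert \to 0$ (the leading term being $\delta_{ij}t/\lvert x\rvert$), hence $\lvert (\Phi_t^0)_* \mathbf{\Gamma}(x)\rvert \lesssim \norm{\mathbf{\Gamma}}_{L^\infty} \lvert x\rvert^{-k}$. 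Integrating $\lvert x\rvert^{-kp}$ over $B(0,t^{1/(1-\alpha)})$ in polar coordinates yields a finite result, uniformly in $t \in [0,T]$, exactly when $kp < n$.

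\emph{Step 2: Weak equation.} Fix $\mathbf{\Theta} \in C_0^\infty(\R^n, \bigwedge^k \mathcal{T}^*\R^n)$ and decompose
\begin{align*}
\llangle \mathbf{K}^\mathbf{\Gamma}_t, \mathbf{\Theta} \rrangle_{L^2} = \int_{\mathcal{A}_t^0} \big( (\phi_t^0)_* \mathbf{K}_0, \mathbf{\Theta} \big) \diff^n x + \int_{\mathcal{B}_t^0} \big( (\Phi_t^0)_* \mathbf{\Gamma}, \mathbf{\Theta} \big) \diff^n x =: F_0(t) + F_\mathbf{\Gamma}(t).
\end{align*}
I would differentiate each piece using a Reynolds-type transport formula. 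The interior contribution uses the strong PDE from Proposition \ref{strongsolution} together with integration by parts, producing $-\int_\bullet \big( \mathbf{K}^\mathbf{\Gamma}_t, \mathcal{L}_{b_\alpha}^* \mathbf{\Theta} \big) \diff^n x$ modulo boundary terms. The moving-boundary contributions at the expanding sphere $\lvert x\rvert = t^{1/(1-\alpha)}$ both vanish: from the outside, direct computation with \eqref{phi-eq:inverse} gives $\psi_t^0 \to 0$ \emph{and} $D\psi_t^0 \to 0$ as $\lvert x\rvert \to t^{1/(1-\alpha)+}$, hence $(\phi_t^0)_*\mathbf{K}_0 \to 0$; from the inside, the limiting components of $(\Phi_t^0)_*\mathbf{\Gamma}$ reduce to a constant multiple of $\Gamma_{i_1\ldots i_k}(0)\,\hat{x}_{i_1}\cdots \hat{x}_{i_k}$ times the symmetric factor $\hat{x}_{j_1}\cdots\hat{x}_{j_k}$, which vanishes by antisymmetry of $\mathbf{\Gamma}$ against the totally symmetric tensor $\hat{x}^{\otimes k}$. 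The outer boundary $\lvert x\rvert = 1$ of $\mathcal{A}_t^0$ contributes nothing either, since the support condition on $\mathbf{K}_0$ together with $T \leq 1 - 1/2^{1-\alpha}$ forces $(\phi_t^0)_*\mathbf{K}_0 \equiv 0$ in a neighborhood of $\lvert x\rvert = 1$. Summing, I obtain $\tfrac{d}{dt}\llangle \mathbf{K}^\mathbf{\Gamma}_t, \mathbf{\Theta}\rrangle_{L^2} = -\llangle \mathbf{K}^\mathbf{\Gamma}_t, \mathcal{L}_{b_\alpha}^*\mathbf{\Theta}\rrangle_{L^2}$, which integrates to the required weak formulation. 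Continuity of $t \mapsto \llangle \mathbf{K}^\mathbf{\Gamma}_t, \mathbf{\Theta}\rrangle_{L^2}$ on all of $[0,T]$, including at $t=0$ (where $\mathcal{B}_0^0 = \emptyset$ and $\mathbf{K}_0^\mathbf{\Gamma} = \mathbf{K}_0$), follows from the uniform-in-$t$ $L^p$ bound of Step~1 together with pointwise a.e.\ continuity.

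\emph{Main obstacle.} The integration by parts on $\mathcal{B}_t^0$ is not directly justified because $(\Phi_t^0)_*\mathbf{\Gamma}$ is singular at the origin. The plan is to excise a ball $B(0,\delta)$, perform integration by parts on $\mathcal{B}_t^0 \setminus B(0,\delta)$, and let $\delta \to 0$. The inner-boundary term scales like $\delta^{n-1}\cdot \delta^{-k} = \delta^{n-k-1}$ (from the pointwise bounds of Step~1 combined with analogous bounds on the first derivatives of $(\Phi_t^0)_*\mathbf{\Gamma}$), which vanishes as $\delta \to 0$ because the hypothesis $kp < n$ combined with $p \geq 2$ forces $k < n/2 \leq n-1$ whenever $n \geq 2$; the case $n = 1$ forces $k=0$ and no singularity is present. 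This origin excision is the only technically delicate point; everything else is routine bookkeeping.
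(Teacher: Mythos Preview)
Your overall strategy---splitting into the two regions, obtaining the $L^p$-bound via the pointwise estimate $|D\Psi_t^0(x)|\lesssim |x|^{-1}$, and then differentiating the pairing using Reynolds transport plus integration by parts---matches the paper's proof. The origin-excision argument you sketch for the $\mathcal{B}_t^0$ piece is a good point that the paper in fact glosses over.

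However, your treatment of the boundary terms at the expanding sphere $|x|=t^{1/(1-\alpha)}$ contains a genuine gap for $k\in\{0,1\}$. Your claim ``from the inside'' that the limiting components of $(\Phi_t^0)_*\mathbf{\Gamma}$ vanish by antisymmetry of $\mathbf{\Gamma}$ against $\hat x^{\otimes k}$ is only valid for $k\ge 2$; for $k=1$ the contraction $\Gamma_i(0)\hat x_i$ is just a number, generically nonzero, and for $k=0$ there is nothing to contract. Likewise, ``from the outside'' you infer $(\phi_t^0)_*\mathbf{K}_0\to 0$ from $D\psi_t^0\to 0$, but for $k=0$ the push-forward is simply $\mathbf{K}_0\circ\psi_t^0\to\mathbf{K}_0(0)$, again generically nonzero. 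So for $k\in\{0,1\}$ the boundary contributions do \emph{not} vanish individually.

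The paper handles this differently: it does not argue that the boundary terms vanish, but that the Reynolds boundary contribution and the integration-by-parts (divergence theorem) boundary contribution \emph{cancel} within each region. The reason is that the outward normal velocity of the sphere $S(0,t^{1/(1-\alpha)})$ equals $b_\alpha\cdot n$ there (the sphere moves with the characteristic speed), so the Reynolds boundary integral $\int_S (\mathbf{K},\mathbf{\Theta})\,v_t\cdot n\,\diff S$ is exactly the negative of the divergence-theorem boundary integral $-\int_S b_\alpha\cdot n\,(\mathbf{K},\mathbf{\Theta})\,\diff S$ arising when you pass from $\llangle \mathcal{L}_{b_\alpha}\mathbf{K},\mathbf{\Theta}\rrangle$ to $\llangle \mathbf{K},\mathcal{L}_{b_\alpha}^*\mathbf{\Theta}\rrangle$; see \eqref{primeraparte}--\eqref{nose} in the paper. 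This cancellation works for all $k$ simultaneously and does not require any information on the boundary values of $\mathbf{K}_t^{\mathbf{\Gamma}}$. Replacing your individual-vanishing claims by this cancellation fixes the gap with no other change to your plan.
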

\begin{proof}
We split the argument into three steps. 

\noindent\textbf{Step 1. Demonstrating that $\mathbf{K}^\mathbf{\Gamma} \in L^p ([0,T] \times \mathbb{R}^n ; \bigwedge^k \mathcal{T}^*\mathbb R^n)$.}
It is easy to check that $(\phi^0_{\cdot})_* \mathbf{K}_0 \in L^p([0,T] \times \mathbb{R}^n ; \bigwedge^k \mathcal{T}^*\mathbb R^n)$ since $\mathbf{K}_0 \in C^\infty_0(\R^n,\bigwedge^k  \mathcal{T}^*\mathbb R^n)$ and $\psi^0 \in C^1(\bigcup_{t \in [0,T]} \left(\{t\} \times \mathcal{A}_t^0 \right); \R^n \backslash \{0\}),$ where we recall that $\psi^0_t := (\phi^0_t)^{-1}$. We claim that $(\Phi^0_{\cdot})_* \mathbf{\Gamma} \in L^p([0,T] \times \mathbb{R}^n ; \bigwedge^k \mathcal{T}^*\mathbb R^n)$. Indeed, by noting that $\Psi^0_t(x) = \frac{x}{\lvert x\rvert} \left(t - \lvert x\rvert^{1-\alpha} \right)$, we have
\begin{align*}
\frac{\partial (\Psi^0_t)^i}{\partial x_j}(x) = \frac{\delta_{ij} \lvert x\rvert - \frac{x_i x_j}{\lvert x\rvert} }{\lvert x\rvert^2}(t - \lvert x\rvert^{1-\alpha}) - (1-\alpha) \frac{x_ix_j}{\lvert x\rvert^{2-\alpha}}, \quad t \in (0,T], \quad x \in \mathcal{B}^0_t,
\end{align*}
for $i,j \in \{1,\ldots,n\}$.

We observe that the partial derivatives $\frac{\partial (\Psi^0_t)^i}{\partial x_j}$ behave at worst like $1/\lvert x\rvert$ near zero. Therefore, using the explicit formula \eqref{repr} for the push-forward, we obtain that
\begin{align*}
    & \left\lvert \int_{B(0,1)} \left((\Phi^0_t)_*\mathbf{\Gamma}(x)\right)^p \diff^n x \right\rvert \lesssim \norm{\mathbf{\Gamma}}_{L^\infty_{1}} \left\lvert \int_{B(0,1)} \lvert x\rvert^{-k p} \, \diff^n x \right\rvert < \infty,
\end{align*}
where we used the hypothesis $kp < n$.

\noindent\textbf{Step 2. $\mathbf{K}^\mathbf{\Gamma}_{t}$ satisfies equation \eqref{eq0} in weak form.}
We have to check that
\begin{align} \label{ecuaciondebil}
 \llangle  \mathbf{K}_t^\mathbf{\Gamma}, \mathbf{\Theta} \rrangle_{L^2} - \llangle  \mathbf{K}_0, \mathbf{\Theta} \rrangle_{L^2}  + \int_0^t \llangle  \mathbf{K}_s^\mathbf{\Gamma},\mathcal{L}^*_{b_\alpha} \mathbf{\Theta} \rrangle_{L^2} \diff s  = 0, \quad t \in [0,T].
\end{align}
By observing that $\mathbf{K}^\mathbf{\Gamma}$ is supported on $B(0,1),$ \eqref{ecuaciondebil} can be rewritten as
\begin{align*} 
 & \llangle \mathcal{X}_{\mathcal{A}_{t}^0}  \mathbf{K}_t^\mathbf{\Gamma}, \mathbf{\Theta} \rrangle_{L^2} + \llangle \mathcal{X}_{\mathcal{B}_{t}^0}  \mathbf{K}_t^\mathbf{\Gamma}, \mathbf{\Theta} \rrangle_{L^2} - \llangle  \mathcal{X}_{\mathcal{A}_{0}^0} \mathbf{K}_0, \mathbf{\Theta} \rrangle_{L^2} - \llangle  \mathcal{X}_{\mathcal{B}_{0}^0} \mathbf{K}_0, \mathbf{\Theta} \rrangle_{L^2} \\
 & \quad + \int_0^t \llangle  \mathcal{X}_{\mathcal{A}_s^0} \mathbf{K}_s^\mathbf{\Gamma},\mathcal{L}^*_{b_\alpha} \mathbf{\Theta} \rrangle_{L^2} \diff s + \int_0^t \llangle  \mathcal{X}_{\mathcal{B}_s^0} \mathbf{K}_s^\mathbf{\Gamma},\mathcal{L}^*_{b_\alpha} \mathbf{\Theta} \rrangle_{L^2} \diff s  = 0,
\end{align*}
for $t \in [0,T],$ where we recall that $\mathcal{X}_{\emptyset} = 0$ (and hence $\mathcal{X}_{\mathcal{B}_{0}^0} = 0$) as assumed by our convention in Notations (Section \ref{notation}). We will first prove
\begin{align} \label{firstpart}
 & \llangle \mathcal{X}_{\mathcal{A}_{t}^0}  \mathbf{K}_t^\mathbf{\Gamma}, \mathbf{\Theta} \rrangle_{L^2}  - \llangle  \mathcal{X}_{\mathcal{A}_{0}^0} \mathbf{K}_0, \mathbf{\Theta} \rrangle_{L^2} + \int_0^t \llangle  \mathcal{X}_{\mathcal{A}_s^0} \mathbf{K}_s^\mathbf{\Gamma},\mathcal{L}^*_{b_\alpha} \mathbf{\Theta} \rrangle_{L^2} \diff s  = 0.
\end{align}
By noting that $\partial \mathcal{A}_{t}^0 = S(0,1) \cup S(0,t^{\frac{1}{1-\alpha}})$ and applying the Reynolds transport theorem, we observe that
\begin{align} \label{primeraparte}
\begin{split}
    \partial_t \llangle \mathcal{X}_{\mathcal{A}_{t}^0} \mathbf{K}_t, \mathbf{\Theta} \rrangle_{L^2} &= \int_{\mathcal{A}_{t}^0} \partial_t (K^\mathbf{\Gamma}_t)_{i_1,\ldots,i_k}(x) \Theta_{i_1,\ldots,i_k}(x) \, \diff^n x \\
    & + \int_{S(0,t^{\frac{1}{1-\alpha}})} (K_t^\mathbf{\Gamma})_{i_1,\ldots,i_k}(x) \Theta_{i_1,\ldots,i_k}(x) v_t(x) \cdot n(x) \, \diff S_t,
\end{split}
\end{align}
where $\diff S_t$ is the surface element, $v_t(x)$ is the velocity of the boundary $S(0,r^{\frac{1}{1-\alpha}})$ at $r=t$ and $n(x)$ is the outward pointing unit normal to $S(0,t^{\frac{1}{1-\alpha}})$ at the point $x \in S(0,t^{\frac{1}{1-\alpha}})$. We also assumed summation over repeated indices as usual. For the first term after the equality in \eqref{primeraparte}, we have 
\begin{align*}
    & \int_{\mathcal{A}_{t}^0} \partial_t (K^\mathbf{\Gamma}_t)_{i_1,\ldots,i_k}(x) \Theta_{i_1,\ldots,i_k}(x) \, \diff^n x = - \llangle \mathcal{X}_{\mathcal{A}_{t}^0} \mathcal{L}_{b_\alpha} \mathbf{K}_t,\mathbf{\Theta} \rrangle_{L^2},
\end{align*}
since by Proposition \ref{strongsolution}, $\mathbf{K}_t$ satisfies \eqref{eq0} strongly on $\mathcal{A}_{t}^0,$ $t \in (0,T)$. By observing that 
\begin{align*}
v_t(x) = \frac{\diff}{\diff t} \left(t^{\frac{1}{1-\alpha}} n(x)\right) = \frac{1}{1-\alpha} t^\frac{\alpha}{1-\alpha} n(x),
\end{align*}
we rewrite the second term after the equality in \eqref{primeraparte} as
\begin{align*} 
    &  \int_{S(0,t^{\frac{1}{1-\alpha}})} (K_t^\mathbf{\Gamma})_{i_1,\ldots,i_k}(x) \Theta_{i_1,\ldots,i_k}(x) v_t(x) \cdot n(x) \, \diff S_t \\
    &\quad = t^{\frac{\alpha}{1-\alpha}}(1-\alpha)^{-1}\int_{S(0,t^{\frac{1}{1-\alpha}})} (K_t^\mathbf{\Gamma})_{i_1,\ldots,i_k}(x) \Theta_{i_1,\ldots,i_k}(x) \, \diff S_t.
\end{align*}
Now we compute $\llangle  \mathcal{X}_{\mathcal{A}_{t}^0} \mathbf{K}_t^\mathbf{\Gamma},\mathcal{L}^*_{b_\alpha} \mathbf{\Theta} \rrangle_{L^2}$ by integration by parts
\begin{align} 
    &\llangle  \mathcal{X}_{\mathcal{A}_{t}^0} \mathbf{K}^\mathbf{\Gamma}_t,\mathcal{L}^*_{b_\alpha} \mathbf{\Theta} \rrangle_{L^2} = \int_{\mathcal{A}_{t}^0} (K^\mathbf{\Gamma}_t)_{i_1,\ldots,i_k}(x) {(\mathcal{L}^*_{b_\alpha} \Theta)}_{i_1,\ldots,i_k}(x) \, \diff^n x \nonumber \\
    &\hspace{10pt} =  \int_{\mathcal{A}_{t}^0} (K^\mathbf{\Gamma}_t)_{i_1,\ldots,i_k}(x) \left(-\frac{\partial}{\partial x_l} \big(b_\alpha^l(x) \Theta_{i_1,\ldots,i_k}(x)\big) + \Theta_{l,i_2,\ldots,i_k}(x) \frac{\partial b_\alpha^{i_1}}{\partial x_l}(x) \right. \nonumber \\
    &\hspace{180pt} \left. + \cdots + \Theta_{i_1,\ldots,i_{k-1},l}(x) \frac{\partial b_\alpha^{i_k}}{\partial x_l}(x) \right) \diff^n x \nonumber \\
    &\hspace{10pt} = \llangle \mathcal{X}_{\mathcal{A}_{t}^0} \mathcal{L}_{b_\alpha} \mathbf{K}^\mathbf{\Gamma}_t,\mathbf{\Theta} \rrangle_{L^2} - \int_{\mathcal{A}_{t}^0} \div \left(b_\alpha(x) (K^\mathbf{\Gamma}_t)_{i_1,\ldots,i_k}(x)\Theta_{i_1,\ldots,i_k}(x)\right) \diff^n x, \label{nose}
\end{align}
where we have used the coordinate expression of $\mathcal{L}^*_{b_\alpha}\mathbf{\Theta}$ (see \eqref{Lieformulaadjoint}). By the divergence theorem and explicit expression \eqref{particularb} of $b_\alpha$, we obtain
\begin{align*} 
    &-\int_{\mathcal{A}_{t}^0} \div \left(b_\alpha(x) (K_t^\mathbf{\Gamma})_{i_1,\ldots,i_k}(x)\Theta_{i_1,\ldots,i_k}(x)\right) \diff^n x \\
    &\quad = -\int_{S(0,t^{\frac{1}{1-\alpha}})} b_\alpha(x) \cdot n(x)(K_t^\mathbf{\Gamma})_{i_1,\ldots,i_k}(x)\Theta_{i_1,\ldots,i_k}(x) \, \diff S_t \\
    &\quad = -t^{\frac{\alpha}{1-\alpha}}(1-\alpha)^{-1}\int_{S(0,t^{\frac{1}{1-\alpha}})} (K_t^\mathbf{\Gamma})_{i_1,\ldots,i_k}(x) \Theta_{i_1,\ldots,i_k}(x) \, \diff S_t.
\end{align*}
By putting together all the previous calculations, we can conclude that \eqref{firstpart} holds. Finally, one can apply the same techniques to prove 
\begin{align*} 
 & \llangle \mathcal{X}_{\mathcal{B}_{t}^0}  \mathbf{K}_t^\mathbf{\Gamma}, \mathbf{\Theta} \rrangle_{L^2}  - \llangle  \mathcal{X}_{\mathcal{B}_{0}^0} \mathbf{K}_0, \mathbf{\Theta} \rrangle_{L^2} + \int_0^t \llangle  \mathcal{X}_{\mathcal{B}_s^0} \mathbf{K}_s^\mathbf{\Gamma},\mathcal{L}^*_{b_\alpha} \mathbf{\Theta} \rrangle_{L^2} \diff s  = 0,
\end{align*}
for $t \in [0,T].$ We conclude that \eqref{ecuaciondebil} holds.

\noindent\textbf{Step 3. $\mathbf{K}^\mathbf{\Gamma}$ is weakly continuous.}
Fix a test $k$-form $\mathbf{\Theta} \in C^\infty_0 (\bigwedge^k \mathcal{T}^*\mathbb R^n)$. Then we have
\begin{align*}
   & \llangle \mathbf{K}^\mathbf{\Gamma}_t,\mathbf{\Theta} \rrangle_{L^2} = 
   \llangle \mathcal{X}_{\mathcal{A}_{t}^0} (\phi_t)_* \mathbf{K}_0, \mathbf{\Theta}  \rrangle_{L^2} + \llangle \mathcal{X}_{\mathcal{B}_t^0}(\Phi_t)_* \mathbf{\Gamma},\mathbf{\Theta} \rrangle_{L^2}, \quad  t \in [0,T].  
\end{align*}
By the regularity of $\phi_t$ on $\mathcal{A}_{t}^0$ and due to the fact that $\mathcal{A}_{t}^0$ varies smoothly with $t,$ it is not difficult to show that $\llangle \mathcal{X}_{\mathcal{A}_{t}^0} (\phi_t)_* \mathbf{K}_0, \mathbf{\Theta}  \rrangle_{L^2}$ is continuous in $t \in [0,T]$. Since $\mathcal B_t^0$ varies smoothly with $t$ and we have a $L^p_{loc}$ control of $(\Phi_t)_* \mathbf{\Gamma}$ on $\mathcal B_t^0,$ it can also be shown that $\llangle \mathcal{X}_{\mathcal{B}_t^0}(\Phi_t)_* \mathbf{\Gamma},\mathbf{\Theta} \rrangle_{L^2}$ is continuous in $t \in [0,T]$.
\end{proof}

\subsection{Instantaneous blow-up of deterministic solutions}
Let $n \geq 2.$ Moreover, let $\alpha \in (0,1)$ and $R>0.$ We define $b_{\alpha,R}'$ by the relations
\begin{align*}
& (b_{\alpha,R}')^1(x) = 0, \\
& (b_{\alpha,R}')^2(x) = (\min \{\lvert x_1 \rvert, R\})^{\alpha}, \\
& (b_{\alpha,R}')^i(x) = 0, \quad i=3,\ldots,n, \\
\end{align*}
where $x = (x_1,\ldots,x_n) \in \R^n$ and we note that the constant $R$ has been introduced to obtain boundedness. One can check that $b_{\alpha,R}' \in C_b^\alpha(\mathbb R^n, \mathbb R^n) \cap  W^{1,q}_{loc}(\mathbb R^n, \mathbb R^n)$ provided that
\begin{align} \label{qcondition}
\alpha \in ((q-1)/q,1).
\end{align}
The associated characteristic ODE is
\begin{align} \label{dflow*}
\begin{split}
	& \partial_t \phi_t(x) = b_{\alpha,R}'(\phi_t(x)), \\
    & \phi_0(x) = x, \quad t \in [0,T], \quad x \in \mathbb{R}^n.
\end{split}
\end{align}
From now on we choose $R=1$ without loss of generality as in the previous subsection and employ the notation $b_{\alpha}'$.  The solution to \eqref{dflow*} can be explicitly computed to be
\begin{align*}
& (\phi^1_t)(x) = x_1, \\
& (\phi^2_t)(x) = x_2 + \lvert x_1\rvert^{\alpha} t, \quad \lvert x_1 \rvert \leq 1, \\
& (\phi^2_t)(x) = x_2 + t, \quad \lvert x_1 \rvert > 1, \\
& (\phi^i_t)(x) = x_i, \quad i=3,\ldots,n, 
\end{align*}
for all $t \in [0,T]$. Moreover, the inverse can be easily verified to be given by 
\begin{align*}
& (\psi^1_t)(x) = x_1, \\
& (\psi^2_t)(x) = x_2 - \lvert x_1\rvert^{\alpha} t, \quad \lvert x_1 \rvert \leq 1, \\
& (\psi^2_t)(x) = x_2 - t, \quad \lvert x_1 \rvert > 1, \\
& (\psi^i_t)(x) = x_i, \quad i=3,\ldots,n,
\end{align*}
for all $t \in [0,T]$. Therefore we have
\begin{align*}
\frac{\partial \psi^i_t}{\partial x_i}(x) = 1, \\
\end{align*}
for all $i = 1, \ldots, n$ (no sum assumed over repeated indices) and
\begin{align*}
& \frac{\partial \psi^2_t}{\partial x_1}(x) =
\begin{cases}
-\text{sgn}(x_1)\alpha\lvert x_1\rvert^{\alpha-1} t, \quad \lvert x_1 \rvert < 1, \\
0, \quad \lvert x_1 \rvert > 1,
\end{cases} 
\end{align*}
where $\text{sgn}$ is the sign function. All the other partial derivatives $\frac{\partial \phi_t^i}{\partial x_j}$ (i.e. when $i \neq j$ and $(i,j) \neq (2,1)$) are zero. Let $k \notin \{0,n\}$ and choose the initial datum
\begin{align*}
\mathbf{K}_0:= \mathcal{X}_{B(0,1)} \diff x_2 \wedge \diff x_3 \wedge \ldots \wedge \diff x_{k+1},
\end{align*}
which is trivially of class  $L^p$. By direct computation (or simply by integrating the equations), it can be checked that the solution to the deterministic Lie transport equation \eqref{deq0} with vector field $b_\alpha'$ must be given by $(\phi_t)_* \mathbf{K}_0$ as expected.

We also have (take into account the formula for the push-forward \eqref{explicitfor})
\begin{align*} 
     & (\mathbf{K}_t)_{1,3,\ldots,k+1}(x) = ((\phi_t)_* \mathbf{K}_0)_{1,3,\ldots,k+1}(x)  \\
     & = \mathcal{X}_{B(0,1)} \frac{\partial \psi_t^2}{\partial x_1}(x) \frac{\partial \psi_t^3}{\partial x_3}(x) \cdots \frac{\partial \psi_t^{k}}{\partial x_k}(x) \frac{\partial \psi_t^{k+1}}{\partial x_{k+1}}(x)\\
     & = -\mathcal{X}_{B(0,1)}\text{sgn}(x_1)\alpha \lvert x_1\rvert^{\alpha-1} t.
\end{align*}
Hence, for every $\epsilon, t >0$,
\begin{align*}
    & \sup_{x \in B(0,\epsilon)} \left \lvert ((\phi_t)_* \mathbf{K}_0)_{1,3,\ldots,k+1}(x) \right \rvert = \infty,
\end{align*}
which clearly implies that for every $\epsilon, t >0$,
\begin{align*}
& \sup_{x \in B(0,\epsilon)} \lvert \mathbf{K}_t(x) \rvert = \infty.
\end{align*}
Furthermore, we observe that the only other nonzero component of $\mathbf{K}_t$ is $(K_t)_{1,2,3,\ldots,k}(x) = 1$ (recall the above observation regarding the partial derivatives of $\psi_t$). 
Therefore $\mathbf{K}$ is of class $L^p$ provided that $\alpha \in ((p-1)/p,1)$. Moreover, $(p-1)/p \geq (q-1)/q$ since $p \geq 2 \geq q,$ so in this case, condition \eqref{qcondition} is met. It is also clear that $\mathbf{K}_t$ is weakly continuous in time, so it a solution to \eqref{deq0} in the sense of Definition \ref{weak-sol-det}.

Finally, note that $\mathbf{K}_0$ can be easily modified to have $C^\infty_c$-regularity in a way that $\mathbf{K}_t$ would still be a solution of \eqref{dflow*}. For example, choose $\mathbf{K}_0$ such that (i) $\mathbf{K}_0:= \diff x_2 \wedge \diff x_3 \wedge \ldots \wedge \diff x_{k+1}$ on $B(0,1/2),$ (ii) $\mathbf{K}_0$ vanishes outside $B(0,1),$ (iii) $\mathbf{K}_0$ is of class $C^\infty,$ and (iv) $(K_0)_{i_1,i_2,\ldots,i_k} = 0$ when $(i_1,i_2,\ldots,i_k) \neq (2,3,\dots,k+1).$ It is not difficult to show that formation of instantaneous singularities via the same mechanism persists in this case. This means that {\em regardless of the regularity of the initial datum}, instantaneous blow-up of the supremum norm can take place in the deterministic case.
\newpage

\section*{Acknowledgements}
{\small
The authors would like to thank Jos\'e Sabina de Lis, Wei Pan, James-Michael Leahy, Nicolai V. Krylov, V\'ictor M. Jim\'enez, Darryl D. Holm, Franco Flandoli, Marta de Le\'on Contreras, and Diego Alonso Or\'an for amazing mathematical discussions and feedback that tremendously helped us put together this work and formalise important aspects. ABdL has been funded by the EPSRC grant at the Mathematics of Planet Earth Centre of Doctoral Training and the Margarita Salas grant awarded by the Ministry of Universities granted by Order UNI/551/2021 of May 26, as well as the European Union Next Generation EU Funds. ST acknowledges the Schr\"odinger scholarship scheme for much of the funding during this work.}

\section*{Declarations}
On behalf of all authors, the corresponding author states that there are no competing interests.

\begin{appendices}

\section{Technical proofs} \label{technical}

\subsection{Proof of estimates \eqref{eq:D-phi-Lp} and \eqref{eq:D-phi-Lp-difference} in Lemma \ref{JM-lemma} (Step 4 of the proof of Theorem \ref{flowfla})} \label{twoproperties}

{\bf{Step 4.1.}} For simplicity, we set $\xi := 1$, but our argument can easily be generalised to allow for general diffusion vector fields of class $L^\infty([0,T]; C^{2+\alpha}_b(\mathbb R^n, \mathbb R^n))$. To avoid confusion with the time integration variables, we employ the notation $p$ instead of $r$ for the number $r\geq 2$ in Lemma \ref{JM-lemma}. We divide the proof into three steps because it involves long computations. Estimate \eqref{eq:D-phi-Lp} follows easily from \eqref{phi-convergence}, but we outline a self-contained proof here since the arguments and the result will be used in the next steps. Since for fixed $s \in [0,T],$ the process $Y_t(x) = \Psi_\lambda(t,\phi_{s,t}(x)),$ $t \geq s,$ $x\in \R^n$ satisfies the modified SDE \eqref{conjugated} and $\phi_{s,t}(x) = \Psi_\lambda^{-1}(t,Y_t(x))$ with $y = \Psi_\lambda(s,x)$, we have the following equation for its (spatial) derivative
\begin{align}
\begin{split}
\label{conjugated-sde-deriv}
    & DY_t(x) =  D\Psi_\lambda(s,x) + \lambda \int^t_s D\psi_\lambda(r,\phi_{s,r}(x)) D\Psi_\lambda^{-1}(r,Y_r(x)) DY_r(x) \, \diff r \\
    & \hspace{70pt} + \int^t_s D^2\Psi_\lambda(r, \phi_{s,r}(x)) D\Psi_\lambda^{-1}(r,Y_r(x)) DY_r(x) \, \diff W_r,
\end{split}
\end{align}
where $0\leq s \leq t \leq T,$ $x \in \mathbb{R}^n$. To prove the first bound \eqref{eq:D-phi-Lp}, we observe that $D\phi_{s,t}(x) = D\Psi_\lambda^{-1}(t,Y_t(x)) DY_t(x)$ and
$$
\mathbb{E}\left[\sup_{s \leq t \leq T}\lvert D\phi_{s,t}(x)\rvert^p\right] \leq \sup_{y \in \R^n} \sup_{s \leq t \leq T} \lvert D\Psi_\lambda^{-1}(t,y)\rvert^p  \mathbb{E}\left[\sup_{s \leq t \leq T}\lvert DY_t(x)\rvert^p\right].
$$
Since by Lemma \ref{lemazo}, $D\Psi_\lambda^{-1}(t,\cdot)$ is bounded uniformly in time, we are only left to show the bound \\
$\sup_{0 \leq s \leq T} \mathbb{E}\left[\sup_{s \leq t \leq T}\lvert DY_t(x)\rvert^p\right] < \infty$, which can be proven directly from \eqref{conjugated-sde-deriv} by a standard argument using Gr\"onwall's lemma and Burkholder-Davis-Gundy inequality, together with the fact that $D\psi_\lambda, D\Psi_\lambda, D\Psi_\lambda^{-1}, D^2\Psi_\lambda$ are bounded {\em uniformly in time}. 

{\bf{Step 4.2.}} It is easy to check that the constants appearing in all the inequalities $\lesssim$ in the rest of the proof are {\em independent of $s$}, so we will not stress this again to avoid repetition. We claim that for fixed $\beta \in (0, 1],$
\begin{align}
\mathbb{E}\left[\sup_{s \leq t \leq T}\lvert\phi_{s,t}(x) - \phi_{s,t}(y)\rvert^p\right] \lesssim \lvert x - y\rvert^{\beta p} \label{phi-diff-estimate}
\end{align}
holds for any $x,y \in \mathbb{R}^n$. To verify this, we first note that
\begin{align*}
\mathbb{E}\left[\sup_{s \leq t \leq T}\lvert \phi_{s,t}(x) - \phi_{s,t}(y)\rvert^p\right] &= \mathbb{E}\left[\sup_{s \leq t \leq T}\lvert \Psi_\lambda^{-1}(t,Y_t(x)) - \Psi_\lambda^{-1}(t,Y_t(y))\rvert^p\right] \\
&\lesssim \mathbb{E}\left[\sup_{s \leq t \leq T}\lvert Y_t(x) - Y_t(y)\rvert^p\right],
\end{align*}
where we have taken into account that the Lipschitz seminorm $[\Psi_\lambda^{-1}(t,\cdot)]_{\text{Lip}}$ is bounded uniformly in time since $\Psi_\lambda^{-1} \in L^\infty([0,T];C^{2}_b(\R^n,\R^n))$ by Lemma \ref{lemazo}. By using the modified SDE \eqref{conjugated}, we also have
\begin{align*}
    \lvert Y_t(x) - Y_t(y)\rvert & \leq \lvert \Psi_\lambda(s,x) - \Psi_\lambda(s,y)\rvert \\
    & + \lambda \int^t_s \lvert \psi_\lambda(r,\Psi_\lambda^{-1}(r,Y_r(x))) - \psi_\lambda(r,\Psi_\lambda^{-1}(r,Y_r(y)))\rvert \, \diff r \\
    & + \left\lvert \int^t_s \left(D\Psi_\lambda(r,\Psi_\lambda^{-1}(r,Y_r(x))) - D\Psi_\lambda(r,\Psi_\lambda^{-1}(r,Y_r(y)))\right) \diff W_r\right\rvert.
\end{align*}
Employing the fact that $\psi_\lambda$ and $\Psi_\lambda^{-1}$ are globally Lipschitz, we obtain
\begin{align} \label{gronwall0}
\begin{split}
    & \lvert Y_t(x) - Y_t(y)\rvert \leq \lvert \Psi_\lambda(s,x) - \Psi_\lambda(s,y)\rvert + C_1 \int^t_s \lvert Y_r(x) - Y_r(y)\rvert \, \diff r  \\
    & \quad + \sup_{s \leq u \leq t} \left\lvert \int^u_s \left(D\Psi_\lambda(r,\Psi_\lambda^{-1}(r,Y_r(x))) - D\Psi_\lambda(r,\Psi_\lambda^{-1}(r,Y_r(y)))\right) \diff W_r\right\rvert,
\end{split}
\end{align} 
where $C_1 = \lambda \sup_{0 \leq t \leq T}[\psi_\lambda(t,\cdot)]_{\text{Lip}} \sup_{0 \leq t \leq T}[\Psi_\lambda^{-1}(t,\cdot)]_{\text{Lip}}$ is a finite constant. Applying Gr\"onwall's inequality to \eqref{gronwall0}, we get
\begin{align*}
    & \lvert Y_t(x) - Y_t(y)\rvert \leq \text{exp}(C_1T) \,\bigg(\lvert \Psi_\lambda(s,x) - \Psi_\lambda(s,y)\rvert \\
    &\quad  + \sup_{s \leq u \leq t}\left\lvert \int^u_s \left(D\Psi_\lambda (r,\Psi_\lambda^{-1}(r,Y_r(x))) - D\Psi_\lambda(r,\Psi_\lambda^{-1}(r,Y_r(y))) \right) \diff W_r\right\rvert \bigg). 
\end{align*}
By raising both sides to the power of $p$ and applying the supremum in time, we have
\begin{align} \label{gronwall}
\begin{split}
&  \sup_{s \leq u \leq t} \lvert Y_u(x) - Y_u(y)\rvert^p \leq \text{exp}(p\,C_1T) \,\bigg( \lvert \Psi_\lambda(s,x) - \Psi_\lambda(s,y)\rvert^p  \\
& \quad + \sup_{s \leq u \leq t}\left\lvert\int^u_s \left(D\Psi_\lambda (r,\Psi_\lambda^{-1}(r,Y_r(x))) - D\Psi_\lambda(r,\Psi_\lambda^{-1}(r,Y_r(y))) \right) \diff W_r\right\rvert^p \bigg).
\end{split}
\end{align}
Now using Burkholder-Davis-Gundy inequality, there exists a constant $C^* > 0$ (dependent on $p$) such that
\begin{align} \label{gronwall2}
    &\mathbb{E}\left[\sup_{s \leq u \leq t}\left\lvert\int^u_s \left(D\Psi_\lambda(r,\Psi_\lambda^{-1}(r,Y_r(x))) - D\Psi_\lambda(r,\Psi_\lambda^{-1}(r,Y_r(y)))\right) \diff W_r\right\rvert^p \right] \nonumber \\
    &\leq C^* \mathbb{E}\left[\left(\int^t_s \lvert D\Psi_\lambda(r,\Psi_\lambda^{-1}(r,Y_r(x)) - D\Psi_\lambda(r,\Psi_\lambda^{-1}(r,Y_r(y))\rvert^2 \, \diff r \right)^{p/2}\right] \nonumber \\
    &\leq C_2 \int^t_s \mathbb{E}\left[\sup_{s \leq u \leq r}\lvert Y_u(x) - Y_u(y)\rvert^p \right] \diff r,
\end{align}
\footnote{Explicitly, $C_2 = C^* T^{\frac{p-2}{2}} \left(\sup_{0 \leq t \leq T} [D\Psi_\lambda(t,\cdot)]_{\text{Lip}} \sup_{0 \leq t \leq T}[ \Psi_\lambda^{-1}(t,\cdot)]_{\text{Lip}} \right)^p$.} 
where in the last line we have taken into account that $p \geq 2.$ Hence, taking expectations in \eqref{gronwall} and applying Gr\"onwall's inequality to the function 
\begin{align*}
& f(t) := \mathbb{E}\left[\sup_{s \leq u \leq t}\lvert Y_u(x) - Y_u(y)\rvert^p\right], \quad t \in [0,T],
\end{align*}
(use inequality \eqref{gronwall2}), we obtain the estimate
\begin{align}
    \mathbb{E}\left[\sup_{s \leq u \leq t} \lvert Y_u(x) - Y_u(y)\rvert^p\right] \leq C_3 \lvert \Psi_\lambda(s,x) - \Psi_\lambda(s,y)\rvert^p \lesssim \lvert x-y\rvert^{\beta p}, \label{eq:Y-diff-bound}
\end{align}
for any $x,y \in \mathbb{R}^n,$ where $C_3 = \text{exp}[C_2T \,\text{exp}(p\,C_1T)] < \infty$ and we have used the fact that the $\beta$-H\"older seminorm $[\Psi_\lambda(t,\cdot)]_{\beta}$ is uniformly bounded in time.

{\bf{Step 4.3.}} Finally, we establish estimate \eqref{eq:D-phi-Lp-difference}. We do not write down the explicit constants in the remaining inequalities for simplicity. For this, employing a similar argument as in the previous steps, we start by noting that
\begin{align}
    &\mathbb{E}\left[\sup_{s \leq t \leq T} \lvert D\phi_{s,t}(x) - D\phi_{s,t}(y)\rvert^p\right] \nonumber   \\
    & = \mathbb{E}\left[\sup_{s \leq t \leq T} \lvert D\Psi_\lambda^{-1}(t,Y_t(x)) DY_t(x) - D\Psi_\lambda^{-1}(t,Y_t(y)) DY_t(y)\rvert^p\right] \nonumber \\
    &\lesssim \mathbb{E}\left[\sup_{s \leq t \leq T} \lvert D\Psi_\lambda^{-1}(t,Y_t(x))\rvert^p \lvert DY_t(x) - DY_t(y)\rvert^p\right] \nonumber \\
    &\hspace{30pt} +\mathbb{E}\left[\sup_{s \leq t \leq T} \lvert DY_t(y)\rvert^p \lvert D\Psi_\lambda^{-1}(t,Y_t(x)) - D\Psi_\lambda^{-1}(t,Y_t(y))\rvert^p\right] \nonumber \\
    &\lesssim \mathbb{E}\left[\sup_{s \leq t \leq T} \lvert DY_t(x) - DY_t(y)\rvert^p\right] \label{horrible1} \\
    & \hspace{50pt} + \mathbb{E}\left[\sup_{s \leq t \leq T} \lvert D\Psi_\lambda^{-1}(t,Y_t(x)) - D\Psi_\lambda^{-1}(t,Y_t(y))\rvert^{2p}\right]^{1/2}, \label{horrible2}
\end{align}
where we have applied H\"older's inequality in the last line and have taken into account the fact that $\mathbb{E}\left[\sup_{s \leq t \leq T}\lvert DY_t(x)\rvert^{2p}\right]$ is uniformly bounded in $s$ as shown at the beginning of this proof. The last term can be treated easily as
\begin{align*}
    & \eqref{horrible2} \lesssim \mathbb{E}\left[\sup_{s \leq t \leq T} \lvert Y_t(x) - Y_t(y)\rvert^{2p}\right]^{1/2} \lesssim \lvert x-y\rvert^{\alpha p}, 
\end{align*}
where we have used \eqref{eq:Y-diff-bound} with $\beta = \alpha$ and the fact that $D\Psi_\lambda^{-1}$ is globally Lipschitz uniformly in time. For the term \eqref{horrible1}, using \eqref{conjugated-sde-deriv} and defining
\begin{align*}
    P_t(x) &:= D\psi_\lambda(t,\phi_{s,t}(x)) D\Psi_\lambda^{-1}(t,Y_t(x)),\\
    Q_t(x) &:= D^2\Psi_\lambda(t,\phi_{s,t}(x)) D\Psi_\lambda^{-1}(t,Y_t(x)), \\
    M_t(x) &:= \int^t_s Q_r(x)DY_r(x) \, \diff W_r,
\end{align*}
for $0 \leq s \leq t \leq T,$ $x \in \mathbb{R}^n,$ we obtain the following inequality
\begin{align} \label{gronwall4}
    & \lvert DY_t(x) - DY_t(y)\rvert \leq \lvert D\Psi_\lambda(s,x) - D\Psi_\lambda(s,y)\rvert + \lvert M_t(x) - M_t(y)\rvert \nonumber \\
    &\qquad  + \lambda \int^t_s \lvert P_r(x) DY_r(x) - P_r(y) DY_r(y)\rvert \, \diff r \nonumber \\
    \begin{split}
    &\quad \leq \lvert D\Psi_\lambda(s,x) - D\Psi_\lambda(s,y)\rvert + \sup_{s\leq r \leq t}\lvert M_r(x) - M_r(y)\rvert \\
    &\qquad + \lambda \left(\int^t_s \lvert P_r(x)\rvert \lvert DY_r(x) - DY_r(y)\rvert \, \diff r + \int^t_s \lvert DY_r(y)\rvert \lvert P_r(x) - P_r(y)\rvert \, \diff r\right).
    \end{split}
\end{align}
By the same argument used in Step 4.2 to derive \eqref{gronwall2}, we can apply Burkholder-Davis-Gundy inequality to obtain
\begin{align*}
    & \mathbb{E}\left[\sup_{s \leq u \leq t} \lvert M_u(x) - M_u(y)\rvert^p\right] \lesssim \int_s^t \mathbb{E}\left[\sup_{s \leq u \leq r}\lvert DY_u(x) - DY_u(y)\rvert^p\right] \diff r \\
    & \hspace{150pt} + \mathbb{E}\left[\int_s^t \lvert DY_r(y)\rvert^p \lvert Q_r(x) - Q_r(y)\rvert^p \, \diff r\right].
\end{align*}
By substituting the last inequality in \eqref{gronwall4}, taking the power of $p$, and applying supremum and expectation, we have 
\begin{align} \label{gronwall5}
\begin{split}
    \eqref{horrible1}  &\lesssim \lvert D\Psi_\lambda(s,x) - D\Psi_\lambda(s,y)\rvert^p  \\
    & \quad + \int_s^t \mathbb{E}\left[\sup_{s \leq u \leq r} \lvert DY_u(x) - DY_u(y)\rvert^p\right] \diff r \\
    & \quad + \mathbb{E}\left[\int_s^t \lvert DY_r(y)\rvert^p \lvert Q_r(x) - Q_r(y)\rvert^p \, \diff r\right] \\
    & \quad +  \int^t_s \E\left[ \sup_{s \leq u \leq r} \lvert P_u(x)\rvert^p \lvert DY_u(x) - DY_u(y)\rvert^p \right] \diff r  \\
    & \quad + \E \left[\int^t_s \lvert DY_u(y)\rvert^p \lvert P_u(x) - P_u(y)\rvert^p \, \diff r\right],
\end{split}
\end{align}
where we have also used H\"older's inequality and Tonelli's theorem to switch the domains of integration in the last line. By applying Gr\"onwall's lemma in \eqref{gronwall5} (these techniques are similar as the ones in Step 4.2), we arrive at
\begin{align}
& \mathbb{E}\left[\sup_{s \leq u \leq t} \lvert DY_u(x) - DY_u(y)\rvert^p\right] \lesssim \Bigg(\lvert D\Psi_\lambda(s,x) - D\Psi_\lambda(s,y)\rvert^p \label{eq:DY-diff1}  \\
&+ \mathbb{E}\left[\int^t_s \lvert DY_r(y)\rvert^p \lvert P_r(x) - P_r(y)\rvert^p \, \diff r\right] + \mathbb{E}\left[\int^t_s \lvert DY_r(y)\rvert^p \lvert Q_r(x) - Q_r(y)\rvert^p \, \diff r\right] \Bigg) \label{eq:DY-diff2} \\
& \hspace{120pt} \times \text{exp} \left(T + \int_0^T \E \left[ \sup_{s \leq u \leq T} \lvert P_r(x)\rvert^p \right] \diff s \right). \label{eq:DY-diff3}
\end{align}
The term in \eqref{eq:DY-diff3} is clearly controlled. The term after the inequality in \eqref{eq:DY-diff1} can be controlled easily as
\begin{align*}
    \lvert D\Psi_\lambda(s,x) - D\Psi_\lambda(s,y)\rvert^p \lesssim \lvert x - y\rvert^{\alpha p}.
\end{align*}
For the first term in \eqref{eq:DY-diff2}, we apply H\"older's inequality to obtain 
\begin{align*}
    &\mathbb{E}\left[\int^t_s \lvert DY_r(y)\rvert^p \lvert P_r(x) - P_r(y)\rvert^p \, \diff r\right] \lesssim \mathbb{E}\left[\int^t_s \lvert P_r(x) - P_r(y)\rvert^{2p} \, \diff r\right]^{1/2} \\
    &\lesssim \mathbb{E}\left[\sup_{s\leq r \leq T} \lvert D\psi_\lambda(r,\phi_{s,r}(x))\rvert^{2p} \lvert D\Psi_\lambda^{-1}(r,Y_t(x)) - D\Psi_\lambda^{-1}(r,Y_t(y))\rvert^{2p}\right]^{1/2} \\
    &\quad + \mathbb{E}\left[\sup_{s\leq r \leq T} \lvert D\Psi_\lambda^{-1}(r,Y_r(y))\rvert^{2p} \lvert D\psi_\lambda(r,\phi_{s,r}(x)) - D\psi_\lambda(r,\phi_{s,r}(y))\rvert^{2p}\right]^{1/2} \\
    &\lesssim \mathbb{E}\left[\sup_{s\leq r \leq T} \lvert D\Psi_\lambda^{-1}(r,Y_r(x)) - D\Psi_\lambda^{-1}(r,Y_r(y))\rvert^{2p}\right]^{1/2} \!\! \\
    & \quad + \mathbb{E}\left[\sup_{s\leq r \leq T} \lvert D\psi_\lambda(r,\phi_{s,r}(x)) - D\psi_\lambda(r,\phi_{s,r}(y))\rvert^{2p}\right]^{1/2} \lesssim \lvert x-y\rvert^{\alpha p},
\end{align*}
where in the last inequality, we have employed \eqref{phi-diff-estimate} and \eqref{eq:Y-diff-bound} with $\beta = \alpha,$ together with the fact that $D\Psi_\lambda^{-1}, D\psi_\lambda$ are globally Lipschitz uniformly in time.
For the second term in \eqref{eq:DY-diff2}, we get the bound
\begin{align*}
    &\mathbb{E}\left[\int^T_s \lvert DY_r(y)\rvert^p \lvert Q_r(x) - Q_r(y)\rvert^p \, \diff r\right] \lesssim \mathbb{E}\left[\int^t_s \lvert Q_r(x) - Q_r(y)\rvert^{2p} \, \diff r\right]^{1/2} \\
    &\lesssim \mathbb{E}\left[\sup_{s\leq r \leq t} \lvert D\Psi_\lambda^{-1}(r,Y_r(x)) - D\Psi_\lambda^{-1}(r,Y_r(y))\rvert^{2p}\right]^{1/2} \!\! \\
    &\hspace{70pt} + \mathbb{E}\left[\sup_{s\leq r \leq t}\lvert D^2 \Psi_\lambda (r,\phi_{s,r}(x)) - D^2 \Psi_\lambda(r,\phi_{s,r}(y))\rvert^{2p}\right]^{1/2} \\
    &\lesssim \mathbb{E}\left[\sup_{s \leq r \leq t}\lvert Y_r(x)-Y_r(y)\rvert^{2 \alpha p}\right]^{1/2} + \mathbb{E}\left[\sup_{s\leq r \leq t}\lvert \phi_{s,r}(x) - \phi_{s,r}(y)\rvert^{2\alpha p}\right]^{1/2}\\
    &\lesssim \lvert x-y\rvert^{\alpha p},
\end{align*}
where in the fourth line, we employed the fact that the $\alpha$-H\"older seminorms $[D\Psi_\lambda^{-1}(t,\cdot)]_{\alpha}, [D^2 \Psi_\lambda (t,\cdot)]_{\alpha}$ are bounded uniformly in time and in the last line, we applied \eqref{phi-diff-estimate} and \eqref{eq:Y-diff-bound} with $\beta = 1$. Our proof is complete.

\subsection{Proof of the claim in the last paragraph of the proof of Proposition \ref{nonsmooth-existence}} \label{weaklimit}

We fix a test $k$-form $\mathbf{\Theta} \in C_0^\infty(\R^n,\bigwedge^k \mathcal{T}^*\mathbb R^n)$ and check term by term convergence in the following equation 
\begin{align*}
    & \llangle \mathbf{K}^\epsilon_{\cdot}, \mathbf{\Theta} \rrangle_{L^2} - \llangle \mathbf{K}_0, \mathbf{\Theta} \rrangle_{L^2} = -\int^{\cdot}_0 \llangle \mathbf{K}^\epsilon, \mathcal L_{b^\epsilon}^* \mathbf{\Theta} \rrangle_{L^2} \diff s - \int^{\cdot}_0 \llangle \mathbf{K}^\epsilon, \mathcal L_\xi^* \mathbf{\Theta} \rrangle_{L^2} \diff W_s \\
    & \hspace{170pt} + \frac12 \int^{\cdot}_0 \llangle \mathbf{K}^\epsilon, \mathcal L_\xi^* \mathcal L_\xi^* \mathbf{\Theta} \rrangle_{L^2} \diff s,
\end{align*}
weakly in $L^{2}(\Omega \times [0,T])$.
\begin{itemize}
    \item{The first term on the left-hand side converges as}
    \begin{align*}
         \llangle \mathbf{K}^\epsilon_\cdot,\mathbf{\Theta} \rrangle_{L^2}  \rightarrow  \llangle \mathbf{K}_\cdot,\mathbf{\Theta} \rrangle_{L^2},
    \end{align*}
    weakly in $L^{2}(\Omega \times [0,T])$ as $\epsilon \rightarrow 0,$ by definition due to the weak convergence of $\mathbf{K}^\epsilon$.
    \item{The Stratonovich-to-It\^o correction term in the second line.} We claim that
    \begin{align*}
        \int_0^\cdot \llangle  \mathbf{K}_s^\epsilon, \mathcal L_\xi^*\mathcal L_\xi^* \mathbf{\Theta} \rrangle_{L^2} \diff s \rightarrow \int_0^\cdot \llangle \mathbf{K}_s, \mathcal L_\xi^*\mathcal L_\xi^* \mathbf{\Theta} \rrangle_{L^2} \diff s,
    \end{align*}
    weakly in $L^{2}(\Omega \times [0,T]).$ Indeed, we observe that the mapping $h \rightarrow \int_0^\cdot h \,\diff s $ is linear continuous from the space of adapted processes of class $L^2(\Omega \times [0,T])$ into itself, and therefore also weakly continuous. Hence   $\int_0^\cdot \llangle  \mathbf{K}_s^\epsilon, \mathcal L_\xi^*\mathcal L_\xi^* \mathbf{\Theta} \rrangle_{L^2} \diff s \rightarrow \int_0^\cdot \llangle \mathbf{K}_s, \mathcal L_\xi^*\mathcal L_\xi^* \mathbf{\Theta} \rrangle_{L^2} \diff s,$ weakly in $L^2(\Omega \times [0,T]).$
    \item{The diffusion term on the right-hand side.} 
    We have
    \begin{align*} 
        \int_0^\cdot \llangle  \mathbf{K}_s^\epsilon, \mathcal L_\xi^* \mathbf{\Theta} \rrangle_{L^2} \diff W_s \rightarrow \int_0^\cdot \llangle \mathbf{K}_s, \mathcal L_\xi^* \mathbf{\Theta} \rrangle_{L^2} \diff W_s,
    \end{align*}
    weakly in $L^{2}(\Omega \times [0,T])$, since the mapping $h \rightarrow \int_0^\cdot h \,\diff W_s $ is linear continuous from the space of adapted processes of class $L^2(\Omega \times [0,T])$ into itself.
    \item{The drift term on the right-hand side.} We check that
    \begin{align*}
    \int_0^\cdot   \llangle \mathbf{K}_s^\epsilon, \mathcal L_{b^\epsilon}^* \mathbf{\Theta} \rrangle_{L^2} \diff s \rightarrow  \int_0^\cdot \llangle \mathbf{K}_s, \mathcal L_{b}^* \mathbf{\Theta} \rrangle_{L^2}  \diff s,
    \end{align*}
    weakly in $L^2(\Omega \times [0,T])$. To this purpose, one can verify that for $f \in L^2(\Omega \times [0,T])$,
    \begin{align} 
        & \left\lvert \int_0^T  \mathbb{E} \left[ \int_0^t  \left( \llangle \mathbf{K}_s^\epsilon, \mathcal L_{b^\epsilon}^* \mathbf{\Theta} \rrangle_{L^2}  - \llangle \mathbf{K}_s, \mathcal L_{b}^* \mathbf{\Theta} \rrangle_{L^2} \right)  \diff s \,f(\cdot,t) \right]  \diff t \right\rvert \nonumber  \\ 
        & \quad \leq \left\lvert \int_0^T \mathbb{E} \left[ \int_0^t   \llangle \mathbf{K}_s^\epsilon - \mathbf{K}_s, \mathcal L_{b}^* \mathbf{\Theta} \rrangle_{L^2}  f(\cdot,t) \, \diff s \right ]  \diff t \right\rvert \label{dominada1} \\
        & \qquad + \left\lvert\int_0^T \mathbb{E} \left[ \int_0^t  \llangle \mathbf{K}_s^\epsilon, \mathcal L_{b^\epsilon}^* \mathbf{\Theta} - \mathcal L_{b}^* \mathbf{\Theta} \rrangle_{L^2}  f(\cdot,t) \, \diff s   \right] \diff t \right\rvert. \label{dominada2}
    \end{align}
    The term \eqref{dominada1} converges to zero by applying the dominated convergence theorem to the function $F_1^\epsilon(t) := \mathbb{E} \left[ \int_0^t   \llangle \mathbf{K}_s^\epsilon - \mathbf{K}_s, \mathcal L_{b}^* \mathbf{\Theta} \rrangle_{L^2} f(\cdot,t)  \, \diff s \right ]$. For this, we note that by defining the test function $\mathbf{\Gamma}_{t}(\omega,s,x) := f(\omega,t) \mathcal L_b^* \mathbf{\Theta}(x)$, \footnote{It should be understood that $b$ depends on $s$.} we have 
    \begin{align*}
        & F_1^\epsilon := \mathbb{E} \left[ \int_0^t   \llangle \mathbf{K}_s^\epsilon - \mathbf{K}_s, \mathcal L_{b}^* \mathbf{\Theta} \rrangle_{L^2} f(\cdot,t)  \, \diff s \right ] = \llangle \mathbf{K}_s^\epsilon - \mathbf{K}_s,\mathbf{\Gamma}_{t}  \rangle \rangle_{L^2_{\omega,s,x}} \rightarrow 0,
    \end{align*}
    by the weak convergence of $\mathbf{K}_t^\epsilon$ to $\mathbf{K}_t$ in $L^2 ( \Omega \times [0,T] \times \R^n; \bigwedge^k \mathcal{T}^*\mathbb R^n)$. By choosing $R$ such that $\mathbf{\Theta}$ has support on $B(0,R),$ we also have the uniform bound
    \begin{align*}
        & \left \lvert \llangle \mathbf{K}_s^\epsilon - \mathbf{K}_s,\mathbf{\Gamma}_t  \rangle \rangle_{L^2_{\omega,s,x}} \right \rvert \leq \mathbb{E} \left[ \norm{\mathcal{X}_{B(0,R)}(\mathbf{K}_s^\epsilon - \mathbf{K}_s)}_{L^p_{s,x}} \right] \mathbb{E} \left[ \norm{ \mathcal L_{b}^* \mathbf{\Theta} }_{L^q_{s,x}} \lvert f(\cdot,t) \rvert \right] \\ 
        &\quad  \lesssim \norm{\mathcal{X}_{B(0,R)}(\mathbf{K}_s^\epsilon-\mathbf{K}_s)}_{L^p_{\omega,s,x}} \|\mathcal L_{b}^* \mathbf{\Theta}\|_{L^q_{s,x}}  \mathbb{E} \left[ \lvert f(\cdot,t) \rvert  \right ]  \lesssim  \mathbb{E} \left[ \lvert f(\cdot,t) \rvert \right ]   < \infty,
    \end{align*}
    where we have taken into account the uniform bound \eqref{greatbound}, Hypothesis \ref{hypoexistence} \ref{maincondition}, and the embedding $L^p_{loc} \subset L^2_{loc},$ $p \geq 2$. Since $f \in L^{2}(\Omega \times [0,T]) \subset  L^1(\Omega \times [0,T]),$ we have $\mathbb{E} \left[ \lvert f(\cdot,t) \rvert \right] \in L^1([0,T]).$ Similarly, the term \eqref{dominada2} also converges to zero by applying the dominated convergence theorem to $F_2^\epsilon(t) := \mathbb{E} \left[ \int_0^t  \llangle \mathbf{K}_s^\epsilon, \mathcal L_{b^\epsilon}^* \mathbf{\Theta} - \mathcal L_{b}^* \mathbf{\Theta} \rrangle_{L^2} f(\cdot,t) \, \diff s \right].$ The proof is now complete.
\end{itemize}

\subsection{Proof of Lemma \ref{lemma:theta-push-convergence}} \label{uglylemma}
We prove our result by induction on $k$ (i.e., the degree of the multivector field $\mathbf{U}$).

\noindent\textbf{Step 1.}
First consider the base case $k = 1$ (i.e. $\mathbf{U}$ is a vector field). Employing the notation $\psi_t := \phi_t^{-1}$ as usually, the pullback of $\mathbf{U}$ has the global coordinate expression
\begin{align}\label{eq:pullback-of-vf}
(\phi_t^* \mathbf{U})^i(x) = U^j(\phi_t(x)) \frac{\partial \psi_t^i}{\partial y_j}(\phi_t(x)), \quad i \in \{1, \ldots,n\}.
\end{align}
Hence, we have
\begin{align} 
    & \norm{(\phi_\cdot^m)^* \mathbf{U} (\cdot)  - (\phi_\cdot)^* \mathbf{U} (\cdot) }^r_{L^r_{\omega,t,R^*(m)}} \nonumber \\
    &\lesssim \mathbb{E}\left[\int^T_0 \int_{B(0,R^*(m))} \lvert \mathbf{U}(\phi_t^m(x))\rvert^r \lvert D\psi_t^m(\phi_t^m(x)) - D\psi_t(\phi_t(x))\rvert^r \, \diff^n x \, \diff t \right] \nonumber \\
    &\quad + \mathbb{E}\left[\int^T_0 \int_{B(0,R^*(m))} \lvert \mathbf{U}(\phi_t^m(x))-\mathbf{U}(\phi_t(x))\rvert^r \lvert D\psi_t(\phi_t(x))\rvert^r \, \diff^n x \, \diff t \right] \nonumber \\
    &\lesssim \norm{\mathbf{U}}_{L^\infty}^r \mathbb{E}\left[\int^T_0 \int_{B(0,R^*(m))} \lvert D\psi_t^m(\phi_t^m(x)) - D\psi_t(\phi_t(x))\rvert^r \, \diff^n x \, \diff t \right] \label{vector+flow1}  \\
    \begin{split}
    &\quad + \norm{D \mathbf{U}}_{L^\infty}^r \mathbb{E}\left[\int^T_0 \int_{B(0,R^*(m))}  \lvert\phi_t^m(x)-\phi_t(x)\rvert^{2r} \, \diff^n x \, \diff t \right]^{1/2} \\
    &\hspace{50pt} \times \mathbb{E}\left[\int^T_0 \int_{B(0,R^*(m))} \lvert D\psi_t(\phi_t(x))\rvert^{2r} \, \diff^n x \, \diff t \right]^{1/2}. \label{vector+flow2} 
    \end{split}
\end{align}
We now proceed to prove that the term \eqref{vector+flow1} converges to zero. Dealing with the term \eqref{vector+flow2} is easy since by standard arguments, we have the bound
\begin{align*}
&\mathbb{E}\left[\int^T_0 \int_{B(0,R^*(m))} \lvert D\psi_t(\phi_t(x))\rvert^{2r} \, \diff^n x \, \diff t \right]^{1/2} \\
&\quad = \mathbb{E}\left[\int^T_0 \int_{B(0,R^*(m))} \lvert D\psi_t(y)\rvert^{2r} \lvert J\psi_t(y)\rvert\, \diff^n y \, \diff t \right]^{1/2} \\
&\qquad \leq \|D\psi_\cdot(\cdot)\|_{L_{\omega, t, R'}^{4r}}^r \|D\psi_\cdot(\cdot)\|_{L_{\omega, t, R'}^{2n}}^{n/2} < \infty,
\end{align*}
and taking into account \eqref{weakflow}, we obtain convergence to zero. Hence, it suffices only to establish the convergence of \eqref{vector+flow1}. We observe
\begin{align}
    &\mathbb{E}\left[\int^T_0 \int_{B(0,R^*(m))} \lvert D\psi_t^m(\phi_t^m(x)) - D\psi_t(\phi_t(x))\rvert^r \, \diff^n x \, \diff t \right] \label{eq:Dpsi(phi) difference} \\
    &\lesssim  \mathbb{E}\left[\int^T_0 \int_{B(0,R^*(m))} \lvert D\psi_t^m(\phi_t^m(x)) - D\psi_t(\phi_t^m(x))\rvert^r \, \diff^n x \, \diff t  \right] \label{splitting1} \\
    &\hspace{30pt} + \mathbb{E}\left[\int^T_0 \int_{B(0,R^*(m))} \lvert D\psi_t(\phi_t^m(x)) - D\psi_t(\phi_t(x))\rvert^r \, \diff^n x \, \diff t \right]. \label{splitting2}
\end{align}
Changing coordinates $y = \phi^m_t(x)$, we obtain convergence of the first term after the inequality as
\begin{align*}
    \eqref{splitting1} &= \|D\psi_\cdot^{m}(\phi_\cdot^{m}(\cdot))- D\psi_\cdot( \phi_\cdot^{m}(\cdot))\|^r_{L^r_{\omega,t,R^*(m)}} \\
    &= \mathbb{E}\left[\int^T_0 \int_{B(0,R)} \lvert D\psi_t^{m}(y) - D\psi_t(y)\rvert^r \lvert J\psi_t^{m}(y)\rvert \, \diff^n y \, \diff t  \right] \\
    & \leq \|D\psi_\cdot^{m}(\cdot) - D\psi_\cdot(\cdot)\|^{r}_{L^{2r}_{\omega,t,R}} \|D\psi_\cdot^{m}(\cdot)\|_{L^{2n}_{\omega,t,R}}^n \rightarrow 0,
\end{align*}
where we have taken into account property \eqref{weakjacobian} of the flow. Fix $\alpha' \in (\frac{1}{4r}, \alpha-\frac{n}{4r})$\footnote{We can guarantee that $\alpha-\frac{n}{4r} > \frac{1}{4r}$ since we have imposed $r\alpha > (n+1)/4.$}.
To establish the convergence of \eqref{splitting2}, we
set $\widetilde{r}(x) := \sqrt{1+|x|^2}$ and observe that
\begin{align*}
& \lvert D\psi_t(\phi_t^m(x)) - D\psi_t(\phi_t(x))\rvert = \lvert\widetilde{r}(\phi_t(x))\widetilde{r}^{-1}(\phi_t(x))(D\psi_t(\phi_t^m(x)) - D\psi_t(\phi_t(x)))\rvert\\
& \leq \lvert \widetilde{r}(\phi_t(x))\rvert \Big(\left\lvert \widetilde{r}^{-1}(\phi_t^m(x))D\psi_t(\phi_t^m(x)) - \widetilde{r}^{-1}(\phi_t(x))D\psi_t(\phi_t(x))\right\rvert \\
& \hspace{150pt} + \lvert D\psi_t(\phi_t^m(x))\rvert \big\lvert \widetilde{r}^{-1}(\phi_t^m(x)) - \widetilde{r}^{-1}(\phi_t(x))\big\rvert \Big)\\
& \leq \lvert \widetilde{r}(\phi_t(x))\rvert \left(C_1(\omega,t) \lvert\phi_t^m(x) - \phi_t(x)\rvert^{\alpha'} + C_2 \lvert D\psi_t(\phi_t^m(x))\rvert \lvert \phi_t^m(x) - \phi_t(x)\rvert\right),
\end{align*}
where
\begin{align*}
C_1(\omega,t) := \sup_{\substack{x,y \in \R^n \\ x \neq y}}\frac{\lvert \widetilde{r}^{-1}(x)D\psi_{s,t}(x) - \widetilde{r}^{-1}(y)D\psi_{s,t}(y) \rvert}{\lvert x-y\rvert^{\alpha'}}
\end{align*}
is finite $\mathbb{P}$-a.s. for all $0 \leq t \leq T$ by property \eqref{eq:JM-estimate} of the flow with $\epsilon=1$ and
\begin{align*}
C_2 := \sup_{\substack{x,y \in \R^n \\ x \neq y}} \frac{\lvert \widetilde{r}^{-1}(x) - \widetilde{r}^{-1}(y)\rvert}{\lvert x-y\rvert}
\end{align*}
is finite since $\widetilde{r}^{-1}$ is globally Lipschitz. Hence, by applying H\"older's inequality in the whole space, we get 
\begin{align*}
    &\eqref{splitting2} = \|D\psi_\cdot(\phi_\cdot^{m}(\cdot)) - D\psi_\cdot( \phi_\cdot(\cdot))\|^r_{L^r_{\omega,t,R^*(m)}} \\
    &\lesssim \mathbb{E}\left[\int^T_0 \int_{B(0,R')} \lvert \widetilde{r}(\phi_t(x))\rvert^r \left( C_1(\cdot,t) \lvert \phi_t^m(x) - \phi_t(x)\rvert^{\alpha'} \right)^r \diff^n x \, \diff t \right] \\
    & \qquad + \mathbb{E}\left[\int^T_0 \int_{B(0,R')} \lvert \widetilde{r}(\phi_t(x))\rvert^r \left(C_2\lvert D\psi_t(\phi_t^m(x))\rvert \lvert \phi_t^m(x) - \phi_t(x)\rvert\right)^r \diff^n x  \, \diff t\right]
\end{align*}
\begin{align*}
    &\lesssim \|\widetilde{r}(\phi_\cdot(\cdot))\|_{L^{2r}_{\omega,t,R'}}^{r} \left(\mathbb{E}\left[\int^T_0 \int_{B(0,R')} C_1^{2r}(\cdot,t) \lvert \phi_t^m(x) - \phi_t(x)\rvert^{2r\alpha'} \, \diff^n x \, \diff t\right]^{1/2} \right.\\
    &\left. \qquad + C_2^{2r} \mathbb{E}\left[\int^T_0 \int_{B(0,R')} \lvert D\psi_t(\phi_t^m(x)))\rvert^{2r} \lvert \phi_t^m(x) - \phi_t(x)\rvert^{2r} \, \diff^n x \, \diff t\right]^{1/2}\right) \\
    &= \|\widetilde{r}(\phi_\cdot(\cdot))\|_{L^{r}_{\omega,t,R'}}^r [\text{(I)} + \text{(II)}].
\end{align*}
By our usual arguments, we have the control
\begin{align*}
    \|\widetilde{r}(\phi_\cdot(\cdot))\|_{L^{2r}_{\omega,t,R'}}^r &= \mathbb{E}\left[\int^T_0 \int_{B(0,R')} \lvert \widetilde{r}(y)\rvert^{2r} \lvert J\phi_t(y)\rvert \, \diff^n y \, \diff t \right]^{1/2} \\
    &\leq \|\widetilde{r}\|_{L^\infty_{R'}}^{r}\|D\phi_\cdot (\cdot)\|_{L^{n}_{\omega,t,R'}}^{n/2} < \infty.
\end{align*}
Moreover, for term (I) we have the convergence
\begin{align*}
    & \text{(I)} \lesssim \mathbb{E}\left[\sup_{0 \leq t \leq T} C_1^{4r}(\cdot, t)\right]^{1/4} \mathbb{E}\left[\int^T_0 \int_{B(0,R')} \lvert \phi_t^m(x) - \phi_t(x)\rvert^{4\alpha'r} \diff^n x \, \diff t\right]^{1/4} \rightarrow 0,
\end{align*}
where we have applied H\"older's inequality, taken into account property \eqref{eq:JM-estimate} with $4r$ instead of $r$ to control the weighted H\"older norm of $C_1,$ and employed \eqref{weakflow} to establish convergence of the second term (note that this can be done since $\alpha' \geq \frac{1}{4r}$ and therefore $4\alpha' r \geq 1$). For the second term (II), we have
\begin{align*}
    \text{(II)} &\lesssim \mathbb{E}\left[\int^T_0 \int_{B(0,R')} \lvert D\psi_t(y)\rvert^{4r} \lvert J \phi_t^m(x)\rvert \, \diff^n y \, \diff t\right]^{1/4} \|\phi_\cdot^m(\cdot) - \phi_\cdot(\cdot)\|_{L^{4r}_{\omega,t,R'}}^r \\
    &\leq \|D\psi_\cdot(\cdot)\|_{L^{8r}_{\omega,t,R'}}^r\|D\phi_\cdot^m(\cdot)\|_{L^{2n}_{\omega,t,R'}}^{n/4} \|\phi_\cdot^m(\cdot) - \phi_\cdot(\cdot)\|_{L^{4r}_{\omega,t,R'}}^r \rightarrow 0,
\end{align*}
where we have employed our usual strategies and the fact that $\|D\phi_\cdot^m(\cdot)\|_{L^s_{\omega,t,R'}}$ is uniformly bounded for any $s \geq 1$, by property \eqref{bound-phi-deriv} of the flow. This concludes the convergence \eqref{eq:theta-push-convergence} when $\mathbf{U}$ is a vector field.

\noindent\textbf{Step 2.} We now proceed to perform the induction step. Here, we only do the case $k=2$ for the sake of communicating the idea more clearly; however, proving this in the general case is straightforward, albeit notationally convoluted. If $\mathbf{U}$ is a two-vector field, we have
\begin{align}\label{eq:pullback of 2-vector field}
(\phi_t^* \mathbf{U})^{ij}(x) = U^{kl}(\phi_t(x)) \frac{\partial \psi_t^i}{\partial y_k}(\phi_t(x)) \frac{\partial \psi_t^j}{\partial y_l}(\phi_t(x)), \quad i,j \in \{1,\ldots,n\}.
\end{align}
Hence, for fixed $i,j \in \{1,\ldots,n\},$ we obtain
\begin{align}\label{eq:difference of 2-vector fields}
    &((\phi_t^m)^* \mathbf{U})^{ij}(x) - (\phi_t^* \mathbf{U})^{ij}(x) \nonumber\\
    &= \frac{\partial (\psi_t^m)^j}{\partial y_l}(\phi_t^m(x))\left(U^{kl}(\phi_t^m(x))\frac{\partial (\psi_t^m)^i}{\partial y_k}(\phi_t^m(x)) - U^{kl}(\phi_t(x))\frac{\partial \psi_t^i}{\partial y_k}(\phi_t(x))\right) \nonumber \\
    &\quad + U^{kl}(\phi_t(x))\frac{\partial \psi^i_t}{\partial y_k}(\phi_t(x)) \left(\frac{\partial (\psi_t^m)^j}{\partial y_l}(\phi_t^m(x)) - \frac{\partial \psi^j_t}{\partial y_l}(\phi_t(x))\right).
\end{align} 
Define 
\begin{align*}
& v^{l,i}(t,x) := U^{kl}(\phi_t(x)) \frac{\partial \psi_t^i}{\partial y_k}(\phi_t(x)), \qquad v_m^{l,i}(t,x) := U^{kl}(\phi_t^m(x)) \frac{\partial (\psi_t^m)^i}{\partial y_k}(\phi_t^m(x)). 
\end{align*}
Observe that for fixed index $l$, these are identical to the expression \eqref{eq:pullback-of-vf} of the pullback of a vector field that we considered in the previous step. \footnote{In the general $k$ case, one can set $v$ and $v_m$ similarly so that they adopt the expression of the pullback of $(k-1)$-vector fields (for example, in the case $k=3$, they should take the form \eqref{eq:pullback of 2-vector field}).} We also define $v$ and $v_m$ to be the tuples $v := (v^1, \ldots, v^n),$ where $v^l:=(v^{l,1},\ldots,v^{l,n}),$ and $v_m := (v^1_m, \ldots, v^n_m),$ where $v_m^l:=(v_m^{l,1},\ldots,v_m^{l,n}),$ respectively, for $l \in \{1,\ldots,n\}.$ For every $l$, the sequence $(v_m^l)_{m \in \mathbb{N}}$ satisfies
\begin{align}\label{eq:inductive hypothesis}
\mathbb{E}\left[ \int_0^T \int_{B(0,R^*(m))} \left\lvert v_m^l(t,x)  - v^l(t,x) \right\rvert^{2r} \diff^n x \, \diff t \right] \rightarrow 0, \quad m \rightarrow \infty,
\end{align}
by the inductive hypothesis. Notice that since $r > (n+1)/(\alpha 2^{k+1}),$ we have $2r > (n+1)/(\alpha 2^{k}),$ which satisfies our assumption at Step $k-1$. 

Next, applying Cauchy-Schwartz inequality in \eqref{eq:difference of 2-vector fields} to get 
\begin{align*}
    \lvert (\phi_t^m)^* \mathbf{U})^{ij}(x) - (\phi_t^*\mathbf{U})^{ij}(x)\rvert &\leq \lvert D(\psi_t^m)^j(\phi_t(x))\rvert \lvert C_m^i(t,x) - C^i(t,x)\rvert \\
    &\quad + \lvert C^i(t,x)\rvert \lvert D(\psi_t^m)^j(\phi_t(x)) - D\psi_t^j(\phi_t(x))\rvert, 
\end{align*}
where $C^i := (\sum_l (v^{l,i})^2)^\frac12$ and $C^i_m := (\sum_l (v^{l,i}_m)^2)^\frac12$, then applying the triangle inequality
\begin{align*}
    \Big(\sum_{ij} \lvert A_{ij} + B_{ij}\rvert^2 \Big)^\frac12 \leq \Big(\sum_{ij} A_{ij}^2\Big)^\frac12 + \Big(\sum_{ij} B_{ij}^2\Big)^\frac12,
\end{align*}
on the RHS (take $A_{ij} := \lvert D(\psi_t^m)^j(\phi_t(x))\rvert \lvert C_m^i(t,x) - C^i(t,x)\rvert$ and $B_{ij} := \lvert C^i(t,x)\rvert \lvert D(\psi_t^m)^j(\phi_t(x)) - D\psi_t^j(\phi_t(x))\rvert$), we have
\begin{align*}
    \lvert (\phi_t^m)^* \mathbf{U}(x) - \phi_t^* \mathbf{U}(x)\rvert &\leq \lvert D\psi_t^m(\phi_t^m(x))\rvert \lvert v_m(t,x) - v(t,x)\rvert \\
    &\quad + \lvert v(t,x)\rvert \lvert D\psi_t^m(\phi_t^m(x)) - D\psi_t(\phi_t(x))\rvert,
\end{align*}
where the norm $\lvert \cdot \rvert$ on $v$ (likewise for $v_m$) is defined as
\begin{align*}
    \lvert v(t,x)\rvert = \left(\sum_{l=1}^n \vert v^l(t,x)\rvert^2\right)^{1/2} = \left(\sum_{l=1}^n \sum_{i=1}^n (v^{l, i}(t,x))^2 \right)^{1/2}.
\end{align*}
This yields
\begin{align*} 
    &\norm{(\phi_\cdot^m)^* \mathbf{U} (\cdot)  - (\phi_\cdot)^* \mathbf{U} (\cdot) }^r_{L^r_{\omega,t,R^*(m)}} \\
    &\lesssim \mathbb{E}\left[\int^T_0 \int_{B(0,R^*(m))} \lvert D\psi_t^m(\phi_t^m(x))\rvert^r \lvert v_m(t,x) - v(t,x)\rvert^r \, \diff^n x \, \diff t \right] \\
    &\quad + \mathbb{E}\left[\int^T_0 \int_{B(0,R^*(m))} \lvert v(t,x)\rvert^r \lvert D\psi_t^m(\phi_t^m(x)) - D\psi_t(\phi_t(x))\rvert^r \, \diff^n x \, \diff t \right] \\
    &= I_1 + I_2.
\end{align*}
Now using H\"older's inequality, we obtain the following estimate for the first term
\begin{align*}
    I_1 &\leq \|D\psi_\cdot^m(\phi_\cdot^m(\cdot))\|_{L^{2r}_{\omega, t, R^*(m)}}^r \|v_m(\cdot,\cdot) - v(\cdot,\cdot)\|_{L^{2r}_{\omega, t, R^*(m)}}^r.
\end{align*}
By our usual arguments, we have the control
\begin{align*}
    \|D\psi_\cdot^m(\phi_\cdot^m(\cdot))\|_{L^{2r}_{\omega, t, R^*(m)}}^r &= \mathbb{E}\left[\int^T_0 \int_{B(0,R)} \lvert D\psi_t^m(y)\rvert^{2r} \lvert J\psi_t^m(y)\rvert \, \diff^n y \, \diff t \right]^{1/2} \\
    &\leq \|D\psi^m_\cdot(\cdot)\|_{L^{4r}_{\omega,t,R}}^r \|D\psi^m_\cdot(\cdot)\|_{L^{2n}_{\omega,t,R}}^{n/2} < \infty,
\end{align*}
where we used property \eqref{bound-phi-deriv}. Noting that
\begin{align*}
    \lvert v_m(t,x) - v(t,x)\rvert^{2r} = \left(\sum_{l=1}^n \lvert v_m^l(t,x) - v^l(t,x)\rvert^2 \right)^{r} \lesssim \sum_{l=1}^n \lvert v_m^l(t,x) - v^l(t,x)\rvert^{2r},
\end{align*}
we also get 
\begin{align*}
    \|v_m(\cdot,\cdot) - v(\cdot,\cdot)\|_{L^{2r}_{\omega, t, R^*(m)}}^r \lesssim \sum_{l=1}^n \|v_m^l(\cdot,\cdot) - v^l(\cdot,\cdot)\|_{L^{2r}_{\omega, t, R^*(m)}}^r \rightarrow 0,
\end{align*}
where we used the inductive hypothesis \eqref{eq:inductive hypothesis} to deduce the convergence in the last line. Hence $I_1 \rightarrow 0$.
Applying H\"older's inequality, we also have
\begin{align*}
    I_2 &\leq \|v(\cdot,\cdot)\|_{L^{2r}_{\omega, t, R^*(m)}}^r \|D\psi_\cdot^m(\phi_\cdot^m(\cdot)) - D\psi_\cdot(\phi_\cdot(\cdot))\|_{L^{2r}_{\omega, t, R^*(m)}}^r \\
    &\leq \|U(\cdot)\|_{L^\infty_x}^r\|D\psi_\cdot(\phi_\cdot(\cdot))\|_{L^{2r}_{\omega, t, R^*(m)}}^r \|D\psi_\cdot^m(\phi_\cdot^m(\cdot)) - D\psi_\cdot(\phi_\cdot(\cdot))\|_{L^{2r}_{\omega, t, R^*(m)}}^r \rightarrow 0,
\end{align*}
where we used that $\|D\psi_{\cdot}(\phi_{\cdot}(\cdot))\|_{L^{2r}_{\omega, t, R^*(m)}}^r < \infty$ by standard arguments and the convergence $\|D\psi_{\cdot}^m(\phi_{\cdot}^m(\cdot)) - D\psi_{\cdot}(\phi_{\cdot}(\cdot))\|_{L^{2r}_{\omega, t, R^*(m)}}^r \rightarrow 0$ holds by following similar arguments as in Step 1 of the proof, where we established convergence of \eqref{eq:Dpsi(phi) difference}. The same arguments hold in the general $k$ case. This concludes our proof.

\subsection{Proof of KIW formula for $k$-form-valued processes (Theorem \ref{KIWmain})}\label{eq:app-KIW-proof}
We first establish \eqref{KIW}. We divide our proof into several steps.

\noindent\textbf{Step 1. Setup and preparation.}
For simplicity and without loss of generality, we choose $N=M=1$ so that the only diffusion vector field in the flow equation \eqref{flowmap} is $\xi =(\xi^1,\ldots,\xi^n)$. Given a multi-vector field $\mathbf{U} = u_1 \wedge \ldots \wedge u_k \in \mathfrak X^k(\mathbb R^n),$ we consider the following real-valued continuous $\mathcal{F}_t$-semimartingale 
\begin{align} \label{F-eq-full}
& F_t(x) := \langle \phi^*_t \mathbf{K}(t,x),\mathbf{U}(x) \rangle = K_{i_1,\ldots,i_k}(t,\phi_t(x)) \prod_{\alpha=1}^k J^{i_\alpha}_{j_\alpha}(t,x) u_\alpha^{j_\alpha}(x),
\end{align}
for $t \in [0,T]$ and $x \in \mathbb{R}^n.$ In \eqref{F-eq-full}, $\langle \cdot , \cdot \rangle$ represents the dual pairing, we employed the notation
$$J^{i_\alpha}_{j_\alpha}(t,x) := \frac{\partial \phi^{i_\alpha}_t}{\partial x_{j_{\alpha}}}(x), \quad t \in [0,T], \quad x \in \mathbb{R}^n,$$
and we stress that summation over repeated indices $i_\alpha=1,\ldots ,n,$ $\alpha=1,\ldots, k$ is implied as usually.

\noindent{\bf Step 2. Equation for $F_t$ in simple form.} 
We employ differential notation to avoid excessively convoluted formulas. It\^o's product rule, we have
\begin{align*}
\diff F_t(x) &= \left(\prod_{\alpha=1}^k J^{i_\alpha}_{j_\alpha}(t,x) u_\alpha^{j_\alpha}(x)\right) \diff  K_{i_1,\ldots,i_k}(t,\phi_t(x)) \\
&\quad + K_{i_1,\ldots,i_k}(t,\phi_t(x)) \, \diff \left(\prod_{\alpha=1}^k J^{i_\alpha}_{j_\alpha}(t,x) u_\alpha^{j_\alpha}(x)\right) \\
&\quad + \diff \left[K_{i_1,\ldots,i_k}(\cdot,\phi_{\cdot}(x)), \prod_{\alpha=1}^k J^{i_\alpha}_{j_\alpha}(\cdot,x) u_\alpha^{j_\alpha}(x) \right]_t, \\
\end{align*}
where $[\cdot,\cdot]_t$ denotes cross-variation. By applying the It\^o-Wentzell formula to $K_{i_1,\ldots,i_k}(t,\phi_t(x))$ and using the flow equation \eqref{flowmap}, one can check that $\mathbb{P}$-a.s.
\begin{align*} 
    &\diff K_{i_1,\ldots,i_k}(t,\phi_t(x)) =  G_{i_1,\ldots,i_k}(t,\phi_t(x)) \, \diff t + H_{i_1,\ldots,i_k}(t,\phi_t(x)) \, \diff W_t \\
    &\quad + (b \cdot D K_{i_1,\ldots,i_k}) (t,\phi_t(x)) \, \diff t + (\xi \cdot D K_{i_1,\ldots,i_k})(t,\phi_t(x)) \, \diff B_t \\
    &\quad + (\xi \cdot D H_{i_1,\ldots,i_k})(t,\phi_t(x)) \, \diff [W_{\cdot}, B_{\cdot}]_t + \frac12 (\xi \cdot D (\xi \cdot D K_{i_1,\ldots,i_k}))(t,\phi_t(x)) \, \diff t,
\end{align*}
for all $t \in [0,T]$ and $x \in \mathbb{R}^n.$
\begin{remark}
We note that $G_{i_1,\ldots,i_k}$ and $H_{i_1,\ldots,i_k}$ do not satisfy the assumptions of the classical It\^o-Wentzell formula (Theorem 1.1. in \cite{kunita1981some}), since they are not continuous in time. However, as noted at the end of page 24 in \cite{flandoli2010well}, Kunita's theorems are stated for processes integrated with respect to continuous semimartingales, and hence the assumptions imposed are those necessary to treat semimartingales. In the present case, it is easy to prove that adaptability, $L^1$ and $L^2$ integrability in time of $G_{i_1,\ldots,i_k}$ and $H_{i_1,\ldots,i_k}$ (respectively), and regularity in space are enough. For instance, in \cite{krylov2011ito}, the author provides an extension of the classical It\^o-Wentzell formula that allows for drifts and diffusions $G_{i_1,\ldots,i_k}$ and $H_{i_1,\ldots,i_k}$ of such time regularity, and even for distribution-valued processes.
\end{remark}
By It\^o's product rule, we also obtain
\begin{align*}
&\diff \left(\prod_{\alpha=1}^k J^{i_\alpha}_{j_\alpha}(t,x) u_\alpha^{j_\alpha}(x)\right) = \sum_{p=1}^k \left(\prod_{\alpha \neq p}^k J^{i_\alpha}_{j_\alpha}(t,x) u_{\alpha}^{j_\alpha}(x) \right) u_p^{j_p}(x) \, \diff J^{i_p}_{j_p}(t,x) \\
&\quad + \frac12 \sum_{\substack{p,q = 1 \\ p \neq q}}^k \left(\prod_{\alpha \neq p,q}^k J^{i_\alpha}_{j_\alpha}(t,x) u_\alpha^{j_\alpha}(x) \right) u_p^{j_p}(x) u_q^{j_q}(x) \, \diff \left[J^{i_p}_{j_p}(\cdot,x),J^{i_q}_{j_q}(\cdot,x)\right]_t.
\end{align*}
We differentiate \eqref{flowmap} with respect to $x$ to arrive at 
\begin{align*}
\diff J_j^i(t,x) &= D_l ( b^i + (1/2) \xi^m D_m \xi^i)(t,\phi_t(x)) J_j^l(t,x) \, \diff t \\
&\quad +  D_l \xi^i(t,\phi_t(x)) J_j^l(t,x) \, \diff B_t,
\end{align*}
for $i,j = 1,\ldots,n$ for all $t \in [0,T]$ and $x \in \R^n.$

\noindent{\bf Step 3. Full equation for $F_t$.} 

By using all the formulas obtained in Step 2, one checks that for $\mathbb{P}$-a.s. $\omega$ independent of $u,$ $F_t(x)$ can be expressed as
\begin{align*}
F_t(x) - F_0(x) = & \int^t_0 \hat{G}^{1}_s(x) \, \diff s + \int^t_0 \hat{G}^{2}_s(x) \, \diff [W_{\cdot},B_{\cdot}]_s \\
&\quad + \int^t_0 \hat{H}^{1}_s(x) \, \diff W_s + \int^t_0 \hat{H}^{2}_s(x) \, \diff B_s,
\end{align*}
where 
\begin{align*}
&\hat{G}^{1}_s(x) :=  G_{i_1,\ldots,i_k}(s,\phi_s(x)) \prod_{\alpha = 1}^k J^{i_\alpha}_{j_\alpha}(s,x) u_\alpha^{j_\alpha}(x) \\
& + b^l(s,\phi_s(x)) D_l K_{i_1,\ldots,i_k}(s,\phi_s(x)) \prod_{\alpha = 1}^k J^{i_\alpha}_{j_\alpha}(s,x) u_\alpha^{j_\alpha}(x) \\
& + \sum_{p=1}^k K_{i_1,\ldots,i_k}(s,\phi_s(x)) D_l b^{i_p}(s,\phi_s(x)) \left(\prod_{\alpha \neq p}^k J^{i_\alpha}_{j_\alpha}(s,x) u_\alpha^{j_\alpha}(x) \right) J^l_{j_p}(s,x) u_p^{j_p}(x) \\
&\quad + \frac12 \xi^l(s,\phi_s(x)) \xi^m(s,\phi_s(x)) D_{lm}^2 K_{i_1,\ldots,i_k}(s,\phi_s(x)) \prod_{\alpha = 1}^k J^{i_\alpha}_{j_\alpha}(s,x) u_\alpha^{j_\alpha}(x) \\
&\quad + \frac12 \xi^m(s,\phi_s(x))D_m \xi^l(s,\phi_s(x)) D_l K_{i_1,\ldots,i_k}(s,\phi_s(x)) \prod_{\alpha = 1}^k J^{i_\alpha}_{j_\alpha}(s,x) u_\alpha^{j_\alpha}(x) \\
&\quad + \sum_{p=1}^k \Bigg[ \xi^m(s,\phi_s(x)) D_l \xi^{i_p}(s,\phi_s(x)) D_m K_{i_1,\ldots,i_k}(s,\phi_s(x)) \\
&\quad \quad + \frac12 D_l\left[\xi^m(s,\phi_s(x)) D_m \xi^{i_p}(s,\phi_s(x))\right] K_{i_1,\ldots,i_k}(s,\phi_s(x)) \Bigg] \\
&\qquad \times \left(\prod_{\alpha \neq p}^k J^{i_\alpha}_{j_\alpha}(s,x) u_\alpha^{j_\alpha}(x) \right) J^l_{j_p}(s,x) u_p^{j_p}(x) \\
&\quad + \frac12 \sum_{\substack{p,q=1 \\ p \neq q}}^k K_{i_1,\ldots,i_k}(s,\phi_s(x)) D_l \xi^{i_p}(s,\phi_s(x)) D_m \xi^{i_q}(s,\phi_s(x)) \\
&\qquad \times \left(\prod_{\alpha \neq p,q}^k J^{i_\alpha}_{j_\alpha}(s,x) u_\alpha^{j_\alpha}(x)\right) J^l_{j_p}(s,x) J^m_{j_q}(s,x) u_p^{j_p}(x) u_q^{j_q}(x), \\
&\hat{G}^{2}_s(x) := \xi^l(s,\phi_s(x)) D_l H_{i_1,\ldots,i_k}(s,\phi_s(x)) \prod_{\alpha = 1}^k J^{i_\alpha}_{j_\alpha}(s,x) u_\alpha^{j_\alpha}(x)\\
& + \sum_{p=1}^k H_{i_1,\ldots,i_k}(s,\phi_s(x)) D_l \xi^{i_p}(s,\phi_s(x)) \left(\prod_{\alpha \neq p}^k J^{i_\alpha}_{j_\alpha}(s,x) u_\alpha^{j_\alpha}(x) \right) J^l_{j_p}(s,x) u_p^{j_p}(x),\\
&\hat{H}^{1}_s(x) := H_{i_1,\ldots,i_k}(s,\phi_s(x)) \prod_{\alpha = 1}^k J^{i_\alpha}_{j_\alpha}(s,x) u_\alpha^{j_\alpha}(x),\\
&\hat{H}^{2}_s(x) := \xi^l(s,\phi_s(x)) D_l K_{i_1,\ldots,i_k}(s,\phi_s(x)) \prod_{\alpha = 1}^k J^{i_\alpha}_{j_\alpha}(s,x) u_\alpha^{j_\alpha}(x)  \\
& + \sum_{p=1}^k K_{i_1,\ldots,i_k}(s,\phi_s(x)) D_l \xi^{i_p}(s,\phi_s(x)) \left(\prod_{\alpha \neq p}^k J^{i_\alpha}_{j_\alpha}(s,x) u_\alpha^{j_\alpha}(x) \right) J^l_{j_p}(s,x) u_p^{j_p}(x).
\end{align*}

\noindent{\bf Step 4. Conclusion.}  By inspection and taking into account formula \eqref{Lieformula}, we obtain that for $\mathbb{P}$-a.s. $\omega$ independent of $\mathbf{U}:$
\begin{align*}
& \int^t_0 \hat{G}^{1}_s(x)  \, \diff s  = \int^t_0 \langle \phi_s^* \mathbf{G}(s,x), \mathbf{U}(x)\rangle \, \diff s + \int^t_0 \langle \phi_s^* \mathcal L_b \mathbf{K}(s,x), \mathbf{U}(x)\rangle \, \diff s \\
& \hspace{90pt} + \frac12 \int^t_0 \langle \phi_s^* \mathcal L_\xi \mathcal L_\xi \mathbf{K}(s,x), \mathbf{U}(x)\rangle \, \diff s, \\
& \int^t_0 \hat{G}^{2}_s(x) \, \diff [W_{\cdot},B_{\cdot}]_s = \int^t_0 \langle \phi^*_s \mathcal L_\xi \mathbf{H}(s,x), \mathbf{U}(x)\rangle \, \diff [W_{\cdot},B_{\cdot}]_s,\\
& \int^t_0 \hat{H}^{1}_s(x) \, \diff W_s = \int^t_0 \langle \phi^*_s \mathbf{H}(s,x), \mathbf{U}(x) \rangle \, \diff W_s, \\
& \int^t_0 \hat{H}^{2}_s(x) \, \diff B_s = \int^t_0 \langle \phi_s^* \mathcal L_\xi \mathbf{K}(s,x),\mathbf{U}(x) \rangle \, \diff B_s, \quad t \in [0,T], \quad x \in \mathbb{R}^n,
\end{align*}
where we remind $\mathbf{U} = u_1 \wedge \ldots \wedge u_k.$ Since $\mathbf{U}$ was chosen arbitrarily, the proof of \eqref{KIW} is finished.
The proof of \eqref{KIWpush} follows similar steps by noting that
\begin{enumerate}
    \item $(\phi_t)_* = \psi_t^*,$ where $\psi_t := \phi_t^{-1}.$
    \item Using the identity \eqref{flowinv}, one can show that the inverse flow satisfies
    \begin{align} \label{inversaflow}
    \begin{split}
        \diff \psi_t(x) &= \left(-b^i(t,x) \frac{\partial \psi_t}{\partial x_i}(x) + \xi^i(t,x) \frac{\partial \xi^j}{\partial x_i}(t,x) \frac{\partial \psi_t}{\partial x_j}(x) \right. \\
        &\qquad \left.+ \xi^i(t,x) \xi^j(t,x) \frac{\partial^2 \psi_t}{\partial x_j \partial x_i}(x) \right) \diff t - \xi^i(t,x) \frac{\partial \psi_t}{\partial x_i}(x) \diff B_t,
    \end{split}
    \end{align}
    for $t \in [0,T], x \in \R^n,$ where we recall that summation over repeated indices is implied.
    \item One can apply the It\^o-Wentzell formula to evaluate $K_{i_1,\ldots,i_k}(t,x)$ along $\psi_t(x)$ using \eqref{inversaflow}.
\end{enumerate}

\section{Background in tensor algebra and calculus} \label{tensor}
In this appendix, we provide some background on concepts from differential geometry that we use throughout this article. Although these concepts are naturally introduced on smooth manifolds, here we restrict ourselves to the Euclidean space $\mathbb{R}^n$, since (1) we only need this setting for our analysis, (2) we avoid defining manifolds and dealing with the arising technicalities.

We first introduce the notion of tensor fields on $\mathbb{R}^n$ and discuss some operations defined on them, such as raising and lowering indices when each tangent space (defined later on) is equipped with an inner product (the metric tensor). Next, we present some common classes of tensor fields that arise naturally in physics, namely, the tensors of alternating types. We also discuss some of the common operations than can be performed on tensor fields, including the Lie derivative, which is a fundamental operator that we need in order to define our main equation. Whereas in the present discussion, we assume for simplicity that the tensor fields we work with are smooth, in the proceeding appendix we will introduce tensor fields of different regularity classes. This will be fundamental for our analysis.

Let $V$ be a finite dimensional vector space with basis $\{e_1,\ldots,e_n\},$ and $V^*$ be its corresponding dual space with dual basis $\{e^1,\ldots,e^n\}$, defined via the relation $\left<e^i,e_j\right> = \delta_{ij}$, where $\delta_{ij}$ is the Kronecker delta (that is, $\delta_{ij}=1$ when $i=j$ and zero otherwise), where $\left<\cdot,\cdot\right>:V^* \times V \rightarrow \mathbb R$ is the natural pairing $\left<\alpha, v\right> := \alpha(v)$. For a contravariant vector $v \in V$, we express its components as $v = v^i e_i$ using raised indices, and for a covariant vector (covector) $\alpha \in V^*$, we express its components as $\alpha = \alpha_i e^i$ with lowered indices. We always assume Einstein's convention of summing over repeated indices.

\begin{definition}
Given an $n$-dimensional vector space $V$ and its dual $V^*,$ we define an $(r,s)$-tensor $\mathbf{T}$ to be a multilinear map
\begin{align*}
    \mathbf{T} : \underbrace{V^* \times \cdots \times V^*}_{r \text{ times}} \times \underbrace{V \times \cdots \times V}_{s \text{ times}} \rightarrow \mathbb R.
\end{align*}
Fixing a basis $\{e_1, \ldots, e_n\}$ for $V$ and $\{e^1, \ldots, e^n\}$ for $V^*$, we may interpret tensors as vectors with basis $\{e_{i_1} \otimes \cdots \otimes e_{i_r} \otimes e^{i_{r+1}} \otimes \cdots \otimes e^{i_{r+s}}\}_{i_k \in \{1,\ldots, n\}}$, where
\begin{align*}
    &e_{i_1} \otimes \cdots \otimes e_{i_r} \otimes e^{i_{r+1}} \otimes \cdots \otimes e^{i_{r+s}} \left(e^{j_1}, \ldots, e^{j_r}, e_{j_{r+1}}, \ldots, e_{j_{r+s}}\right) \\
    &:= \delta_{i_1}^{j_1} \cdots \delta_{i_r}^{j_r} \delta^{i_{r+1}}_{j_{r+1}} \cdots \delta^{i_{r+s}}_{j_{r+s}}.
\end{align*}
We can express $\mathbf{T}$ in coordinates as
\begin{align*}
    \mathbf{T} = T^{i_1,\ldots,i_r}_{i_{r+1},\ldots,i_{r+s}} e_{i_1} \otimes \cdots \otimes e_{i_r} \otimes e^{i_{r+1}} \otimes \cdots \otimes e^{i_{r+s}},
\end{align*}
where $T^{i_1,\ldots,i_r}_{i_{r+1},\ldots,i_{r+s}} =  \mathbf{T} \left(e^{i_1}, \ldots, e^{i_r}, e_{i_{r+1}}, \ldots, e_{i_{r+s}}\right)$ are its components for $i_1, \ldots, i_{r+s} \in \{1, \ldots, n\}$. Note that summation over repeated indices is assumed. We denote the space of $(r,s)$-tensors over $V$ by $\mathcal{T}^{(r,s)}_V$.
\end{definition}

Given two tensors $\mathbf{S} \in \mathcal{T}^{(r,s)}_V$ and $\mathbf{T} \in \mathcal{T}^{(r',s')}_V$, we define the tensor product $\otimes$ between these two tensors as the new tensor $\mathbf{S} \otimes \mathbf{T} \in \mathcal{T}^{(r+r', s+s')}_V$ given by
\begin{align}\label{eq:tensor-products}
    \mathbf{S} \otimes \mathbf{T} := S^{i_1,\ldots,i_r}_{i_{r+1},\ldots,i_{r+s}} T^{j_1,\ldots,j_{r'}}_{j_{r'+1},\ldots,j_{r'+s'}} \bigotimes_{a=1}^r e_{i_a} \otimes \bigotimes_{b=1}^{r'} e_{j_b}
    \otimes \bigotimes_{c=r+1}^{r+s} e^{i_c} \otimes \bigotimes_{d=r'+1}^{r'+s'} e^{i_d},
\end{align}
where for simplicity, we employed the notations $\bigotimes_{j=1}^l e_{i_j} := e_{i_1} \otimes \cdots \otimes e_{i_l}$ and $\bigotimes_{j=1}^m e^{i_j} := e^{i_1} \otimes \cdots \otimes e^{i_m}$ for any integers $l, m \in \mathbb{N}$.

\subsection{Smooth sections of tensor bundles}\label{app:smooth sections}
We wish to introduce the notion of tensor field, which is a function that assigns a tensor to every point in a manifold (in the present case, in $\R^n$). Tensors fields help formalise various objects of physical origin mathematically, such as Eulerian flow field, vorticity, and density.

First, we need to introduce the notion of tangent space at a point. Let $x \in \R^n.$ The tangent space at $x,$ denoted by $\mathcal T_x \R^n,$ is an equivalence class of smooth curves $c : (-1, 1) \rightarrow \R^n$ with $c(0) = x$, where we say that two curves $c_1$ and $c_2$ are equivalent if and only if $Dc_1(0) = Dc_2(0)$. One can check that this forms an $n$-dimensional vector space over $\R$. We define the tangent bundle as the tuple $(\mathcal{T} {\mathbb R^n}, \pi)$, where $\mathcal T\R^n := \{(x, v) : x\in \R^n, \, v\in \mathcal T_x\R^n\}$ and $\pi$ is the natural projection map $\pi : \mathcal{T} {\mathbb R^n} \rightarrow \mathbb R^n$, which is defined by $\pi(x, v) = x,$ for all $(x, v) \in \mathcal{T} {\mathbb R^n}$. Let $x \in \R^n.$ Since $\mathcal T_x \R^n$ is a vector space, there exists a corresponding dual space, denoted $\mathcal T_x^* \R^n$ and referred to as the cotangent space. We analogously define the cotangent bundle as the tuple $(\mathcal{T}^* {\mathbb R^n}, \pi)$, where $\mathcal T^*\R^n := \{(x, \alpha) : x\in \R^n, \, \alpha \in \mathcal T_x^*\R^n\}$ and $\pi$ is the corresponding natural projection.

Generalising further, we define the 
$(r,s)$-tensor bundle on $\mathbb R^n$ as the tuple $(\mathcal{T}^{(r,s)} {\mathbb R^n}, \pi)$, where $\mathcal{T}^{(r,s)} {\mathbb R^n} := \{(x, v) : x \in \R^n, \, v \in \mathcal T_{\mathcal T_x \R^n}^{(r,s)}\}$ and $\pi$ is the corresponding natural projection. For every point $x \in \mathbb R^n$, we have $\pi^{-1}(x) = \mathcal T_{\mathcal T_x \R^n}^{(r,s)}$, which we call the fibre of the bundle at $x$ and we will denote more conveniently by $\mathcal{T}_x^{(r,s)}\R^n$. We will often refer to $\mathcal{T}^{(r,s)}{\mathbb R^n}$ itself as the tensor bundle, omitting the projection map. Note that setting $(r,s) = (1,0)$, we recover the tangent bundle $\mathcal{T}\mathbb R^n,$ and setting $(r,s) = (0,1),$ we recover the cotangent bundle $\mathcal{T}^*\mathbb R^n$. A tensor field is defined as a ``section" of the tensor bundle, a concept which we introduce in the next paragraph.

A smooth section $s$ of the tensor bundle $(\mathcal{T}^{(r,s)}{\mathbb R^n}, \pi)$ is a smooth map $s : \mathbb R^n \rightarrow \mathcal{T}^{(r,s)}{\mathbb R^n}$ such that $\pi \circ s : \mathbb R^n \rightarrow \mathbb R^n$ is the identity map.
We define a smooth tensor field to be a smooth section of the tensor bundle. The space of smooth tensor fields will be denoted by $C^\infty(\R^n,\mathcal{T}^{(r,s)}{\mathbb R^n})$, or alternatively $\Gamma(\mathcal{T}^{(r,s)}{\mathbb R^n})$, as commonly done in the differential geometry literature. As special cases, a smooth section of the tangent bundle is called a smooth vector field and a smooth section of the cotangent bundle is called a smooth differential one-form. As standardly done in the differential geometry literature, we denote the set of all smooth vector fields by $\mathfrak{X}(\R^n)$ and the set of all smooth differential one-forms by $\Omega^1(\R^n)$.

\begin{remark}\label{app:remark on trivialisation}
In the Euclidean case, given $x \in \R^n$, the tangent space $\mathcal T_x\R^n$  can be canonically identified with the tangent space at zero $\mathcal T_0 \R^n$ via the following construction. Define the translation operator $L_x :\R^n \rightarrow \R^n$ by $L_xx' = x'-x,$ for $x' \in \R^n$. Given a basis $\{[c_1], \ldots, [c_n]\}$ for $\mathcal T_x\R^n$, we can construct a basis for $\mathcal T_0\R^n$ by  $\{[L_x c_1], \ldots, [L_x c_n]\}$. This defines an isomorphism $\mathcal T_xL_x : \mathcal T_x\R^n \rightarrow \mathcal T_0\R^n$ via the relation $\mathcal T_xL_x(\alpha_1[c_1] + \cdots + \alpha_n[c_n]) := \alpha_1[L_x c_1] + \cdots + \alpha_n[L_x c_n]$, for any $\alpha_1, \ldots, \alpha_n \in \R$. Thus, we can identify the tangent bundle $\mathcal T\R^n$ with the product space $\R^n \times \mathcal T_0\R^n$, which can further be identified with $\R^n \times \R^n$ upon fixing a basis for $\mathcal{T}_0\R^n$. Similarly, one can identify the cotangent bundle $\mathcal T^*\R^n$ with $\R^n \times \R^n$ via the adjoint of $\mathcal T_xL_x$ with respect to the natural pairing, namely, $\mathcal T^*_xL_x : \mathcal T_0^*\R^n \rightarrow \mathcal T_x^*\R^n$, and by fixing a dual basis for $\mathcal{T}_0^*\R^n$. Putting the previous facts together, we can identify the tensor bundle $\mathcal{T}^{(r,s)} \mathbb R^n$ with the direct product space $\R^n \times \mathcal{T}^{(r,s)}_{\mathbb R^n}$. 
\end{remark}

\begin{remark}\label{app:remark on canonical basis}
There is a canonical basis for $\mathcal T_0\R^n$ due to the presence of the standard coordinate basis $e_1 = (1, 0, 0 \ldots, 0)$, $e_2 = (0, 1, 0, \ldots, 0),$ etc, on $\R^n$. This is given by $\{[c_1], \ldots, [c_n]\}$, where $c_1(t) = (t, 0, 0 \ldots, 0)$, $c_2(t) = (0, t, 0 \ldots, 0)$, etc. By Remark \ref{app:remark on trivialisation} above, this also generates a canonical basis $\{[L_{-x}c_1], \ldots, [L_{-x}c_n]\}$ for $\mathcal T_x\R^n,$ for any $x \in \R^n$.
\end{remark}

\begin{remark}\label{app:interpretation of tensor fields}
Owing to Remark \ref{app:remark on trivialisation} above, a tensor field on $\R^n$ can be interpreted as a tensor-valued map $\mathbf{T}: \mathbb{R}^n \rightarrow \mathcal{T}^{(r,s)}_{\mathbb R^n}$, i.e., a function that assigns a tensor to every point in $\R^n$. In the cases $(r,s)=(1,0)$ and $(r,s)=(0,1)$, the tensor fields simply become vector-valued maps $\R^n \rightarrow \R^n$. 
\end{remark}

Given an $(r,s)$-tensor field $\mathbf{T}$ and an $(s,r)$-tensor field $\mathbf{S}$, we can define a natural pairing at each point $x \in \R^n$, denoted by angle brackets $\left<\cdot,\cdot\right> : \mathcal{T}^{(r,s)}_x\R^n \times \mathcal{T}^{(s,r)}_x\R^n \rightarrow \R$, by
\begin{align*}
    \left<\mathbf{T}(x),\mathbf{S}(x)\right> := T^{i_1,\ldots,i_r}_{j_1,\ldots,j_s}(x) S^{j_1,\ldots,j_s}_{i_1,\ldots,i_r}(x),
\end{align*}
where sum over repeated indices is assumed. 

Throughout the rest of this section, we assume sections to always be smooth.

\subsection{Raising/lowering indices and the bundle metric}
Consider a symmetric, positive definite, nondegenerate bilinear form $\gamma : V \times V \rightarrow \mathbb R$, which is a $(0,2)$-tensor whose components are given by $\gamma_{ij} = \gamma(e_i,e_j)$. Fixing a vector $v \in V$, one can construct an associated covector $v^\flat \in V^*$ through the relation $v^\flat(w) = \gamma(v,w),$ for any $w \in V$. Let $\gamma^\flat : V \rightarrow V^*$ be the map $\gamma^\flat :v \rightarrow v^\flat,$ for $v \in V$. This procedure is called ``lowering the index", since the components of $v^\flat$ now have lower indices ($(v^\flat)_i = \gamma_{ij}v^j$). We also introduce the inverse operation $\gamma^\sharp := (\gamma^\flat)^{-1} : V^* \rightarrow V$, which maps a covector $\alpha \in V^*$ to a vector $\alpha^\sharp \in V$ by $\alpha^\sharp = \gamma^\sharp (\alpha)$. This is called ``raising the index" for similar reasons. One can then define a $(2,0)$-tensor $\gamma^{-1}: V^* \times V^* \rightarrow \R$ by $\gamma^{-1}(\alpha, \beta) := \gamma(\alpha^\sharp,\beta^\sharp),$ whose components we simply denote by $\gamma^{ij} = \gamma^{-1}(e^i,e^j)$.

If $\mathbf{g}$ is a smooth $(0,2)$-tensor field such that for every $x \in \R^n$, $\mathbf{g}(x)$ defines a symmetric, positive definite, nondegenerate bilinear form on the tangent space $\mathcal{T}_x\R^n$, we say that $\mathbf{g}$ is a Riemannian metric or a metric tensor on $\mathcal{T}\R^n$. Applying the above procedure on each fibre, we can construct the cometric tensor $\mathbf{g}^{-1} \in \Gamma(\mathcal{T}^{(2,0)}\R^n)$, which defines a symmetric, positive definite, nondegenerate bilinear form on the cotangent spaces $\mathcal{T}_x^*\R^n$.
Combining the metric and cometric tensors, we can construct a metric on general tensor fields, defined as follows (see for instance \cite{bauer2010sobolev}).

\begin{definition}
Given a metric tensor $\mathbf{g}$ on $\mathcal{T}\R^n$, a bundle metric $\bar{\mathbf{g}}$ on $\mathcal{T}^{(r,s)}\R^n$ is defined by
\begin{align*}
\resizebox{1.0 \textwidth}{!}{
\begin{math}
    \bar{\mathbf{g}}_x(\mathbf{T}_x,\mathbf{S}_x) := g_{i_1 j_1}(x) \cdots g_{i_r j_r}(x) g^{i_{r+1} j_{r+1}}(x) \cdots g^{i_{r+s} j_{r+s}}(x) (T_x)^{i_1,\ldots,i_r}_{i_{r+1},\ldots,i_{r+s}} (S_x)^{j_1,\ldots,j_r}_{j_{r+1},\ldots,j_{r+s}},
\end{math}
}
\end{align*}
for $x \in \R^n$ and any $\mathbf{T}_x, \mathbf{S}_x \in \mathcal{T}_x^{(r,s)}\R^n$, where $g_{ij}$ are the components of the metric tensor $\mathbf{g}$ and $g^{ij}$ are the components of the cometric tensor $\mathbf{g}^{-1}$. For every $x \in \R^n$, this also equips each fibre $\mathcal{T}^{(r,s)}_x \R^n$ with a norm $\lvert \cdot\rvert_{x} : \mathcal{T}^{(r,s)}_x \R^n \rightarrow [0,\infty)$, defined by
\begin{align*}
    \lvert \mathbf{T}_x\rvert_{x} := \sqrt{\bar{\mathbf{g}}_x(\mathbf{T}_x,\mathbf{T}_x)}.
\end{align*}
\end{definition}
The Euclidean space is canonically equipped with the Euclidean metric tensor $g_{ij}(x) = \delta_{ij},$ for all $x \in \R^n$, where $\delta_{ij}$ is the Kronecker delta\footnote{This is operationally identical to the standard dot product employed in Euclidean geometry.}. In this case, the bundle metric simply becomes
\begin{align*}
    \bar{\mathbf{g}}_x(\mathbf{T}_x, \mathbf{S}_x) = (T_x)^{i_1,\ldots,i_r}_{j_{1},\ldots,j_{s}} (S_x)^{i_1,\ldots,i_r}_{j_{1},\ldots,j_{s}},
\end{align*}
where sum over repeated indices is assumed regardless of whether the indices are up or down. For simplicity, when the base metric is just the Euclidean metric, we denote the corresponding bundle metric by round brackets $(\cdot, \cdot)_x : \mathcal{T}^{(r,s)}_x\R^n \times \mathcal{T}^{(r,s)}_x\R^n \rightarrow \R^n$. Note that throughout the paper, this is the only metric tensor that we consider.

\subsection{Multi-vector fields and differential $k$-forms} \label{tensor-analysis}
In Section \ref{app:smooth sections}, we introduced the notion of smooth section of the tensor bundle and, in particular, defined (smooth) vector fields as the special case corresponding to $(r,s)=(1,0),$ and (smooth) differential one-forms as the special case corresponding to $(r,s)=(0,1)$. We now introduce an important generalisation of these two concepts, namely, (smooth) multi-vector fields and (smooth) differential $k$-forms.

We first introduce the exterior product $\wedge$, which is an operation $\wedge : \mathcal{T}^{(r,s)}_V \times \mathcal{T}^{(r',s')}_V \rightarrow \mathcal{T}^{(r+r',s+s')}_V$ defined by
$\mathbf{F} \wedge \mathbf{G} := \mathbf{F} \otimes \mathbf{G} - \mathbf{G} \otimes \mathbf{F}$, where the tensor product $\otimes$ was defined in \eqref{eq:tensor-products}. Given $x \in \R^n,$ we define $\bigwedge^k \mathcal{T}_x\R^n$ to be the space consisting of tensors of the form $v_1 \wedge v_2 \wedge \cdots \wedge v_k$, where $v_1, \ldots, v_k \in \mathcal{T}_x\R^n$. Likewise, we define $\bigwedge^k \mathcal{T}_x^*\R^n$ as the space consisting of tensors of the form $\alpha_1 \wedge \alpha_2 \wedge \cdots \wedge \alpha_k$, where $\alpha_1, \ldots, \alpha_k \in \mathcal{T}_x^*\R^n$. We define the alternating bundles $\bigwedge^k \mathcal{T}\R^n$ and $\bigwedge^k \mathcal{T}^*\R^n$ to be the tensor bundles whose fibres at point $x$ correspond to $\bigwedge^k \mathcal{T}_x\R^n$ and $\bigwedge^k \mathcal{T}^*_x\R^n$, respectively.

\noindent\textbf{Multi-vector fields.}
A smooth multi-vector field of degree $k$ (or simply, a $k$-vector field) $\mathbf{U}$ on $\mathbb R^n$ is a smooth $(k,0)$-tensor field such that for any permutation $\sigma \in S_k$ (here, $S_k$ denotes the permutation group) and $x \in \R^n$, we have
\begin{align*}
    \mathbf{U}(x)(\alpha_{\sigma(1)}, \ldots, \alpha_{\sigma(k)}) = \sgn(\sigma) \mathbf{U}(x)(\alpha_1, \ldots, \alpha_n), \text{ for all } \alpha_1, \ldots, \alpha_k \in \mathcal{T}^*_x \R^n,
\end{align*}
where $\sgn(\sigma)$ is the signature of the permutation $\sigma \in S_k$. One can check that smooth multi-vector fields are simply sections of $\bigwedge^k \mathcal{T}\R^n$.
We denote the space of smooth $k$-vector fields by $\X^k(\mathbb R^n)$.

\noindent\textbf{Differential $k$-forms.}
A smooth differential $k$-form (or simply, a $k$-form) $\mathbf{K}$ on $\R^n$ is a $(0,k)$-tensor field such that for any permutation $\sigma \in S_k$ and $x \in \R^n$,
\begin{align*}
    \mathbf{K}(x)(v_{\sigma(1)}, \ldots, v_{\sigma(k)}) = \sgn(\sigma) \mathbf{K}(x)(v_1, \ldots, v_n), \text{ for all } v_1, \ldots, v_k \in \mathcal{T}_x\R^n.
\end{align*}
Again, smooth differential $k$-forms are simply sections of $\bigwedge^k \mathcal{T}^*\R^n$.
We denote the space of smooth $k$-forms by $\Omega^k(\mathbb R^n)$.

As a standard convention in differential geometry, the canonical basis of $\mathcal T_x \R^n$ (i.e., the basis obtained by standard coordinates in $\R^n$, see Remark \ref{app:remark on canonical basis}) is denoted
$\{\partial/\partial{x_1}(x), \ldots, \partial/\partial{x_n}(x)\}$ and its corresponding dual basis is denoted by $\{\diff x_1(x), \ldots, \diff x_n(x)\}$. Note that these are related by $\left<\partial/\partial{x_i}(x), \diff x_j(x)\right> = \delta_{ij}$. Thus, in standard coordinates, a vector field $X$ and a one-form $\alpha$ can be expressed explicitly as
\begin{align*}
    X(x) = X^i(x) \frac{\partial}{\partial x_i}(x), \qquad \alpha(x) = \alpha_i(x) \diff x_i(x),
\end{align*}
respectively. Likewise, a $k$-vector field $\mathbf{U} \in \X^k(\R^n)$ and a $k$-form $\mathbf{K} \in \Omega^k(\R^n)$ can be expressed as
\begin{align*}
    \mathbf{U}(x) &= \sum_{i_1<\cdots<i_k}^n U^{i_1, \ldots, i_k}(x) \frac{\partial}{\partial x_{i_1}}(x) \wedge \ldots \wedge \frac{\partial}{\partial x_{i_k}}(x), \\ 
    \mathbf{K}(x) &= \sum_{i_1<\cdots<i_k}^n K_{i_1, \ldots, i_k}(x) \diff x_{i_1}(x) \wedge \ldots \wedge \diff x_{i_k}(x),
\end{align*}
respectively. Differential $k$-forms are especially important in differential geometry and physics since they allow to extend the notion of integration to general oriented manifolds. For a $k$-form $\mathbf{K} \in \Omega^k(\mathbb R^n)$, we define its integral over a $k$-dimensional submanifold $U \subseteq \mathbb R^n$ by
\begin{align*}
    \int_{U} \mathbf{K} := \sum_{i_1<\cdots<i_k}^n \int_{U} K_{i_1,\ldots,i_k}(x) \, \diff x_{i_1} \ldots \diff x_{i_k},
\end{align*}
where $\diff x_{i_1} \ldots \diff x_{i_k}$ is the Lebesgue measure on $\R^k$. This helps formalise the notions of circulation, flux, and density that arise in physics.

\subsection{Tensor calculus} \label{calculo}
Here, we review some elementary differential operations defined on smooth tensor fields.

\noindent\textbf{Pull-back and push-forward.}
Let $\varphi : \mathbb R^n \rightarrow \mathbb R^n$ be a smooth map. The push-forward $\varphi_* X$ of a vector field $X = X^i(x) \partial/\partial x_{i}(x)$ with respect to $\varphi$ is defined in coordinates as
\begin{align*}
    (\varphi_* X)(\varphi(x)) := X^i(x) \frac{\partial \varphi^j}{\partial x_i} (x) \frac{\partial}{\partial x_j}(\varphi(x)).
\end{align*}
The pull-back $\varphi^* \alpha$ of a one-form $\alpha = \alpha_i(x) \diff x_i(x)$ with respect to $\varphi$ is defined as
\begin{align*}
    (\varphi^* \alpha)(x) := \alpha_i(\varphi(x)) \frac{\partial \varphi^i}{\partial x_j}(x) \diff x_j(x).
\end{align*}
Furthermore, when $\varphi$ is a diffeomorphism, one can also define the pull-back of a vector field as $\varphi^* X := (\varphi^{-1})_* X,$ and the push-forward of a one-form as $\varphi_* \alpha := (\varphi^{-1})^* \alpha$.

In fact, this notion can be readily extended to arbitrary $(r,s)$-tensor fields. Indeed, for a smooth tensor field $\mathbf{T},$ we define its pull-back $\varphi^* \mathbf{T}$ with respect to a diffeomorphism $\varphi$ by
\begin{align*}
    &(\varphi^* \mathbf{T})(x)(\alpha^1(x),\ldots,\alpha^r(x),v_1(x),\ldots,v_s(x)) \\
    &\quad := \mathbf{T}(\varphi(x))(\varphi_*\alpha^1(\varphi(x)),\ldots,\varphi_*\alpha^r(\varphi(x)),\varphi_*v_1(\varphi(x)),\ldots,\varphi_*v_s(\varphi(x))),
\end{align*}
for any $\alpha^1,\ldots,\alpha^r \in \Omega^1(\mathbb R^n),$ and $v_1,\ldots,v_s \in \mathfrak X(\mathbb R^n)$.
This has the coordinate expression
\begin{align*}
    (\varphi^* \mathbf{T})^{i_1,\ldots,i_r}_{j_1,\ldots,j_s}(x) &= T^{p_1,\ldots,p_r}_{q_1,\ldots,q_s}(\varphi(x)) \frac{\partial \psi^{i_1}}{\partial x_{p_1}}(\varphi(x)) \cdots \frac{\partial \psi^{i_r}}{\partial x_{p_r}}(\varphi(x)) \frac{\partial \varphi^{q_1}}{\partial x_{j_1}}(x) \cdots \frac{\partial \varphi^{q_s}}{\partial x_{j_s}}(x),
\end{align*}
where $\psi := \varphi^{-1}$. Similarly, we can define the push-forward of an $(r,s)$-tensor field as $\varphi_* \mathbf{T} := (\varphi^{-1})^* \mathbf{T}$.

\noindent\textbf{Lie derivative.}
Let $\mathbf{T}$ be an arbitrary smooth $(r,s)$-tensor field and $X \in \mathfrak X(\mathbb R^n)$ a smooth vector field. Let $\varphi_t,$ $t \in [0,\infty)$ be the local flow of $X$, whose existence is guaranteed by the Cauchy-Lipschitz theorem. The Lie derivative of $\mathbf{T}$ with respect to $X$ is defined as
\begin{align*}
    \mathcal L_X \mathbf{T} := \lim_{t \rightarrow 0} \frac1t \left( \varphi_t^* \mathbf{T} - \mathbf{T}\right).
\end{align*}
The explicit formula (in coordinates) for the Lie derivative of a smooth tensor field $\mathbf{T}$ with respect to a smooth vector field $b \in \mathfrak X(\mathbb R^n)$ can be computed as
\begin{align} \label{Lieformula}
\begin{split}
    &(\mathcal L_b \mathbf{T})^{i_1,\ldots,i_r}_{j_1, \ldots, j_s}(x) = b^l(x)\frac{\partial T^{i_1,\ldots,i_r}_{j_1, \ldots, j_s}}{\partial x_l}(x) - T^{l,i_2\ldots,i_r}_{j_1,\ldots,j_s}(x) \frac{\partial b^{i_1}}{\partial x_l}(x) \\
    &\hspace{60pt} - \cdots - T^{i_1,\ldots,i_{r-1},l}_{j_1,\ldots,j_s}(x) \frac{\partial b^{i_r}}{\partial x_l}(x)  \\
    &\hspace{60pt} + T^{i_1\ldots,i_r}_{l,j_2\ldots,j_s}(x) \frac{\partial b^{l}}{\partial x_{j_1}}(x) + \cdots + T^{i_1,\ldots,i_r}_{j_1,\ldots,j_{s-1},l}(x) \frac{\partial b^l}{\partial x_{j_s}}(x),
\end{split}
\end{align}
for all $x \in \mathbb{R}^n$ and its corresponding $L^2$-adjoint operator applied to a smooth tensor $\mathbf{\Theta}$ is given by 

\begin{align}
\begin{split}
    \label{Lieformulaadjoint}
    &(\mathcal L^*_b \mathbf{\Theta})^{i_1,\ldots,i_r}_{j_1, \ldots, j_s}(x) = -\frac{\partial (b^l \Theta^{i_1,\ldots,i_r}_{j_1, \ldots, j_s})}{\partial x_l}(x)  - \Theta^{l, i_2, \ldots, i_r}_{j_1, \ldots, j_s}(x) \frac{\partial b^{l}}{\partial x_{i_1}}(x) \\
    &\hspace{60pt} - \cdots - \Theta^{i_1, \ldots, i_{r-1}, l}_{j_1, \ldots, j_s}(x) \frac{\partial b^{l}}{\partial x_{i_r}}(x) \\
    &\hspace{60pt} + \Theta^{i_1, \ldots, i_r}_{l, j_2, \ldots, j_s}(x) \frac{\partial b^{j_1}}{\partial x_{l}}(x) + \cdots + \Theta^{i_1, \ldots, i_r}_{j_1, \ldots, j_{s-1}, l}(x) \frac{\partial b^{j_s}}{\partial x_{l}}(x),
    \end{split}
\end{align}
for all $x \in \R^n$.

\section{Banach sections of tensor bundles} \label{back-banach}
Here, we introduce some Banach sections of the $(r,s)$-tensor bundle $\mathcal{T}^{(r,s)}\mathbb R^n$ that are used in this paper.
We only consider tensor fields on the Euclidean space $\mathbb R^n$ equipped with the standard Euclidean metric for simplicity; however, these notions can be extended to general Riemannian manifolds (see for instance \cite{scott1995Lp, dodziuk1981sobolev, bauer2010sobolev}). In Section \ref{tensor}, we introduced the space of smooth sections of the tensor bundle $\mathcal{T}^{(r,s)}\mathbb R^n$, denoted $\Gamma(\mathcal{T}^{(r,s)}\mathbb R^n)$. However, since in this paper we will work with sections of tensor bundles that are of lower regularity, we will employ the notation $\Gamma_{\text{alg}}(\mathcal{T}^{(r,s)}\mathbb R^n)$ to denote sections in the algebraic sense (i.e., maps $s : \mathbb R^n \rightarrow \mathcal{T}^{(r,s)}{\mathbb R^n}$ that satisfy $\pi \circ s(x) = x$ for all $x \in \R^n$ without the smoothness requirement), and $\Gamma_{\text{meas}}(\mathcal{T}^{(r,s)}\mathbb R^n)$ to denote algebraic sections that are measurable with respect to the Borel sigma-algebra.

\noindent\textbf{$L^p$-tensor fields.} For $1 \leq p < \infty$, we wish to define the $L^p$-norm of a measurable tensor field $\mathbf{T} \in \Gamma_{\text{meas}}(\mathcal{T}^{(r,s)}\mathbb R^n).$ For every $x \in \mathbb R^n$, one can induce a bundle metric on $\mathcal{T}^{(r,s)}\mathbb R^n$ by making use of the standard inner product on $\mathbb R^n$ (i.e. the Euclidean metric), which we denote by rounded brackets $( \cdot,\cdot)_x:\mathcal{T}^{(r,s)}_x \mathbb R^n \times \mathcal{T}^{(r,s)}_x\mathbb R^n \rightarrow \mathbb R$ (see Appendix \ref{tensor} for more details about the construction of a bundle metric). The corresponding associated norm is denoted by $\lvert \cdot\rvert_x:\mathcal{T}^{(r,s)}_x \mathbb R^n \rightarrow [0, \infty)$. For $1 \leq p < \infty$, we define the $L^p$-norm of $\mathbf{T} \in \Gamma_{\text{meas}}(\mathcal{T}^{(r,s)}\mathbb R^n)$ by
\begin{align*}
    \|\mathbf{T}\|_{L^p} := \left(\int_{\mathbb R^n} \lvert \mathbf{T}(x)
    \rvert_x^p \, \diff^n x\right)^{\frac1p},
\end{align*}
and for $p=\infty$, we define the supremum norm
\begin{align*}
    \|\mathbf{T}\|_{L^\infty} := \sup_{x \in \mathbb R^n} \lvert \mathbf{T}(x)\rvert_x.
\end{align*}
For $p \in [1,\infty]$, the $L^p$-section of $\mathcal{T}^{(r,s)} \mathbb R^n$, denoted by $L^p(\mathbb R^n, \mathcal{T}^{(r,s)}\mathbb R^n),$ is an equivalence class of tensor fields $\mathbf{T}$ with $\|\mathbf{T}\|_{L^p} < \infty,$ where two tensor fields $\mathbf{T}_1,\mathbf{T}_2$ are identified if and only if $\|\mathbf{T}_1-\mathbf{T}_2\|_{L^p} = 0$.

Let $\mathcal{B}(\mathbb R^n)$ be the Borel algebra. Given $U \in \mathcal{B}(\mathbb R^n)$ with compact closure, we define the $L^p$-norm of $\mathbf{T}$ on $U$ by
\begin{align*}
    \|\mathbf{T}\|_{L^p(U,\mathcal{T}^{(r,s)} \mathbb R^n)} := \left(\int_U \lvert \mathbf{T}(x) \rvert_x^p \,\diff^n x\right)^{\frac1p} =  \left(\int_{\mathbb{R}^n} \mathcal{X}_U \lvert \mathbf{T}(x) \rvert_x^p \,\diff^n x\right)^{\frac1p},
\end{align*}
for $1 \leq p < \infty,$ and
\begin{align}
    \|\mathbf{T}\|_{L^\infty(U,\mathcal{T}^{(r,s)} \mathbb R^n)} := \sup_{x \in U} \lvert \mathbf{T}(x) \rvert_x,
\end{align}
for $p = \infty$.
If $\|\mathbf{T}\|_{L^p(U,\mathcal{T}^{(r,s)} \mathbb R^n)} < \infty$ for any $U \in \mathcal{B}(\mathbb R^n)$ with compact closure, then we say that $\mathbf{T}$ is of class $L^p_{loc}(\R^n,\mathcal{T}^{(r,s)} \mathbb R^n)$. For convenience, we will denote by $\|\cdot\|_{L^p_R}$ the local $L^p$-norm on the open ball $B(0,R)$.

In the special case $p=2$, the space $L^2(\R^n,\mathcal{T}^{(r,s)}\mathbb R^n)$ is furthermore equipped with an inner product $\llangle \cdot,\cdot \rrangle_{L^2} : L^2(\R^n,\mathcal{T}^{(r,s)}\mathbb R^n) \times L^2(\R^n,\mathcal{T}^{(r,s)}\mathbb R^n) \rightarrow \R$, defined by
\begin{align*}
    \llangle \mathbf{F},\mathbf{G} \rrangle_{L^2} := \int_{\mathbb R^n} (\mathbf{F}(x), \mathbf{G}(x))_x \,\diff^n x,
\end{align*}
for any $\mathbf{F},\mathbf{G} \in L^2(\R^n,\mathcal{T}^{(r,s)} \mathbb R^n)$, making it a Hilbert space. More generally, for $\mathbf{F} \in L^q(\R^n,\mathcal{T}^{(r,s)}\mathbb R^n)$ and $\mathbf{G} \in L^p(\R^n,\mathcal{T}^{(r,s)}\mathbb R^n)$, where $q,p \in [1,\infty]$ such that $1/q + 1/p = 1$,  we have
\begin{align*}
    \llangle \mathbf{F},\mathbf{G} \rrangle_{L^2} &= \int_{\mathbb R^n} (\mathbf{F}(x), \mathbf{G}(x))_x \,\diff^n x \leq \int_{\mathbb R^n} \lvert \mathbf{F}(x)\rvert_x \lvert \mathbf{G}(x)\rvert_x \,\diff^n x \\
    &\leq \|\mathbf{F}\|_{L^q}\|\mathbf{G}\|_{L^p} < \infty,
\end{align*}
where we have made use of the classical Cauchy-Schwartz and H\"older's inequalities.

\noindent\textbf{$C^k$-tensor fields.}
Let $k \in \mathbb{N} \cup \{0\}.$ If every component $T^{i_1,\ldots,i_r}_{i_{r+1},\ldots,i_{r+s}}$ of an $(r,s)$-tensor field $\mathbf{T}$ is $k$-times differentiable, then we say that $\mathbf{T}$ is a $C^k$-section of the tensor bundle, denoted by $C^k\left(\R^n,\mathcal{T}^{(r,s)}\mathbb R^n\right)$. We note that this {\em is not a Banach space}, but a Fréchet space. Given an $(r,s)$-tensor field $\mathbf{T}$ of class $C^1$, we define its derivative to be the $(r,s+1)$-tensor field
\begin{align*}
\resizebox{1. \textwidth}{!}{
\begin{math}
    \nabla \mathbf{T}(x) := \frac{\partial T^{i_1,\ldots,i_r}_{i_{r+1},\ldots,i_{r+s}}}{\partial x_l}(x) \frac{\partial}{\partial x_{i_1}}(x) \otimes \cdots \otimes \frac{\partial}{\partial x_{i_r}}(x) \otimes \diff x_l \otimes \diff x_{i_{r+1}}(x) \otimes \cdots \otimes \diff x_{i_{r+s}}(x).
\end{math}
}
\end{align*}
Analogously, we can define the $k$-th derivative $\nabla^k \mathbf{T}$ for $C^k$-tensor fields $\mathbf{T}$, which is an $(r,s+k)$-tensor. Following standard conventions, we also use the notation $D$ for the derivative $\nabla$ (e.g. $D\mathbf{T}$, $D^2\mathbf{T}$). We denote by $C^k_0 (\R^n,\mathcal{T}^{(r,s)} \mathbb R^n )$ the class of $C^k$ tensor fields with compact support, which is a Banach space, equipped with the norm
\begin{align*}
    \|\mathbf{T}\|_{C_0^k} := \|\mathbf{T}\|_{L^\infty} + \sum_{m=1}^k \|\nabla^m \mathbf{T}\|_{L^\infty}
\end{align*}

We note that all the operators we defined in Section \ref{calculo} can be defined for tensors of class $C^1 (\R^n,\mathcal{T}^{(r,s)} \mathbb R^n ).$ In particular, the coordinate expressions for the Lie derivative and its $L^2$-adjoint in the smooth case \eqref{Lieformula}, \eqref{Lieformulaadjoint} remain the same.

\noindent\textbf{$W^{k,p}$-tensor fields.}
One can also define Sobolev sections of tensor fields in the same way one defines the usual Sobolev spaces. First we provide the notion of weak derivatives of tensor fields.
\begin{definition}
Given $\mathbf{T} \in L^1_{loc} (\R^n,\mathcal{T}^{(r,s)}\mathbb R^n)$, we say that $\mathbf{S} \in L^1_{loc} (\R^n,\mathcal{T}^{(r,s+1)} \mathbb R^n)$ is a weak derivative of $\mathbf{T}$ if it satisfies
$\llangle \mathbf{S}, \mathbf{\Theta} \rrangle_{L^2} = \llangle \mathbf{T}, \nabla^T \mathbf{\Theta} \rrangle_{L^2}$ for any test tensor field $\mathbf{\Theta} \in C^\infty_0 (\R^n,\mathcal{T}^{(r,s+1)}\mathbb R^n)$, where $\nabla^T$ is the $L^2$-adjoint operator of $\nabla : C^\infty_0 (\R^n,\mathcal{T}^{(r,s)}\mathbb R^n) \rightarrow C^\infty_0 (\R^n,\mathcal{T}^{(r,s+1)}\mathbb R^n)$, which in coordinates reads
\begin{align*}
\resizebox{1. \textwidth}{!}{
\begin{math}
  \nabla^T \mathbf{\Theta}(x) := -\frac{\partial \Theta^{i_1,\ldots,i_r}_{l,i_{r+1},\ldots,i_{r+s}}}{\partial x_l}(x) \frac{\partial}{\partial x_{i_1}}(x) \otimes \cdots \otimes \frac{\partial}{\partial x_{i_r}}(x) \otimes \diff x_{i_{r+1}}(x) \otimes \cdots \otimes \diff x_{i_{r+s}}(x),
\end{math}
}
\end{align*}
for all $x \in \R^n$. We denote the weak derivative $\mathbf{S}$ of a tensor field by $\nabla_w \mathbf{T}$, where ``$w$" is usually omitted if it is understood from the context.
\end{definition}

Weak notions of $k$-th derivatives are defined in a similar manner and we define the Sobolev norm $\|\cdot\|_{W^{k,p}}$ of a tensor field, for $k \in \mathbb{N} \cup \{0\}$ and $p \in [1,\infty],$ by
\begin{align*}
    \|\mathbf{T}\|_{W^{k,p}} := \|\mathbf{T}\|_{L^p} + \sum_{m=1}^k \|\nabla_w^m \mathbf{T}\|_{L^p}.
\end{align*}
We say that $\mathbf{T}$ is in the Sobolev section $W^{k,p}(\R^n,\mathcal{T}^{(r,s)}\mathbb R^n)$ if all of the weak derivatives $\nabla_w \mathbf{T}, \ldots, \nabla_w^k \mathbf{T}$ exist and $\|\mathbf{T}\|_{W^{k,p}} < \infty$. As usually, we identify two elements $\mathbf{T}_1, \mathbf{T}_2$ if and only if $\|\mathbf{T}_1-\mathbf{T}_2\|_{W^{k,p}} = 0$. Furthermore, if the section $\mathbf{T}_U := \mathcal{X}_{U} \mathbf{T} \in \Gamma_{\text{meas}}(\mathcal{T}^{(r,s)} \mathbb{R}^n ))$ is of class $W^{k,p}$ for every $U \in \mathcal{B}(\mathbb R^n)$ with compact closure, then we say that $\mathbf{T}$ is of class $W^{k,p}_{loc} (\R^n,\mathcal{T}^{(r,s)} \mathbb{R}^n )$.

\noindent\textbf{The vector field case.}
In the case $(r,s)=(1,0)$ a tensor field $\mathbf{T} \in \mathcal{T}^{(r,s)} \mathbb{R}^n$ is a vector field and hence can be interpreted as a function $b: \mathbb{R}^n \rightarrow \mathbb{R}^n$ (see Remark \ref{app:interpretation of tensor fields}). In this case, for the Banach classes of vector fields we will employ the notations $L^p(\mathbb{R}^n,\mathbb{R}^n),$ $W^{k,p}(\mathbb{R}^n,\mathbb{R}^n),$ $C^{k}(\mathbb{R}^n,\mathbb{R}^n),$ etc, instead of the geometric notations $L^p(\R^n,\mathcal{T}\mathbb{R}^n),$ $W^{k,p}(\R^n,\mathcal{T}\mathbb{R}^n),$ $C^{k}(\R^n,\mathcal{T}\mathbb{R}^n),$ etc. The reason behind this choice is making this article more accessible to readers who are less familiar with differential geometry.

\noindent\textbf{Distribution-valued tensor fields.}
Given the space $C^\infty_0(\R^n,\mathcal{T}^{(r,s)}\mathbb R^n)$ of smooth $(r,s)$-tensor fields with compact support, a distribution-valued tensor field $\mathbf{T}$ is a continuous (with respect to the LF topology) linear functional $\mathbf{T} :C^\infty_0(\R^n,\mathcal{T}^{(r,s)}\mathbb R^n) \rightarrow \mathbb R$. We denote the space of all distribution-valued tensor fields by $\D'(\R^n,\mathcal{T}^{(r,s)}\mathbb R^n)$, which is equipped with the weak-$*$ topology.

\begin{definition}[Mollification of tensor fields] \label{molliformula}
Consider a tensor field $\mathbf{T} \in L^1_{loc} (\R^n,\mathcal{T}^{(r,s)}\mathbb R^n)$ and a mollifier $\rho^\epsilon \in C^\infty_0(\mathbb R^n).$ We define the mollified tensor field $\mathbf{T}^\epsilon$ as the smooth $(r,s)$-tensor field satisfying
\begin{align}
\begin{split}
    &\mathbf{T}^\epsilon(x)(\alpha^1(x), \ldots ,\alpha^r(x), v_1(x), \ldots, v_s(x)) \\
    &\quad = \int_{\mathbb R^n} \rho^\epsilon(x-y) \mathbf{T}(y)(\tau_x^y\alpha^1(x), \ldots ,\tau_x^y\alpha^r(x), \tau_x^y v_1(x), \ldots, \tau_x^y v_s(x)) \,\diff^n y,
\end{split}\label{eq:mollification-of-tensors}
\end{align}
for any $\alpha^i \in \Gamma_{\text{alg}}(\mathcal{T}^* \R^n)$, $i=1, \ldots, r,$ $v_j \in \Gamma_{\text{alg}}(\mathcal{T} \R^n),$ $j=1,\ldots,s,$ where $\tau_x^y$ denotes the parallel transport from $x$ to $y$ along the geodesic with respect to the metric.
\end{definition}

When the metric is the Euclidean one, identifying vector fields and one-forms simply as maps $\R^n \rightarrow \R^n$ (see Remark \ref{app:interpretation of tensor fields}), we have $\tau_x^y v(x) = v(x) \in \R^n$ and $\tau_x^y \alpha(x) = \alpha(x) \in \R^n$ (in this case, the right hand side of \eqref{eq:mollification-of-tensors} makes sense by interpreting $\mathbf{T}$ as a map $\R^n \rightarrow \mathcal{T}_{\R^n}^{(r,s)}$). We employ the standard convolution notation $\mathbf{T}^\epsilon := \rho^\epsilon * \mathbf{T}$ to denote the mollification of a tensor.

\begin{definition}[Mollification of distribution-valued tensor fields] \label{distrimolliformula}
Given a distribution-valued tensor field $\mathbf{T} \in \D' (\R^n,\mathcal{T}^{(r,s)}\mathbb R^n)$ and a mollifier $\rho^\epsilon \in C^\infty_0(\mathbb R^n)$, we define the mollified tensor field $\mathbf{T}^\epsilon$ as the smooth $(r,s)$-tensor field satisfying
\begin{align*}
    \llangle \mathbf{T}^\epsilon, \mathbf{\Theta} \rrangle_{L^2} :=  \mathbf{T}( \rho^\epsilon * \mathbf{\Theta}),
\end{align*}
for any test tensor field $\mathbf{\Theta} \in C_0^\infty(\R^n,\mathcal{T}^{(r,s)} \mathbb R^n)$.
\end{definition}

\end{appendices}

\bibliography{biblio}

%% BioMed_Central_Bib_Style_v1.01

\begin{thebibliography}{39}
% BibTex style file: bmc-mathphys.bst (version 2.1), 2014-07-24
\ifx \bisbn   \undefined \def \bisbn  #1{ISBN #1}\fi
\ifx \binits  \undefined \def \binits#1{#1}\fi
\ifx \bauthor  \undefined \def \bauthor#1{#1}\fi
\ifx \batitle  \undefined \def \batitle#1{#1}\fi
\ifx \bjtitle  \undefined \def \bjtitle#1{#1}\fi
\ifx \bvolume  \undefined \def \bvolume#1{\textbf{#1}}\fi
\ifx \byear  \undefined \def \byear#1{#1}\fi
\ifx \bissue  \undefined \def \bissue#1{#1}\fi
\ifx \bfpage  \undefined \def \bfpage#1{#1}\fi
\ifx \blpage  \undefined \def \blpage #1{#1}\fi
\ifx \burl  \undefined \def \burl#1{\textsf{#1}}\fi
\ifx \doiurl  \undefined \def \doiurl#1{\url{https://doi.org/#1}}\fi
\ifx \betal  \undefined \def \betal{\textit{et al.}}\fi
\ifx \binstitute  \undefined \def \binstitute#1{#1}\fi
\ifx \binstitutionaled  \undefined \def \binstitutionaled#1{#1}\fi
\ifx \bctitle  \undefined \def \bctitle#1{#1}\fi
\ifx \beditor  \undefined \def \beditor#1{#1}\fi
\ifx \bpublisher  \undefined \def \bpublisher#1{#1}\fi
\ifx \bbtitle  \undefined \def \bbtitle#1{#1}\fi
\ifx \bedition  \undefined \def \bedition#1{#1}\fi
\ifx \bseriesno  \undefined \def \bseriesno#1{#1}\fi
\ifx \blocation  \undefined \def \blocation#1{#1}\fi
\ifx \bsertitle  \undefined \def \bsertitle#1{#1}\fi
\ifx \bsnm \undefined \def \bsnm#1{#1}\fi
\ifx \bsuffix \undefined \def \bsuffix#1{#1}\fi
\ifx \bparticle \undefined \def \bparticle#1{#1}\fi
\ifx \barticle \undefined \def \barticle#1{#1}\fi
\bibcommenthead
\ifx \bconfdate \undefined \def \bconfdate #1{#1}\fi
\ifx \botherref \undefined \def \botherref #1{#1}\fi
\ifx \url \undefined \def \url#1{\textsf{#1}}\fi
\ifx \bchapter \undefined \def \bchapter#1{#1}\fi
\ifx \bbook \undefined \def \bbook#1{#1}\fi
\ifx \bcomment \undefined \def \bcomment#1{#1}\fi
\ifx \oauthor \undefined \def \oauthor#1{#1}\fi
\ifx \citeauthoryear \undefined \def \citeauthoryear#1{#1}\fi
\ifx \endbibitem  \undefined \def \endbibitem {}\fi
\ifx \bconflocation  \undefined \def \bconflocation#1{#1}\fi
\ifx \arxivurl  \undefined \def \arxivurl#1{\textsf{#1}}\fi
\csname PreBibitemsHook\endcsname

%%% 1
\bibitem[\protect\citeauthoryear{Alonso-Or\'{a}n and Bethencourt-de
  Le\'{o}n}{2020}]{DieAytJNLS}
\begin{barticle}
\bauthor{\bsnm{Alonso-Or\'{a}n}, \binits{D.}},
\bauthor{\bsnm{Bethencourt-de-Le\'{o}n}, \binits{A.}}:
\batitle{On the well-posedness of stochastic {B}oussinesq equations with
  transport noise}.
\bjtitle{Journal of Nonlinear Science}
\bvolume{30}(\bissue{1}),
\bfpage{175}--\blpage{224}
(\byear{2020})
\end{barticle}
\endbibitem

%%% 2
\bibitem[\protect\citeauthoryear{Alonso-Or\'{a}n
  et~al.}{2019}]{alonso2018burgers}
\begin{barticle}
\bauthor{\bsnm{Alonso-Or\'{a}n}, \binits{D.}},
\bauthor{\bsnm{Bethencourt-de-Le{\'o}n}, \binits{A.}},
\bauthor{\bsnm{Takao}, \binits{S.}}:
\batitle{The {B}urgers' equation with stochastic transport: shock formation,
  local and global existence of smooth solutions}.
\bjtitle{Nonlinear Differential Equations and Applications}
\bvolume{26}(\bissue{6}),
\bfpage{1}--\blpage{33}
(\byear{2019})
\end{barticle}
\endbibitem

%%% 3
\bibitem[\protect\citeauthoryear{Alonso-Or\'{a}n et~al.}{2021}]{diegochino}
\begin{barticle}
\bauthor{\bsnm{Alonso-Or\'{a}n}, \binits{D.}},
\bauthor{\bsnm{Tang}, \binits{H.}},
\bauthor{\bsnm{Rohde}, \binits{C.}}:
\batitle{A local-in-time theory for singular {SDE}s with applications to fluid
  models with transport noise}.
\bjtitle{Journal of Nonlinear Science}
\bvolume{31}(\bissue{6}),
\bfpage{1}--\blpage{55}
(\byear{2021})
\end{barticle}
\endbibitem

%%% 4
\bibitem[\protect\citeauthoryear{Alonso-Or\'{a}n et~al.}{2020}]{modelling}
\begin{barticle}
\bauthor{\bsnm{Alonso-Or\'{a}n}, \binits{D.}},
\bauthor{\bsnm{Bethencourt-de-Le{\'o}n}, \binits{A.}},
\bauthor{\bsnm{Holm}, \binits{D.D.}},
\bauthor{\bsnm{Takao}, \binits{S.}}:
\batitle{Modelling the climate and weather of a {2D} {L}agrangian-averaged
  {E}uler–{B}oussinesq equation with transport noise}.
\bjtitle{Journal of Statistical Physics}
\bvolume{179}(\bissue{5}),
\bfpage{1267}--\blpage{1303}
(\byear{2020})
\end{barticle}
\endbibitem

%%% 5
\bibitem[\protect\citeauthoryear{Attanasio and
  Flandoli}{2011}]{attanasio2011renormalized}
\begin{barticle}
\bauthor{\bsnm{Attanasio}, \binits{S.}},
\bauthor{\bsnm{Flandoli}, \binits{F.}}:
\batitle{Renormalized solutions for stochastic transport equations and the
  regularization by bilinear multiplicative noise}.
\bjtitle{Communications in Partial Differential Equations}
\bvolume{36}(\bissue{8}),
\bfpage{1455}--\blpage{1474}
(\byear{2011})
\end{barticle}
\endbibitem

%%% 6
\bibitem[\protect\citeauthoryear{Bauer et~al.}{2012}]{bauer2010sobolev}
\begin{botherref}
\oauthor{\bsnm{Bauer}, \binits{M.}},
\oauthor{\bsnm{Harms}, \binits{P.}},
\oauthor{\bsnm{Michor}, \binits{P.W.}}:
Sobolev metrics on shape space of surfaces.
Journal of Geometric Mechanics
\textbf{3}(4)
(2012)
\end{botherref}
\endbibitem

%%% 7
\bibitem[\protect\citeauthoryear{Beck et~al.}{2019}]{Beck2018}
\begin{barticle}
\bauthor{\bsnm{Beck}, \binits{L.}},
\bauthor{\bsnm{Flandoli}, \binits{F.}},
\bauthor{\bsnm{Gubinelli}, \binits{M.}},
\bauthor{\bsnm{Maurelli}, \binits{M.}}:
\batitle{Stochastic {ODE}s and stochastic linear {PDE}s with critical drift:
  regularity, duality and uniqueness}.
\bjtitle{Electronic Journal of Probability}
\bvolume{24},
\bfpage{1}--\blpage{72}
(\byear{2019})
\end{barticle}
\endbibitem

%%% 8
\bibitem[\protect\citeauthoryear{Bethencourt-de Le\'{o}n
  et~al.}{2020}]{takao2019}
\begin{barticle}
\bauthor{\bsnm{Bethencourt-de-Le\'{o}n}, \binits{A.}},
\bauthor{\bsnm{Holm}, \binits{D.D.}},
\bauthor{\bsnm{Luesink}, \binits{E.}},
\bauthor{\bsnm{Takao}, \binits{S.}}:
\batitle{Implications of {K}unita-{I}t\^o-{W}entzell formula for $k$-forms in
  stochastic fluid dynamics}.
\bjtitle{Journal of Nonlinear Science}
\bvolume{30}(\bissue{4}),
\bfpage{1421}--\blpage{1454}
(\byear{2020})
\end{barticle}
\endbibitem

%%% 9
\bibitem[\protect\citeauthoryear{Bismut}{1981}]{itowentzell1}
\begin{barticle}
\bauthor{\bsnm{Bismut}, \binits{J.-M.}}:
\batitle{A generalized formula of {I}t\^o and some other properties of
  stochastic flows}.
\bjtitle{Zeitschrift f{\"u}r Wahrscheinlichkeitstheorie und verwandte Gebiete}
\bvolume{55}(\bissue{3}),
\bfpage{331}--\blpage{350}
(\byear{1981})
\end{barticle}
\endbibitem

%%% 10
\bibitem[\protect\citeauthoryear{Crisan et~al.}{2019}]{crisan2017solution}
\begin{barticle}
\bauthor{\bsnm{Crisan}, \binits{D.}},
\bauthor{\bsnm{Flandoli}, \binits{F.}},
\bauthor{\bsnm{Holm}, \binits{D.D.}}:
\batitle{Solution properties of a {3D} stochastic {E}uler fluid equation}.
\bjtitle{Journal of {N}onlinear {S}cience}
\bvolume{29}(\bissue{3}),
\bfpage{813}--\blpage{870}
(\byear{2019})
\end{barticle}
\endbibitem

%%% 11
\bibitem[\protect\citeauthoryear{DiPerna and Lions}{1989}]{diperna1989ordinary}
\begin{barticle}
\bauthor{\bsnm{DiPerna}, \binits{R.J.}},
\bauthor{\bsnm{Lions}, \binits{P.-L.}}:
\batitle{Ordinary differential equations, transport theory and {S}obolev
  spaces}.
\bjtitle{Inventiones mathematicae}
\bvolume{98}(\bissue{3}),
\bfpage{511}--\blpage{547}
(\byear{1989})
\end{barticle}
\endbibitem

%%% 12
\bibitem[\protect\citeauthoryear{Dodziuk}{1981}]{dodziuk1981sobolev}
\begin{barticle}
\bauthor{\bsnm{Dodziuk}, \binits{J.}}:
\batitle{Sobolev spaces of differential forms and de {R}ham-{H}odge
  isomorphism}.
\bjtitle{Journal of Differential Geometry}
\bvolume{16}(\bissue{1}),
\bfpage{63}--\blpage{73}
(\byear{1981})
\end{barticle}
\endbibitem

%%% 13
\bibitem[\protect\citeauthoryear{Drivas et~al.}{2020}]{drivas2020lagrangian}
\begin{barticle}
\bauthor{\bsnm{Drivas}, \binits{T.D.}},
\bauthor{\bsnm{Holm}, \binits{D.D.}},
\bauthor{\bsnm{Leahy}, \binits{J.-M.}}:
\batitle{Lagrangian averaged stochastic advection by {L}ie transport for
  fluids}.
\bjtitle{Journal of Statistical Physics}
\bvolume{179}(\bissue{5}),
\bfpage{1304}--\blpage{1342}
(\byear{2020})
\end{barticle}
\endbibitem

%%% 14
\bibitem[\protect\citeauthoryear{Fedrizzi and Flandoli}{2013}]{Flandoli2013}
\begin{barticle}
\bauthor{\bsnm{Fedrizzi}, \binits{E.}},
\bauthor{\bsnm{Flandoli}, \binits{F.}}:
\batitle{Noise prevents singularities in linear transport equations}.
\bjtitle{Journal of Functional Analysis}
\bvolume{264}(\bissue{6}),
\bfpage{1329}--\blpage{1354}
(\byear{2013})
\end{barticle}
\endbibitem

%%% 15
\bibitem[\protect\citeauthoryear{Flandoli and Luo}{2021}]{fl2019high}
\begin{barticle}
\bauthor{\bsnm{Flandoli}, \binits{F.}},
\bauthor{\bsnm{Luo}, \binits{D.}}:
\batitle{High mode transport noise improves vorticity blow-up control in {3D}
  {N}avier-{S}tokes equations}.
\bjtitle{Probability Theory and Related Fields}
\bvolume{180}(\bissue{1}),
\bfpage{309}--\blpage{363}
(\byear{2021})
\end{barticle}
\endbibitem

%%% 16
\bibitem[\protect\citeauthoryear{Flandoli and Olivera}{2018}]{flandoli2018well}
\begin{barticle}
\bauthor{\bsnm{Flandoli}, \binits{F.}},
\bauthor{\bsnm{Olivera}, \binits{C.}}:
\batitle{Well-posedness of the vector advection equations by stochastic
  perturbation}.
\bjtitle{Journal of Evolution Equations}
\bvolume{18}(\bissue{2}),
\bfpage{277}--\blpage{301}
(\byear{2018})
\end{barticle}
\endbibitem

%%% 17
\bibitem[\protect\citeauthoryear{Flandoli et~al.}{2020}]{fl2019scaling}
\begin{barticle}
\bauthor{\bsnm{Flandoli}, \binits{F.}},
\bauthor{\bsnm{Galeati}, \binits{L.}},
\bauthor{\bsnm{Luo}, \binits{D.}}:
\batitle{Scaling limit of stochastic {2D} {E}uler equations with transport
  noises to the deterministic {N}avier-{S}tokes equations}.
\bjtitle{Journal of Evolution Equations}
\bvolume{21}(\bissue{1}),
\bfpage{567}--\blpage{600}
(\byear{2020})
\end{barticle}
\endbibitem

%%% 18
\bibitem[\protect\citeauthoryear{Flandoli et~al.}{2010}]{flandoli2010well}
\begin{barticle}
\bauthor{\bsnm{Flandoli}, \binits{F.}},
\bauthor{\bsnm{Gubinelli}, \binits{M.}},
\bauthor{\bsnm{Priola}, \binits{E.}}:
\batitle{Well-posedness of the transport equation by stochastic perturbation}.
\bjtitle{Inventiones mathematicae}
\bvolume{180}(\bissue{1}),
\bfpage{1}--\blpage{53}
(\byear{2010})
\end{barticle}
\endbibitem

%%% 19
\bibitem[\protect\citeauthoryear{Flandoli et~al.}{2014}]{flandoli2014noise}
\begin{barticle}
\bauthor{\bsnm{Flandoli}, \binits{F.}},
\bauthor{\bsnm{Maurelli}, \binits{M.}},
\bauthor{\bsnm{Neklyudov}, \binits{M.}}:
\batitle{Noise prevents infinite stretching of the passive field in a
  stochastic vector advection equation}.
\bjtitle{Journal of Mathematical Fluid Mechanics}
\bvolume{16}(\bissue{4}),
\bfpage{805}--\blpage{822}
(\byear{2014})
\end{barticle}
\endbibitem

%%% 20
\bibitem[\protect\citeauthoryear{Friedman}{1983}]{friedman}
\begin{bbook}
\bauthor{\bsnm{Friedman}, \binits{A.}}:
\bbtitle{Partial {D}ifferential {E}quations of {P}arabolic Type}.
\bpublisher{Courier Dover Publications},
\blocation{New {Y}ork}
(\byear{1983})
\end{bbook}
\endbibitem

%%% 21
\bibitem[\protect\citeauthoryear{Gess and Smith}{2019}]{gess2019stochastic}
\begin{barticle}
\bauthor{\bsnm{Gess}, \binits{B.}},
\bauthor{\bsnm{Smith}, \binits{S.}}:
\batitle{Stochastic continuity equations with conservative noise}.
\bjtitle{Journal de Math{\'e}matiques Pures et Appliqu{\'e}es}
\bvolume{128},
\bfpage{225}--\blpage{263}
(\byear{2019})
\end{barticle}
\endbibitem

%%% 22
\bibitem[\protect\citeauthoryear{Holm}{2015}]{holm2015variational}
\begin{botherref}
\oauthor{\bsnm{Holm}, \binits{D.D.}}:
Variational principles for stochastic fluid dynamics.
Proceedings of the Royal Society A: Mathematical, Physical and Engineering
  Sciences
\textbf{471}(2176)
(2015)
\end{botherref}
\endbibitem

%%% 23
\bibitem[\protect\citeauthoryear{Holm et~al.}{1998}]{holm1998euler}
\begin{barticle}
\bauthor{\bsnm{Holm}, \binits{D.D.}},
\bauthor{\bsnm{Marsden}, \binits{J.E.}},
\bauthor{\bsnm{Ratiu}, \binits{T.S.}}:
\batitle{The {E}uler--{P}oincar{\'e} equations and semidirect products with
  applications to continuum theories}.
\bjtitle{Advances in Mathematics}
\bvolume{137}(\bissue{1}),
\bfpage{1}--\blpage{81}
(\byear{1998})
\end{barticle}
\endbibitem

%%% 24
\bibitem[\protect\citeauthoryear{Krylov}{2008}]{sobolevkrylov}
\begin{bbook}
\bauthor{\bsnm{Krylov}, \binits{N.V.}}:
\bbtitle{Lectures on {E}lliptic and {P}arabolic {E}quations in Sobolev Spaces}
vol. \bseriesno{96}.
\bpublisher{American Mathematical Society},
\blocation{Rhode Island}
(\byear{2008})
\end{bbook}
\endbibitem

%%% 25
\bibitem[\protect\citeauthoryear{Krylov}{2011}]{krylov2011ito}
\begin{barticle}
\bauthor{\bsnm{Krylov}, \binits{N.V.}}:
\batitle{On the {I}t{\^o}--{W}entzell formula for distribution-valued processes
  and related topics}.
\bjtitle{Probability Theory and Related Fields}
\bvolume{150}(\bissue{1}),
\bfpage{295}--\blpage{319}
(\byear{2011})
\end{barticle}
\endbibitem

%%% 26
\bibitem[\protect\citeauthoryear{Krylov and Priola}{2009}]{krylov2009elliptic}
\begin{barticle}
\bauthor{\bsnm{Krylov}, \binits{N.V.}},
\bauthor{\bsnm{Priola}, \binits{E.}}:
\batitle{Elliptic and parabolic second-order {PDE}s with growing coefficients}.
\bjtitle{Communications in Partial Differential Equations}
\bvolume{35}(\bissue{1}),
\bfpage{1}--\blpage{22}
(\byear{2009})
\end{barticle}
\endbibitem

%%% 27
\bibitem[\protect\citeauthoryear{Kunita}{1981}]{kunita1981some}
\begin{bchapter}
\bauthor{\bsnm{Kunita}, \binits{H.}}:
\bctitle{Some extensions of {I}to's formula}.
In: \bbtitle{S{\'e}minaire de Probabilit{\'e}s XV 1979/80}
vol. \bseriesno{39},
pp. \bfpage{79}--\blpage{91}.
\bpublisher{Springer},
\blocation{New {Y}ork}
(\byear{1981})
\end{bchapter}
\endbibitem

%%% 28
\bibitem[\protect\citeauthoryear{Kunita}{1984}]{kunita1984stochastic}
\begin{bchapter}
\bauthor{\bsnm{Kunita}, \binits{H.}}:
\bctitle{Stochastic differential equations and stochastic flows of
  diffeomorphisms}.
In: \bbtitle{Ecole D'{\'e}t{\'e} de Probabilit{\'e}s de Saint-Flour XII-1982},
pp. \bfpage{143}--\blpage{303}.
\bpublisher{Springer},
\blocation{New {Y}ork}
(\byear{1984})
\end{bchapter}
\endbibitem

%%% 29
\bibitem[\protect\citeauthoryear{Kunita}{1997}]{kunita1997stochastic}
\begin{bbook}
\bauthor{\bsnm{Kunita}, \binits{H.}}:
\bbtitle{Stochastic Flows and Stochastic Differential Equations}
vol. \bseriesno{24}.
\bpublisher{Cambridge University Press},
\blocation{Cambridge}
(\byear{1997})
\end{bbook}
\endbibitem

%%% 30
\bibitem[\protect\citeauthoryear{Leahy and
  Mikulevi{\v{c}}ius}{2015}]{leahy2015some}
\begin{barticle}
\bauthor{\bsnm{Leahy}, \binits{J.-M.}},
\bauthor{\bsnm{Mikulevi{\v{c}}ius}, \binits{R.}}:
\batitle{On some properties of space inverses of stochastic flows}.
\bjtitle{Stochastic Partial Differential Equations: Analysis and Computations}
\bvolume{3}(\bissue{4}),
\bfpage{445}--\blpage{478}
(\byear{2015})
\end{barticle}
\endbibitem

%%% 31
\bibitem[\protect\citeauthoryear{Lions}{1996}]{lions1996mathematical}
\begin{bbook}
\bauthor{\bsnm{Lions}, \binits{P.-L.}}:
\bbtitle{Mathematical Topics in Fluid Mechanics: Incompressible Models}
vol. \bseriesno{2}.
\bpublisher{Oxford University Press},
\blocation{Oxford}
(\byear{1996})
\end{bbook}
\endbibitem

%%% 32
\bibitem[\protect\citeauthoryear{Mohammed et~al.}{2015}]{mohammed2015sobolev}
\begin{barticle}
\bauthor{\bsnm{Mohammed}, \binits{S.-E.A.}},
\bauthor{\bsnm{Nilssen}, \binits{T.K.}},
\bauthor{\bsnm{Proske}, \binits{F.N.}}:
\batitle{Sobolev differentiable stochastic flows for {SDE}s with singular
  coefficients: {A}pplications to the transport equation}.
\bjtitle{The Annals of Probability}
\bvolume{43}(\bissue{3}),
\bfpage{1535}--\blpage{1576}
(\byear{2015})
\end{barticle}
\endbibitem

%%% 33
\bibitem[\protect\citeauthoryear{Neves and
  Olivera}{2015}]{neves2015wellposedness}
\begin{barticle}
\bauthor{\bsnm{Neves}, \binits{W.}},
\bauthor{\bsnm{Olivera}, \binits{C.}}:
\batitle{Wellposedness for stochastic continuity equations with
  {L}adyzhenskaya--{P}rodi--{S}errin condition}.
\bjtitle{Nonlinear Differential Equations and Applications}
\bvolume{22}(\bissue{5}),
\bfpage{1247}--\blpage{1258}
(\byear{2015})
\end{barticle}
\endbibitem

%%% 34
\bibitem[\protect\citeauthoryear{Olivera}{2017}]{olivera2017regularization}
\begin{barticle}
\bauthor{\bsnm{Olivera}, \binits{C.}}:
\batitle{Regularization by noise in one-dimensional continuity equation}.
\bjtitle{Potential Analysis}
\bvolume{51}(\bissue{1}),
\bfpage{23}--\blpage{35}
(\byear{2017})
\end{barticle}
\endbibitem

%%% 35
\bibitem[\protect\citeauthoryear{Punshon-Smith and
  Smith}{2018}]{punshon2018boltzmann}
\begin{barticle}
\bauthor{\bsnm{Punshon-Smith}, \binits{S.}},
\bauthor{\bsnm{Smith}, \binits{S.}}:
\batitle{On the {B}oltzmann equation with stochastic kinetic transport: Global
  existence of renormalized martingale solutions}.
\bjtitle{Archive for Rational Mechanics and Analysis}
\bvolume{229}(\bissue{2}),
\bfpage{627}--\blpage{708}
(\byear{2018})
\end{barticle}
\endbibitem

%%% 36
\bibitem[\protect\citeauthoryear{Scott}{1995}]{scott1995Lp}
\begin{barticle}
\bauthor{\bsnm{Scott}, \binits{C.}}:
\batitle{{$L^p$} theory of differential forms on manifolds}.
\bjtitle{Transactions of the American Mathematical Society}
\bvolume{347}(\bissue{6}),
\bfpage{2075}--\blpage{2096}
(\byear{1995})
\end{barticle}
\endbibitem

%%% 37
\bibitem[\protect\citeauthoryear{Veretennikov}{1981}]{ruso}
\begin{barticle}
\bauthor{\bsnm{Veretennikov}, \binits{A.J.}}:
\batitle{On strong solutions and explicit formulas for solutions of stochastic
  integral equations}.
\bjtitle{Mathematics of the USSR-Sbornik}
\bvolume{39}(\bissue{3}),
\bfpage{387}
(\byear{1981})
\end{barticle}
\endbibitem

%%% 38
\bibitem[\protect\citeauthoryear{Zvonkin}{1974}]{itotanaka}
\begin{barticle}
\bauthor{\bsnm{Zvonkin}, \binits{A.K.}}:
\batitle{A transformation of the phase space of a diffusion process that
  removes the drift}.
\bjtitle{Mathematics of the USSR-Sbornik}
\bvolume{22}(\bissue{1}),
\bfpage{129}
(\byear{1974})
\end{barticle}
\endbibitem

\end{thebibliography}
\nocite{label}

\end{document}